\theoremstyle{plain}
\newtheorem{thrm}{Theorem}[section]
\newtheorem{lmm}[thrm]{Lemma}
\newtheorem{prpstn}[thrm]{Proposition}
\newtheorem{crllry}[thrm]{Corollary}
\newtheorem{dfntn}{Definition}
\newtheorem*{rmk}{Remark}
\numberwithin{equation}{section}
\DeclareMathOperator{\lcm}{lcm}
\DeclareMathOperator{\Kl}{Kl}
\newcommand{\Mod}[1]{\ (\mathrm{mod}\ #1)}
\renewcommand{\mod}[1]{\mathrm{mod}\ #1}
\renewcommand{\hat}[1]{\widehat #1}
\DeclareRobustCommand{\notsquare}{\mathord{\mathpalette\generic@not\square}}
\newcommand{\generic@not}[2]{%
  \sbox\z@{$\m@th#1/$}%
  \sbox\tw@{$\m@th#1#2$}%
  \sbox\z@{\raisebox{\dimexpr(\ht\tw@-\dp\tw@-\ht\z@+\dp\z@)/2\relax}{$\m@th#1/$}}%
  \vphantom{\usebox{\z@}}%
  \ooalign{\hidewidth\usebox{\z@}\hidewidth\cr$\m@th#1#2$\cr}%
}
\begin{document}

\title[Primes in arithmetic progressions to large moduli III]{Primes in arithmetic progressions to large moduli III: Uniform residue classes}
\author{James Maynard}
\address{Mathematical Institute, Radcliffe Observatory quarter, Woodstock Road, Oxford OX2 6GG, England}
\email{james.alexander.maynard@gmail.com}
\begin{abstract}
We prove new mean value theorems for primes in arithmetic progressions to moduli larger than $x^{1/2}$, extending the Bombieri-Vinogradov theorem to moduli of size $x^{1/2+\delta}$ which have conveniently sized divisors. The main feature of these estimates is that they are completely uniform with respect to the  residue classes considered, unlike previous works on primes in arithmetic progressions to large moduli.
\end{abstract}
\maketitle
\tableofcontents
\parskip 7.2pt
\section{Introduction}
%
%
%%%%%%%%%%%%%%%%%%%%%%
%
%
The Bombieri-Vinogradov Theorem states that for any $A>0$ there is a $B=B(A)$ such that
\begin{equation}
\sum_{q\le x^{1/2}/(\log{x})^B}\sup_{(a,q)=1}\Bigl|\pi(x;q,a)-\frac{\pi(x)}{\phi(q)}\Bigr|\ll_A \frac{x}{(\log{x})^A},
\label{eq:BV}
\end{equation}
thereby showing equidistribution of primes up to $x$ in arithmetic progressions on average over moduli $q$ a bit smaller than $x^{1/2}$. For the purpose of many applications in analytic number theory this serves as an adequate substitute for the Generalized Riemann Hypothesis.

One useful technical feature of \eqref{eq:BV} is that it is completely uniform over the residue classes which appear. In particular, for $q$ outside of some small bad set of moduli, \textit{every} residue class $\Mod{q}$ contains roughly the expected number of primes. There are $\exp(x^{1/2+o(1)})$ possible collections of different residue classes $a\Mod{q}$ with $q\le x^{1/2}/(\log{x})^B$, and all of these are considered in \eqref{eq:BV}.

It is expected that one should be able to extend the range of moduli in \eqref{eq:BV} to a summation over all $q\le x^{1-\epsilon}$ (this is the Elliott-Halberstam Conjecture - see \cite{ElliottHalberstam}), but simply extending the summation to moduli larger than $x^{1/2}$ remains an important open problem. 

Some important progress was made in a series of works by Bombieri, Fouvry, Friedlander and Iwaniec \cite{BFI1,BFI2,BFI3,Fouvry,Fouvry2,FouvryIwaniec,FouvryIwaniec2}, who produced variants of \eqref{eq:BV} which held for moduli $q$ of size $x^{1/2+\delta}$ (for some small $\delta>0$) at the cost of imposing some additional restrictions. One limitation of these results was that the estimates put significant restrictions on the residue classes $a\Mod{q}$ which appeared. Any method exploiting bounds for sums of Kloosterman sums via the spectral theory of automorphic forms \cite{DeshouillersIwaniec} necessarily introduces a dependence on the residue class appearing, and this essentially restricts one to only considering the same residue class $a\ll x^\epsilon$ for all moduli $q$. In such works there are therefore only $O(x^{1/2+\delta+\epsilon})$ collections of residue classes under consideration. This limitation on uniformity of residue classes was the key reason that these works were not applicable to the work of Goldston-Pintz-Y\i ld\i r\i m \cite{GPY}, which would produce bounded gaps between primes if one could obtain a suitable variant of \eqref{eq:BV} for moduli of size $x^{1/2+\delta}$. 

The key technical innovation in the breakthrough work of Zhang \cite{Zhang} on bounded gaps between primes was a variant of \eqref{eq:BV} for smooth moduli which was more uniform with respect to the residue classes considered. Zhang's work took a fixed polynomial $f$, moduli $q$ of size $x^{1/2+\delta}$  having no prime factors bigger than $x^{\delta/2}$, and allowed one to consider all residue classes $a\Mod{q}$ with $f(a)\equiv 0\Mod{q}$. This estimate was sufficiently uniform to combine with the work of Goldston-Pintz-Y\i ld\i r\i m to give bounded gaps between primes. An important technical feature enabling this uniformity was that rather than relying on estimates from the spectral theory of automorphic forms, Zhang ultimately relied only on exponential sum estimates coming from algebraic geometry, which have the benefit of being much more uniform with respect to the residue classes. 

Zhang's work was refined further by the Polymath project \cite{Polymath}, who showed that a variant of his methods allowed one to produce an estimate where the residue class $a$ was the same for all moduli, but otherwise the estimate was completely uniform. Specifically, they showed that for suitably small $\delta>0$
\begin{equation}
\sup_{a\in\mathbb{Z}}\sum_{\substack{q\le x^{1/2+\delta}\\ (q,a)=1\\ p|q\Rightarrow p\le x^{\delta} }}\Bigl|\pi(x;q,a)-\frac{\pi(x)}{\phi(q)}\Bigr|\ll_{A} \frac{x}{(\log{x})^A}.
\label{eq:Polymath}
\end{equation}
Such an estimate considers $\exp(x^{\delta+o(1)})$ different residue classes in total, which is less that the Bombieri-Vinogradov Theorem \eqref{eq:BV}, but considerably more than the other results on primes in arithmetic progressions to moduli beyond $x^{1/2}$.

The aim of this paper is to produce variants of \eqref{eq:BV} with moduli of size $x^{1/2+\delta}$ with a similar quality of uniformity with respect to the residue classes under consideration as the original Bombieri-Vinogradov Theorem. As with many of the previous works, our methods require us to restrict ourselves to moduli $q$ which have factors of a convenient size. 

Our first estimate allows us to consider $q\sim x^{1/2+\delta}$ with complete uniformity, provided we restrict ourselves to $q$ with a factor of size close to $x^{1/10}$, and we satisfy ourselves with having a weaker error term.
%
%
%%%%%%%%%%%%%%%%%%%%%%
%
%
\begin{thrm}[Uniform equidistribution of primes with weak error term]\label{thrm:WeakEquidistribution}
Let $C>0$ be a sufficiently large absolute constant and $\delta>0$. Let $Q_1\le x^{1/10-3\delta}/(\log{x})^C$ and $Q_2\le x^{4/10+4\delta}(\log{x})^C$. Then we have
\[
\sum_{Q_1\le q_1\le 2 Q_1}\sum_{Q_2\le q_2\le 2Q_2}\sup_{(a,q_1q_2)=1}\Bigl|\pi(x;q_1q_2,a)-\frac{\pi(x)}{\phi(q_1q_2)}\Bigr|\ll_C\delta \pi(x)+\frac{x(\log\log{x})^2}{(\log{x})^2}.
\]
\end{thrm}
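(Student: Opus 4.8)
The plan is to prove Theorem~\ref{thrm:WeakEquidistribution} by a dispersion argument in the spirit of Bombieri--Fouvry--Friedlander--Iwaniec and Zhang, organised so that the convenient divisor $q_1\asymp x^{1/10}$ of the modulus $q=q_1q_2$ carries all the gain past $x^{1/2}$ and so that every exponential-sum input stays uniform in the residue class. I would begin with the standard reductions: pass from $\pi(x;q_1q_2,a)$ to $\psi(x;q_1q_2,a)=\sum_{n\le x,\ n\equiv a\ (q_1q_2)}\Lambda(n)$, at a cost of $O(x^{1-\epsilon})$ in total; discard prime powers and the range $n\le x^{1-\epsilon}$; and remove the supremum by choosing, for each pair $(q_1,q_2)$, a residue $a_{q_1q_2}$ and a unimodular $c_{q_1q_2}$ that attain it, so that it suffices to bound
\[
\sum_{q_1\sim Q_1}\ \sum_{q_2\sim Q_2} c_{q_1q_2}\Bigl(\psi(x;q_1q_2,a_{q_1q_2})-\frac{\psi(x)}{\phi(q_1q_2)}\Bigr).
\]
A Heath--Brown combinatorial identity of fixed order applied to $\Lambda$ on each dyadic block $n\asymp N'$, $x^{1-\epsilon}\le N'\le x$, then reduces matters, up to $O((\log x)^{O(1)})$ pieces, to multilinear sums in which several variables are confined to $[1,x^{1/3}]$ and carry M\"obius weights.

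Localising every variable dyadically and pigeonholing, I would sort the pieces into Type~I sums (one smooth factor of convenient length), Type~II sums (genuinely bilinear, both factors in a medium range), and Type~III sums (three or more medium factors, of $d_3$-type). Types~I and~II I would treat by the dispersion method: Cauchy--Schwarz in the long variable, open the square, detect the congruence $mn\equiv a_{q_1q_2}\ (q_1q_2)$ by additive characters, and estimate the resulting complete and incomplete exponential sums; the decisive move is to complete the inner sum modulo $q_1\asymp x^{1/10}$ --- a loss of only $x^{1/10}$ --- and to extract cancellation modulo $q_2$ from the algebraic exponential-sum estimates of Weil and Deligne (and, where a bilinear shape survives, the trace-function calculus of Fouvry--Kowalski--Michel), all of which are uniform in $a$. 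Type~III sums I would handle by a Friedlander--Iwaniec/Heath--Brown/Zhang-type bound for correlations of $d_3$-like coefficients with progressions, again routed through the factorisation so as to remain uniform in $a$. The hypotheses $Q_1\le x^{1/10-3\delta}/(\log x)^C$ and $Q_2\le x^{4/10+4\delta}(\log x)^C$ are exactly what make the exponent bookkeeping of all three treatments close; each such piece then contributes $\ll_C x/(\log x)^{100}$ after summation over $q_1,q_2$.

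What these treatments do not quite reach is a thin residual family --- bilinear and $d_3$-type configurations sitting just beyond the accessible windows --- whose presence is intrinsic to operating at level $x^{1/2+\delta}$ rather than $x^{1/2}$, and whose weight is proportional to the relative excess $2\delta$ of the level exponent over $1/2$. For it I would not attempt cancellation: bound each piece by its absolute value, apply the Brun--Titchmarsh inequality to the longest variable against the modulus $q_1q_2$, and sum the remainder with large-sieve and $\sum 1/\phi$ estimates, comparing throughout with the main terms produced in the dispersion step. The outcome is a contribution $\ll\delta\,\pi(x)$; the residual debris --- prime powers, the tail $n\le x^{1-\epsilon}$, moduli sharing a prime with a coefficient variable, and the mismatch between the extracted main terms and $\psi(x)/\phi(q_1q_2)$ --- is absorbed into $x(\log\log x)^2/(\log x)^2$ by standard divisor-sum and Barban--Davenport--Halberstam inputs. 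Adding this to the negligible contribution of the main ranges proves the theorem.

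I expect the main obstacle to be the Type~II and Type~III estimates with \emph{complete uniformity in the residue class}~$a$. The usual way past $x^{1/2}$ in the fixed-residue setting --- bounds for sums of Kloosterman sums from the spectral theory of automorphic forms --- is unavailable here, since those estimates deteriorate badly once $a$ is as large as the modulus; so every saving must be drawn from algebraic exponential-sum bounds, which are uniform in $a$ but weaker, and the shortfall has to be recouped by exploiting the factorisation $q=q_1q_2$ as tightly as possible, with essentially the whole $x^{\delta}$ surplus wrung out of completion modulo the small factor $q_1$. Making this bookkeeping balance at level $x^{1/2+\delta}$ while conceding as little as possible is the technical heart of the argument, and the concessions one is forced into are precisely the constrained shapes of $Q_1$ and $Q_2$ and the weak error term --- an $\asymp\delta\,\pi(x)$ loss in place of the classical $x/(\log x)^A$.
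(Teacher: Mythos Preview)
Your high-level architecture is right --- Heath--Brown identity, a Type~II estimate for the bulk, a $d_3$-type estimate for triple-smooth terms, and a logarithmically thin residual handled trivially to produce the $\delta\,\pi(x)$ loss --- and this is indeed what the paper does. But two specific technical points separate your sketch from a working proof, and both are where the paper's new content lies.

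\textbf{Type~II uniformity.} Your Type~II plan is ``dispersion, complete modulo $q_1\asymp x^{1/10}$, then Weil/Deligne modulo $q_2$''. This is essentially the Zhang/Polymath route, and it does \emph{not} survive uniformity in $a$: after Cauchy--Schwarz the congruence on the inner variables becomes $n_1\overline{a_{q,r_1}}\equiv n_2\overline{a_{q,r_2}}\Mod{q}$, which no longer simplifies to $n_1\equiv n_2\Mod{q}$ when the residues depend on $r$. One cannot then write $n_2=n_1+kq$ and proceed as in Zhang. The paper's actual Type~II estimate (Proposition~\ref{prpstn:MainProp}) is the main innovation: an \emph{iterated} Cauchy--Schwarz scheme that systematically smooths first the $n_1$ coefficients and then the $n_2,n_2'$ coefficients, keeping $q$ (the \emph{large} factor $\sim x^{2/5}$, not $q_1$) on the outside throughout, and ending with a four-variable smooth sum that factors into a product of Kloosterman sums to modulus $q$ times products of $r_i$'s. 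The Weil bound on each factor then gives the saving. This is genuinely different from the one-shot dispersion you describe, and without it you will not get past the dependence of $a$ on the moduli.

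\textbf{The Type~III/Type~II range gap.} Your claim that the hypotheses on $Q_1,Q_2$ ``are exactly what make the exponent bookkeeping of all three treatments close'' is too optimistic. The paper's $d_3$-type estimate (Proposition~\ref{prpstn:Triple}) needs the rough factor $M<R x^{-4\delta}$ with $R\asymp x^{1/10}$, while the Type~II lower threshold is $N>Q x^{2\delta}$ with $Q\asymp x^{2/5}$; these do not meet, and the combinatorics force a rough factor $M$ up to about $x^{1/10}$. The paper closes this gap by observing that all but an $O(\delta)$-proportion of $q_2\sim Q_2$ have a divisor $d\in[x^{28\delta}(\log x)^{5C},x^{1/100}]$, and uses this extra factorisation of the modulus to push the Triple estimate through (Lemma~\ref{lmm:Triple}); the exceptional $q_2$ are absorbed into the $\delta\,\pi(x)$ term by a sieve bound. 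You do not mention this device, and without it the $d_3$-type range is not covered. Relatedly, the residual family in the paper is not ``$d_3$-type configurations just beyond the window'' but purely the Type~II boundary intervals $[x^{2/5},x^{2/5+6\delta}]\cup[x^{1/2-3\delta},x^{1/2}]$; the $d_3$ terms are handled in full.
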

%
%
%%%%%%%%%%%%%%%%%%%%%%
%
%
Since $\pi(x,a;q)\ll \pi(x)/\phi(q)$ for $q\le x^{1-\epsilon}$ by the Brun-Titchmarsh Theorem, the trivial bound for the quantity considered in Theorem \ref{thrm:WeakEquidistribution} is $\pi(x)$, and so we are only winning a factor $O(\delta)$ over the trivial bound. In particular, Theorem \ref{thrm:WeakEquidistribution} has no content unless $\delta$ is sufficiently small. Theorem \ref{thrm:WeakEquidistribution} gives a version of a theorem of Bombieri-Friedlander-Iwaniec \cite{BFI3} which is now completely uniform with respect to residue classes (whereas previously the estimate was restricted to a single fixed integer $a$ of size $O(1)$), but with the constraint that the moduli have a factor of size close to $x^{1/10}$. By way of comparison, there are $\exp(x^{1/2+\delta+o(1)})$ collections of residue classes under consideration, which is more than any of the previous results, and comparable to \eqref{eq:BV} extended to moduli of size $x^{1/2+\delta}$.
%
%
%%%%%%%%%%%%%%%%%%%%%%
%
%

Our second estimate gives a good error term, with more flexible constraints on the moduli, at the cost of weakening the level of uniformity in the residue classes slightly and requiring that the moduli split into 3 factors.
%
%
%%%%%%%%%%%%%%%%%%%%%%%%%%
%
%
\begin{thrm}[Almost uniform equidistribution for primes]\label{thrm:AlmostUniform}
Let $0<\delta<1/1000$, $A>0$, and $Q_1,Q_2,Q_3\ge 1$ satisfy $Q_1Q_2Q_3=x^{1/2+\delta}$ and 
\[
x^{40\delta}<Q_2<x^{1/20-7\delta},\qquad \frac{x^{1/10+12\delta}}{Q_2}<Q_3<\frac{x^{1/10-4\delta}}{Q_2^{3/5}}.
\]
 Then we have
\[
\sum_{q_1\le  Q_1}\sum_{q_2\le Q_2}\sup_{(b,q_1q_2)=1}\sum_{q_3\le  Q_3}\sup_{\substack{(a,q_1q_2q_3)=1\\ a\equiv b\Mod{q_1q_2}}}\Bigl|\pi(x;q_1q_2q_3,a)-\frac{\pi(x)}{\phi(q_1q_2q_3)}\Bigr|\ll_{A,\delta}\frac{x}{(\log{x})^A}.
\]
\end{thrm}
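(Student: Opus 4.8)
The plan is to reduce the prime-counting problem to a bilinear/trilinear dispersion estimate, exploiting the factorisation $q=q_1q_2q_3$ of the modulus. First I would replace $\pi(x;q,a)$ by the corresponding sum over the von Mangoldt function $\psi(x;q,a)$ via partial summation, and then use a combinatorial identity (Heath-Brown's identity, or Vaughan's identity) to decompose $\Lambda$ into $O((\log x)^{O(1)})$ sums of the form $\sum \alpha_m\beta_n$ with $mn\sim x$, where the coefficients are supported on dyadic ranges $M,N$ and satisfy Siegel--Walfisz-type hypotheses. The ``Type I/II'' sums with a long smooth variable can be handled by existing Bombieri--Vinogradov-type input (the $q\le x^{1/2-\epsilon}$ regime is classical, and the extra length is furnished by the small factors); the genuine difficulty is the ``Type II'' range where both $M$ and $N$ are of intermediate size, roughly $M,N\in[x^{\epsilon},x^{1-\epsilon}]$, and the modulus exceeds $x^{1/2}$.

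For the Type II ranges I would follow the dispersion method of Bombieri--Fouvry--Friedlander--Iwaniec (and its refinements by Zhang and Polymath): open the square in $\sum_q \bigl|\sum_{mn\equiv a} \alpha_m\beta_n - \text{main}\bigr|^2$, expand, and apply Cauchy--Schwarz in the variable coprime to the modulus to arrive at incomplete exponential sums twisted by the inverse of a product of moduli. The factorisation into three pieces $q_1q_2q_3$ is what makes this work: the small factor $q_1$ (which may be taken up to $Q_1\approx x^{2/5}$) plays the role of the ``amplification'' modulus handled by standard large-sieve/Bombieri--Vinogradov arguments, while $q_2$ and $q_3$, of carefully calibrated sizes $x^{40\delta}<Q_2<x^{1/20-7\delta}$ and $Q_3\approx x^{1/10}/Q_2$, are used to split the conductor of the resulting complete exponential sum so that after applying Weil/Deligne-type bounds (for Kloosterman sums and, crucially, the more intricate algebraic exponential sums estimated via $\ell$-adic cohomology — the ``Type III'' estimates in the style of Zhang and Polymath) one gains power savings. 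The condition $a\equiv b\Mod{q_1q_2}$ is precisely what is needed so that, for fixed $b$, the diagonal/off-diagonal split in the dispersion argument over the $q_3$ and residue $a$ variables is controlled uniformly; this is the (mild) sacrifice in residue-uniformity compared to Theorem 1.1.

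The key steps in order: (1) partial summation $\pi\to\psi$ and a Heath-Brown decomposition of $\Lambda$ into bilinear forms on dyadic scales, tracking the Siegel--Walfisz property of the coefficients; (2) disposal of the ``easy'' ranges (one variable $\le x^{\epsilon}$ or $\ge x^{1-\epsilon}$, i.e. Type I and Type I/II with a smooth variable) using the small modulus $q_1$ and classical large-sieve inputs; (3) the Type II estimate via the dispersion method, reducing to incomplete Kloosterman-type sums modulo $q_1q_2q_3$; (4) completion of these sums and factorisation of the modulus, reducing to complete exponential sums modulo $q_2$, modulo $q_3$, and modulo the various $q_i$-products; (5) invocation of the algebraic-geometry exponential sum bounds (Weil bound for Kloosterman sums, and the higher-dimensional bounds à la Polymath for the three-dimensional ``Type III'' sums) to extract the power saving; (6) reassembling the dyadic and combinatorial pieces, absorbing the $(\log x)^{O(1)}$ losses into the arbitrary power of $\log x$ in the error term. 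I expect the main obstacle to be step (4)–(5): arranging the sizes $Q_2,Q_3$ so that the conductor-splitting leaves every resulting complete sum in a range where a nontrivial algebraic bound is available \emph{and} the total error, after the two Cauchy--Schwarz applications, beats the trivial bound $x/\phi(q)$ — this is exactly where the precise exponents $40\delta$, $1/20-7\delta$, $1/10+12\delta$, $1/10-4\delta$ in the hypotheses come from, and verifying that these constraints are simultaneously satisfiable (hence $\delta<1/1000$) and that no parasitic main terms survive is the delicate bookkeeping at the heart of the proof.
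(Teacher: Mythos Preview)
Your high-level plan (Heath-Brown identity, dispersion, Weil/Deligne bounds) matches the paper, but your structural organisation misses the key diagnosis, and there is one factual slip.

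The slip: you call $q_1$ the ``small factor'' handled by ``standard large-sieve/Bombieri--Vinogradov arguments''. In fact $Q_1\approx x^{2/5}$ is the \emph{large} factor (since $Q_2Q_3\lesssim x^{1/10}$), and no piece of the modulus is handled by classical Bombieri--Vinogradov input, as the full modulus exceeds $x^{1/2}$.

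The missing diagnosis: after Heath-Brown (with $k=5$) the paper does not run a single Type~II pipeline but splits into two cases according to whether some subproduct of the convolution factors lies in $[x^{2/5}/4,4x^{3/5}]$.
\begin{itemize}
\item If yes, one groups variables to get $\sum\alpha_n\beta_m$ with $N\in[x^{2/5},x^{3/5}]$, and this range is covered by \emph{two} separate Type~II propositions glued together (Lemma~\ref{lmm:ExtendedTypeII}): a fully-uniform estimate (Proposition~\ref{prpstn:SecondProp}) valid for $N$ up to roughly $x^{1/2-3\delta}/Q_2$, and a Zhang-style estimate (Proposition~\ref{prpstn:Zhang}) valid for $N$ near $x^{1/2}$. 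It is the \emph{Zhang-style} estimate that forces the constraint $a\equiv b\Mod{q_1q_2}$: its dispersion step needs $n_1\equiv n_2\Mod{q_1q_2}$ (rather than a twisted congruence depending on the outer $r$-variable) so that one can write $n_2=n_1+kq_1q_2$ and obtain a smooth exponential sum in $n_1$. Your proposal does not isolate this, and attributes the residue restriction to a vaguer ``diagonal/off-diagonal split over $q_3$''.
\item If no subproduct lies in $[x^{2/5},x^{3/5}]$, a short combinatorial argument forces the term to be of the shape $\alpha_m*\mathbf{1}_{I_1}*\mathbf{1}_{I_2}*\mathbf{1}_{I_3}$ with three smooth factors and one short rough factor $M\lesssim x^{1/10}$. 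This is handled by a separate $\tau_3$-type proposition (Proposition~\ref{prpstn:Triple}), fully uniform in $a$, built on Friedlander--Iwaniec/Heath-Brown/Polymath and ultimately on Deligne-level bounds for correlations of $\Kl_3$. Your step~(5) gestures at this (``three-dimensional Type~III sums'') but folds it into the Type~II argument rather than treating it as a distinct case.
\end{itemize}

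Consequently, the numerical constraints on $Q_2,Q_3$ in the theorem arise from making the two Type~II ranges overlap and jointly cover $[x^{2/5},x^{3/5}]$, together with the hypothesis $M<\min(R x^{-4\delta},Q^{1/2}x^{-2\delta})$ of the $\tau_3$ estimate; they are not, as you suggest, calibrated by conductor-splitting inside a single exponential sum.
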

%
%
%%%%%%%%%%%%%%%%%%%%%%%%%%
%
%
In Theorem \ref{thrm:AlmostUniform} the residue class $a\Mod{q_1q_2q_3}$ which is considered is only allowed to lie in a residue class $b\Mod{q_1q_2}$ which doesn't depend on $q_3$, but otherwise is completely uniform. However, as with Theorem \ref{thrm:WeakEquidistribution} there are $\exp(x^{1/2+\delta+o(1)})$ collections of residue classes under consideration, and now we obtain an estimate with a good error term. An immediate consequence of Theorem \ref{thrm:AlmostUniform} is an extension of \eqref{eq:Polymath} to a wider collection of moduli.
%
%
%%%%%%%%%%%%%%%%%%%%%%
%
%

Our final estimate gives uniform equidistribution with a good error term for a minorant for the indicator function of the primes.
%
%
%%%%%%%%%%%%%%%%%%%%%%%%%%
%
%
\begin{thrm}[Uniform equidistribution of a minorant]\label{thrm:Minorant}
Let $\delta>0$ be sufficiently small. Then there is a function $\rho:\mathbb{N}\rightarrow \mathbb{R}$ satisfying the following conditions:
\begin{enumerate}
\item $\rho(n)$ is a minorant for the primes:
\[
\rho(n)\le \begin{cases}
1,\qquad &n\text{ is prime},\\
0,&\text{otherwise}.
\end{cases}
\]
\item $\rho(n)$ is close to the indicator function of the primes:
\[
\sum_{n\le   x}\rho(n)\ge \frac{\pi(x)}{8}.
\]
\item $\rho(n)$ is equidistributed in arithmetic progressions to large moduli:
For any $Q_1\in [x^{2/5+5\delta},x^{3/7}]$, $Q_2=x^{1/2+\delta}/Q_1$ and $A>0$ we have
\[
\sum_{q_1\le  Q_1}\sum_{q_2\le  Q_2}\sup_{(a,q_1q_2)=1}\Bigl|\sum_{\substack{n\le x\\ n\equiv a\Mod{q_1q_2}}}\rho(n)-\frac{1}{\phi(q_1q_2)}\sum_{\substack{n\le x\\ (n,q_1q_2)=1}}\rho(n)\Bigr|\ll_{\delta,A} \frac{x}{(\log{x})^A}.
\]
\end{enumerate}
\end{thrm}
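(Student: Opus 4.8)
The idea is to build $\rho$ as the accessible part of a combinatorial decomposition of the indicator function of the primes, retaining exactly those pieces whose equidistribution to moduli of size $x^{1/2+\delta}$ with the prescribed factorisation $q=q_1q_2$ (a large factor $q_1\asymp x^{2/5}$ and a small factor $q_2\asymp x^{1/10}$) is delivered by the Type~I, Type~II and Type~III dispersion estimates underlying Theorems~\ref{thrm:WeakEquidistribution} and~\ref{thrm:AlmostUniform}, and discarding the remainder in a way that can only \emph{decrease} $\rho$. After removing the $O(\sqrt x)$ contribution of prime powers and of $n=1$, we may replace $\mathbf 1_{\mathbb P}(n)\mathbf 1_{n\le x}$ by the sifted sum $S(\mathcal A,x^{1/2})$ with $\mathcal A=\{n\le x\}$, up to a negligible error. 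Running Buchstab's identity repeatedly from level $x^{1/2}$ down to a fixed small level $x^{\eta}$ expresses $S(\mathcal A,x^{1/2})$ as a signed sum of sifted sums $\pm S(\mathcal A_d,z')$; each step is followed by splitting the new range of the sifting variable into a good part, where the resulting bilinear (or trilinear) sum lies inside the admissible region for the dispersion estimates with full uniformity in the residue class, and a bad part, which is Buchstab-decomposed once more (or, when it is of product-of-three-medium-primes type, absorbed by a Type~III estimate).

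Carrying this out so that every term that is ultimately abandoned appears at an \emph{even} Buchstab depth, hence with a $+$ sign, we define $\rho$ to be the sum of the retained pieces with their signs, and set $\rho:=0$ on prime powers and at $n=1$. Since each abandoned piece is a non-negative sifted sum entering with a $+$ sign, the abandoned part is $\ge 0$ pointwise, so $\rho(n)\le \mathbf 1_{\mathbb P}(n)$ everywhere, which is~(1). For~(3) one argues term by term: each retained piece, after routine manipulation, is a sum $\sum_{mn\le x}\alpha_m\beta_n$ (or a trilinear analogue) with $\alpha,\beta$ supported in dyadic ranges lying in the admissible region, so the Bombieri--Vinogradov-type bound with factorisation $q_1q_2$ and \emph{uniform} in $a$ is precisely one of the paper's dispersion estimates, and summing over the $O((\log x)^{O(1)})$ dyadic cases and the $O(1)$ terms of the identity yields~(3). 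It is exactly because the pieces responsible for the weak error term in Theorem~\ref{thrm:WeakEquidistribution} and for the residue-class restriction in Theorem~\ref{thrm:AlmostUniform} are the ones discarded here that $\rho$ simultaneously enjoys a power-of-$\log$ saving and complete uniformity in $a$.

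Property~(2) is a sieve computation. Writing $\sum_{n\le x}\rho(n)=\pi(x)-\sum_{n\le x}(\text{abandoned part})(n)+O(\sqrt x)$, each abandoned sifted sum is evaluated asymptotically by the fundamental lemma of sieve theory and the Buchstab function, giving $\sum_{n\le x}(\text{abandoned part})(n)\le(1-c+o(1))\pi(x)$ with $c>0$ an explicit constant depending only on the widths of the admissible Type~I/II/III ranges. For $\delta$ small these widths are bounded below by absolute constants, and a direct check gives $c\ge 1/8$ (in fact comfortably more), which is~(2).

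The main obstacle is the combinatorial optimisation concealed in the first two paragraphs: one must verify that the Buchstab tree can genuinely be steered so that \emph{every} inaccessible leaf is reached at even depth (so that pruning it lowers $\rho$) and no inaccessible branch is forced to survive at odd depth, while at the same time the total pruned mass stays below $\tfrac78\pi(x)$. This is a finite but delicate case analysis, governed entirely by the exponent intervals in which the paper's Type~I, Type~II and Type~III estimates hold; in particular the constraint $Q_1\in[x^{2/5+5\delta},x^{3/7}]$, $Q_2=x^{1/2+\delta}/Q_1$ is exactly what makes those intervals wide enough for the scheme to close up. It is the analogue here of the sieve bookkeeping in the work of Zhang and of the Polymath project on bounded gaps between primes, and is where essentially all of the content of the theorem beyond the dispersion estimates themselves resides.
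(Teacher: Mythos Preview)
Your high-level picture---Buchstab/Harman decomposition, keep the terms covered by the Type~II estimate, discard the rest as a non-negative remainder---is right, and this is exactly what the paper does. But two concrete points of your plan do not match what is actually needed, and one of them is a genuine gap.

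\textbf{The sign-parity mechanism is not sufficient.} You assert that the Buchstab tree can be steered so that every inaccessible leaf is reached at even depth and hence with a $+$ sign, after which it may simply be dropped. This is not how the proof works and, as stated, cannot close. The Type~II range from Proposition~\ref{prpstn:MainProp} (the only dispersion estimate used here) covers $N$ in a set $\mathcal G\subset[x^{3/7},x^{4/7}]$ but leaves short bad intervals $\mathcal B$ of logarithmic width $O(\delta)$ around $x^{3/7}$, $x^{1/2}$ and $x^{4/7}$. Terms with a factor in $\mathcal B$ unavoidably occur at \emph{odd} depth as well---for instance $S_3^{\mathcal B}=\sum_{p\mid n,\,p\in\mathcal B}\rho(n/p,p)$ enters with a minus sign and counts products of two primes both exceeding $x^{3/7}$, so no further Buchstab step is available. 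Discarding it would \emph{increase} $\rho$, destroying~(1). The paper's device is different: bound the signed sum $S^{\mathcal B}(n)$ in absolute value by a sieve majorant $C\rho_{\mathcal B}(n)$ (Lemma~\ref{lmm:Trivial}), which is equidistributed to modulus $x^{3/5}$ via the $\tau_2$-in-progressions estimate and has average $O(\delta\,\pi(x))$ because $\mathcal B$ is short, and then \emph{subtract} $C\rho_{\mathcal B}$ from the definition of $\rho$. This simultaneously preserves the minorant property and the equidistribution, and costs only $O(\delta)$ in~(2). Your outline has no analogue of this step.

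\textbf{No Type~III estimate is used.} You invoke a trilinear/Type~III estimate for ``product-of-three-medium-primes'' terms. In fact the proof of Theorem~\ref{thrm:Minorant} uses only Proposition~\ref{prpstn:MainProp}; the decomposition (with sifting levels $z_1=x^{1/7}$, $z_2=x^{3/7}$, $z_3=x^{4/7}$ and an auxiliary fundamental-lemma reduction down to $z_0=x^{1/14-4\delta}$) is arranged precisely so that every retained non-trivial piece has a divisor in $\mathcal G$, and the only pieces actually discarded as non-negative are $S_{2,3}$ and $S_{2,5}$. Proposition~\ref{prpstn:Triple} plays no role here, and Proposition~\ref{prpstn:Zhang} cannot be used because it carries a residue-class constraint incompatible with the full uniformity claimed in~(3).

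Finally, the numerical verification of~(2) is not an abstract ``$c\ge 1/8$'' but an explicit double-integral computation with the Buchstab function $\omega$ over the regions defining $S_{2,3}$ and $S_{2,5}$; the paper gets $\tfrac{25}{12}-\tfrac{19}{6}\log 2+\tfrac14\log 3>\tfrac18$ after using the crude bound $\omega\le 1$.
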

%
%
%%%%%%%%%%%%%%%%%%%%%%
%
%
Although Theorem \ref{thrm:Minorant} has a more technical formulation, we expect it to be more applicable in practice. For many problems concerning the primes one is often ultimately interested in showing a lower bound for the number of primes in a set, and so it suffices to work with a suitable minorant throughout the argument. The conditions on $Q_1,Q_2$ could certainly be relaxed quite significantly - we have made no attempt to numerically optimize the constants involved. Similarly, we haven't given an explicit quantification on what `sufficiently small' requires, but an explicit numerical upper bound could be given with a bit more effort. As with previous estimates, $\exp(x^{1/2+\delta+o(1)})$ collections of residue classes are considered in \ref{thrm:Minorant}.
%
%
%%%%%%%%%%%%%%%%%%%%%%
%
%

One consequence of Theorem \ref{thrm:Minorant} is a uniform lower bound for the number of primes in arithmetic progressions to moduli $qr\le x^{1/2+\delta}$, provided $qr$ avoids a small bad set $\mathcal{B}$ and $r\le x^{1/10-3\delta}$.
%
%
%%%%%%%%%%%%%%%%%%%%%%%%%%
%
%
\begin{crllry}[Primes in all progressions for almost-all moduli]\label{crllry:LowerBound}
Let $\delta>0$ be sufficiently small, $A>0$ and $x>x_0(\delta,A)$ be sufficiently large in terms of $\delta$ and $A$. Then there is a set $\mathcal{B}\subseteq [1,x^{1/2+\delta}]$ with $\#\mathcal{B}\le x^{1/2+\delta}/(\log{x})^A$ such that if $q\le x^{1/2+\delta}$ has a divisor in $[x^{2/5+\delta},x^{3/7}]$ and $q\notin \mathcal{B}$, then for every $a$ coprime to $q$ we have
\[
\pi(x,a;q)\asymp \frac{\pi(x)}{\phi(q)}.
\]
\end{crllry}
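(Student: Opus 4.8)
The plan is to deduce Corollary \ref{crllry:LowerBound} from Theorem \ref{thrm:Minorant} by a standard averaging argument. First I would fix $\delta'>0$ slightly smaller than the $\delta$ in the statement, apply Theorem \ref{thrm:Minorant} with this $\delta'$ and a parameter $A'>A$ to be chosen, and pick a factorization $q = q_1 q_2$ where $q_1$ is the given divisor in $[x^{2/5+\delta},x^{3/7}]$ (a dyadic decomposition lets us assume $q_1 \sim Q_1$ and $q_2 \sim Q_2$ with $Q_1 Q_2 \asymp x^{1/2+\delta}$, after which one absorbs the dyadic loss). The key input is that the minorant $\rho$ is equidistributed on average over such moduli, so the set of moduli $qr$ (in the notation of the corollary, writing $q$ for a modulus with a divisor $r$ in the allowed range, $qr\le x^{1/2+\delta}$) for which $\sum_{n\le x,\,n\equiv a\,(q r)}\rho(n)$ deviates from its expected main term $\frac{1}{\phi(qr)}\sum_{n\le x,\,(n,qr)=1}\rho(n)$ by more than, say, $\frac{x}{\phi(qr)(\log x)^{10}}$ for \emph{some} admissible $a$ must be small: if $\mathcal{B}$ denotes the set of such bad moduli, then summing the deviations over $\mathcal{B}$ and comparing with the bound $x/(\log x)^{A'}$ from Theorem \ref{thrm:Minorant} gives $\#\mathcal{B} \le x^{1/2+\delta}/(\log x)^{A}$ upon choosing $A'$ large enough, since $\phi(qr)\ll qr \le x^{1/2+\delta}$ and the main term is of size $\asymp \pi(x)/\phi(qr)$ by condition (2) combined with the trivial bound $\rho(n)\le 1$.

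Next I would handle the main term. By condition (2) of Theorem \ref{thrm:Minorant} we have $\sum_{n\le x}\rho(n)\ge \pi(x)/8$, and by $\rho(n)\le \mathbf{1}_{n\text{ prime}}$ we have $\sum_{n\le x}\rho(n)\le\pi(x)$; a short sieve or Brun--Titchmarsh argument shows that restricting to $n$ coprime to $qr$ only removes $O(x/(\log x)^2 \cdot \prod_{p\mid qr}(1-1/p)^{-1}/p \cdot \dots)$ — more simply, $\sum_{n\le x,\,(n,qr)=1}\rho(n) = \sum_{n\le x}\rho(n) + O(\omega(qr)\log x \cdot \text{something small})$, and one checks $\frac{1}{\phi(qr)}\sum_{n\le x,\,(n,qr)=1}\rho(n)\asymp \pi(x)/\phi(qr)$ for $qr\le x^{1/2+\delta}$ and $x$ large, using that $\rho$ is supported away from small prime factors in a way that makes this ratio stay bounded above and below. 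Hence for $q r\notin\mathcal{B}$ and every $a$ coprime to $qr$,
\[
\sum_{\substack{n\le x\\ n\equiv a\Mod{qr}}}\rho(n) = \frac{1}{\phi(qr)}\sum_{\substack{n\le x\\ (n,qr)=1}}\rho(n) + O\Bigl(\frac{x}{\phi(qr)(\log x)^{10}}\Bigr) \asymp \frac{\pi(x)}{\phi(qr)}.
\]
Since $\rho(n)\le \mathbf{1}_{n\text{ prime}}$, the left side is a lower bound for $\pi(x,a;qr)$, giving $\pi(x,a;qr)\gg \pi(x)/\phi(qr)$; the matching upper bound $\pi(x,a;qr)\ll\pi(x)/\phi(qr)$ is immediate from the Brun--Titchmarsh theorem since $qr\le x^{1/2+\delta}\le x^{1-\epsilon}$. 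Relabeling $qr$ as $q$ and the divisor $r$ as the divisor in $[x^{2/5+\delta},x^{3/7}]$ yields the statement; one small point to track is that the corollary's divisor range $[x^{2/5+\delta},x^{3/7}]$ must match (or sit inside an allowable dyadic sub-range of) the range $[x^{2/5+5\delta'},x^{3/7}]$ permitted in Theorem \ref{thrm:Minorant}, which is why one takes $\delta'$ slightly smaller than $\delta$ and absorbs the discrepancy — this is harmless for $\delta$ sufficiently small.

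The main obstacle, such as it is, is purely bookkeeping rather than conceptual: one must be careful that the bad set $\mathcal{B}$ is defined as a subset of $[1,x^{1/2+\delta}]$ of \emph{moduli} (not pairs), so after the dyadic decomposition of the divisor $r$ and the complementary factor, the union over $O((\log x)^2)$ dyadic boxes of the corresponding bad sets is still of size $\le x^{1/2+\delta}/(\log x)^{A}$, which forces us to apply Theorem \ref{thrm:Minorant} with $A'$ exceeding $A$ by a fixed amount to absorb the $(\log x)^{O(1)}$ losses from dyadic summation and from passing between $\phi(qr)$ and $qr$. There is also the minor subtlety that a given modulus $q$ may have several divisors in $[x^{2/5+\delta},x^{3/7}]$; it suffices to choose any one of them, so this causes no difficulty. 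None of these steps requires new ideas beyond the routine deduction of a pointwise-for-almost-all statement from an averaged estimate, so the proof is short once Theorem \ref{thrm:Minorant} is in hand.
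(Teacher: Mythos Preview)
Your approach is correct and is exactly the intended deduction; the paper states the corollary as an immediate consequence of Theorem \ref{thrm:Minorant} without writing out a proof, and the argument you describe (Markov-type averaging to define $\mathcal{B}$, the minorant property for the lower bound, Brun--Titchmarsh for the upper bound, dyadic decomposition to align with the ranges in Theorem \ref{thrm:Minorant}) is the standard one. One simplification: your handling of the main term $\tfrac{1}{\phi(q)}\sum_{n\le x,\,(n,q)=1}\rho(n)$ is more laboured than necessary, since condition (1) gives $\rho(n)\le \mathbf{1}_{n\text{ prime}}$, so $\sum_{n\le x,\,(n,q)>1}\rho(n)\le \#\{p\le x:p\mid q\}=\omega(q)\ll\log x$, and together with condition (2) this immediately yields $\sum_{n\le x,\,(n,q)=1}\rho(n)\asymp\pi(x)$.
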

%
%
%%%%%%%%%%%%%%%%%%%%%%%%%%
%
%
In particular, Corollary \ref{crllry:LowerBound} shows that for almost all pairs $q,r$ with $q\in[x^{2/5+\delta}, x^{3/7}]$ and $r\le x^{1/2+\delta}/q$, \textit{every} primitive residue class contains at least one prime.
\begin{rmk}
The implied constants in Theorems \ref{thrm:WeakEquidistribution}-\ref{thrm:Minorant} are ineffective due to issues regarding a possible Siegel zero, but Theorem \ref{thrm:WeakEquidistribution} could be made effective with explicit constants with a little more care.
\end{rmk}
%
%
%%%%%%%%%%%%%%%%%%%%%%%%%%
%
%
\begin{rmk}
The error terms in Theorems \ref{thrm:AlmostUniform} and Theorem \ref{thrm:Minorant} could be upgraded to $O(x^{1-\epsilon})$ and made effective if some small set of bad moduli were excluded.
\end{rmk}
%
%%%%%%%%%%%%%%%%%%%%%%
%
%
%%%%%%%%%%%%%%%%%%%%%%
%
%
\section{Outline}\label{sec:Outline}
%
%
%%%%%%%%%%%%%%%%%%%%%%
%
%
First we sketch some of the key new ideas in our work. As with previous results, we perform a combinatorial decomposition of the primes (such as Heath-Brown's identity) to reduce the problem to estimating certain convolutions in arithmetic progressions. In particular, it suffices to get estimates of the shape
\[
\sum_{q\sim Q}\sum_{r\sim R}c_{q,r}\sum_{n\sim N}\alpha_n\sum_{m\sim M}\beta_m\Bigl(\mathbf{1}_{n m\equiv a_{q,r}\Mod{qr}}-\frac{\mathbf{1}_{(n m,qr)=1}}{\phi(qr)}\Bigr)\ll_A \frac{x}{(\log{x})^A},
\]
for suitable 1-bounded coefficients $c_{q,r}$, $\alpha_n,\beta_m$ and integers $(a_{q,r},qr)=1$ for certain ranges of $N,M,Q,R$ with $NM\asymp x$ and $QR=x^{1/2+\delta}$. Applying Cauchy-Schwarz in the $m,q$ variables, expanding the square and Fourier-completing the resulting sum reduces this to estimating sums like
\[
\sum_{q\sim Q}\sum_{r_1,r_2\sim R}c_{q,r_1}\overline{c_{q,r_2}}\hspace{-0.5cm}\sum_{\substack{n_1,n_2\sim N\\ n_1\overline{a_{q,r_1}}\equiv n_2\overline{a_{q,r_2}}\Mod{q}}}\hspace{-0.5cm}\alpha_{n_1}\overline{\alpha_{n_2}}\sum_{1\le h\le H}e\Bigl(\frac{h a_{q,r_1}\overline{n_1 r_2}}{q r_1}\Bigr)e\Bigl(\frac{h a_{q,r_2}\overline{n_2 q r_1}}{r_2}\Bigr).
\]
In the work of Zhang and Polymath, $a_{q,r}=a$ was independent of $q,r$, so the congruence on $n_1,n_2$ simplified to $n_1\equiv n_2\Mod{q}$. This then enabled one to let $n_2=n_1+k q$, apply Cauchy-Schwarz in the $n_1,k,q$ variables (or $n_1,k,q,r_1$), resulting in an exponential sum over $n_1$ with smooth coefficients to modulus $O(QR^4)$. The Weil bound then gives a saving for this sum provided $Q^{1/2}R^2<N$.

In our situation we cannot simplify the congruence in this way since there is a dependence between $n_1,n_2,q,r_1,r_2$ via the $a_{q.r}$ factors, and so we require a different approach. Somewhat inspired by transference arguments from additive combinatorics (see \cite{Green,GreenTao}), our aim is to use Cauchy-Schwarz repeatedly to systematically replace the unknown coefficients $\alpha_n$ with smooth coefficients. We note that in our situation we need to obtain good power savings to make up for the fact that the trivial bound is now larger than our desired bound by a factor of $H$ (which one should think of as a small power of $x$), and so we are in a situation which is rather different to that of dense variables. In particular, we need to ensure that there is enough `entropy' in the terms that we square at each stage so as to ensure that the diagonal contributions give an adequate saving, which restricts possible manoevres we can make. Moreover, to maintain control over our summation we need to keep the $q$ variable always on the outside, and we couple the variables $n_i$ with $r_i$.

If we can find a means to apply Cauchy-Schwarz in some order to smooth all occurrences of $\alpha_n$, then we might hope to end up with a sum of exponential sums which look like (a smoothed version of)
\[
\sum_{\substack{n_1,\dots ,n_j\sim N\\ n_1\overline{a_{q,r_1}}\equiv n_2\overline{a_{q,r_2}}\equiv \dots \equiv n_j\overline{a_{q,r_j}}\Mod{q} }}e\Bigl(\frac{c_0\overline{n_1}}{q}\Bigr)\prod_{i=1}^j e\Bigl(\frac{c_i \overline{n_i}}{r_i}\Bigr),
\]
for some constants $c_0,c_1,\dots,c_j$ (depending on $q,r_1,\dots,r_j$). Fourier completing each summation in turn transforms this to (something like)
\[
\frac{N^j}{Q^j R^j}\sum_{\substack{\ell_1,\dots,\ell_j\ll QR/N}}S\Bigl(c_0,\sum_{i=1}^j a_{q,r_i}\overline{r_i}\ell_i;q\Bigr)\prod_{i=1}^jS(c_i,\ell_i;r_i),
\]
where $S(m,n;c)$ is the standard Kloosterman sum. If $N<QR$ the Weil bound gives a bound $Q^{1/2}R^{j/2}$ for our sum, which is a power-saving over the trivial bound $N^j/Q^{j-1}$ if $N$ is a bit larger than $Q^{1-1/(2j)}R^{1/2}$ and wins more than a factor $H^j$ if $Q$ is a large power of $R$. Provided we can do such a reduction (and can adequately handle all diagonal-type behavior) then this enables us to obtain an estimate of the desired type, at least for some ranges of $N,M,Q,R$.

Our main estimate follows precisely this approach, first smoothing the $n_1$ variable, then smoothing the $n_2,n_2'$ variables and producing a sum of the above type with $j=4$. This ultimately gives a satisfactory estimate in the range
\[
Qx^{2\delta+\epsilon}<N<x^{1/2-3\delta-\epsilon},
\]
provided $Q>x^{2/5+4\delta+\epsilon}$. In particular, if $Q\approx x^{2/5}$ and $\delta,\epsilon\approx 0$ this almost covers the entire range $[x^{2/5},x^{1/2}]$, and so by symmetry we could essentially estimate any convolution involving a factor of length $N\in [x^{2/5},x^{3/5}]$. 

If we genuinely had this full range, then this would cover all terms appearing in the Heath-Brown identity except for those involving $1,2$ or $3$ long smooth components. Terms with 1 or 2 long smooth components are easy to deal with thanks to known (uniform) results about the divisor function $d_2$ in arithmetic progressions. Thus we are left to estimate the terms with 3 long smooth components, and one rough component of length at most $x^{1/10}$. This requires an estimate of the form
\[
\sum_{q\sim Q}\sum_{r\sim R}c_{q,r}\sum_{m\sim M}\beta_m\sum_{\substack{n_1\sim N_1\\ n_2\sim N_2\\ n_3\sim N_3} }\Bigl(\mathbf{1}_{n m\equiv a_{q,r}\Mod{qr}}-\frac{\mathbf{1}_{(n m,qr)=1}}{\phi(qr)}\Bigr)\ll \frac{x}{(\log{x})^A},
\]
where we have written $n=n_1n_2n_3$ and $MN_1N_2N_3\asymp x$. By building on the work of Friedlander-Iwaniec \cite{FIDivisor}, Heath-Brown \cite{HBDivisor} and Polymath \cite{Polymath}, relying on estimates coming from Deligne's work \cite{Deligne1,Deligne2}, we are able to establish such an estimate provided $R>M x^{O(\delta)}$ and $Q>M^2 x^{O(\delta)}$. In the case when $\delta \approx 0$, $R\approx x^{1/10}$, $Q\approx x^{2/5}$ this almost covers all such terms. The slight failure to cover some of these terms presents an issue for Theorem \ref{thrm:WeakEquidistribution}, but we can use the fact that almost all $q$ have a small factor $\in[x^{100\delta}(\log{x})^{100C},x^{1/100}]$ to circumvent this.

Even in the situation $R\approx x^{1/10}$, $Q\approx x^{2/5}$, we still cannot quite handle all the terms which appear in the Heath-Brown identity. The key terms we cannot handle are convolutions of 5 terms each of length $x^{1/5+O(\delta)}$, or convolutions of 4 terms each of length $x^{1/4+O(\delta)}$. Since there are only a very small number of such terms when $\delta$ is small, slightly refined estimates of this type suffice for the purposes of Theorem \ref{thrm:WeakEquidistribution} and Theorem \ref{thrm:Minorant} using sieve methods.

By adapting the `de-amplifying' technique used in \cite{May1}, we are able to refine our original Type II estimate if we assume stronger divisibility conditions on the moduli. By introducing a congruence constraint (similar to the $q$-analogue of Van-der-Corput's method \cite{GrahamRingrose,HBVanDerCorput}) we are able to reduce the modulus of the final exponential sums appearing, at the cost of worsening the contribution from various diagonal terms. The upshot of this is that we are able to handle the terms with 5 factors of length $x^{1/5+O(\delta)}$ provided the moduli have three conveniently sized factors.

Unfortunately we are still not able to handle the terms with four factors each of length $x^{1/4+O(\delta)}$.  To get around this issue we impose some slight constraints on the residue classes $a_{q_1,q_2,q_3}\Mod{q_1q_2q_3}$ which appear, namely that $a_{q_1,q_2,q_3}\Mod{q_1q_2}$ is independent of $q_3$ (but $a_{q_1,q_2,q_3}\Mod{q_3}$ can be arbitrary). In this case we are able to adapt the method of Zhang which produces satisfactory estimates with $N=x^{1/2+O(\delta)}$, and this then enables us to handle all convolution types, giving Theorem \ref{thrm:AlmostUniform}.
%
%
%%%%%%%%%%%%%%%%%%%%%%
%
%
\section{Acknowledgements}
I would like to thank John Friedlander, Ben Green, Henryk Iwaniec and Kyle Pratt for useful discussions and suggestions. JM is supported by a Royal Society Wolfson Merit Award, and this project has received funding from the European Research Council (ERC) under the European Union’s Horizon 2020 research and innovation programme (grant agreement No 851318).
%
%
%%%%%%%%%%%%%%%%%%%%%%
%
%
\section{Notation}
%
%
%%%%%%%%%%%%%%%%%%%%%%
%
%
We use the Vinogradov $\ll$ and $\gg$ asymptotic notation, and the big oh $O(\cdot)$ and $o(\cdot)$ notation. $f\asymp g$ denotes both $f\ll g$ and $g\ll f$ hold. Dependence on a parameter will be denoted by a subscript. Throughout the paper $x$ will be a large parameter, and all asymptotics should be thought of as $x\rightarrow\infty$.

Throughout the paper, $\epsilon$ will be a single fixed small real number; $\epsilon=10^{-100}$ would probably suffice. We will let $\psi:\mathbb{R}\rightarrow\mathbb{R}$ denote a fixed smooth function supported on $[1/2,5/2]$ which is equal to $1$ on the interval $[1,2]$ and satisfies $\|\psi^{(j)}\|_\infty\ll (4^j j!)^2$ for all $j\ge 0$. (See \cite[Page 368, Corollary]{BFI2} for the construction of such a function.) Any bounds in our asymptotic notation will be allowed to depend on $\epsilon$ and $\psi$.

The letter $p$ will be reserved to denote a prime number. We use $\phi$ to denote the Euler totient function, $e(x):=e^{2\pi i x}$ the complex exponential, $\tau_k(n)$ the $k$-fold divisor function, $\mu(n)$ the M\"obius function. We let $P^-(n)$, $P^+(n)$ denote the smallest and largest prime factors of $n$ respectively, and $\hat{f}$ denote the Fourier transform of $f$ over $\mathbb{R}$ - i.e. $\hat{f}(\xi)=\int_{-\infty}^{\infty}f(t)e(-\xi t)dt$. Summations assumed to be over all positive integers unless noted otherwise. We use the notation $n\sim N$ to denote the conditions $N<n\le 2N$. We use $\mathbf{1}$ to denote the indicator function of a statement. For example,
\[
\mathbf{1}_{n\equiv a\Mod{q}}=\begin{cases}1,\qquad &\text{if }n\equiv a\Mod{q},\\
0,&\text{otherwise}.
\end{cases}
\]
We will use $(a,b)$ to denote $\gcd(a,b)$ when it does not conflict with notation for ordered pairs. For $(n,q)=1$, we will use $\overline{n}$ to denote the inverse of the integer $n$ modulo $q$; the modulus will be clear from the context. For example, we may write $e(a\overline{n}/q)$ - here $\overline{n}$ is interpreted as the integer $m\in \{0,\dots,q-1\}$ such that $m n\equiv 1\Mod{q}$. Occasionally we will also use $\overline{\lambda}$ to denote complex conjugation; the distinction of the usage should be clear from the context. 
%
%
%%%%%%%%%%%%%%%%%%%%%%%%%%
%
%
%
%%%%%%%%%%%%%%%%%%%%%%%%%%
%
%
\begin{dfntn}[Siegel-Walfisz condition]
We say that a complex sequence $\alpha_n$ satisfies the \textbf{Siegel-Walfisz condition} if for every $d\ge 1$, $q\ge 1$ and $(a,q)=1$ and every $A>1$ we have
\begin{equation}
\Bigl|\sum_{\substack{n\sim N\\ n\equiv a\Mod{q}\\ (n,d)=1}}\alpha_n-\frac{1}{\phi(q)}\sum_{\substack{n\sim N\\ (n,d q)=1}}\alpha_n\Bigr|\ll_A \frac{N\tau(d)^{O(1)}}{(\log{N})^A}.
\label{eq:SiegelWalfisz}
\end{equation}
\end{dfntn}
%
%
%%%%%%%%%%%%%%%%%%%%%%%%%%
%
%
We note that $\alpha_n$ certainly satisfies the Siegel-Walfisz condition if $\alpha_n=1$, if $\alpha_n=\mu(n)$ or if $\alpha_n$ is the indicator function of the primes.
%
%
%%%%%%%%%%%%%%%%%%%%%%
%
%
\section{Main Propositions}
%
%
%%%%%%%%%%%%%%%%%%%%%%
%
%
As mentioned in the introduction, to prove Theorems \ref{thrm:WeakEquidistribution}-\ref{thrm:Minorant} we follow the standard approach of reducing the task of counting primes to that of estimating various bilinear quantities with essentially arbitrary coefficients - `Type II' estimates. (The `Type I' estimates of this paper will essentially just be the trivial estimate for integers in an arithmetic progression.) Since these estimates can be of independent interest and are potentially useful for other applications, we first give our main propositions here, and then deduce Theorems \ref{thrm:WeakEquidistribution}-\ref{thrm:Minorant} from them. The bulk of the paper is then spent establishing each of these propositions in turn.

The main new proposition is the following result, which we will establish later in Section \ref{sec:MainProp}.
%
%
%%%%%%%%%%%%%%%%%%%%%%
%
%
\begin{prpstn}[Type II estimate]\label{prpstn:MainProp}
Let $A>0$ and $C=C(A)$ be sufficiently large in terms of $A$. Let $QR=x^{1/2+\delta}$ and $NM\asymp x$ satisfy
\begin{align*}
x^{6\delta }(\log{x})^{C}\le R\le \frac{x^{1/10-3\delta}}{(\log{x})^C},\qquad 
Q x^{2\delta}(\log{x})^C\le N\le \frac{x^{1/2-3\delta}}{(\log{x})^C}.
\end{align*}
Let $\alpha_n,\beta_m$ be complex sequences with $|\alpha_n|,|\beta_n|\le \tau(n)^A$ and $\alpha_n$ satisfying the Siegel-Walfisz condition \eqref{eq:SiegelWalfisz}. Then we have
\[
\sum_{q\sim Q}\sum_{r\sim R}\sup_{(a,qr)=1}\Bigl|\sum_{m\sim M}\sum_{n\sim N}\alpha_n\beta_m\Bigl(\mathbf{1}_{n m\equiv a\Mod{q r}}-\frac{\mathbf{1}_{(nm,qr)=1}}{\phi(q r)}\Bigr)\Bigr|\ll_{A} \frac{x}{(\log{x})^A}.
\]
\end{prpstn}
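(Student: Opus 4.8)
The plan is to follow the strategy outlined in Section \ref{sec:Outline}: reduce to a bilinear exponential sum by Cauchy--Schwarz and Fourier completion, then iteratively replace the unknown coefficients $\alpha_n$ with smooth ones by repeated Cauchy--Schwarz applications, keeping the modulus $q$ on the outside at every stage and pairing each $n_i$ with an $r_i$. First I would dispose of the second main term: since $\alpha_n$ satisfies the Siegel--Walfisz condition and $m$ ranges over a single coset, a standard argument (splitting $m$ into residue classes mod $qr$ and applying \eqref{eq:SiegelWalfisz} to the $n$-sum) shows that the contribution of $\frac{\mathbf{1}_{(nm,qr)=1}}{\phi(qr)}$ together with the part of $\mathbf{1}_{nm\equiv a\,(\mathrm{mod}\ qr)}$ that is ``explained'' by equidistribution is $\ll_A x/(\log x)^A$; so it suffices to bound the sum with just $\mathbf{1}_{nm\equiv a\,(\mathrm{mod}\ qr)}$ after subtracting its average, which is what Fourier completion naturally produces. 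Next, open the supremum by fixing for each pair $(q,r)$ a choice $a_{q,r}$ achieving (up to a factor $2$) the supremum, and insert $1$-bounded coefficients $c_{q,r}$; also smooth the $n,m,q$ variables using the fixed bump function $\psi$ at negligible cost. Applying Cauchy--Schwarz in the $(m,q)$ variables and expanding the square, the $m$-sum becomes a complete sum mod $qr_1r_2/(r_1,r_2)$-ish modulus after Fourier completion, leaving a main-diagonal term (handled trivially, using $M$ large enough relative to the completion length $H \asymp QR^2/M \asymp QR^2 N/x$, which is why $N < x^{1/2-3\delta}$ is needed so that $H$ is a small power of $x$) and an off-diagonal term of the shape displayed in the outline, with a congruence $n_1\overline{a_{q,r_1}}\equiv n_2\overline{a_{q,r_2}}\ (\mathrm{mod}\ q)$ coupling $n_1,n_2$.

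The heart of the argument is then the iterated smoothing. Because $a_{q,r}$ genuinely depends on $q$ and $r$, I cannot collapse the congruence to $n_1\equiv n_2$, so instead I would Cauchy--Schwarz \emph{first} in all the variables except $n_1$ (i.e. in $q,r_1,r_2,n_2$ and the completion variable $h$), which squares the $n_1$-dependence; after expanding we get two copies $n_1,n_1'$ lying in a common congruence class mod $q$, and the $\alpha_{n_1}\overline{\alpha_{n_1'}}$ can be detached and the inner sum over $(n_1,n_1')$ completed, with the diagonal $n_1=n_1'$ contributing an acceptable amount precisely because there is enough entropy (this forces $N$ somewhat large, giving the lower bound $N > Qx^{2\delta}(\log x)^C$). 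This produces a sum in which $n_1,n_1'$ carry smooth weights; one then repeats the manoeuvre on the $n_2,n_2'$ pair, arriving at a sum of products of Kloosterman sums with $j=4$ of the form
\[
\frac{N^4}{Q^4R^4}\sum_{\ell_1,\dots,\ell_4 \ll QR/N} S\Bigl(c_0,\sum_{i=1}^4 a_{q,r_i}\overline{r_i}\ell_i;q\Bigr)\prod_{i=1}^4 S(c_i,\ell_i;r_i).
\]
The Weil bound $|S(m,n;c)|\le \tau(c)(m,n,c)^{1/2}c^{1/2}$ applied to all five Kloosterman sums gives $\ll Q^{1/2}R^2$ per choice of the $\ell_i$ in the generic (nonzero) case, which is a power saving over the trivial bound $N^4/Q^3$ when $N \gtrsim Q^{7/8}R^{1/2}$, and the range $R < x^{1/10-3\delta}$, $Q > x^{2/5+4\delta}$ makes this saving beat the factor $H^4$ that the trivial bound carries. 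The degenerate terms where some $\ell_i=0$ or where $\sum a_{q,r_i}\overline{r_i}\ell_i \equiv 0 \ (\mathrm{mod}\ q)$ have to be bounded separately — typically there are few such $(\ell_i)$ and one uses the trivial bound $q$ for the exceptional Kloosterman sum — and it is checking that \emph{all} of these diagonal and semi-diagonal contributions are genuinely smaller than $x/(\log x)^A$ that requires care.

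I expect the main obstacle to be exactly this bookkeeping of diagonal terms: at each of the (at least) three Cauchy--Schwarz steps one generates a diagonal contribution (equal pairs of $n$-variables, or collisions among completion variables, or degenerate Kloosterman configurations), and each must be shown to be of acceptable size, which is what pins down the precise exponents $6\delta$, $1/10-3\delta$, $2\delta$, $1/2-3\delta$ and the powers of $\log x$ in the hypotheses. A secondary technical point is keeping the congruence condition $n_1\overline{a_{q,r_1}}\equiv n_2\overline{a_{q,r_2}}\ (\mathrm{mod}\ q)$ under control through the successive Cauchy--Schwarz applications without losing the $q$-on-the-outside structure — one has to track how the residues $\overline{a_{q,r_i}}$ enter the final exponential sum, and verify that the resulting phase is a genuine Kloosterman-type sum to which Weil (or, for the composite modulus $q$, a twisted-multiplicativity reduction to prime power moduli plus Deligne-type bounds) applies with no loss from the dependence on the $a_{q,r_i}$. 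Once the off-diagonal sum is under control and the diagonal terms are absorbed, summing over the dyadic ranges of $q,r$ and $n,m$ and reinstating the logarithmic savings finishes the proof; the uniformity over $a$ is automatic since $a$ was eliminated at the very first step by passing to the worst-case $a_{q,r}$.
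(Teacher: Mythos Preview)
Your overall plan matches the paper's strategy, but the description of the iterated Cauchy--Schwarz has the variables on the wrong side, and this is a genuine gap rather than a slip of notation. When you say you ``Cauchy--Schwarz first in all the variables except $n_1$ \dots which squares the $n_1$-dependence; after expanding we get two copies $n_1,n_1'$,'' you are placing $n_1$ as the \emph{inner} (squared) variable. That \emph{doubles} the rough coefficient to $\alpha_{n_1}\overline{\alpha_{n_1'}}$; it does not eliminate it, and you cannot then ``complete the inner sum over $(n_1,n_1')$'' because Poisson summation needs smooth weights. The paper's first Cauchy (Lemma \ref{lmm:Cauchy}) does the opposite: it takes $(n_1,r_1)$ as the \emph{outer} variables, so $|\alpha_{n_1}|^2$ is absorbed into the trivial factor $\ll NR_1'$, a smooth majorant $\psi(n_1/N)$ is inserted, and it is the inner sum over $(r_2,n_2,h)$ that gets squared, producing rough copies $n_2,n_3$. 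The second Cauchy (Lemma \ref{lmm:SecondCauchy}) then takes $(r_2,r_3,n_2,n_3)$ as outer variables to eliminate $\alpha_{n_2},\alpha_{n_3}$ and doubles the already-smooth $n_1$ to $n_1,n_1'$; only then are there four smooth variables. Your version never eliminates $\alpha_{n_1}$, and there is no ``$n_2,n_2'$ pair'' to repeat on, since $n_2$ was outer and stayed single in your first step. This is exactly the ``pairing $n_i$ with $r_i$'' you mention in your plan but then do not implement.

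Two further corrections. The congruence after the initial $(m,q)$-Cauchy is modulo $qr_0$ with $r_0=(r_1,r_2)$, not just modulo $q$; the large-$r_0$ case requires a separate lattice-point count (Lemma \ref{lmm:GCD}) before any Fourier expansion, and the Siegel--Walfisz input enters not at the start but via Barban--Davenport--Halberstam when isolating the $h=0$ main term in Lemma \ref{lmm:Fourier}. Also, the constraint $N\le x^{1/2-3\delta}(\log x)^{-C}$ does not come from the completion length $H$ as you suggest, but from a ``pseudo-diagonal'' regime (Lemma \ref{lmm:Diag1}) sitting between the two Cauchys, in which the smoothed $n_1$-sum is effectively complete and the $r_1'$-Kloosterman sum degenerates to a Ramanujan sum. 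Finally, no Deligne-type input is needed for this proposition: the final exponential sum factors into a product of ordinary Kloosterman sums (one to modulus $q_0$ and one to each $r_i$), and the Weil bound (Lemma \ref{lmm:Kloosterman}) suffices.
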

%
%
%%%%%%%%%%%%%%%%%%%%%%
%
%
Proposition \ref{prpstn:MainProp} (and the subsequent propositions in this section) does not require that $\delta>0$ (although the result follows from the Bombieri-Vinogradov Theorem for $\delta\le -C\log\log{x}/\log{x}$). We have chosen this formulation to emphasize the fact that we are interested in the regime when $Q R$ is close to $x^{1/2}$. An alternative formulation of the constraints is given by 
\begin{align*}
R^5 Q^6\le \frac{x^3}{(\log{x})^C},\quad  Q^3 R^4\le \frac{x^{8/5}}{(\log{x})^{C}},\quad 
\frac{Q^3 R^2(\log{x})^C}{x}\le N\le \frac{x^2}{Q^3 R^3(\log{x})^C}.
\end{align*}
As mentioned in the outline, when $R\approx x^{1/10}$ and $\delta$ is small, Proposition \ref{prpstn:MainProp} covers arbitrary convolutions with one factor of length $N\in [x^{2/5+\epsilon},x^{1/2-\epsilon}]$. To extend the range of applicability to $N\approx x^{2/5-\epsilon}$ and to reduce the requirements of the sizes of $R$, $Q$ we also have the following technical variant of Proposition \ref{prpstn:MainProp}, which we will prove in Section \ref{sec:SecondProp}.
%
%
%%%%%%%%%%%%%%%%%%%%%%
%
%
\begin{prpstn}[Type II estimate near $x^{2/5}$]\label{prpstn:SecondProp}
Let $A>0$. Let $\alpha_n,\beta_m$ be complex sequences with $|\alpha_n|,|\beta_n|\le \tau(n)^A$ and with $\alpha_n$ satisfying the Siegel-Walfisz condition \eqref{eq:SiegelWalfisz}. Let $Q_1Q_2Q_3=x^{1/2+\delta}$ and $NM\asymp x$ with 
\[
Q_2 x^{6\delta+5\epsilon}\le Q_3\le \frac{x^{1/10-3\delta-5\epsilon}}{Q_2^{3/5}},
\]
and
\[
\max\Bigl(Q_1 x^{2\delta+5\epsilon},\, Q_2Q_3 x^{1/4+13\delta/2+5\epsilon}\Bigr)<N<\frac{x^{1/2-3\delta-5\epsilon}}{Q_2}.
\]
Then we have that
\begin{align*}
\sum_{q_1\sim Q_1}\sum_{q_2\sim Q_2}\sum_{q_3\sim Q_3}\sup_{(a,q_1q_2q_3)=1}\Bigl|\sum_{n\sim N}\alpha_n \sum_{m\sim M}\beta_m \Bigl(\mathbf{1}_{nm\equiv a\Mod{q_1q_2q_3}}-\frac{\mathbf{1}_{(nm,q_1q_2q_3)=1}}{\phi(q_1q_2q_3)}\Bigr)\Bigr|\\
\ll_A \frac{x}{(\log{x})^A}.
\end{align*}
\end{prpstn}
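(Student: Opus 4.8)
The plan is to deduce Proposition~\ref{prpstn:SecondProp} from Proposition~\ref{prpstn:MainProp} by treating $q_2q_3$ (or a suitable regrouping) as the ``$R$'' modulus and $q_1$ as the ``$Q$'' modulus, but this direct approach fails precisely in the regime $N\approx x^{2/5-\epsilon}$, which is why the proposition is phrased as a genuine strengthening rather than a corollary. So instead I would adapt the argument of Section~\ref{sec:MainProp} itself, inserting the ``de-amplifying'' manoeuvre alluded to in the outline: one uses the extra factorisation $r=q_2q_3$ to introduce a Graham--Ringrose / $q$-van der Corput style congruence constraint that lowers the modulus of the final Kloosterman-type exponential sums. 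Concretely, after the combinatorial decomposition we again reach a bilinear sum of the shape displayed in Section~\ref{sec:Outline}, apply Cauchy--Schwarz in the $(m,q_1)$ variables, Fourier-complete, and smooth the $n_1$ and then the $n_2,n_2'$ variables exactly as in the proof of Proposition~\ref{prpstn:MainProp}, arriving at a sum of products of Kloosterman sums with $j=4$. The point of keeping $q_3$ separate is that at one of the Cauchy--Schwarz steps we can shift a factor of $q_3$ out of the conductor, replacing a sum to modulus of size $Q_1Q_2^2Q_3^2$ (roughly) by one of size $Q_1Q_2^2Q_3$, at the price of a diagonal term weighted by an extra factor $Q_3$; the Weil/Deligne bound then wins as long as $N$ exceeds roughly $Q_2Q_3 x^{1/4+O(\delta)}$ rather than $Q_2Q_3 x^{3/10+O(\delta)}$, which is exactly the improvement encoded in the hypothesis $N>Q_2Q_3x^{1/4+13\delta/2+5\epsilon}$.

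The key steps, in order, are: (i) reduce via Heath-Brown's identity and the trivial Type~I input to a Type~II sum with coefficients $\alpha_n$ (Siegel--Walfisz) and $\beta_m$ ($\tau^A$-bounded), with $n\sim N$, $m\sim M$, $NM\asymp x$, and modulus $q_1q_2q_3$ in the stated ranges; (ii) Cauchy--Schwarz in $(m,q_1)$ to remove $\beta_m$, expand the square, and Fourier-complete the $m$-sum, producing an exponential sum over $n_1,n_2$ with a parameter $h$ of size $H\approx q_1q_2q_3/M$ and a congruence $n_1\overline{a_{q,r_1}}\equiv n_2\overline{a_{q,r_2}}\Mod{q_1}$; (iii) smooth $n_1$ by a further Cauchy--Schwarz, then smooth $n_2,n_2'$ by another, each time being careful that the ``entropy'' of the squared block is large enough that diagonal contributions are admissible --- this is where the $q_2$-aspect and the coupling of $n_i$ with the divisors of $r_i$ matters; (iv) at the decisive step, use the factorisation $r=q_2q_3$ and a reciprocity / van der Corput congruence to drop $q_3$ from the modulus of the completed sum; (v) evaluate the resulting $j=4$ product of Kloosterman sums by the Weil bound (plus Deligne-type square-root cancellation for the $q_1$-sum, as in Polymath~\cite{Polymath} and Friedlander--Iwaniec~\cite{FIDivisor}), and (vi) collect the error terms, checking that the main term, the diagonal terms, and the degenerate (non-generic) contributions are all $\ll x/(\log x)^A$ under the stated constraints on $Q_2,Q_3,N$.

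The main obstacle will be step~(iv) together with the bookkeeping in step~(vi): one must verify that after lowering the conductor by $q_3$, the inflated diagonal term --- which now carries an extra $Q_3$ and whose size must be compared against $N^{4}/(\text{conductor})$ type quantities --- is still beaten, and this is exactly what forces the upper bound $Q_3\le x^{1/10-3\delta-5\epsilon}/Q_2^{3/5}$ (the $3/5$ power of $Q_2$ reflecting how $Q_2$ enters both the conductor and the diagonal after two Cauchy--Schwarz steps) and the lower bound $Q_3\ge Q_2 x^{6\delta+5\epsilon}$ (ensuring $Q$ is a large enough power of $R$ that the $H^j$ loss is absorbed). A secondary technical nuisance is that the residue classes $a_{q_1,q_2,q_3}$ are completely arbitrary and $q_3$-dependent, so one cannot simplify any of the intermediate congruences; every reciprocity step must be carried out with the $a$'s present, and one must track coprimality conditions between $q_1,q_2,q_3,n_1,n_2,h$ throughout. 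I expect the bilinear Cauchy--Schwarz structure and the final exponential sum evaluation to be essentially parallel to Section~\ref{sec:MainProp}, with the genuinely new work concentrated in the insertion of the van der Corput congruence and the re-optimisation of the exponents.
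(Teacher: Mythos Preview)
Your high-level strategy is the paper's: rerun the Section~\ref{sec:MainProp} machinery but modify the \emph{first} Cauchy--Schwarz step to exploit the extra factor in the modulus, thereby forcing the parameter $E_1$ to be large and the residual modulus $S_1$ to be small. A few points where your description drifts from what actually works:

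\textbf{Where the de-amplification sits, and which factor it uses.} The trick is not a separate step~(iv) after smoothing both $n_1$ and $n_2,n_2'$; it is built into the first Cauchy step (your step~(iii)). In the paper's notation one sets $Q=Q_1$, $R=Q_2$, $S=Q_3$, $T=RS$, and writes the $t_1'$-coefficient as coming from a convolution $t_1'=r's'$ with $r'\asymp R$ (this is the content of the special form of $\gamma_{q,t}$ in \eqref{eq:GammaDef}). One then splits the $h$-sum into residue classes $h\overline{t_2'}\equiv c\Mod{r'}$ \emph{before} applying Cauchy--Schwarz in $(n_1,r',s',c)$. Expanding the square forces the constraint $h_2t_3\equiv h_3t_2\Mod{r'}$, which guarantees $E_1\ge R/(T_0D_1)$ in the analogue of Lemma~\ref{lmm:Cauchy} and hence $S_1\ll T_1/R'$. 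So it is the $Q_2$-factor that produces the gain, not $Q_3$; the resulting saving of $\min(E_1,R)\asymp Q_2$ in the analogue of $\mathscr{S}_4$ is what gives the improved $N$-range.

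\textbf{No Deligne input is needed here.} Your step~(v) invokes Deligne-type bounds for the $q_1$-sum, but just as in Section~\ref{sec:MainProp} the completed sums factor into ordinary one-variable Kloosterman sums, and the Weil bound (Lemma~\ref{lmm:Kloosterman}) alone suffices. Deligne enters only in Proposition~\ref{prpstn:Triple}.

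\textbf{The $3/5$ exponent.} Your explanation of why $Q_2^{3/5}$ appears is not quite the mechanism. In practice one literally reuses the bounds \eqref{eq:S93} and \eqref{eq:S6Intermediate} from Lemmas~\ref{lmm:Diag2}--\ref{lmm:OffDiag} (with $R_1''$ replaced by $S_1$, etc.), now demanding the stronger inequality with an extra $\min(E_1,R)^2$ in the denominator; the off-diagonal bound then yields $T<x^{1/10-3\delta}R^{2/5}$, and since $T=RS=Q_2Q_3$ and $R=Q_2$ this is $Q_3<x^{1/10-3\delta}/Q_2^{3/5}$.

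With those corrections the argument goes through exactly as the paper's Lemmas~\ref{lmm:AlternativeCauchy}--\ref{lmm:SecondConclusion}.
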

%
%
%%%%%%%%%%%%%%%%%%%%%%
%
%
For example, if $\delta>0$ is small, $Q_1\approx x^{2/5-1/1000+\delta}$, $Q_2\approx x^{5/1000}$, $Q_3\approx x^{1/10-4/1000}$, then the inequalities on $Q_1,Q_2,Q_3$ are satisfied and Proposition \ref{prpstn:SecondProp} covers the range $N\in[x^{2/5-1/2000},x^{2/5+1/100}]$, and so extends Proposition \ref{prpstn:MainProp} to $N\le x^{2/5}$. Overcoming this $x^{2/5}$ barrier is vital for the proof of Theorem \ref{thrm:AlmostUniform}.
%
%
%%%%%%%%%%%%%%%%%%%%%%
%
%

Neither Proposition \ref{prpstn:MainProp} nor Proposition \ref{prpstn:SecondProp} can handle `balanced' convolutions with $N,M\approx x^{1/2}$. Unfortunately we are not able to produce an estimate which is completely uniform for such terms, which is why Theorem \ref{thrm:WeakEquidistribution} and Theorem \ref{thrm:AlmostUniform} fail to give a full extension of the Bombieri-Vinogradov Theorem to moduli $q\sim x^{1/2+\delta}$ with suitable factorization properties. To handle such terms we resort to imposing some restrictions on our residue classes, which then enables us to adapt the ideas underlying a key estimate of Zhang \cite{Zhang} to this setting. This is our third proposition, which we will establish in Section \ref{sec:Zhang}.
%
%
%%%%%%%%%%%%%%%%%%%%%%%%%%
%
%
\begin{prpstn}[Type II estimate near $x^{1/2}$]\label{prpstn:Zhang}
Let $A>0$ and let $\alpha_n,\beta_m$ be complex sequences with $|\alpha_n|,|\beta_m|\le \tau(n)^A$ and with $\alpha_n$ satisfying the Siegel-Walfisz condition \eqref{eq:SiegelWalfisz}. Let $Q_1Q_2=x^{1/2+\delta}$ and $NM\asymp x$ with 
\begin{align*}
Q_1^7 Q_2^{12}&< x^{4-10\epsilon},\qquad
x^{2\delta+\epsilon} Q_1< N < \frac{x^{1-\epsilon}}{Q_1}.
\end{align*}
Then we have that
\begin{align*}
\sum_{q_1\sim Q_1}\sup_{(b,q_1)=1}\sum_{q_2\sim Q_2}\sup_{\substack{(a,q_1q_2)=1\\ a\equiv b\Mod{q_1}}}\Bigl|\sum_{n\sim N}\alpha_n \sum_{m\sim M}\beta_m \Bigl(\mathbf{1}_{n m\equiv a\Mod{q_1q_2}}-\frac{\mathbf{1}_{(n m,q_1q_2)=1}}{\phi(q_1q_2)}\Bigr)\Bigr|\\
\ll_A \frac{x}{(\log{x})^A}.
\end{align*}
\end{prpstn}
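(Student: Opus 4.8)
The plan is to adapt Zhang's original Type~II argument, exploiting the fact that the residue class $a\Mod{q_1q_2}$ is constrained to lie in a fixed class $b\Mod{q_1}$ independent of $q_2$; this is precisely what makes the ``$n_1\equiv n_2$'' simplification available. First I would dispose of the diagonal term $\mathbf{1}_{(nm,q_1q_2)=1}/\phi(q_1q_2)$ by the usual device: it is handled by the Siegel--Walfisz hypothesis on $\alpha_n$ together with a dyadic/Mellin smoothing of $m$, so that after this reduction it suffices to bound the ``off-diagonal'' piece $\sum_{q_1\sim Q_1}\sum_{q_2\sim Q_2}\sum_n\sum_m\alpha_n\beta_m\,\mathbf{1}_{nm\equiv a\Mod{q_1q_2}}$ minus its expected main term with a power-saving. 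I would then insert smooth majorants $\psi$ for the ranges of $n$ and $m$, apply Cauchy--Schwarz in the $(m,q_2)$ variables (keeping $q_1$ and the class $b$ on the outside, since they are fixed first), open the square, and execute Fourier completion in the $m$-sum. Because $NM\asymp x$ and $Q_1Q_2=x^{1/2+\delta}$, the dual variable $h$ runs up to $H\asymp Q_1Q_2M/x = x^{\delta}M/N$ times logarithmic factors, and the condition $N>x^{2\delta+\epsilon}Q_1$ together with $Q_2=x^{1/2+\delta}/Q_1$ keeps $H$ of manageable size.

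After completion one is left (schematically) with a sum over $q_1, q_2, q_2'$, over $n_1,n_2$ with $n_1\equiv n_2\Mod{q_1}$ (this is where the uniformity-in-$b$ hypothesis is used crucially — without it the congruence would couple $n_1,n_2$ to the moduli via $\overline{a_{q,r}}$, and Zhang's maneuver would be unavailable), and over $1\le h\le H$, of a product of two exponential phases with denominators $q_1q_2$ and $q_2'$ (or, after splitting, coprime parts thereof). I would then write $n_2=n_1+kq_1$, apply Cauchy--Schwarz a second time in the variables $(n_1,k,q_1)$ — or in $(n_1,k,q_1,q_2)$ as needed to balance — so as to replace $\alpha_{n_1}\overline{\alpha_{n_2}}$ by smooth weights, expand, and reduce matters to a complete (Kloosterman-type / Weil-type) exponential sum in the variable $n_1$ to a modulus of size $O(q_1 q_2 q_2' \cdot q_1)=O(Q_1^2 Q_2^2)$, i.e.\ $O(x^{1+2\delta})$. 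The Weil bound then gives square-root cancellation in this sum; the resulting bound is a genuine power saving over the trivial bound $N^2 H/Q_1$ precisely when $N$ is somewhat larger than $(Q_1^2Q_2^2)^{1/2}=Q_1Q_2$ adjusted by the various $h$ and $k$ ranges — which after unwinding is exactly the hypothesis $N<x^{1-\epsilon}/Q_1$, while the constraint $Q_1^7Q_2^{12}<x^{4-10\epsilon}$ is what guarantees the diagonal contributions from the two Cauchy--Schwarz steps (the terms with $h$ or $k$ forcing degenerate coincidences among moduli, plus the $O(h)$-type main terms that survive completion) are themselves smaller than $x/(\log x)^A$. Throughout, the supremum over $a$ (equivalently over $b\Mod{q_1}$ and over $a\Mod{q_2}$) is passed through by noting the final exponential sum bounds are uniform in the additive shifts, a standard feature of Weil-type estimates; the $q_1$-supremum over $b$ costs nothing since $q_1$ is summed first and there are only $\ll Q_1$ choices of $b$ absorbed into the outer sum's trivial count.

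The main obstacle I expect is the bookkeeping of the coprimality splittings and the diagonal terms. Concretely: (i) in the Fourier completion and the $n_2=n_1+kq_1$ step one must carefully separate the parts of the moduli that are genuinely coprime (so that CRT and the Weil bound apply cleanly) from common factors, and the contribution of moduli with large gcd has to be bounded separately — this is routine but delicate; (ii) one must verify that after \emph{two} applications of Cauchy--Schwarz there is still enough ``entropy'' (enough free summation length in $n_1$, $k$, $h$) that the off-diagonal main term, once hit with square-root cancellation, beats the trivial bound by the required power of $x$ — this is where the precise exponents $7,12$ and $4$ in $Q_1^7Q_2^{12}<x^{4-10\epsilon}$ come from, and getting them right requires tracking how the $h$-range $H\asymp x^\delta M/N$ and the $k$-range $\asymp N/Q_1$ propagate through the completion; (iii) the incomplete-sum tails from the two completions (the ``$h=0$'' or ``$k=0$'' truncation errors) must be shown to match the intended main term or be negligible. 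None of these is conceptually new — they follow the template of \cite{Zhang} and \cite{Polymath} — but assembling them so that the stated ranges hold with the clean constants is the substantive work, carried out in Section~\ref{sec:Zhang}.
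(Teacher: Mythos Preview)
Your plan is essentially the paper's own argument: Cauchy--Schwarz in $(q_1,m)$ putting $(q_2,n)$ inside, Poisson completion in $m$, exploitation of $a\equiv b\Mod{q_1}$ to obtain $n_1\equiv n_2\Mod{q_1}$, a second Cauchy--Schwarz to smooth the $\alpha$-coefficients, and then the Weil bound. Two small corrections worth flagging before you execute it. First, you have the roles of the two hypotheses reversed: in the paper's accounting the upper bound $N<x^{1-\epsilon}/Q_1$ controls the \emph{diagonal} terms after the second Cauchy (those with $hr_1'r_2'=h'r_1r_2$), while $Q_1^7Q_2^{12}<x^{4-10\epsilon}$ is what makes the Weil bound on the \emph{off-diagonal} terms sufficient --- swapping these will confuse the endgame arithmetic. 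Second, the paper does not substitute $n_2=n_1+kq_1$; it keeps $n_1,n_2$ as separate smooth variables and completes each to a Kloosterman sum of modulus $\asymp Q_1Q_2^2$ (not $Q_1^2Q_2^2$), which is cleaner because the congruence $n_1\equiv n_2\Mod{q_1}$ is accompanied by a non-trivial twist $\Mod{kr_0}$ depending on the residue classes. The paper also uses the Fouvry-type coprime-splitting device (Lemma~\ref{lmm:FouvryDecomposition}) to force $(r_1r_1',r_2r_2')=1$ after the second square is opened, which is the concrete realization of your point (i).
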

%
%
%%%%%%%%%%%%%%%%%%%%%%%%%%
%
%
Each of Propositions \ref{prpstn:MainProp}-\ref{prpstn:Zhang} apply to essentially arbitrary coefficients $\alpha_n,\beta_m$, but fail to handle terms when $N\approx x^{1/3},M\approx x^{2/3}$. For the purposes of estimating primes, however, Type II estimates such as Proposition \ref{prpstn:MainProp} allow us to reduce to the situation where we can assume various convolution factors are smooth functions. To cover the remaining cases for primes we require estimates with one small arbitrary factor and three smooth factors, which is closely related to estimates for the ternary divisor function in arithmetic progressions. This leads us to our final proposition, which is based on ideas in \cite{Polymath}, and will be proven in Section \ref{sec:Triple}.
%
%
%%%%%%%%%%%%%%%%%%%%%%%%%%
%
%
\begin{prpstn}\label{prpstn:Triple}
Let $A>0$. Let $QR=x^{1/2+\delta}$ and $N_1 N_2 N_3 M\asymp x$ with
\[
M<\min\Bigl(\frac{R}{x^{4\delta}(\log{x})^C},\frac{Q^{1/2}}{x^{2\delta}(\log{x})^C}\Bigr)
\]
for some constant $C=C(A)$ sufficiently large in terms of $A$. Let $\alpha_m$ be a complex sequence with $|\alpha_m|\le \tau(m)^A$. Let $\psi_1,\psi_2,\psi_3$ be smooth functions supported on $[1,2]$ with $\|\psi^{(j)}_1\|_\infty,\|\psi^{(j)}_2\|_\infty,\|\psi^{(j)}_3\|_\infty \ll ((j+1)\log{x})^{A j}$ for all $j\ge 0$. Let $\Delta_\mathscr{K}(a;qr)=\Delta_\mathscr{K}(a;qr;n_1,n_2,n_3,m)$ be given by
\[
\Delta_\mathscr{K}(a;qr):=\psi_1\Bigl(\frac{n_1}{N_1}\Bigr)\psi_2\Bigl(\frac{n_2}{N_2}\Bigr)\psi_3\Bigl(\frac{n_3}{N_3}\Bigr)\Bigl(\mathbf{1}_{n_1n_2n_3 m\equiv a\Mod{qr}}-\frac{\mathbf{1}_{(n_1n_2n_3m,q r)=1}}{\phi(qr)}\Bigr).
\]
Then we have
\[
\sum_{q\sim Q}\sum_{r\sim R}\sup_{(a,qr)=1}\Bigl|\sum_{m\sim M}\alpha_m\sum_{n_1\sim N_1}\sum_{n_2\sim N_2}\sum_{n_3\sim N_3}\Delta_{\mathscr{K}}(a;qr)\Bigr|\ll_A \frac{x}{(\log{x})^A}.
\]
\end{prpstn}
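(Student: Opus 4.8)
The plan is to follow the treatment of the ternary divisor function in arithmetic progressions of Friedlander--Iwaniec \cite{FIDivisor}, Heath--Brown \cite{HBDivisor} and Polymath \cite{Polymath}, adapting it to accommodate the extra short unspecified factor $\alpha_m$ and to replace the smoothness of the modulus used in \cite{Polymath} by the prescribed factorisation $qr$ with $r$ small. After the usual opening manoeuvres --- removing the coprimality conditions $(n_1n_2n_3m,qr)=1$ by M\"obius inversion, reducing to squarefree $q,r$ with $(q,r)=1$, and discarding ranges where one of $N_1,N_2,N_3$ or $M$ is below a small power of $\log x$ --- I would first dispose of the very unbalanced cases: if $\min(N_1,N_2,N_3)$ is a sufficiently small power of $x$, absorb that variable into $m$ and appeal to the known uniform equidistribution of $\tau_2$-type sequences in arithmetic progressions, which covers moduli as large as $x^{2/3-\epsilon}$ and hence our $qr<x^{1/2+\delta}$. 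Thus one may assume $N_1,N_2,N_3$ are all at least $x^{1/20}$, and, relabelling, that $N_1=\max(N_1,N_2,N_3)\ge (x/M)^{1/3}$.

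Next I would complete the two shorter smooth sums $n_2$ and $n_3$ modulo $qr$, keeping $m$ and $n_1$ in the coefficients. Writing the congruence as $n_2n_3\equiv a\overline{n_1m}\pmod{qr}$, Poisson summation in $n_2$ and then $n_3$ converts $\sum_{n_2,n_3}\psi_2\psi_3(\cdots)$ into
\[
\frac{N_2N_3}{(qr)^2}\sum_{|h_2|\ll qr x^\epsilon/N_2}\ \sum_{|h_3|\ll qr x^\epsilon/N_3}\widehat{\psi_2}\!\Bigl(\frac{h_2N_2}{qr}\Bigr)\widehat{\psi_3}\!\Bigl(\frac{h_3N_3}{qr}\Bigr)S\bigl(-h_2,\,a h_3\,\overline{n_1m};\,qr\bigr)+O(x^{-100}),
\]
where $S(\cdot,\cdot;\cdot)$ is the Kloosterman sum. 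The term $h_2=h_3=0$ gives $S(0,0;qr)=\phi(qr)$ and, after summation over $n_1$ and $m$, reproduces the subtracted main term up to a negligible error; the terms with exactly one of $h_2,h_3$ zero give Ramanujan sums and, being essentially independent of $n_1,m$, contribute a readily-bounded negligible remainder. For $h_2h_3\neq0$ one has $S(-h_2,ah_3\overline{n_1m};qr)=S(1,-ah_2h_3\overline{n_1m};qr)$ (handling $\gcd(h_2h_3,qr)>1$ as a further degenerate case), so what remains is to bound
\[
\sum_{q\sim Q}\sum_{r\sim R}\frac{N_2N_3}{(qr)^2}\sum_{0<|h_2|\ll H_2}\sum_{0<|h_3|\ll H_3}\widehat{\psi_2}\widehat{\psi_3}(\cdots)\sum_{m\sim M}\alpha_m\sum_{n_1\sim N_1}\psi_1\!\Bigl(\frac{n_1}{N_1}\Bigr)S\bigl(1,-ah_2h_3\bar m\,\overline{n_1};\,qr\bigr),
\]
with $H_2\asymp qr/N_2$ and $H_3\asymp qr/N_3$, and this is required to be $\ll x(\log x)^{-A}$.

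The crux is the inner sum over $n_1$ and $m$ of $\alpha_m\,\psi_1(n_1/N_1)\,S(1,\beta\overline{n_1};qr)$ with $\beta=-ah_2h_3\bar m$. Here a pointwise Weil bound $S\ll (qr)^{1/2+o(1)}$ is far too weak (the two completions have already inflated the trivial bound by $\approx H_2H_3$), so I would \emph{not} complete the $n_1$ sum, but instead open the Kloosterman sum, split $qr$ by the Chinese Remainder Theorem, and apply a $q$-van der Corput / Graham--Ringrose step \cite{GrahamRingrose,HBVanDerCorput} exploiting the small factor $r$: shifting $n_1$ and applying Cauchy--Schwarz reduces the modulus of the resulting complete exponential sum from $qr$ down to $q$, at the cost of a shift variable of controlled length and a worsening of the diagonal contributions. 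A further Cauchy--Schwarz to extract $\alpha_m$, followed by Fourier completion of the (now short) residual $n_1$-range, then leaves complete exponential sums modulo $q$ of hyper-Kloosterman type, which one estimates by the Weil and Deligne bounds \cite{Deligne1,Deligne2} exactly as in \cite{Polymath}; the degenerate configurations (nontrivial gcds with $q$ or with the surviving frequencies) are separated off as lower-order terms, and the various diagonal contributions produced along the way --- of size roughly $(N_2N_3)^{1+o(1)}(1+N_2N_3/(qr))$ together with their van der Corput analogues --- are collected and checked to be acceptable.

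I expect the main obstacle to be making the algebraic-geometry saving genuinely beat the (now substantially inflated) trivial bound uniformly across the whole permitted range of $N_1,N_2,N_3,M$: after all the completions one is only marginally ahead, and it is the $q$-van der Corput reduction of the modulus from $qr$ to $q$ that tips the balance. Carrying this out while simultaneously keeping every diagonal and degenerate term under control is a delicate optimisation, and it is precisely this optimisation that forces the two hypotheses $M<R\,x^{-4\delta}(\log x)^{-C}$ --- needed so that the short factor $m$ does not spoil the van der Corput step attached to the small modulus $r$ --- and $M<Q^{1/2}\,x^{-2\delta}(\log x)^{-C}$ --- needed so that the modulus $q\sim Q$ of the final complete sum is large enough, relative to $M$ and to the number of surviving frequencies, for the $q^{1/2}$ gain of Deligne's bound to dominate.
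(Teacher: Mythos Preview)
Your route diverges from the paper's at the first substantive step. The paper completes \emph{all three} smooth variables $n_1,n_2,n_3$ (Lemma~\ref{lmm:TripleCompletion}), so that after extracting gcd's the summand becomes $\Kl_3(a'_{q',r'}h\overline{m}f;q'r')$ with $h=h_1'h_2'h_3'$; the point is that only the \emph{product} $h$ enters. One then applies Cauchy--Schwarz in $(q',h)$ (Lemma~\ref{lmm:TripleCauchy}) --- not in any $n_i$ --- which removes both $\alpha_m$ and $c_{q,r}$ in a single stroke and produces an inner sum $\sum_h\psi(h/H)\Kl_3(\cdots h\cdots;q'r_1)\overline{\Kl_3(\cdots h\cdots;q'r_2)}$. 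This $h$-sum is then completed modulo $q'[r_1,r_2]$ (Lemma~\ref{lmm:K4Bound}); the decisive Deligne input is the twisted $\Kl_3$-correlation bound of Lemma~\ref{lmm:KloostermanCorrelation}, namely $\sum_{b}e(c_1b/p)\Kl_3(b;p)\overline{\Kl_3(c_2b;p)}\ll p^{1/2}$ unless $(c_1,c_2)\equiv(0,1)\pmod p$. No $q$-van der Corput step appears anywhere. In this framework the constraint $M<Q^{1/2}x^{-2\delta}(\log x)^{-C}$ arises from the generic square-root correlation bound (the $\ell\ne 0$ frequencies after completing $h$), while $M<Rx^{-4\delta}(\log x)^{-C}$ arises from the zero frequency $\ell=0$, where the gcd $(a'_{q',r_1}r_2^3m_2-a'_{q',r_2}r_1^3m_1,\,q')$ can be as large as $q'$ and forces a secondary diagonal over $(m_1,m_2)$ --- a quite different mechanism from the one you describe.

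Your scheme (complete only $n_2,n_3$, then $q$-van der Corput in $n_1$) leads, after CRT-splitting the Kloosterman sum and differencing, to correlations of $\Kl_2$ at additively shifted arguments modulo $q$, which is a different Deligne-type statement than the $\Kl_3$ correlation the paper uses; you would also still be carrying the opened Kloosterman variable and two separate Cauchy steps, and your sketch does not show that the resulting losses stay under control across the full range. The paper's approach gains efficiency precisely by collapsing three frequency variables into one before any Cauchy is applied, so that a single Cauchy and a single completion suffice.
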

%
%
%%%%%%%%%%%%%%%%%%%%%%%%%%
%
%
For the purposes of Theorem \ref{thrm:WeakEquidistribution} it is vital that we are able to handle $M\approx x^{1/10}$ when $R\approx x^{1/10}$, $Q\approx x^{2/5}$, and so our estimate is only just sufficient for this purpose. Unlike the earlier propositions (which ultimately only rely on the Weil bound for Kloosterman sums), Proposition \ref{prpstn:Triple} relies on Deligne's work \cite{Deligne1,Deligne2} to handle certain multidimensional exponential sums (correlations of hyper-Kloosterman sums with an additive twist.) 

An immediate consequence of Proposition \ref{prpstn:Triple} is the following corollary on the exponent of distribution of the ternary divisor function.
%
%
%%%%%%%%%%%%%%%%%%%%%%%%%%
%
%
\begin{crllry}
Let $A>0$ and $C=C(A)$ sufficiently large in terms of $A$. Let $Q,R$ satisfy
\[
Q^4 R^3+ Q^3 R^4<\frac{x^2}{(\log{x})^C}.
\]
Then we have that
\[
\sum_{q\le Q} \sum_{r\le R}\sup_{(a,qr)=1}\Bigl|\sum_{\substack{n\le x\\ n\equiv a\Mod{q r}}}\tau_3(n)-\frac{1}{\phi(q r)}\sum_{\substack{n\le x\\ (n,qr)=1}}\tau_3(n)\Bigr|\ll \frac{x}{(\log{x})^A}.
\]
\end{crllry}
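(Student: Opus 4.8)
The plan is to deduce the Corollary directly from Proposition~\ref{prpstn:Triple}, applied with the short factor taken trivially: $M\asymp 1$, $\alpha_1=1$. The key observation is that, after substituting $x^{\delta}=QR/x^{1/2}$, the hypothesis $M<\min\bigl(R/(x^{4\delta}(\log x)^C),\,Q^{1/2}/(x^{2\delta}(\log x)^C)\bigr)$ of Proposition~\ref{prpstn:Triple} with $M\asymp 1$ is equivalent to the pair of inequalities $Q^4R^3<x^2/(\log x)^{C}$ and $Q^3R^4<x^2/(\log x)^{2C}$ --- that is, up to adjusting $C$, exactly the hypothesis $Q^4R^3+Q^3R^4<x^2/(\log x)^{C}$ of the Corollary. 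Since these products are monotone in $Q$ and $R$, the same hypothesis is inherited by every dyadic block $q\sim Q'\le Q$, $r\sim R'\le R$, so Proposition~\ref{prpstn:Triple} will be applicable throughout after a dyadic decomposition (and no Siegel--Walfisz hypothesis intervenes, as the short factor there is unrestricted). I also record that $Q^4R^3,Q^3R^4<x^2$ forces $QR<x^{4/7}$, which will be used to dispose of error terms.

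First I would split the ranges $q\le Q$ and $r\le R$ into $O(\log x)$ dyadic blocks each, reducing to a bound over $q\sim Q'$, $r\sim R'$. Writing $\tau_3=\mathbf 1\ast\mathbf 1\ast\mathbf 1$ and inserting a fixed smooth partition of unity $1=\sum_{N}\Psi(t/N)$ (with $N$ over powers of $2$, $\Psi$ supported on a fixed compact subset of $(0,\infty)$, $0\le\Psi\le 1$) into each of the three divisors gives $\tau_3(n)=\sum_{N_1,N_2,N_3}\sum_{n_1n_2n_3=n}\Psi(n_1/N_1)\Psi(n_2/N_2)\Psi(n_3/N_3)$. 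The truncation $n\le x$ is handled by sandwiching $\mathbf 1_{[0,x]}$ between smooth $w^{\pm}$ that are constant outside an interval of relative length $\Delta:=(\log x)^{-A-10}$ and satisfy $\|(w^{\pm})^{(j)}\|_\infty\ll_j\Delta^{-j}$: for dyadic triples with $N_1N_2N_3$ bounded away from $x$ one has $w^{\pm}\equiv 1$ or $w^{\pm}\equiv 0$ on the relevant box and may drop it, while for the $O((\log x)^2)$ boundary triples with $N_1N_2N_3\asymp x$ one absorbs $w^{\pm}(n_1n_2n_3/x)$ into the weight in $n_3$; since $n_1n_2\asymp x/N_3$ on the support, this produces a smooth weight in $n_3$ with derivatives $\ll_j N_3^{-j}\Delta^{-j}$, and after rescaling the loss $\Delta^{-j}=(\log x)^{(A+10)j}$ is dominated by $((j+1)\log x)^{A'j}$ for a suitable constant multiple $A'$ of $A$, so the weight is admissible in Proposition~\ref{prpstn:Triple} for $x$ large. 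The error incurred in replacing $\mathbf 1_{n\le x}$ by $w^{\pm}(n/x)$ is bounded, on both the main term and the remaining sum, by a Shiu-type estimate for $\tau_3$ in arithmetic progressions (valid since $qr\le x^{4/7}$ while the interval has length $\asymp\Delta x$) by $\ll \Delta x(\log x)^{O(1)}\sum_{q\le Q}\sum_{r\le R}\phi(qr)^{-1}\ll \Delta x(\log x)^{O(1)}\ll x/(\log x)^{A}$.

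It then remains to estimate, for each of the $O((\log x)^{O(1)})$ resulting pieces indexed by $(Q',R',N_1,N_2,N_3)$ with $N_1N_2N_3\asymp x$, the quantity $\sum_{q\sim Q'}\sum_{r\sim R'}\sup_{(a,qr)=1}\bigl|\sum_{n_1,n_2,n_3}\psi_1(n_1/N_1)\psi_2(n_2/N_2)\psi_3(n_3/N_3)(\mathbf 1_{n_1n_2n_3\equiv a\Mod{qr}}-\phi(qr)^{-1}\mathbf 1_{(n_1n_2n_3,qr)=1})\bigr|$, which is precisely the content of Proposition~\ref{prpstn:Triple} with $M\asymp 1$ and $\alpha_1=1$. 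By the first paragraph its hypotheses are met (with the Proposition's parameter taken to be $A'=A+O(1)$, which also governs the required smoothness of the $\psi_i$), so each piece is $\ll x/(\log x)^{A'}$. Summing over the $O((\log x)^{O(1)})$ pieces, and using $\sum_{N}\Psi(t/N)=1$ to see that the weights reassemble into $\tau_3(n)$ and the secondary terms into $\phi(qr)^{-1}\sum_{n\le x,(n,qr)=1}\tau_3(n)$ (up to the negligible cutoff error above), yields the Corollary for a suitable choice of $C=C(A)$. The only mildly delicate point is the removal of the sharp cutoff $n\le x$ while keeping every piece a genuine product of smooth weights with derivative bounds of the shape required by Proposition~\ref{prpstn:Triple}; everything else is routine bookkeeping. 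I do not expect a substantive obstacle --- the Corollary is essentially a repackaging of Proposition~\ref{prpstn:Triple}, with all of the analytic content residing in that Proposition.
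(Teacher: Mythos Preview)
Your approach is essentially the one the paper has in mind (the paper gives no proof, only the remark ``An immediate consequence of Proposition~\ref{prpstn:Triple}''), and your verification that the hypothesis $Q^4R^3+Q^3R^4<x^2/(\log x)^C$ is exactly the condition of Proposition~\ref{prpstn:Triple} with $M\asymp 1$ after eliminating $\delta$ via $x^\delta=QR/x^{1/2}$ is correct and is the heart of the matter.

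There is one technical slip. You propose to handle the boundary triples $N_1N_2N_3\asymp x$ by ``absorbing $w^{\pm}(n_1n_2n_3/x)$ into the weight in $n_3$''. But the resulting weight $n_3\mapsto\Psi(n_3/N_3)w^{\pm}(n_1n_2n_3/x)$ depends on $n_1,n_2$, whereas Proposition~\ref{prpstn:Triple} is stated for fixed functions $\psi_1,\psi_2,\psi_3$. The clean fix is already in the paper: use Lemma~\ref{lmm:Separation} (which is designed precisely for constraints of the shape $n_1^{\alpha_1}\cdots n_r^{\alpha_r}\le B$, here $n_1n_2n_3\le x$) to reduce to products of indicator functions of short intervals in each variable separately, and then invoke Lemma~\ref{lmm:Partition} to replace those indicators by smooth $\psi_i$ with the required derivative bounds. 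Alternatively, a Mellin inversion $w^{\pm}(n_1n_2n_3/x)=\tfrac{1}{2\pi i}\int \widetilde{w}^{\pm}(s)(x/n_1n_2n_3)^s\,ds$ separates the variables at the cost of an extra integral, which is harmless. With either patch in place the rest of your argument goes through as written.
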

%
%
%%%%%%%%%%%%%%%%%%%%%%%%%%
%
%
This improves Heath-Brown's result \cite{HBDivisor} on the range of equidistribution on average for $\tau_3(n)$ provided we restrict to moduli that have a factor in $[x^{2/21},x^{3/7}]$, and extends the result of Fouvry-Kowalski-Michel \cite{FKMDivisor} to larger moduli with additional uniformity in the residue classes provided the moduli have a factor in $[x^{2/17},x^{7/17}]$.
%
%
%%%%%%%%%%%%%%%%%%%%%%%%%%
%
%
\section{Preparatory lemmas}
%
%
%%%%%%%%%%%%%%%%%%%%%%%%%%
%
%
Before embarking on the deduction of Theorems \ref{thrm:WeakEquidistribution}-\ref{thrm:Minorant}, we first collect some basic lemmas and some consequences of our main propositions. 
%
%
%%%%%%%%%%%%%%%%%%%%%%%%%%
%
%
\begin{lmm}[Heath-Brown identity \cite{HBVaughan}]\label{lmm:HeathBrown}
Let $k\ge 1$ and $n\le 2x$. Then we have
\[
\Lambda(n)=\sum_{j=1}^k (-1)^j \binom{k}{j}\sum_{\substack{n=m_1\cdots m_kn_1\cdots n_{k}\\ m_1,\,\dots,\,m_k\le 2x^{1/k}}}\mu(m_1)\cdots \mu(m_k)\log{n_{1}}.
\]
\end{lmm}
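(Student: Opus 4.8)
The plan is to prove this purely formally, as an identity of arithmetic functions after restricting the variable to the range $[1,2x]$, built up from the elementary relation $\Lambda=\mu*\log$ (Dirichlet convolution, with $\log$ denoting the arithmetic function $n\mapsto\log n$). The one substantive idea is that a truncated M\"obius function is ``almost'' a Dirichlet inverse of $\mathbf 1$: writing $\mu_{\le z}(n):=\mu(n)\mathbf 1_{n\le z}$, the identity $\sum_{d\mid n}\mu(d)=\mathbf 1_{n=1}$ shows $\mathbf 1*\mu_{\le z}=\delta+h$, where $\delta$ is the convolution identity and $h(n)=\sum_{d\mid n,\,d\le z}\mu(d)$ vanishes for all $n\le z$, i.e.\ $h$ is supported on integers $>z$. (In Dirichlet-series language with $M(s)=\sum_{m\le z}\mu(m)m^{-s}$ this is just $\zeta(s)M(s)=1-G(s)$ with $G$ supported on $n>z$; either language works, and I would take $z=(2x)^{1/k}$.)

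First I would run the recursion this gives. From $\mathbf 1*\mu_{\le z}=\delta+h$ and $\mathbf 1*\mu=\delta$ one gets $\mu=\mu_{\le z}-\mu*h$, and iterating $k$ times,
\[
\mu=\sum_{j=0}^{k-1}(-1)^j\,\mu_{\le z}*h^{*j}\;+\;(-1)^k\,\mu*h^{*k}.
\]
Since $h$ is supported on $n>z$, its $k$-fold convolution $h^{*k}$ is supported on $n>z^k=2x$, so $\mu*h^{*k}$ is supported on $n>2x$ and may be discarded on $[1,2x]$; convolving with $\log$ and using $\Lambda=\mu*\log$ (the tail $\mu*h^{*k}*\log$ is still supported on $n>2x$) yields, for all $n\le 2x$,
\[
\Lambda(n)=\sum_{j=0}^{k-1}(-1)^j\bigl(\mu_{\le z}*h^{*j}*\log\bigr)(n).
\]

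The last step is pure bookkeeping: substitute $h=\mathbf 1*\mu_{\le z}-\delta$, expand $h^{*j}$ by the binomial theorem for Dirichlet convolution (valid since everything commutes and $\delta$ is the identity), and collect the powers of $\mu_{\le z}$. Summing the coefficient of $\mathbf 1^{*i}*\mu_{\le z}^{*(i+1)}$ over $j$ produces $\sum_{j=i}^{k-1}\binom{j}{i}=\binom{k}{i+1}$ by the hockey-stick identity, so after reindexing one is left with a combination of the convolutions $\mathbf 1^{*(j-1)}*\mu_{\le z}^{*j}*\log$ with binomial weights $\binom{k}{j}$ and alternating signs; reading off the $n$-th coefficient of $\mathbf 1^{*(j-1)}*\mu_{\le z}^{*j}*\log$ gives exactly $\sum_{n=m_1\cdots m_j n_1\cdots n_j,\ m_i\le z}\mu(m_1)\cdots\mu(m_j)\log n_1$, which is the asserted inner sum (the stated constraint $m_i\le 2x^{1/k}$ is the harmless relaxation of $m_i\le(2x)^{1/k}$). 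There is no deep obstacle: the only points needing care are verifying that $h$, hence $h^{*k}$, is genuinely supported beyond $[1,2x]$ — this is what forces the truncation level $z=(2x)^{1/k}$ and hence the exponent $1/k$ in the statement — and tracking the signs and binomial coefficients through the expansion; the generating-function route, splitting $\tfrac{1}{1-G}=\sum_{j<k}G^j+\tfrac{G^k}{1-G}$ and writing $-\tfrac{\zeta'}{\zeta}=-\zeta'M\sum_{j<k}G^j-\tfrac{\zeta'}{\zeta}G^k$, organises precisely the same computation and is an equally valid way to present it.
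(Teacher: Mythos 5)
Your argument is correct and complete in substance; the paper itself offers no proof (it simply cites \cite{May1}), so your self-contained derivation via $\mathbf 1*\mu_{\le z}=\delta+h$ with $h$ supported on $n>z$, the $k$-fold iteration, and the binomial/hockey-stick bookkeeping is exactly the standard route (it is the same computation as the generating-function identity $\zeta^{-1}=M\sum_{j<k}(1-\zeta M)^j+\zeta^{-1}(1-\zeta M)^k$ that Heath-Brown's original argument organises). Two small points. First, carrying your expansion to the end gives the coefficient $(-1)^{j-1}\binom{k}{j}$, not $(-1)^{j}\binom{k}{j}$: the factor $(-1)^j(-1)^{j-i}=(-1)^i$ together with $\sum_{j\ge i}\binom{j}{i}=\binom{k}{i+1}$ and the reindexing $j=i+1$ yields $(-1)^{j-1}$, consistent with the $k=1$ sanity check $\Lambda=\mu*\log$ (the paper's stated sign $(-1)^j$ would give $-\Lambda$ at $k=1$ and is evidently a typo inherited from the cited source; it is harmless for the applications, which take absolute values). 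Second, your parenthetical that $m_i\le 2x^{1/k}$ is a ``harmless relaxation'' of $m_i\le(2x)^{1/k}$ is not quite the right justification — enlarging the support of $\mu_{\le z}$ changes each term of an exact identity — but the fix is immediate: run the whole argument with $z=2x^{1/k}$ from the outset, note $z^k=2^kx\ge 2x$, and the tail $\mu*h^{*k}$ is still supported beyond $2x$. With those two adjustments the proof is exactly right.
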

\begin{proof}
This is \cite[Lemma 7.8]{May1}.
\end{proof}
%
%
%%%%%%%%%%%%%%%%%%%%%%
%
%
\begin{lmm}[Reduction to fundamental lemma type condition]\label{lmm:Buchstab}
Let $y\ge 1$ and $z_1\ge z_2$. Then there are 1-bounded sequences $\alpha_d$, $\beta_{d}$ supported on $P^-(d)\ge z_2$ depending only on $d,z_1,z_2$ such that
\[
\mathbf{1}_{P^-(n)>z_1}=\sum_{\substack{m d=n\\ d\le y}}\alpha_d\mathbf{1}_{P^-(m)> z_2}+\sum_{\substack{n=p d m\\ d\le y<d p\\ z_2<p\le z_1\\ P^-(d)\ge p}}\beta_{d}\mathbf{1}_{P^-(m)> p}.
\] 
\end{lmm}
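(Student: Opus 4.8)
The plan is to run Buchstab's identity to peel off the small prime factors of $n$ one at a time, stopping each branch as soon as the accumulated small part $d$ exceeds the threshold $y$, and to bookkeep the resulting terms into the two sums in the claimed shape. Concretely, I would argue by induction on the structure of the sieve. Starting from $\mathbf{1}_{P^-(n)>z_1}$, if $n$ already has $P^-(n)>z_1$ there is nothing to remove; otherwise write $n=pm$ with $p=P^-(n)\le z_1$ (so $p>z_2$ automatically once we are in the regime being iterated, or more precisely we only iterate over primes in $(z_2,z_1]$) and $P^-(m)\ge p$. Recurse on $m$. At each stage we have built up a squarefree product $d$ of the primes peeled off so far, with $P^-(d)\ge z_2$; the recursion on a given branch terminates either because the remaining cofactor $m$ satisfies $P^-(m)>z_1\ge z_2$ with no further small primes to remove, or because adjoining the next prime $p$ to $d$ would push the product past $y$, i.e. $d\le y<dp$.

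The terms of the first type — those where the recursion ran to completion without ever hitting the size threshold — contribute $\sum_{md=n,\ d\le y}\alpha_d \mathbf{1}_{P^-(m)>z_1}$, but since $P^-(m)>z_1\ge z_2$ we may harmlessly replace the condition by $\mathbf{1}_{P^-(m)>z_2}$ and absorb the discrepancy (the terms where $z_2<P^-(m)\le z_1$) into the second sum. Here $\alpha_d$ is a $\pm 1$-valued coefficient (a signed count of the Buchstab expansions producing the divisor $d$), supported on squarefree $d$ with $P^-(d)\ge z_2$ and $d\le y$, depending only on $d,z_1,z_2$ — crucially not on $n/d$. The terms of the second type are exactly those indexed by the event ``the next prime $p$ to be removed satisfies $dp>y$'': writing $n=pdm$ with $d\le y<dp$, $z_2<p\le z_1$, $P^-(d)\ge p$ (the prime $p$ is the smallest prime of $dp$, hence of $pd m$ modulo the condition $P^-(m)>p$ which records that $p$ is genuinely the next one up), and $P^-(m)>p$, with a $1$-bounded coefficient $\beta_d$ again depending only on $d,z_1,z_2$. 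One checks the two descriptions are exhaustive and mutually exclusive, giving the identity.

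The routine part is the combinatorial bookkeeping: verifying that the coefficients $\alpha_d,\beta_d$ are well-defined, $1$-bounded, supported where claimed, and independent of the cofactor, and that every $n$ with $P^-(n)>z_1$ (resp.\ not) is counted with total weight $1$ (resp.\ $0$) — this is a finite, terminating recursion since $d$ strictly increases and is bounded by $y$ on the surviving branches, or by $n$ otherwise, so the alternating sum telescopes. The only point requiring a little care, and the main (very mild) obstacle, is handling the boundary between the two sums cleanly: the first sum as produced by the raw recursion carries the condition $P^-(m)>z_1$, and one must argue that relaxing it to $P^-(m)>z_2$ only introduces terms that already fit the template of the second sum (with the appropriate $d$ and $p=P^-(m)$), so that after adjusting $\beta_d$ the stated identity holds exactly. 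Once that matching is done the lemma follows; I expect no analytic input is needed, only the elementary Buchstab recursion.
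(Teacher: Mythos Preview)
Your approach is correct and is the standard argument: iterate Buchstab's identity
\[
\mathbf{1}_{P^-(n)>z_1}=\mathbf{1}_{P^-(n)>z_2}-\sum_{\substack{n=pm\\ z_2<p\le z_1\\ P^-(m)\ge p}}1,
\]
applying it again to each summand (now with $p$ playing the role of $z_1$) and stopping a branch as soon as the accumulated product $d$ of peeled-off primes satisfies $dp>y$. The paper itself gives no proof here---it simply cites \cite[Lemma 9.2]{May1}---so there is nothing to compare against in the present text, but your outline is essentially what that reference does.

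One small comment on your ``boundary'' worry: the first sum that emerges from the raw iteration already carries the condition $P^-(m)>z_2$, not $P^-(m)>z_1$. At each Buchstab step one replaces a condition $P^-(\,\cdot\,)>p_j$ (or $\ge p_j$) by $P^-(\,\cdot\,)>z_2$ minus a sum over the next prime $p_{j+1}\in(z_2,p_j]$; the branches that terminate without triggering the size condition $d\le y<dp$ are precisely those where the alternating Buchstab expansion has been run all the way down to $z_2$, so the cofactor condition is $P^-(m)>z_2$ on the nose. Thus no separate adjustment of $\beta_d$ is needed for this reason, and the identity falls out directly. The only bookkeeping is checking that the signed multiplicity with which a given $d$ arises is $1$-bounded and depends only on $(d,z_1,z_2)$, which is immediate since each squarefree $d$ with prime factors in $(z_2,z_1]$ is produced exactly once along the branch that removes its prime factors in increasing order.
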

\begin{proof}
This is \cite[Lemma 9.2]{May1}.
\end{proof}
%
%
%%%%%%%%%%%%%%%%%%%%%%
%
%
\begin{lmm}[Smooth partition of unity]\label{lmm:Partition}
Let $C\ge 3$. There exists smooth non-negative functions $\widetilde{\psi}_1,\dots,\widetilde{\psi}_J$ with $J\le (\log{x})^C+2$ such that
\begin{enumerate}
\item $\|\widetilde{\psi}_i^{(j)}\|_\infty\ll_{C} ((j+1)\log{x})^{j C}$ for each $1\le i\le J$ and each $j\ge 0$.
\item We have that
\[
\sum_{j=1}^J \widetilde{\psi}_j(t)=\begin{cases}
0,\qquad &\text{if }t\le 1-1/(\log{x})^C,\\
O(1), &\text{if }1-1/(\log{x})^C\le t\le N,\\
1,&\text{if }1\le t\le 2,\\
O(1), &\text{if }2\le t\le 2+1/(\log{x})^C,\\
0, &\text{if }2+1/(\log{x})^C\le t.\\
\end{cases}
\]
\end{enumerate}
\end{lmm}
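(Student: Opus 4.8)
The plan is to build the partition out of dyadic pieces using a single fixed bump function, following the standard construction of a smooth partition of unity on $[1,\infty)$ but with the scales chosen fine enough to achieve the near-sharp cutoffs at $t=1$ and $t=2$. First I would fix once and for all a smooth function $\eta:\mathbb{R}\to[0,1]$ supported on $[-1,1]$, equal to $1$ on $[-1/2,1/2]$, with the Gevrey-type derivative bounds $\|\eta^{(j)}\|_\infty\ll (C_0^j j!)^{1+1/(C-1)}$ or similar — the point being that such $\eta$ exists (by the construction cited for $\psi$ in the Notation section, or by a standard mollification of an indicator) and that these bounds, after rescaling, translate into the claimed bound $((j+1)\log x)^{jC}$ once the scale is a power of $\log x$. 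Concretely, set $\theta = 1/(\log x)^C$ and let the $\widetilde\psi_i$ be translates/dilates of $\eta$ by multiples of $\theta$, so that each $\widetilde\psi_i$ lives on an interval of length $O(\theta)$; differentiating $j$ times costs a factor $\theta^{-j} = (\log x)^{Cj}$, and absorbing the benign $j$-dependence of $\|\eta^{(j)}\|_\infty$ into the $((j+1)\log x)^{jC}$ shape is routine. This handles clause (1).

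For clause (2), the idea is to cover $[1-\theta, 2+\theta]$ by $O(\theta^{-1}) = O((\log x)^C)$ overlapping intervals of length $\asymp\theta$, take the associated bump functions $\eta_i$, and then normalize: set $\widetilde\psi_i = \eta_i \big/ \big(\sum_k \eta_k\big)$ on the region where $\sum_k \eta_k > 0$, after first arranging that $\sum_k \eta_k$ is bounded below by a positive constant on the "core" interval $[1,2]$ and is supported in $[1-\theta,2+\theta]$. Division preserves smoothness with comparable derivative bounds since the denominator is bounded away from $0$ where it matters (quotient rule plus Faà di Bruno, again absorbed into the stated bound). On $[1,2]$ the denominator sum equals the numerator sum, so the normalized functions sum to exactly $1$ there; outside $[1-\theta, 2+\theta]$ everything vanishes; and on the two thin transition strips $[1-\theta,1]$ and $[2,2+\theta]$ the partial sum is between $0$ and $1$, hence $O(1)$. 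The count $J \le (\log x)^C + 2$ comes from counting the dyadic/linear pieces, with the "$+2$" slack absorbing the two boundary bumps. (The "$O(1)$ if $1-1/(\log x)^C \le t \le N$" clause in the statement is weaker than "$0$" outside the support and is immediate.)

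The one genuinely delicate point — and the step I would flag as the main obstacle — is keeping the derivative bounds of clause (1) under control through both the normalization (division by $\sum_k \eta_k$) and the choice of base bump $\eta$: one needs $\eta$ with derivatives growing no faster than roughly $(Cj)!$-type rates so that after dilation by $\theta^{-1}=(\log x)^C$ the composite bound is exactly $((j+1)\log x)^{jC}$ and not, say, $(j!)^2 (\log x)^{Cj}$, which would be too large. This is why the statement insists $C\ge 3$: the margin between the Gevrey exponent of the available $\eta$ and the target exponent $C$ must be positive, and the quotient rule eats part of that margin. I would therefore be careful to (i) invoke the \cite[Page 368, Corollary]{BFI2} construction (or rather a variant of it tuned to the exponent $C$) to get a base bump whose $j$-th derivative is $\ll (K^j j!)^{1+c}$ with $c$ small, (ii) verify that on the support of each $\widetilde\psi_i$ the denominator $\sum_k\eta_k$ is $\gg 1$, so that Faà di Bruno applied to $1/(\cdot)$ only introduces factors that are polynomial in the derivative data of the denominator, and (iii) track the arithmetic of exponents to confirm $1 + c + (\text{loss from quotient rule}) \le C$ — everything else is bookkeeping. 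The rest of the argument (existence of $\eta$, the counting of pieces, the three-way case split in clause (2)) is entirely standard.
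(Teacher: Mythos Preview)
The paper does not prove this lemma at all: its entire proof is the sentence ``This is \cite[Lemma 18.1]{May1}.'' So there is no approach to compare against in this paper itself; your sketch is an actual construction, which is more than the paper provides here.

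Your plan is essentially correct and is the standard way such partitions are built. One simplification worth noting: you can avoid the normalisation step (and hence the Fa\`a di Bruno bookkeeping for the reciprocal) entirely by starting from a single bump $\eta$ that already satisfies $\sum_{k\in\mathbb{Z}}\eta(t-k)=1$ identically --- for instance, take any smooth $\sigma:\mathbb{R}\to[0,1]$ with $\sigma=0$ on $(-\infty,0]$, $\sigma=1$ on $[1,\infty)$, and set $\eta(t)=\sigma(t)-\sigma(t-1)$. Then the rescaled translates $\widetilde\psi_i(t)=\eta(\theta^{-1}t - i)$ automatically sum to $1$ wherever their supports cover $t$, and you only keep those $i$ for which the support meets $[1,2]$; the two boundary pieces give the transition on $[1-\theta,1]$ and $[2,2+\theta]$. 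This makes the derivative bound in clause~(1) a one-line rescaling computation: $\|\widetilde\psi_i^{(j)}\|_\infty=\theta^{-j}\|\eta^{(j)}\|_\infty$, and since one can take $\sigma$ (hence $\eta$) with $\|\eta^{(j)}\|_\infty\ll_C (A j)^{j}$ for a constant $A$ depending on the Gevrey class chosen, the bound $((j+1)\log x)^{jC}$ follows for $C$ large enough to absorb $A$. Your speculation that the hypothesis $C\ge 3$ is there to give room between the Gevrey exponent of the base bump and the target exponent is plausible, though the precise threshold depends on which construction of $\eta$ one uses; it is not a deep point.
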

\begin{proof}
This is \cite[Lemma 18.1]{May1}.
\end{proof}
%
%
%%%%%%%%%%%%%%%%%%%%%%
%
%
\begin{lmm}[Double divisor function estimate]\label{lmm:DoubleDivisor}
Let $A>0$ and let $\mathcal{I}_1,\mathcal{I}_2\subseteq [1,x]$ be intervals. Then we have uniformly for $q\le x^{2/3-\epsilon}$ and $(a,q)=1$
\[
\sum_{\substack{n_1n_2\sim x\\ n_1\in\mathcal{I}_1 \\ n_2\in\mathcal{I}_2\\ n_1n_2\equiv a\Mod{q} }}1=\frac{1}{\phi(q)}\sum_{\substack{n_1n_2\sim x\\ n_1\in\mathcal{I}_1 \\ n_2\in\mathcal{I}_2\\ (n_1n_2,q)=1 }}1+O_A\Bigl(\frac{x}{q(\log{x})^A}\Bigr).
\]
\end{lmm}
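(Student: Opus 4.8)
The plan is to reduce the count of $n_1 n_2 \equiv a \Mod q$ with $n_1 \in \mathcal I_1$, $n_2 \in \mathcal I_2$, $n_1 n_2 \sim x$ to a standard divisor-function-in-progressions estimate. First I would dispose of the ``hyperbola'' constraint $n_1 n_2 \sim x$ together with the interval constraints $n_i \in \mathcal I_i$ by noting that for fixed $n_1$, the joint condition ``$n_2 \in \mathcal I_2$ and $x < n_1 n_2 \le 2x$'' restricts $n_2$ to a single subinterval $\mathcal J(n_1) \subseteq [1,x]$; so the left-hand side is $\sum_{n_1 \in \mathcal I_1} \#\{n_2 \in \mathcal J(n_1): n_1 n_2 \equiv a \Mod q\}$. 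The cleanest route is to invoke the known result that the divisor function $\tau(n) = \sum_{n_1 n_2 = n} 1$ (equivalently, counting lattice points $(n_1,n_2)$ with $n_1 n_2 = n$) is equidistributed in residue classes $\Mod q$ on average-free — i.e. \emph{individually} uniformly — for $q$ up to $x^{2/3-\epsilon}$. This is classical: it goes back to the work on the Titchmarsh divisor problem and the third-moment/Kloosterman-sum treatment, and in the form ``$\sum_{n \le y,\, n \equiv a (q)} \tau(n) = \frac{1}{\phi(q)}\sum_{n \le y,\, (n,q)=1}\tau(n) + O_A(y q^{-1}(\log x)^{-A})$ uniformly for $q \le x^{2/3-\epsilon}$'' is exactly the input needed; it follows from the Weil bound for Kloosterman sums (this is why the threshold is $2/3$), and in the dyadic-with-weights formulation is essentially contained in the references the paper already cites for $\tau_3$ (Heath-Brown, Friedlander--Iwaniec) specialized to $\tau_2$, or directly from \cite{BFI2}.

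The key steps, in order, would be: (i) split the region $\{(n_1,n_2): n_1 \in \mathcal I_1,\ n_2 \in \mathcal I_2,\ x < n_1 n_2 \le 2x\}$ into $O((\log x)^2)$ dyadic boxes $n_1 \sim N_1$, $n_2 \sim N_2$ with $N_1 N_2 \asymp x$, accepting a negligible boundary loss (points near the edges of the box or the hyperbola contribute $O(x^{1/2+\epsilon})$, harmless against the allowed error $x q^{-1}(\log x)^{-A}$ since $q \le x^{2/3-\epsilon}$); (ii) within each box, write the congruence count as a hyperbola sum and apply the standard dispersion/Kloosterman treatment, or more economically quote the cited uniform $\tau$-in-AP estimate applied to the interval $\mathcal I_1 \cap (x/n_2, 2x/n_2]$ for each $n_2$ in the shorter of the two variables; (iii) sum the main terms back up and check they reassemble to $\frac{1}{\phi(q)}\sum_{n_1 n_2 \sim x,\ n_1 \in \mathcal I_1,\ n_2 \in \mathcal I_2,\ (n_1n_2,q)=1} 1$, which is immediate since the main term is ``linear'' in the counting region; (iv) sum the $O((\log x)^2)$ error terms, absorbing the polylog loss by increasing $A$ at the start (the paper's convention of choosing $C=C(A)$ large is not needed here since there is no $C$, but the same trick of proving it for $A+2$ works).

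The main obstacle is really just bookkeeping rather than a genuine difficulty: one must ensure that when the shorter variable (say $n_2 \sim N_2$ with $N_2 \le x^{1/2}$) is summed first, the resulting modulus in the completed exponential sum stays below $x$ so that Weil's bound gives a true power saving — this is precisely what forces $q < x^{2/3-\epsilon}$, since the relevant Kloosterman modulus is of size roughly $q N_2 \le q x^{1/2}$ against a main term of size $N_1 N_2/q \asymp x/q$, and one needs $q x^{1/2} \cdot (\text{something}) $ to beat $x/q$, i.e. $q \ll x^{2/3}$. The uniformity in $a$ (no supremum is lost) comes for free because the Kloosterman bound is uniform in the residue class. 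I would therefore structure the write-up as: reduce to dyadic boxes, cite the uniform $\tau_2$-in-progressions estimate (or sketch its one-line derivation from completion + Weil), and reassemble — with the $q \le x^{2/3-\epsilon}$ constraint tracked through as the place where the Weil bound is decisive.
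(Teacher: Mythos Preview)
Your overall strategy---reduce to a known uniform estimate for $\tau_2$-type sums in progressions to individual moduli $q\le x^{2/3-\epsilon}$, ultimately resting on completion and the Weil bound---is correct and is exactly what the paper does. The paper's proof is two lines: smooth the indicator functions of $\mathcal I_1,\mathcal I_2$ via the partition-of-unity Lemma~\ref{lmm:Partition}, then cite \cite[Lemma~6.1]{May2}, which is precisely the smoothed-weight version of the result you describe.

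There is, however, a genuine slip in your step~(i). You assert that boundary terms from the dyadic splitting contribute $O(x^{1/2+\epsilon})$ and that this is ``harmless against the allowed error $x q^{-1}(\log x)^{-A}$ since $q\le x^{2/3-\epsilon}$''. But for $q$ near $x^{2/3-\epsilon}$ the allowed error is only $x^{1/3+\epsilon}(\log x)^{-A}$, which is \emph{smaller} than $x^{1/2+\epsilon}$; the inequality goes the wrong way. The same problem lurks in your alternative route in~(ii): fixing the short variable $n_2$ and counting $n_1$ trivially in its interval in an arithmetic progression gives an $O(1)$ error per $n_2$, accumulating to $O(x^{1/2})$, which again is not absorbed. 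The point is that the Kloosterman saving must be applied to the region with its constraints intact, not after a hard cut that reintroduces a crude $O(x^{1/2})$ loss. This is exactly why the paper first replaces the sharp interval indicators by smooth weights (so that Poisson summation can be performed cleanly on both variables with negligible tails) and only then invokes the cited estimate. Your write-up becomes correct once you replace the hard dyadic chop by the smooth partition of unity; after that the quoted lemma applies directly and there is no boundary to worry about.
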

\begin{proof}
We first take a suitable smooth approximation to the indicator functions of the intervals $\mathcal{I}_1,\mathcal{I}_2$ using Lemma \ref{lmm:Partition}, and then apply \cite[Lemma 6.1]{May2}.
\end{proof}
%
%
%%%%%%%%%%%%%%%%%%%%%%
%
%
\begin{lmm}[Asymptotics for rough numbers]\label{lmm:Buchstab2}
Let $x^\epsilon \le z\le x$. Then we have
\[
\sum_{n<x}\mathbf{1}_{P^-(n)\ge z}=\frac{(1+o(1))x}{\log{x}}\omega\Bigl(\frac{\log{x}}{\log{z}}\Bigr),
\]
where $\omega(u)$ is the continuous, piecewise smooth function defined for $u\ge 1$ by the delay differential equation
\begin{align*}
\omega(u)=\frac{1}{u}\text{ for }1\le u\le 2, \qquad
\frac{\partial}{\partial u}(u \omega(u) )=\omega(u-1)\text{ for }2\le u.
\end{align*}
\end{lmm}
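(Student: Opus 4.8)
Write $\Phi(x,z):=\sum_{n<x}\mathbf 1_{P^-(n)\ge z}$ and put $u:=\log x/\log z$; by hypothesis $u$ lies in the bounded range $[1,1/\epsilon]$. The plan is to establish the asymptotic by induction on $\lceil u\rceil$ via the classical Buchstab recursion, recognising the limiting integral identity as the delay-differential equation that defines $\omega$.

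\emph{Base case} $1\le u\le 2$, i.e.\ $z\ge x^{1/2}$: a number $n<x$ all of whose prime factors are $\ge z$ has at most one such factor, since a product of two would be $\ge z^2\ge x$; hence $n=1$ or $n$ is a prime in $[z,x)$, so $\Phi(x,z)=1+\pi(x)-\pi(z)$. The prime number theorem with classical error term evaluates this, uniformly for $u$ in compact subsets of $(1,2]$, and since $\omega(u)=1/u$ there this is the base case of the induction.

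\emph{Inductive step.} For $z\le w$, sorting each $n<x$ with $z\le P^-(n)<w$ by its least prime factor $p$ and writing $n=pm$ with $P^-(m)\ge p$ gives the Buchstab identity
\[
\Phi(x,z)=\Phi(x,w)+\sum_{z\le p<w}\Phi\!\left(\tfrac xp,\,p\right).
\]
Assume the asymptotic holds for all pairs $(X,Z)$ with $\log X/\log Z\le k$, let $u\in(k,k+1]$, and take $w=x^{1/k}$, so $\Phi(x,w)$ is covered. Every prime $p\in[z,w)$ gives a pair $(x/p,p)$ of ratio $\log(x/p)/\log p=s-1$ with $s:=\log x/\log p$; since $k<s\le u\le k+1$ we have $1\le s-1\le k$, so the inductive hypothesis applies to it. Substituting these asymptotics, writing $\sum_{z\le p<w}$ as a Stieltjes integral against $d\pi(t)=(1+o(1))\,dt/\log t$, and changing variables $t=x^{1/s}$ turns the $p$-sum into a clean multiple of $\int_k^u\omega(s-1)\,ds$; comparing with $\Phi(x,w)$ and cancelling the common normalisation yields exactly
\[
u\,\omega(u)=k\,\omega(k)+\int_k^u\omega(s-1)\,ds.
\]
Differentiating in $u$ recovers $\tfrac{\partial}{\partial u}\bigl(u\,\omega(u)\bigr)=\omega(u-1)$, which together with the base case determines $\Phi(x,z)$ on all of $(k,k+1]$ and closes the induction.

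\emph{Main obstacle.} The work lies in the uniform control of errors rather than in any single inequality. One must check that the $O_\epsilon(1)$ many applications of the inductive step accumulate an error that is still $o(x/\log x)$; that the prime number theorem error $O\!\bigl(X\exp(-c\sqrt{\log X})\bigr)$ used at each stage is $o(x/\log x)$, which holds since every auxiliary modulus $X=x/p$ satisfies $X\ge x^{\epsilon/2}$; and, most importantly, that the inductive hypothesis is only applied to pairs whose ratio is bounded away from $1$ — the primes $p$ within $\exp(\sqrt{\log x})$ of $w$, for which the ratio of $(x/p,p)$ tends to $1$ and the asymptotic is not uniform, contribute only $O(x^{1/2+o(1)})$ to the Buchstab sum and may simply be discarded. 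With this bookkeeping the argument is routine; alternatively the lemma follows from de Bruijn's analysis of the Buchstab function, but the iteration above is short enough to record in full.
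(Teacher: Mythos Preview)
Your proof is correct and follows the classical route: Buchstab's identity plus induction on $\lceil u\rceil$, with the prime number theorem supplying the base case and the error control. This is essentially the textbook derivation (as in Tenenbaum or Friedlander--Iwaniec).

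The paper, however, does not prove the lemma at all: it simply cites \cite[Lemma 12.1]{Opera} (Friedlander--Iwaniec, \emph{Opera de Cribro}). So there is no meaningful comparison of method to make --- you have supplied a self-contained argument where the paper defers to the literature. Your sketch is more than adequate for this purpose, and your ``main obstacle'' paragraph correctly isolates the one place where care is needed (uniformity as the ratio $\log(x/p)/\log p$ approaches $1$), handling it by discarding a negligible tail of primes near $w$.
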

\begin{proof}
This is \cite[Lemma 12.1]{Opera}.
\end{proof}
%
%
%%%%%%%%%%%%%%%%%%%%%%
%
%
\begin{lmm}[Upper bound sieve]\label{lmm:Sieve}
There exists a sequence $\lambda_d^+$ supported on $d\le x^\epsilon$ such that $|\lambda_d^+|\le 1$ and 
\begin{align*}
\sum_{d|n}\lambda_d^+&\ge \begin{cases}
1,\qquad &P^-(n)\ge x^\epsilon,\\
0,&\text{otherwise},
\end{cases}\\
\sum_{d\le x^\epsilon}\frac{\lambda_d^+}{d}&\ll \frac{1}{\log{x}}.
\end{align*}
\end{lmm}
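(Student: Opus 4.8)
This is a repackaging of standard upper-bound sieve theory, so the plan is simply to exhibit an explicit construction and check the three stated properties. I would take $\lambda^+$ to be the weights of the combinatorial (Rosser--Iwaniec) linear upper-bound sieve of level $D=x^\epsilon$ with sifting parameter $z=x^{\epsilon/2}$, applied to the sequence of all positive integers, for which the natural density of the multiples of $d$ is $g(d)=1/d$. By construction these weights are supported on squarefree $d\mid\prod_{p<z}p$ with $d\le D$, and take values in $\{-1,0,1\}$; in particular the support condition and $|\lambda_d^+|\le 1$ are immediate. The defining majorant property of an upper-bound sieve gives $\sum_{d\mid n}\lambda_d^+\ge \mathbf{1}_{(n,\,\prod_{p<z}p)=1}\ge 0$ for all $n$, and when $P^-(n)\ge x^\epsilon$ we have $P^-(n)\ge z$, so the right-hand side equals $1$; this is exactly the first displayed inequality.

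It then remains to bound $\sum_{d\le x^\epsilon}\lambda_d^+/d=\sum_{d\mid\prod_{p<z}p,\;d\le D}\lambda_d^+ g(d)$, which is precisely the quantity estimated by the fundamental lemma of the linear sieve. With sieve dimension $\kappa=1$, $V(z):=\prod_{p<z}(1-1/p)$, and $s:=\log D/\log z=2$, the Rosser--Iwaniec bound yields $\sum_d\lambda_d^+ g(d)\le V(z)\bigl(F(s)+o(1)\bigr)\ll V(z)$, and $V(x^{\epsilon/2})\asymp 1/\log x$ by Mertens' theorem (recall $\epsilon$ is a fixed constant). Taking $z$ slightly below the level $D$, rather than $z=D$, is purely so that $s$ is comfortably larger than $1$ and the fundamental-lemma error is harmless; it costs nothing, since $P^-(n)\ge x^\epsilon$ still forces $(n,\prod_{p<z}p)=1$.

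There is no genuine obstacle here: the proof is the invocation of a textbook sieve construction together with its fundamental lemma. The only point deserving a word of care is the normalization $|\lambda_d^+|\le 1$, which is why I would use the combinatorial sieve (where $\lambda_d^+\in\{0,\pm1\}$ by fiat) rather than, say, the Selberg $\Lambda^2$ sieve; for the latter one would instead take $\lambda_d^+=\sum_{[d_1,d_2]=d}\rho_{d_1}\rho_{d_2}$ with the optimal weights $\rho_\ell$ supported on squarefree $\ell\le x^{\epsilon/2}$ and $\rho_1=1$, so that $\sum_{d\mid n}\lambda_d^+=\bigl(\sum_{d\mid n}\rho_d\bigr)^2$ is $\ge 0$ and equals $1$ when $P^-(n)\ge x^\epsilon$, with $\sum_d\lambda_d^+/d=1/G$ and $G=\sum_{\ell\le x^{\epsilon/2}}\mu^2(\ell)/\phi(\ell)\ge\sum_{\ell\le x^{\epsilon/2}}\mu^2(\ell)/\ell\gg\log x$, and the bound $|\lambda_d^+|\le 1$ for optimal Selberg weights is a classical fact (see e.g. \cite{Opera}).
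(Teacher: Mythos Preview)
Your proposal is correct and matches the paper's approach: the paper's proof is a one-line citation of \cite[Lemma 6.3]{IwaniecKowalski}, which is precisely the fundamental lemma/combinatorial sieve construction you spell out, so you have simply unpacked what the paper leaves as a reference. Your aside on the Selberg alternative is fine as motivation, though the claim $|\lambda_d^+|\le 1$ for Selberg weights is more delicate than you suggest and is in any case unnecessary given that your primary combinatorial-sieve construction already delivers $\lambda_d^+\in\{-1,0,1\}$.
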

\begin{proof}
This follows from \cite[Lemma 6.3]{IwaniecKowalski}, for example.
\end{proof}
%
%
%%%%%%%%%%%%%%%%%%%%%%
%
%
\begin{lmm}[Separation of variables]\label{lmm:Separation}
Let $N_1,\dots,N_r\asymp x$ with $N_1,\dots,N_r\ge 1$. Let $\alpha_{n_1,\dots,n_r}$ be a 1-bounded non-negative real sequence. 

Suppose that for all intervals $\mathcal{I}_1,\dots,\mathcal{I}_r$ with $\mathcal{I}_i\subseteq[N_i,2N_i]$ and every $A>0$
\[
\sum_{q\sim Q}\sup_{(a,q)=1}\Bigl|\sum_{\substack{n_1,\dots,n_r\\ n_i\in\mathcal{I}_i\forall i}}\alpha_{n_1,\dots,n_r}\Bigl(\mathbf{1}_{n_1\cdots n_r\equiv a\Mod{q}}-\frac{\mathbf{1}_{(n_1\cdots n_r,q)=1}}{\phi(q)}\Bigr)\Bigr|\ll_A \frac{x}{(\log{x})^A}.
\]
Then for every $A>0$ we have
\[
\sum_{q\sim Q}\sup_{(a,q)=1}\Bigl|\mathop{\sideset{}{^*}\sum}_{\substack{n_1,\dots,n_r\\ n_i\sim N_i\forall i}}\alpha_{n_1,\dots,n_r}\Bigl(\mathbf{1}_{n_1\cdots n_r\equiv a\Mod{q}}-\frac{\mathbf{1}_{(n_1\cdots n_r,q)=1}}{\phi(q)}\Bigr)\Bigr|\ll_{A,r} \frac{x}{(\log{x})^A}.
\]
where by $\mathop{\sideset{}{^*}\sum}$ we indicate that the summation is restricted to $O(1)$ conditions of the form $n_1^{\alpha_1}\cdots n_r^{\alpha_r}\le B$ for some quantities $\alpha_1,\dots,\alpha_r,B$. The implied constant may depend on the $\alpha_i$.
\end{lmm}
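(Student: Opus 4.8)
The plan is to approximate each of the $O(1)$ sharp multiplicative constraints $n_1^{\alpha_1}\cdots n_r^{\alpha_r}\le B$ by a bounded combination of box constraints $n_i\in\mathcal I_i$ with $\mathcal I_i\subseteq[N_i,2N_i]$, apply the hypothesis to the bulk, and estimate the thin ``boundary'' contribution directly, using crucially that $\alpha_{n_1,\dots,n_r}$ is non-negative and $1$-bounded. Putting $y_i=n_i/N_i\in[1,2]$, the $j$-th constraint becomes $\prod_iy_i^{\alpha_i^{(j)}}\le B_j'$ for a suitable $B_j'$; if such a constraint holds throughout $[1,2]^r$ it may be dropped, if one fails throughout then $\sum^*$ is empty, so I may assume each of the $k=O(1)$ hypersurfaces $\Sigma_j=\{\sum_i\alpha_i^{(j)}\log y_i=\log B_j'\}$ meets the open box $(1,2)^r$, and I let $\mathcal R\subseteq[1,2]^r$ be the region they cut out. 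Fixing a large $D=D(A,r)$, I would partition $[1,2]$ into $\lfloor(\log x)^D\rfloor$ equal pieces and take the induced grid of $\ll(\log x)^{Dr}$ boxes $\prod_iJ_i$, each of which is (a) contained in $\mathcal R$, (b) disjoint from $\mathcal R$, or (c) met by some $\Sigma_j$. Since each $\Sigma_j$ is, after solving for one coordinate, a graph whose gradient is bounded in terms of its exponents, it meets only $\ll_{A,r}(\log x)^{D(r-1)}$ of the grid boxes, and so there are $\ll_{A,r}(\log x)^{D(r-1)}$ boundary boxes. The decomposition $\mathbf{1}_{\mathcal R}=\sum_{\text{(a)}}\prod_i\mathbf{1}_{y_i\in J_i}+\sum_{\text{(c)}}\mathbf{1}_{\mathcal R}\prod_i\mathbf{1}_{y_i\in J_i}$ then splits $\sum^*$ into a sum over these boxes, the box $\prod_iJ_i$ contributing the intervals $\mathcal I_i=N_iJ_i\subseteq[N_i,2N_i]$.

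For a type-(a) box the inner sum is precisely of the form in the hypothesis, so applying it with $A+Dr+1$ in place of $A$ and summing over the $\ll(\log x)^{Dr}$ such boxes contributes $\ll x/(\log x)^{A+1}$; type-(b) boxes contribute nothing. For a type-(c) box, since $0\le\alpha\le1$ and $0\le\mathbf{1}_{\mathcal R}\le1$,
\[
\Bigl|\sum_{n_i\in\mathcal I_i}\alpha\,\mathbf{1}_{\mathcal R}\Bigl(\mathbf{1}_{n_1\cdots n_r\equiv a\Mod{q}}-\tfrac{\mathbf{1}_{(n_1\cdots n_r,q)=1}}{\phi(q)}\Bigr)\Bigr|\le\sum_{n_i\in\mathcal I_i}\mathbf{1}_{n_1\cdots n_r\equiv a\Mod{q}}+\frac{1}{\phi(q)}\prod_i|\mathcal I_i|.
\]
Summed over the boundary boxes and over $q\sim Q$, the second term is $\ll x(\log x)^{1-D}\le x/(\log x)^{A+1}$ for $D$ large (using $\prod_iN_i\asymp x$). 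The first term counts integer tuples in a thin multiplicative shell lying in a prescribed residue class, so it remains to show that, for each $j$ and every $\eta\ll_{A,r}(\log x)^{-D}$,
\[
\sum_{q\sim Q}\ \sup_{(a,q)=1}\ \#\Bigl\{(n_1,\dots,n_r):\ n_i\sim N_i,\ \Bigl|\textstyle\sum_i\alpha_i^{(j)}\log\tfrac{n_i}{N_i}-\log B_j'\Bigr|\le\eta,\ n_1\cdots n_r\equiv a\Mod{q}\Bigr\}\ll\frac{x}{(\log x)^{A+1}}.
\]

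This last display is the crux, and where the real work lies. In the applications the variables occurring in a given constraint do so with equal exponent, so the shell condition is just $m\in I$ for the product variable $m=\prod_{i:\alpha_i^{(j)}\ne 0}n_i$, where $I$ is an interval of length $\ll\eta\prod_iN_i\ll x/(\log x)^D$; fixing the variables not in this product, $m$ is then confined to $I$ and to a single residue class modulo $q$, and since $|I|\gg q$ there are $\ll(|I|/q)(\log x)^{O(1)}$ admissible values of $m$ (each with at most $\tau(m)^{O(1)}$ factorisations) by a standard short-interval divisor estimate in arithmetic progressions. Summing over the remaining variables and then over $q\sim Q$, and using once more that the box has volume $\asymp x$, bounds the display by $\ll\eta x(\log x)^{O(1)}$, which is $\ll x/(\log x)^{A+1}$ once $D$ is large. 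The delicate features here are the complete uniformity in $a$ — which forces one to carry the residue-class condition through the divisor estimate and to use the product structure of the constraint rather than a single variable — and the verification that $|I|$ is genuinely large compared with $q$; both are satisfied in our applications.
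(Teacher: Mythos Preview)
Your box-subdivision strategy and your treatment of the interior (type-(a)) boxes match the paper's argument. The gap is in the boundary (type-(c)) boxes: you bound the positive part by $\sum_{n_i\in\mathcal I_i}\mathbf{1}_{n_1\cdots n_r\equiv a\Mod{q}}$, dropping \emph{both} $\alpha$ and $\mathbf{1}_{\mathcal R}$, and then try to control this raw count via a short-interval divisor estimate in progressions. As you yourself acknowledge, this forces extra assumptions (equal exponents in each constraint, and $|I|\gg q$) that are not in the lemma's hypotheses---so you do not prove the lemma as stated, only argue that it holds ``in the applications''.

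The paper avoids all of this with a one-line observation: on a boundary box, drop only $\mathbf{1}_{\mathcal R}$ and \emph{keep} $\alpha$. Since $0\le\alpha\mathbf{1}_{\mathcal R}\le\alpha\le 1$, the boundary-box contribution is sandwiched between $-\tfrac{1}{\phi(q)}\prod_i|\mathcal I_i|$ and $\sum_{n_i\in\mathcal I_i}\alpha\,\mathbf{1}_{n_1\cdots n_r\equiv a\Mod{q}}$, and the latter equals
\[
\sum_{n_i\in\mathcal I_i}\alpha_{n_1,\dots,n_r}\Bigl(\mathbf{1}_{n_1\cdots n_r\equiv a\Mod{q}}-\frac{\mathbf{1}_{(n_1\cdots n_r,q)=1}}{\phi(q)}\Bigr)+\frac{1}{\phi(q)}\sum_{\substack{n_i\in\mathcal I_i\\(n_1\cdots n_r,q)=1}}\alpha_{n_1,\dots,n_r}.
\]
The first sum is \emph{exactly} of the form appearing in the hypothesis (a full-box sum with the original weights), so summed over $q\sim Q$ and over the $\ll(\log x)^{D(r-1)}$ boundary boxes it is $\ll x/(\log x)^A$ once the hypothesis is invoked with a larger exponent; the second sum and the lower sandwich term are trivially $\le\tfrac{1}{\phi(q)}\prod_i|\mathcal I_i|$, which you already handle. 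No divisor estimate, no restriction on the exponents $\alpha_i$, and no comparison of interval length with $q$ is needed.
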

%
%
%%%%%%%%%%%%%%%%%%%%%%%%%%
%
%
\begin{proof}
This is a subdivision argument. If $N_j\ll (\log{x})^{O(1)}$ then we consider each value of $n_j$ individually. Thus it suffices to consider the case when $N_i>(\log{x})^C$ for all $i$ for a suitably large constant $C$. Let $J=\lfloor (\log{x})^C\rfloor$. We partition the interval $[N_i,2N_i)$ into $J$ disjoint subintervals $\mathcal{I}_{i,j}=[N_i(1+(j-1)/J),N_i(1+j/J))$ for $j\in\{1,\dots,J\}$. We do this for each $i\in\{1,\dots,r\}$, so there are $J^r\ll (\log{x})^{Cr}$ such subintervals in total. We call an $r$-tuple $(j_1,\dots,j_r)\in\{1,\dots,J\}^r$ \textit{exceptional} if there exists $\mathbf{a},\mathbf{b}\in\mathcal{I}_{1,j_1}\times\mathcal{I}_{2,j_2}\times\dots\times\mathcal{I}_{r,j_r}$ such that one of the conditions of the summation holds for $\mathbf{a}$ but not $\mathbf{b}$ - that is if $a_1^{\alpha_1}\cdots a_r^{\alpha_r}\le B< b_1^{\alpha_1}\cdots b_r^{\alpha_r}$. Since any $n_i\in\mathcal{I}_{i,j_i}$ satisfies $n_i=N_{i}(1+j_i/J+O(1/\log^C{x}))$, we see that if $(j_1,\dots,j_r)$ is exceptional then for some suitable $(\alpha_1,\dots,\alpha_r,B)$
\[
N_{1}^{\alpha_1}\cdots N_{r}^{\alpha_r}\Bigl(1+\frac{j_1}{J}\Bigr)^{\alpha_1}\cdots\Bigl(1+\frac{j_r}{J}\Bigr)^{\alpha_r}\Bigl(1+O_{\alpha_1,\dots,\alpha_r}\Bigl(\frac{1}{(\log{x})^C}\Bigr)\Bigr)=B.
\]
There are $O_{\alpha_1,\dots,\alpha_r}(J^{r-1})$ possible such exceptional tuples $(j_1,\dots,j_r)$. (Any such constraint cannot have all $\alpha_i=0$, and if $\alpha_\ell\ne0$ then there are $O_{\alpha_\ell}(1)$ choices of $j_\ell$ for each choice of the other $j_i$.) Since there are $O(1)$ such constraints, there are $O(J^{r-1})$ exceptional tuples in total (with the implied constant depending on all the $\alpha_i$). We call a tuple $(j_1,\dots,j_r)$ \textit{good} if for all $\mathbf{a}\in\mathcal{I}_{1,j_1}\times\mathcal{I}_{2,j_2}\times\dots\times\mathcal{I}_{r,j_r}$ we have $a_1^{\alpha_1}\cdots a_r^{\alpha_r}\le B$. Since $\alpha_{n_1,\dots,n_r}\ge 0$, we see that
\begin{align}
\mathop{\sideset{}{^*}\sum}_{\substack{n_1,\dots,n_r\\ n_i\sim N_i\forall i}}&\alpha_{n_1,\dots,n_r}\Bigl(\mathbf{1}_{n_1\cdots n_r\equiv a\Mod{q}}-\frac{\mathbf{1}_{(n_1\cdots n_r,q)=1}}{\phi(q)}\Bigr)\nonumber\\
&\ge \sum_{(j_1,\dots,j_r)\text{ good}}\Bigl(\mathop{\sum}_{\substack{n_1,\dots,n_r\\ n_i\in \mathcal{I}_{i,j_i}\forall i}}\alpha_{n_1,\dots,n_r}\Bigl(\mathbf{1}_{n_1\cdots n_r\equiv a\Mod{q}}-\frac{\mathbf{1}_{(n_1\cdots n_r,q)=1}}{\phi(q)}\Bigr)\Bigr)\nonumber\\
&-\sum_{(j_1,\dots,j_r)\text{ exceptional}}\Bigl(\mathop{\sum}_{\substack{n_1,\dots,n_r\\ n_i\in \mathcal{I}_{i,j_i}\forall i}}\alpha_{n_1,\dots,n_r}\frac{\mathbf{1}_{(n_1\cdots n_r,q)=1}}{\phi(q)}\Bigr).\label{eq:SubdivisionLower}
\end{align}
By the assumption of the lemma, the contribution from good tuples when summed over $q\sim Q$ (with absolute values) is small. By trivial estimation we also have
\begin{align*}
\sum_{\substack{(j_1,\dots,j_r)\\\text{ exceptional}}}\Bigl(\sum_{\substack{n_1,\dots,n_r\\ n_i\in \mathcal{I}_{i,j_i}\forall i}}\alpha_{n_1,\dots,n_r}\frac{\mathbf{1}_{(n_1\cdots n_r,q)=1}}{\phi(q)}\Bigr)&\ll \frac{1}{\phi(q)}J^{r-1} \sup_{j_1,\dots,j_r}\prod_{i=1}^r\#\mathcal{I}_{i,j_i}\\
&\ll_r \frac{x}{\phi(q)(\log{x})^C}.
\end{align*}
Thus \eqref{eq:SubdivisionLower} gives a suitable lower bound. By upper bounding the main summation in an analogous manner we obtain a suitable upper bound. This gives the result.
\end{proof}
%
%
%%%%%%%%%%%%%%%%%%%%%%
%
%
\begin{lmm}[Terms which can be handled trivially]\label{lmm:Trivial}
Let $A,C>0$ and $\delta\in[0,1/1000]$. Let $\lambda_d^+$ be the upper bound sieve weights of Lemma \ref{lmm:Sieve}. Let
\begin{align*}
\mathcal{B}_1&:=\Bigl[\frac{x^{2/5}}{4} ,x^{2/5+6\delta}(\log{x})^{2C}\Bigr]\cup\Bigl[\frac{x^{1/2-3\delta}}{(\log{x})^C},2x^{1/2}\Bigr],\\
\mathcal{B}_2&:=[x^{3/7},x^{3/7+8\delta}]\cup[x^{1/2-4\delta},x^{1/2+4\delta}]\cup[x^{4/7-8\delta},x^{4/7}].
\end{align*}
Let $\mathcal{B}\in\{\mathcal{B}_1,\mathcal{B}_2\}$ and set
\begin{align*}
\rho_\mathcal{B}(n)&:=\sum_{\substack{m_1 m_2=n\\ m_1\in\mathcal{B}}}\Bigl(\sum_{\substack{d_1|m_1\\ d_1<x^{\epsilon} }}\lambda_{d_1}^+\Bigr)\Bigl(\sum_{\substack{d_2|m_2\\ d_2<x^\epsilon }}\lambda_{d_2}^+\Bigr).
\end{align*}
Then we have that:
\begin{enumerate}
\item $\rho_\mathcal{B}(n)$ is equidistributed in arithmetic progressions: For $Q<x^{3/5}$ we have
\[
\sum_{q\sim Q}\sup_{(a,q)=(b,q)=1}\Bigl|\sum_{\substack{n\sim x\\ n\equiv a\Mod{q}}}\rho_\mathcal{B}(n)-\sum_{\substack{n\sim x \\ n\equiv b\Mod{q}}}\rho_\mathcal{B}(n)\Bigr|\ll_{A,C}\frac{x}{(\log{x})^A}.
\]
\item $\rho_\mathcal{B}(n)$ is an upper bound for terms with a subproduct in $\mathcal{B}$: For $n\sim x$
\[
\sum_{\substack{n=m_1\dots m_j n_1\dots n_j\\ P^-(n)\ge x^\epsilon \\ \prod_{i\in\mathcal{I}_1}m_i\prod_{i\in\mathcal{I}_2}n_i\in\mathcal{B}\text{ some }\mathcal{I}_1,\mathcal{I}_2\subseteq\{1,\dots,j\}}}1\ll \rho_\mathcal{B}(n).
\]
\item $\rho_\mathcal{B}(n)$ has small average: 
\[
\sum_{n\sim x}\rho_\mathcal{B}(n)\ll_C\delta \pi(x)+\frac{x\log\log{x}}{(\log{x})^2}.
\]
\end{enumerate}
\end{lmm}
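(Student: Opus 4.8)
The plan is to establish the three properties in turn, treating $\rho_\mathcal{B}$ as a divisor-type weight convolved with two copies of the upper bound sieve $\lambda_d^+$. For property (1), the equidistribution statement, I would open up the definition of $\rho_\mathcal{B}(n)$ and write the sum over $n\sim x$ with $n\equiv a\Mod q$ as a sum over $d_1,d_2<x^\epsilon$ of $\lambda^+_{d_1}\lambda^+_{d_2}$ times a count of $n=m_1 m_2$ with $d_1\mid m_1$, $d_2\mid m_2$, $m_1\in\mathcal{B}$, and $n\equiv a\Mod q$. Writing $m_i=d_i k_i$ this becomes (after removing the coprimality issues between $d_1d_2$ and $q$, which contribute negligibly) a restricted double divisor sum: counting $k_1 k_2$ with $k_1$ in a dilated interval $\mathcal{B}/d_1$ and $k_1 k_2 \equiv a\overline{d_1 d_2}\Mod q$. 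Since $Q<x^{3/5}<x^{2/3-\epsilon}$ and $d_1 d_2<x^{2\epsilon}$, Lemma \ref{lmm:DoubleDivisor} applies (with the intervals $\mathcal{I}_1=\mathcal{B}/d_1$ and $\mathcal{I}_2$ the complementary range for $k_2$) and gives that this count equals the coprimality-weighted main term plus $O_A(x/(q(\log x)^A))$, with the main term independent of the residue class $a$. Taking the difference of the $a$-count and the $b$-count, the main terms cancel exactly, and summing the error over $q\sim Q$ and over the $O(x^{2\epsilon})$ pairs $(d_1,d_2)$ (each weighted by $|\lambda^+_{d_i}|\le 1$) leaves $O_A(x (\log x)^{-A+1})$; adjusting $A$ gives (1).

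For property (2), this is purely combinatorial. Given a factorization $n=m_1\cdots m_j n_1\cdots n_j$ with $P^-(n)\ge x^\epsilon$ and with some subproduct $P=\prod_{i\in\mathcal{I}_1}m_i\prod_{i\in\mathcal{I}_2}n_i\in\mathcal{B}$, set $m_1:=P$ and $m_2:=n/P$, giving a factorization $n=m_1m_2$ with $m_1\in\mathcal{B}$. Since every prime factor of $n$ is $\ge x^\epsilon$, both $m_1$ and $m_2$ have $P^-\ge x^\epsilon$, so by Lemma \ref{lmm:Sieve} each inner sum $\sum_{d_i\mid m_i,\,d_i<x^\epsilon}\lambda^+_{d_i}\ge 1$ (using that $\lambda^+$ is supported on $d<x^\epsilon$, so the constraint $d_i<x^\epsilon$ is vacuous). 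Hence each such factorization contributes at least $1$ to $\rho_\mathcal{B}(n)$; summing over all such factorizations and noting the left-hand count is bounded by the number of ways to write $n=m_1m_2$ with $m_1\in\mathcal B$, each counted with a nonnegative weight $\ge 1$, gives $\rho_\mathcal{B}(n)\gg$ the left side. (One should be slightly careful that distinct tuples on the left may map to the same $(m_1,m_2)$, but this only helps the inequality in the stated direction.)

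Property (3) is the main obstacle and the analytically substantive step. We have $\sum_{n\sim x}\rho_\mathcal{B}(n)=\sum_{d_1,d_2<x^\epsilon}\lambda^+_{d_1}\lambda^+_{d_2}\sum_{\substack{m_1 m_2\sim x\\ d_1\mid m_1,\,d_2\mid m_2\\ m_1\in\mathcal{B}}}1$, and the inner sum is $\asymp \sum_{\substack{m_1\in\mathcal{B},\,d_1\mid m_1}} (x/(m_1 d_2))$ roughly. Using $\sum_{d_i<x^\epsilon}\lambda^+_{d_i}/d_i\ll 1/\log x$ from Lemma \ref{lmm:Sieve} for the $d_2$-sum, and for the $d_1$-sum the same bound after pulling $d_1$ out of $m_1$, one is left with something of size $\ll \frac{x}{(\log x)^2}\sum_{m_1\in\mathcal{B}}\frac{1}{m_1}$ — more precisely the sieve weights save $(\log x)^{-2}$ over the trivial count of pairs, and the sum $\sum_{m_1\in\mathcal B}1/m_1$ is $\ll \log(\text{ratio of endpoints of the intervals defining }\mathcal B)$. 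For $\mathcal{B}_1$ the two constituent intervals have logarithmic lengths $O(\delta)+O(C\log\log x/\log x)$ and $O(\delta)+O(C\log\log x/\log x)$ respectively, so $\sum_{m_1\in\mathcal B_1}1/m_1\ll_C \delta+\log\log x/\log x$; the same holds for $\mathcal{B}_2$ where all three intervals have logarithmic length $O(\delta)$. Multiplying, $\sum_{n\sim x}\rho_\mathcal{B}(n)\ll_C (\delta+\log\log x/\log x)\cdot x/(\log x)\ll_C \delta\pi(x)+x\log\log x/(\log x)^2$, which is the claim. The delicate point is getting the full $(\log x)^{-2}$ saving honestly: one must handle the interaction between the divisibility constraint $d_1\mid m_1$ and the interval constraint $m_1\in\mathcal B$ — this is done by writing $m_1=d_1 k_1$ and noting $k_1$ then ranges over an interval $\mathcal{B}/d_1$ of the same logarithmic length, so $\sum_{k_1\in\mathcal B/d_1}1/k_1$ is still $\ll_C\delta+\log\log x/\log x$ uniformly in $d_1$, whence the $d_1$ and $d_2$ sums decouple and each furnishes a factor $1/\log x$ via Lemma \ref{lmm:Sieve}.
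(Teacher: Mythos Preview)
Your approach is essentially the same as the paper's for all three parts, and is correct in outline. Two small points are worth tightening.

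In part (2), your parenthetical has the direction backwards: if several factorizations $n=m_1\cdots m_j n_1\cdots n_j$ on the left map to the \emph{same} pair $(P,n/P)$, this makes the left-hand count \emph{larger} than the number of admissible pairs, not smaller, so it hurts rather than helps. The paper's fix (which you are one line away from) is to note that since $P^-(n)\ge x^\epsilon$ and $n\sim x$, we have $\Omega(n)\le 1/\epsilon$, so the total number of ordered factorizations of $n$ into $2j$ parts is $O_{j,\epsilon}(1)$; hence the left side is $O(1)$, and since $\rho_\mathcal{B}(n)\ge 1$ whenever the left side is positive, the bound follows.

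In part (3), the decoupling of the $d_1$- and $d_2$-sums does not follow merely from $\sum_{k_1\in\mathcal{B}/d_1}1/k_1$ being \emph{uniformly bounded} in $d_1$, because the weights $\lambda^+_{d_1}$ are signed: one cannot pull a non-constant bounded factor outside a signed sum and then invoke $\sum_d\lambda^+_d/d\ll 1/\log x$. What you need (and what the paper uses) is the stronger statement that this harmonic sum equals a fixed constant $L_0$ (the logarithmic measure of $\mathcal{B}$) plus $O(x^{-1/5})$, independently of $d_1$; then the main term genuinely factors as $xL_0\bigl(\sum_d\lambda^+_d/d\bigr)^2$ and the error terms are controlled using $|\lambda^+_d|\le 1$. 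You already observe that $\mathcal{B}/d_1$ has the same logarithmic length as $\mathcal{B}$, which is exactly this point --- you just need to state the conclusion as an asymptotic rather than an upper bound.
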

%
%
%%%%%%%%%%%%%%%%%%%%%%%%%%
%
%
\begin{proof}
The second claim follows immediately from Lemma \ref{lmm:Sieve} and the fact that $P^-(n)\ge x^\epsilon$ implies that $j\ll 1$ so there are $O(1)$ choices of $\mathcal{I}_1,\mathcal{I}_2$ which occur each with multiplicity $O(1)$. The third claim similarly follows from the fact that $\rho_{\mathcal{B}}(n)$ is a sieve upper bound. If $\mathcal{B}=\mathcal{B}_1$:
\begin{align*}
\sum_{n\sim x}\rho_{\mathcal{B}}(n)&=\sum_{d_1,d_2<x^\epsilon}\lambda^+_{d_1}\lambda_{d_2}^+\sum_{\substack{n_1'n_2'\sim x/(d_1d_2)\\ d_1n_1'\in\mathcal{B}}}1\\
&=x\sum_{d_1,d_2<x^\epsilon}\frac{\lambda^+_{d_1}\lambda_{d_2}^+}{d_1d_2}\sum_{d_1n_1'\in\mathcal{B} }\frac{1}{n_1'}+O(x^{2\epsilon}\#\mathcal{B})\\
&=x\sum_{d_1,d_2<x^\epsilon}\frac{\lambda^+_{d_1}\lambda_{d_2}^+}{d_1d_2}\Bigl(\log(8x^{9\delta}(\log{x})^{3C})+O(x^{-1/5})\Bigr)+O(x^{1/2+2\epsilon})\\
&=x\Bigl(9\delta\log{x}+3C\log\log{x}+\log{8}\Bigr)\Bigl(\sum_{d<x^\epsilon}\frac{\lambda_d^+}{d}\Bigr)^2+O(x^{1-\epsilon})\\
&\ll _C\frac{\delta x}{\log{x}}+\frac{x\log\log{x}}{(\log{x})^2}.
\end{align*}
Here we used Lemma \ref{lmm:Sieve} in the final line. The argument for $\mathcal{B}=\mathcal{B}_2$ is entirely analogous. Thus we are left to establish the first claim. Substituting the definition of $\rho_\mathcal{B}$, we see that
\begin{align*}
\sum_{\substack{n\sim x\\ n\equiv a\Mod{q}}}\rho_{\mathcal{B}}(n)&=\sum_{\substack{d_1<x^{\epsilon} }}\lambda_{d_1}^+\sum_{\substack{d_2<x^\epsilon }}\lambda_{d_2}^+\sum_{\substack{n_1'n_2'\sim x/(d_1 d_2) \\ d_1n_1'\in\mathcal{B}\\ d_1d_2n_1'n_2'\equiv a\Mod{q} }}1.
\end{align*}
By Lemma \ref{lmm:DoubleDivisor}, we have that for $(d_1d_2,q)=1$ and $q\le (x/d_1d_2)^{2/3-\epsilon}$
\[
\sum_{\substack{n_1'n_2'\sim x/(d_1 d_2) \\ d_1n_1'\in\mathcal{B}\\ d_1d_2n_1'n_2'\equiv a\Mod{q} }}1=\frac{1}{\phi(q)}\sum_{\substack{n_1'n_2'\sim x/(d_1 d_2) \\ d_1n_1'\in\mathcal{B}\\ (n_1'n_2',q)=1 }}1+O_A\Bigl(\frac{x}{q(\log{x})^A}\Bigr).
\]
Thus we see that for $Q=x^{3/5}$
\[
\sup_{q\le Q}\sup_{(a,q)=(b,q)=1}\Bigl|\sum_{\substack{n\sim x\\ n\equiv a\Mod{q}}}\rho_{\mathcal{B}}(n)-\sum_{\substack{n\sim x \\ n\equiv b\Mod{q}}}\rho_{\mathcal{B}}(n)\Bigr|\ll_A\frac{x}{q(\log{x})^A}.
\]
This gives the result.
\end{proof}
%
%
%%%%%%%%%%%%%%%%%%%%%%
%
%
\begin{lmm}[Type II terms]\label{lmm:TypeII}
Let $A>0$, $C=C(A)$ sufficiently large in terms of $A$ and $Q_1=x^{1/10-3\delta}(\log{x})^{-C}$, $Q_2=x^{2/5+4\delta}(\log{x})^C$. Let
\begin{align*}
\mathcal{G}&:=\Bigl[x^{2/5+6\delta}(\log{x})^{2C},\frac{x^{1/2-3\delta}}{(\log{x})^C}\Bigr],\\
\rho_\mathcal{G}(n)&:=\sum_{j=1}^5 \frac{(-1)^j \binom{5}{j}}{\log{x}}\hspace{-1cm}\sum_{\substack{n_1\cdots n_j m_1\cdots m_j=n \\ P^-(n)\ge x^\epsilon \\ m_1,\dots,m_j \le 2x^{1/5}\\ \prod_{i\in\mathcal{I}_1}n_i\prod_{i\in\mathcal{I}_2}m_i\in\mathcal{G}\text{ some }\mathcal{I}_1,\mathcal{I}_2\subseteq \{1,\dots,j\}}}\hspace{-1cm}\mu(m_1)\cdots \mu(m_j)\log{n_1}.
\end{align*}
Then we have that
\[
\sum_{q_1\sim Q_1}\sum_{q_2\sim Q_2}\sup_{(a,q_1 q_2)=1}\Bigl|\sum_{\substack{n\sim x\\ n\equiv a\Mod{q_1 q_2}}}\rho_{\mathcal{G}}(n)-\frac{1}{\phi(q_1q_2)}\sum_{\substack{n\sim x\\ (n,q_1q_2)=1}}\rho_{\mathcal{G}}(n)\Bigr|\ll_A \frac{x}{(\log{x})^A}.
\]
\end{lmm}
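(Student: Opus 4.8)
The plan is to expand the definition of $\rho_{\mathcal{G}}$ and reduce everything to an application of Proposition \ref{prpstn:MainProp}. First I would substitute the definition of $\rho_{\mathcal{G}}(n)$ into the sum and interchange the order of summation, so that the outer sum is over the decomposition type: a choice of $1\le j\le 5$, a choice of subsets $\mathcal{I}_1,\mathcal{I}_2\subseteq\{1,\dots,j\}$, and the ranges $N_1,\dots,N_j,M_1,\dots,M_j$ of the variables $n_1,\dots,n_j,m_1,\dots,m_j$. We have $N_1\cdots N_j M_1\cdots M_j\asymp x$, each $M_i\le 2x^{1/5}$, and crucially $P:=\prod_{i\in\mathcal{I}_1}N_i\prod_{i\in\mathcal{I}_2}M_i\asymp G$ for some $G$ lying in the (dyadically subdivided) range $\mathcal{G}=[x^{2/5+6\delta}(\log x)^{2C},x^{1/2-3\delta}(\log x)^{-C}]$. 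Here we use Lemma \ref{lmm:Partition} to replace each sharp range $n_i\sim N_i$, $m_i\sim M_i$ by a smooth partition of unity at the cost of a factor $(\log x)^{O(1)}$ in the number of terms, and we use Lemma \ref{lmm:Separation} to reduce the constraint $\prod_{i\in\mathcal{I}_1}n_i\prod_{i\in\mathcal{I}_2}m_i\in\mathcal{G}$ to a product of intervals — strictly speaking Lemma \ref{lmm:Separation} applies to non-negative sequences, so one should first bound $|\mu(m_1)\cdots\mu(m_j)\log n_1|\le (\log x)\tau(n)^{O(1)}$ and use the triangle inequality, or equivalently split the range $\mathcal{G}$ dyadically and subdivide.

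The key point is then that with $N:=P$ and $M:=x/P$ (so $NM\asymp x$, $N\asymp G\in\mathcal{G}$, $M\asymp x/G\in[x^{1/2+3\delta}(\log x)^C, x^{3/5-6\delta}(\log x)^{-2C}]$), and with the further factorization $q=q_1 q_2$ where $R:=q_1\sim Q_1\asymp x^{1/10-3\delta}(\log x)^{-C}$ and $Q:=q_2\sim Q_2\asymp x^{2/5+4\delta}(\log x)^C$, the hypotheses of Proposition \ref{prpstn:MainProp} are satisfied. Indeed $QR=Q_1Q_2\asymp x^{1/2+\delta}$; the constraint $x^{6\delta}(\log x)^C\le R\le x^{1/10-3\delta}(\log x)^{-C}$ holds for $\delta$ small; and the constraint $Qx^{2\delta}(\log x)^C\le N\le x^{1/2-3\delta}(\log x)^{-C}$ holds because $N\ge x^{2/5+6\delta}(\log x)^{2C}=Q_2\cdot x^{2\delta}(\log x)^C$ and $N\le x^{1/2-3\delta}(\log x)^{-C}$ by definition of $\mathcal{G}$. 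The coefficients: setting $\alpha_n$ to be the contribution of the variables indexed outside a suitable set and $\beta_m$ the rest, arranged so that the factor $\log n_1$ (which satisfies Siegel–Walfisz, being $\log$ of a smooth variable after partitioning) and the $\mu$-factors (which satisfy Siegel–Walfisz) end up on the $\alpha$ side while the remaining variables form $\beta$; in all cases $|\alpha_n|,|\beta_m|\le\tau(n)^{O(1)}\le\tau(n)^{A'}$ for the relevant $A'$, and $\alpha_n$ satisfies \eqref{eq:SiegelWalfisz} since it is a Dirichlet convolution of sequences each satisfying Siegel–Walfisz (including at least one $\mu$ or $\log$ or $\mathbf{1}_{m\le 2x^{1/5}}$ smoothed factor). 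Applying Proposition \ref{prpstn:MainProp} for each of the $O((\log x)^{O(1)})$ decomposition types, with $A$ replaced by $A+O(1)$, and summing, gives the claimed bound.

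The main obstacle is purely bookkeeping: one must check that \emph{every} decomposition type appearing in $\rho_{\mathcal{G}}$ can genuinely be split as $\alpha*\beta$ with the length-$N$ block (the subproduct forced into $\mathcal{G}$) on one side and everything else on the other, and that the side carrying $\alpha_n$ can always be arranged to include a Siegel–Walfisz factor. Since the subproduct constraint is $\prod_{i\in\mathcal{I}_1}n_i\prod_{i\in\mathcal{I}_2}m_i\in\mathcal{G}$ with $\mathcal{I}_1,\mathcal{I}_2$ arbitrary, the variable $n_1$ (carrying $\log n_1$) might lie inside the $\mathcal{G}$-block or outside it; either way the $\mathcal{G}$-block or its complement contains at least one $\mu(m_i)$-factor or the $\log n_1$-factor or a smoothed variable, all of which satisfy Siegel–Walfisz, so the convolution identity for Siegel–Walfisz sequences lets us assign $\alpha_n$ to be the sum of products over that block and $\beta_m$ the other; if by chance the block with the Siegel-Walfisz factor is the $M$-block rather than the $N$-block, we simply interchange the roles, which is allowed because Proposition \ref{prpstn:MainProp} only requires the Siegel–Walfisz hypothesis on the length-$N$ sequence — so one must double-check that in all cases the $\mathcal{G}$-block (of length $\asymp N$) can be taken to be the $\alpha$-block; this is where one uses that $j\le 5$ and each $M_i\le 2x^{1/5}$ to see that the $\mathcal{G}$-block of length $\ge x^{2/5+6\delta}$ must contain at least two of the $2j\le 10$ variables, hence contains at least one variable that is either an $n_i$ with a smoothed weight or an $m_i$ carrying $\mu$ — in every case a Siegel–Walfisz factor — so $\alpha_n$ (the $\mathcal{G}$-block) always satisfies \eqref{eq:SiegelWalfisz}. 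Once this combinatorial check is in place the rest is routine.
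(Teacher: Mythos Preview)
Your approach is essentially the same as the paper's: reduce to Proposition \ref{prpstn:MainProp} after unpacking $\rho_{\mathcal{G}}$ and separating variables via Lemma \ref{lmm:Separation}. The verification that $N\in\mathcal{G}$ lands exactly in the range required by Proposition \ref{prpstn:MainProp} (with $Q=Q_2$, $R=Q_1$) is correct.

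Two bookkeeping points deserve tightening. First, the condition in $\rho_{\mathcal{G}}$ is that \emph{some} subproduct lies in $\mathcal{G}$, not that a fixed one does; you cannot simply sum over choices of $(\mathcal{I}_1,\mathcal{I}_2)$ without overcounting. The paper handles this by inclusion--exclusion, rewriting the existential indicator as a signed linear combination of indicators that prescribed collections of subproducts lie in $\mathcal{G}$; each such term then has at least one subproduct pinned to $\mathcal{G}$, which you can take as the $N$-block. Second, your suggestion to ``bound $|\mu(m_1)\cdots\mu(m_j)\log n_1|$ and use the triangle inequality'' would only give an upper bound, not the equidistribution statement you need. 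The paper instead splits according to the signs of the $\mu(m_i)$, so that each resulting piece has constant sign and Lemma \ref{lmm:Separation} applies directly.

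Finally, your argument that the $\mathcal{G}$-block must contain at least two variables is false as stated: the $n_i$ are not bounded by $2x^{1/5}$, so a single $n_i$ can fill the whole block. This does not matter, since a single $n_i$ carrying $\mathbf{1}_{P^-(n_i)\ge x^\epsilon}$ (possibly times $\log n_i$) over an interval already satisfies the Siegel--Walfisz condition; more generally any convolution of such factors with $\mu\cdot\mathbf{1}_{P^-\ge x^\epsilon}$ factors does, so the hypothesis of Proposition \ref{prpstn:MainProp} is met in every case.
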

%
%
%%%%%%%%%%%%%%%%%%%%%%%%%%
%
%
\begin{proof}[Proof assuming Proposition \ref{prpstn:MainProp}]
We use inclusion-exclusion to rewrite the condition that there exists a subproduct lying in $\mathcal{G}$ as a linear combination of terms where some fixed subproducts lie in $\mathcal{G}$. We separately consider all possible combinations of signs of the $\mu$ functions (so the terms are all positive or all negative), and then use Lemma \ref{lmm:Separation} to remove the dependencies from the conditions $n_1\cdots n_jm_1\cdots m_j\sim x$ and that some subproducts lie in $\mathcal{G}$ by splitting the summation into short intervals. Finally, by grouping variables suitably we can apply Proposition \ref{prpstn:MainProp}, which gives the result.
\end{proof}
%
%
%%%%%%%%%%%%%%%%%%%%%%
%
%
\begin{lmm}\label{lmm:Triple}
Let $A>0$ and $C=C(A)$ be sufficiently large in terms of $A$. Let $x^{\epsilon}\le N_1\le N_2\le N_3\le x^{2/5}$ and $1\le M\le x^{2/5}/N_3$ satisfy $MN_1N_2N_3\asymp x$. Let $\mathcal{I}_1\subseteq[N_1,2N_1],\,\mathcal{I}_2\subseteq[N_2,2N_2],\,\mathcal{I}_3\subseteq[N_3,2N_3]$ be intervals and $\alpha_m$ be a 1-bounded complex sequence. Let
\[
\Delta(a;q):=\sum_{m\sim M}\alpha_m\mathop{\sum_{n_1\in\mathcal{I}_1}\sum_{n_2\in\mathcal{I}_2}\sum_{ n_3\in\mathcal{I}_3}}\limits_{P^-(n_1n_2n_3)\ge x^\epsilon}\Bigl(\mathbf{1}_{m n_1 n_2 n_3\equiv a\Mod{q}}-\frac{\mathbf{1}_{(m n_1n_2n_3,q)=1}}{\phi(q)}\Bigr).
\]
Let $Q_1,Q_2,Q_3$ satisfy $Q_1Q_2Q_3=x^{1/2+\delta}$, $Q_1=x^{1/10-3\delta}(\log{x})^{-C}$ and
\[
Q_2\in [x^{20\delta} (\log{x})^{5C},x^{1/100}].
\]
Then we have that
\[
\sum_{q_1\le  Q_1}\sum_{q_2\le  Q_2}\sum_{q_3\le  Q_3}\sup_{(a,q_1q_2q_3)=1}|\Delta(a;q_1q_2q_3)|\ll_A\frac{x}{(\log{x})^A}.
\]
\end{lmm}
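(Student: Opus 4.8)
\textbf{Proof plan for Lemma \ref{lmm:Triple}.}

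The plan is to reduce this to Proposition \ref{prpstn:Triple}, which handles exactly the "one arbitrary factor times three smooth factors" situation with moduli of the form $qr$ satisfying $M < \min(R x^{-4\delta}(\log x)^{-C}, Q^{1/2} x^{-2\delta}(\log x)^{-C})$. The difficulty is that in Lemma \ref{lmm:Triple} the modulus is a \emph{triple} product $q_1 q_2 q_3$ with $q_1 \asymp x^{1/10-3\delta}$, $q_2$ a small factor in $[x^{20\delta}(\log x)^{5C}, x^{1/100}]$, and $q_3$ of the complementary size, whereas Proposition \ref{prpstn:Triple} wants a two-factor modulus $qr$. The natural move is to set $r = q_1$ (so $R \asymp x^{1/10-3\delta}$) and $q = q_2 q_3$ (so $Q \asymp x^{2/5+4\delta}/(\log x)^{-C}$, up to the ranges of $q_2, q_3$); then the hypotheses $M \le x^{2/5}/N_3$ together with $N_1 N_2 N_3 \asymp x/M$ and the constraint $N_3 \le x^{2/5}$ should be leveraged to bound $M$. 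Indeed from $N_1 \le N_2 \le N_3 \le x^{2/5}$ and $M N_1 N_2 N_3 \asymp x$ we get $M \asymp x/(N_1 N_2 N_3) \ge x / x^{6/5} = x^{-1/5}$ on one side, but more usefully $M \le x^{2/5}/N_3$ and $N_3 \ge (x/M)^{1/3}$ (since $N_3$ is the largest of three factors multiplying to $x/M$), giving $M \le x^{2/5} (M/x)^{1/3}$, i.e. $M^{2/3} \le x^{2/5-1/3} = x^{1/15}$, so $M \le x^{1/10}$. This is exactly small enough: $R \asymp x^{1/10-3\delta}$ beats $M x^{4\delta}(\log x)^C$ when $\delta$ is small, and $Q^{1/2} \asymp x^{1/5+2\delta}$ comfortably beats $M x^{2\delta}(\log x)^C$.

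The first step is therefore to dispose of the degenerate ranges: if any of $N_1, N_2, N_3$ or $M$ is $\le (\log x)^{O(1)}$ we fix that variable and absorb it into the coefficient sequence or the $\alpha_m$, reducing the number of smooth factors; the resulting terms with fewer than three long smooth factors are handled either trivially (when there are $\le 1$ of them, by the Type I / Brun--Titchmarsh bound, since the modulus is below $x^{3/5}$) or by Lemma \ref{lmm:DoubleDivisor} (when there are exactly two, after a further smoothing). So we may assume $N_1, N_2, N_3, M \ge (\log x)^C$. Next I would replace the indicator functions of the intervals $\mathcal{I}_1, \mathcal{I}_2, \mathcal{I}_3$ and the condition $P^-(n_1 n_2 n_3) \ge x^\epsilon$ by smooth weights: apply Lemma \ref{lmm:Partition} to each $\mathbf{1}_{n_i \in \mathcal{I}_i}$ to write it as a bounded linear combination of $\psi_i(n_i/N_i')$ for smooth $\psi_i$ supported on a dyadic-type range with the required derivative bounds, at the cost of a negligible error from the boundary pieces. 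The $P^-$ condition can be handled by noting $\mathbf{1}_{P^-(n) \ge x^\epsilon} = \sum_{d|n, P^+(d) < x^\epsilon} \mu(d)$ truncated, or more cleanly by absorbing it via an upper-bound sieve (Lemma \ref{lmm:Sieve}) — but in fact it is cleanest to just incorporate the sieve weights $\lambda^+_{d_i}$ into the $n_i$ and note each $d_i \le x^\epsilon$ is tiny, so it merges harmlessly with $M$ or with one of the smooth factors without disturbing the size bound $M \le x^{1/10}$ by more than $x^{3\epsilon}$.

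Having arrived at a sum of the form appearing in Proposition \ref{prpstn:Triple} with $q = q_2 q_3 \sim Q$, $r = q_1 \sim R$ (summing over the dyadic blocks of $q_2, q_3$ separately and adding up the $O((\log x)^2)$ pieces), I would verify that the numerology holds: $R \asymp x^{1/10-3\delta}/(\log x)^C$ and $M \le x^{1/10+3\epsilon}$, so $M x^{4\delta}(\log x)^C \le x^{1/10 + 4\delta + 3\epsilon}(\log x)^C < R$ once $\delta$ (and $\epsilon$) are small and $C$ is large; similarly $Q^{1/2} \gg x^{1/5 + 2\delta}/(\log x)^{C/2} > M x^{2\delta}(\log x)^C$. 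Thus Proposition \ref{prpstn:Triple} applies and yields the bound $x/(\log x)^{A'}$ for each dyadic piece, and summing over the $O((\log x)^2)$ pieces (and over the $O(1)$ smooth sub-pieces from the partition of unity, and the $O((\log x)^{O(1)})$ choices of $d_1, d_2, d_3$ from the sieve, all $1$-bounded) gives the claim with $A$ in place of $A'$ upon taking $A' = A + O(1)$ and enlarging $C$ accordingly. The main obstacle is purely the bookkeeping of the size bound $M \le x^{1/10+o(1)}$: everything hinges on the clean inequality extracted from $N_3 \le x^{2/5}$, $M \le x^{2/5}/N_3$, $MN_1N_2N_3 \asymp x$ and $N_3 = \max(N_1,N_2,N_3)$, and on checking this survives the small perturbations coming from the sieve weights and from $q_2$ ranging up to $x^{1/100}$ (which is what forces the lower endpoint $x^{20\delta}(\log x)^{5C}$ on $Q_2$: one needs $q_2 q_3 \asymp Q$ to genuinely be of size $\approx x^{2/5}$ so that $Q^{1/2}$ dominates $M$, and the upper cutoff $x^{1/100}$ on $q_2$ keeps $R = q_1$ from shrinking, while $x^{20\delta}$ keeps the total modulus $q_1 q_2 q_3 = x^{1/2+\delta}$ consistent with $q_3$ staying positive-exponent). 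Once the size constraints are confirmed, the reduction to Proposition \ref{prpstn:Triple} is mechanical.
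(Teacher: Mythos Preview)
Your proposal has two genuine gaps.

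First, the numerology fails with your choice of grouping. You set $r = q_1$ so that $R \asymp x^{1/10 - 3\delta}/(\log x)^C$, and you derive $M \le x^{1/10}$ (plus $x^{3\epsilon}$ from the sieve); the condition of Proposition~\ref{prpstn:Triple} then demands $M x^{4\delta}(\log x)^C < R$, i.e.\ roughly $x^{1/10+4\delta} < x^{1/10-3\delta}$, which is \emph{false} for every $\delta > 0$. Your sentence ``$M x^{4\delta}(\log x)^C \le x^{1/10+4\delta+3\epsilon}(\log x)^C < R$ once $\delta$ is small'' is simply an arithmetic error. The paper instead groups $r = q_1 q_2$ and $q = q_3$, so that $R = Q_1 Q_2 \ge x^{1/10-3\delta}(\log x)^{-C} \cdot x^{20\delta}(\log x)^{5C} = x^{1/10+17\delta}(\log x)^{4C}$. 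The lower bound $Q_2 \ge x^{20\delta}(\log x)^{5C}$ exists precisely to make $R$ large enough here; you have its role backwards.

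Second, the condition $P^-(n_1 n_2 n_3) \ge x^\epsilon$ cannot be removed with upper-bound sieve weights $\lambda^+_d$: those give only the pointwise inequality $\sum_{d|n}\lambda^+_d \ge \mathbf{1}_{P^-(n)\ge x^\epsilon}$, not an identity, and $\Delta(a;q)$ is a signed quantity, so the substitution changes the sum by an amount you have no handle on. Untruncated M\"obius $\sum_{d|n,\,P^+(d)<x^\epsilon}\mu(d)$ is an identity, but $d$ ranges up to $n_i$, not $x^\epsilon$, so absorbing $d$ into $m$ destroys the bound on $M$. The paper instead applies the Buchstab-type identity of Lemma~\ref{lmm:Buchstab} to each $n_i$ in turn: this writes $\mathbf{1}_{P^-(n_i)\ge x^\epsilon}$ exactly as a ``main'' piece with $d_i \le y_i := x^{2/5+6\delta}(\log x)^{2C}/(M N_3)$ and no residual roughness, plus a ``Type~II'' piece in which $m d_i p_i n_3$ lands in the interval $[Q_2 Q_3 x^{2\delta}(\log x)^C, x^{3/7}]$ and is handled by Proposition~\ref{prpstn:MainProp} via Lemma~\ref{lmm:TypeII}. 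After three such iterations the enlarged arbitrary factor satisfies $M'' \ll x^{1/10+9\delta}(\log x)^{3C}$, and with $R = Q_1 Q_2$ as above one checks $M'' \ll Q_1 Q_2 x^{-4\delta}(\log x)^{-C}$ and $M'' \ll Q_3^{1/2} x^{-2\delta}(\log x)^{-C}$, so Proposition~\ref{prpstn:Triple} applies. This reliance on Proposition~\ref{prpstn:MainProp} is why the paper states the lemma as proved ``assuming Proposition~\ref{prpstn:MainProp} and Proposition~\ref{prpstn:Triple}'', not only the latter.
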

%
%
%%%%%%%%%%%%%%%%%%%%%%%%%%
%
%
\begin{proof}[Proof assuming Proposition \ref{prpstn:MainProp} and Proposition \ref{prpstn:Triple}]
By Lemma \ref{lmm:Buchstab}, letting $y_1:=x^{2/5+6\delta}(\log{x})^{2C}/(MN_3)\ge 1$, for some 1-bounded $\alpha'_d,\beta_d$ we have
\begin{equation}
\mathbf{1}_{P^-(n_1)\ge x^\epsilon}=\sum_{\substack{n_1=d_1n_1'\\ d_1\le y_1}}\alpha'_{d_1}+\sum_{\substack{n_1=d_1p_1p_2 n_1'\\ d_1\le y_1\le d_1p_1\\ P^-(d_1),p_2\ge p_1\\ p_1\le x^\epsilon}}\beta_{d_1}\mathbf{1}_{P^-(n_1')\ge p_2\text{ or }n_1'=1}.
\label{eq:RoughDecomp}
\end{equation}
(Here we wrote $p_2=P^-(n/d_1p_1)$.) If $y_1<d_1 p_1\le  y_1x^\epsilon$ and $m\sim M$ then $m n_3 d_1 p_1 \in [Q_2Q_3 x^{2\delta}(\log{x})^C,x^{3/7}]$, and all the terms in the second summation give rise to a product which lies in our Type II range, and so can be handled satisfactorily. Expliclity, let $\Delta'(a;q)$ be given by
\begin{align*}
\Delta'(a;q)&:=\sum_{m\sim M}\alpha_m\sum_{\substack{d_1p_1p_2n_1'\in\mathcal{I}_1 \\ d_1\le y_1\le d_1p_1\\ P^-( d_1),p_2\ge p_1\\ p_1\le x^\epsilon}}\beta_{d_1}\mathbf{1}_{P^-(n_1')\ge p_2\text{ or }n_1'=1}\mathop{\sum_{n_2\in\mathcal{I}_2}\sum_{ n_3\in\mathcal{I}_3}}\limits_{P^-(n_2n_3)\ge x^\epsilon}\\
&\qquad \times\Bigl(\mathbf{1}_{m d_1 p_1 p_2 n_1' n_2 n_3\equiv a\Mod{q}}-\frac{\mathbf{1}_{(m d_1 p_1 p_2 n_1'n_2n_3,q)=1}}{\phi(q)}\Bigr).
\end{align*}
By Lemma \ref{lmm:Separation} (considering positive and negative real and imaginary parts of $\alpha_{m}\beta_{d_1}$ separately), and Lemma \ref{lmm:TypeII} (grouping $m,d_1,p_1,n_3$ together, $p_2,n_1',n_2$ together and $q_2,q_3$ together), we see that
\[
\sum_{q_1\le Q_1}\sum_{q_2\le  Q_2}\sum_{q_3\le Q_3}\sup_{(a,q_1q_2q_3)=1}|\Delta'(a;q_1q_2q_3)|\ll_A\frac{x}{(\log{x})^A},
\]
and so the second term in \eqref{eq:RoughDecomp} contributes negligibly. Thus we just need to consider the first term of \eqref{eq:RoughDecomp}. Therefore, using Lemma \ref{lmm:Separation} again, it suffices to show that
\[
\sum_{q_1\le Q_1}\sum_{q_2\le Q_2}\sum_{q_3\le Q_3}\sup_{(a,q_1q_2q_3)=1}|\Delta''(a;q_1q_2q_3)|\ll_A\frac{x}{(\log{x})^A},
\]
where (letting $m'=d_1m$)
\[
\Delta''(a;q):=\sum_{m'\sim M'}\alpha''_{m'}\sum_{n_1'\in\mathcal{I}_1'}\mathop{\sum_{n_2\in\mathcal{I}_2'}\sum_{ n_3\in\mathcal{I}_3'}}\limits_{P^-(n_2n_3)\ge x^\epsilon}\Bigl(\mathbf{1}_{m' n_1' n_2 n_3\equiv a\Mod{q}}-\frac{\mathbf{1}_{(m' n_1'n_2n_3,q)=1}}{\phi(q)}\Bigr)
\]
for some intervals $\mathcal{I}_1'\subseteq [N_1',2N_1']$ and $\mathcal{I}_2'\subseteq [N_2,2N_2]$, $\mathcal{I}_3'\subseteq[N_3,2N_3]$ where $N_1'\ll N_1$, $M'\ll  x^{2/5+14\delta}(\log{x})^{2C}/N_3$ with $M' N_1'\asymp MN_1$, and for some $1$-bounded complex function $\alpha_m''$. By repeating this argument for $n_2,n_3$ in place of $n_1$ it suffices to show that 
\[
\sum_{q_1\le Q_1}\sum_{q_2\le Q_2}\sum_{q_3\le Q_3}\sup_{(a,q_1q_2q_3)=1}|\Delta'''(a;q_1q_2q_3)|\ll_A\frac{x}{(\log{x})^A},
\]
where
\[
\Delta'''(a;q):=\sum_{m\sim M''}\alpha'''_m\sum_{n_1\in\mathcal{I}_1''}\sum_{n_2\in\mathcal{I}_2''}\sum_{ n_3\in\mathcal{I}_3''}\Bigl(\mathbf{1}_{m n_1 n_2 n_3\equiv a\Mod{q}}-\frac{\mathbf{1}_{(m n_1n_2n_3,q)=1}}{\phi(q)}\Bigr)
\]
for some intervals $\mathcal{I}_1''\subseteq [N_1',2N_1']$, $\mathcal{I}_2''\subseteq[N_2',2N_2']$ and $\mathcal{I}_3''\subseteq[N_3',2N_3']$ with $N_i'\ll N_i$ for $i\in\{1,2,3\}$ and with $M''\ll x^{2/5+14\delta}(\log{x})^{2C}/\max(N_1',N_2',N_3')$ and $M'' N_1'N_2'N_3'\asymp MN_1N_2N_3\asymp x$.  We see that $N_i' M''\le x^{2/5+6\delta}(\log{x})^{2C}$ for all $i$ implies that $M''{}^3N_1'N_2'N_3'\le x^{6/5+18\delta}(\log{x})^{6C}$, which gives $M''\ll x^{1/10+9\delta}(\log{x})^{3C}$ since $M''N_1'N_2'N_3'\asymp x$. In particular, $M''\ll Q_1 Q_2x^{-4\delta} (\log{x})^{-C},Q_3^{1/2}x^{-2\delta}(\log{x})^{-C}$, which is the condition of Proposition \ref{prpstn:Triple} (grouping $Q_1,Q_2$).

Finally, by Lemma \ref{lmm:Partition} we may replace the indicator functions of the intervals $\mathcal{I}_1'',\mathcal{I}_2'',\mathcal{I}_3''$ by suitable smooth functions $\psi_1,\psi_2,\psi_3$ which satisfy $\psi^{(j)}(t)\ll ((j+1)\log{x})^{j C_2}$ for some suitably large constant $C_2=C_2(A)$. The result now follows from Proposition \ref{prpstn:Triple} (grouping $q_1q_2$ together).
\end{proof}
%
%
%%%%%%%%%%%%%%%%%%%%%%
%
%
\begin{lmm}[Extended Type II estimate]\label{lmm:ExtendedTypeII}
Let $\delta,A>0$ and $N\in [x^{2/5}/4,4x^{3/5}]$, $M\in[x/(2N),2x/N]$ and let $Q_1,Q_2,Q_3\ge 1$ satisfy $Q_1Q_2Q_3=x^{1/2+\delta}$ with
\[
Q_2<x^{1/16-10\delta-10\epsilon},\qquad \max\Bigl(\frac{x^{1/10+11\delta+10\epsilon}}{Q_2},Q_2 x^{14\delta+10\epsilon}\Bigr)<Q_3<\frac{x^{1/10-3\delta-5\epsilon}}{Q_2^{3/5}}.
\]
Let $\alpha_n,\beta_n$ be complex coefficients satisfying the Siegel-Walfisz condition \eqref{eq:SiegelWalfisz} with $|\alpha_n|,|\beta_n|\le \tau(n)^B$, and set
\[
\Delta(a;q):=\sum_{n\sim N}\alpha_n\sum_{m\sim M}\beta_m\Bigl(\mathbf{1}_{n m\equiv a\Mod{q}}-\frac{\mathbf{1}_{(n m,q)=1}}{\phi(q)}\Bigr).
\]
Then we have that
\begin{equation*}
\sum_{q_1\sim Q_1}\sum_{q_2\sim Q_2}\sup_{(b,q_1q_2)=1}\sum_{q_3\sim Q_3}\sup_{\substack{(a,q_1q_2q_3)=1\\ a\equiv b\Mod{q_1q_2}}}|\Delta(a;q_1q_2q_3)|\ll_{A,B} \frac{x}{(\log{x})^A}.
\end{equation*}
\end{lmm}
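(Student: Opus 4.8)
The plan is to combine Lemma \ref{lmm:ExtendedTypeII}'s two constituent estimates, namely Proposition \ref{prpstn:SecondProp} (which handles $N$ near and somewhat above $x^{2/5}$) and Proposition \ref{prpstn:MainProp} (which handles the bulk of the range $[x^{2/5},x^{1/2}]$), together with the symmetry $n\leftrightarrow m$ which lets us reduce $N\in[x^{1/2},3x^{3/5}]$ to the case $N\le x^{1/2}$. Since the constraint on the moduli in the Lemma requires $q$ to split as $q_1q_2q_3$ with $q_3$ in the window $[x^{1/10+11\delta+10\epsilon}/Q_2, x^{1/10-3\delta-5\epsilon}/Q_2^{3/5}]$, and since Proposition \ref{prpstn:SecondProp} is already phrased with a three-fold factorization $Q_1Q_2Q_3$, while Proposition \ref{prpstn:MainProp} uses a two-fold factorization $QR$, the first task is to check that the hypotheses on $Q_1,Q_2,Q_3$ in Lemma \ref{lmm:ExtendedTypeII} imply the hypotheses needed for whichever proposition we invoke, after possibly regrouping the moduli.

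The key steps, in order: First, by symmetry assume $x^{2/5}/4\le N\le x^{1/2}$, replacing the roles of $\alpha,\beta$ if necessary (the Siegel-Walfisz hypothesis and the size bounds $|\alpha_n|,|\beta_n|\le\tau(n)^B$ are symmetric, so this is legitimate; we only need to verify that both $N$ and $x/N$ fall in the allowed range, which they do since $N\in[x^{2/5}/4,4x^{3/5}]$). Second, split into two regimes according to whether $N\ge x^{2/5+6\delta}(\log x)^C$ or $N<x^{2/5+6\delta}(\log x)^C$. In the first regime I would apply Proposition \ref{prpstn:MainProp} with $R:=Q_2Q_3$ and $Q:=Q_1$: I must check $x^{6\delta}(\log x)^C\le R\le x^{1/10-3\delta}/(\log x)^C$, which follows since $R=Q_2Q_3$ and the upper bound $Q_3<x^{1/10-3\delta-5\epsilon}/Q_2^{3/5}$ forces $Q_2Q_3<x^{1/10-3\delta-5\epsilon}Q_2^{2/5}<x^{1/10-3\delta}/(\log x)^C$ using $Q_2<x^{1/16-10\delta-10\epsilon}$; and I must check $Qx^{2\delta}(\log x)^C\le N\le x^{1/2-3\delta}/(\log x)^C$, where $Q=Q_1=x^{1/2+\delta}/(Q_2Q_3)$, so the lower bound $N\ge Q_1x^{2\delta}(\log x)^C$ needs $N(Q_2Q_3)\ge x^{1/2+3\delta}(\log x)^C$, which follows from $N\ge x^{2/5+6\delta}(\log x)^C$ combined with the lower bound $Q_3>x^{1/10+11\delta+10\epsilon}/Q_2$ giving $Q_2Q_3>x^{1/10+11\delta+10\epsilon}$. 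In the second regime, where $x^{2/5}/4\le N<x^{2/5+6\delta}(\log x)^C$, I would instead apply Proposition \ref{prpstn:SecondProp} with the same $Q_1,Q_2,Q_3$: the constraints $Q_2x^{6\delta+5\epsilon}\le Q_3\le x^{1/10-3\delta-5\epsilon}/Q_2^{3/5}$ are exactly (a subset of) the hypotheses of Lemma \ref{lmm:ExtendedTypeII}, and the range condition $\max(Q_1x^{2\delta+5\epsilon}, Q_2Q_3x^{1/4+13\delta/2+5\epsilon})<N<x^{1/2-3\delta-5\epsilon}/Q_2$ must be verified for $N$ in this small window — the lower bound $Q_1x^{2\delta+5\epsilon}<N$ reduces as above to checking $Q_2Q_3$ is large enough, and the lower bound $Q_2Q_3x^{1/4+13\delta/2+5\epsilon}<N$ is where the precise numerology bites, since $Q_2Q_3<x^{1/10-3\delta-5\epsilon}Q_2^{2/5}$ and we need this times $x^{1/4+13\delta/2+5\epsilon}$ to be below $x^{2/5}/4$; this forces $Q_2^{2/5}x^{1/10-3\delta+1/4+13\delta/2}<x^{2/5}$, i.e. $Q_2^{2/5}<x^{1/20-7\delta/2}$, i.e. $Q_2<x^{1/8-35\delta/4}$, which is implied by $Q_2<x^{1/16-10\delta-10\epsilon}$.

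The main obstacle I anticipate is purely the bookkeeping of these inequalities: one must confirm that the two regimes $N\ge x^{2/5+6\delta}(\log x)^C$ and $N<x^{2/5+6\delta}(\log x)^C$ genuinely cover the full range $[x^{2/5}/4,x^{1/2}]$ with overlap (they do, trivially), and, more delicately, that in each regime \emph{every} admissible triple $(Q_1,Q_2,Q_3)$ from the Lemma's hypotheses lands inside the (sometimes tighter) hypotheses of the invoked proposition, uniformly in the choice of how one groups the moduli for the proposition. The factorization gymnastics matters because Proposition \ref{prpstn:MainProp} only needs $q$ to split as (coarse)$\times$(fine) while Proposition \ref{prpstn:SecondProp} genuinely needs the three-fold split; fortunately Lemma \ref{lmm:ExtendedTypeII} already supplies $q=q_1q_2q_3$, so no further factorization of the moduli is needed — one only ever coarsens, never refines. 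Once all the inequalities are checked, the conclusion of Lemma \ref{lmm:ExtendedTypeII} is immediate, since the supremum over $b$ coprime to $q_1q_2$ on the outside and the constrained supremum over $a\equiv b\Mod{q_1q_2}$ on the inside is bounded above by $\sum_{q_1\sim Q_1}\sum_{q_2\sim Q_2}\sum_{q_3\sim Q_3}\sup_{(a,q_1q_2q_3)=1}|\Delta(a;q_1q_2q_3)|$ (Proposition \ref{prpstn:SecondProp}'s left-hand side), and is bounded above by $\sum_{q\sim Q_1}\sum_{r\sim Q_2Q_3}\sup_{(a,qr)=1}|\cdots|$ after regrouping (Proposition \ref{prpstn:MainProp}'s left-hand side), so in both regimes the desired bound $\ll_{A,B} x/(\log x)^A$ follows directly.
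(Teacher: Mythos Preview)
Your proposal has a genuine gap: neither Proposition~\ref{prpstn:MainProp} nor Proposition~\ref{prpstn:SecondProp} covers $N$ close to $x^{1/2}$. In your ``first regime'' you invoke Proposition~\ref{prpstn:MainProp} and check the lower bound $N\ge Qx^{2\delta}(\log x)^C$, but you never verify the upper bound $N\le x^{1/2-3\delta}/(\log x)^C$ required there. This upper bound fails when $N\in[x^{1/2-3\delta}/(\log x)^C,\,x^{1/2}]$, and symmetry does not help: if $N$ is in this window then so is $M\asymp x/N$. The same obstruction afflicts Proposition~\ref{prpstn:SecondProp}, whose range is $N<x^{1/2-3\delta-5\epsilon}/Q_2$. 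So your two regimes together leave the interval around $x^{1/2}$ uncovered.

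The paper closes this gap by using Proposition~\ref{prpstn:Zhang} rather than Proposition~\ref{prpstn:MainProp}. Proposition~\ref{prpstn:Zhang} allows $N$ up to $x^{1-\epsilon}/Q_1$, comfortably past $x^{1/2}$, but at the price of the residue-class constraint $a\equiv b\Mod{q_1q_2}$---which is precisely why that constraint appears in the statement of Lemma~\ref{lmm:ExtendedTypeII}. Your argument discards this constraint at the outset (bounding the constrained supremum by the unconstrained one), so you have thrown away the one hypothesis that makes the range near $x^{1/2}$ accessible. Indeed, the paper explicitly remarks (end of Section~\ref{sec:SecondProp}) that producing a variant of Proposition~\ref{prpstn:MainProp} valid for $N\in[x^{1/2-3\delta-3\epsilon},2x^{1/2}]$ is something the author was unable to do; your proposal implicitly assumes such a variant exists. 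The correct proof combines Proposition~\ref{prpstn:Zhang} (grouping $q_1q_2$ together, for the range $Q_1Q_2x^{2\delta+\epsilon}<N<x^{1-\epsilon}/(Q_1Q_2)$) with Proposition~\ref{prpstn:SecondProp} (for $N$ down to $x^{2/5}/4$), and then checks that the hypotheses on $Q_2,Q_3$ force these two ranges to cover $[x^{2/5}/4,x^{1/2}]$.
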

%
%
%%%%%%%%%%%%%%%%%%%%%%%%%%
%
%
\begin{proof}[Proof assuming Proposition \ref{prpstn:SecondProp} and Proposition \ref{prpstn:Zhang}] By symmetry we may assume that $N\le M$ so $N\le 2x^{1/2}$. If 
\begin{equation}
Q_3<x^{1/10-3\delta-2\epsilon},
\label{eq:Con1}
\end{equation}
 we see that  $Q_1^7 Q_2^7 Q_3^{12}=x^{7/2+7\delta}Q_3^5<x^{4-10\epsilon}$. Thus, grouping $q_1 ,q_2$ together, we see that Proposition \ref{prpstn:Zhang} gives the estimate of the lemma for 
\[
Q_1 Q_2 x^{2\delta+\epsilon} <N <\frac{x^{1-\epsilon}}{Q_1Q_2}.
\]
Similarly, provided
\begin{align}
Q_2 x^{14\delta+10\epsilon}&\le Q_3, \label{eq:Con2} \\
Q_3Q_2^{3/5}&\le x^{1/10-3\delta-5\epsilon}, \label{eq:Con3}
\end{align}
we see that Proposition \ref{prpstn:SecondProp} gives the result for
\[
\max\Bigl(Q_1 x^{2\delta+5\epsilon},\, Q_2Q_3 x^{1/4+13\delta/2+5\epsilon}\Bigr)<N<\frac{x^{1/2-3\delta-5\epsilon}}{Q_2}.
\]
Together, we see that the ranges for $N$ cover the range $[x^{2/5}/4,2x^{1/2}]$ provided
\begin{align}
Q_1 Q_2&<x^{1/2-2\epsilon}, \label{eq:Con4}\\
Q_1 Q_2^2&<x^{1/2-13\delta-7\epsilon},\label{eq:Con5}\\
Q_1&<x^{2/5-2\delta-10\epsilon},\label{eq:Con6}\\
Q_2Q_3&<x^{3/20-7\delta-10\epsilon}.\label{eq:Con7}%\\
%Q_2&<x^{1/20-6\delta-10\epsilon}.\label{eq:Con8}
\end{align}
We see that for \eqref{eq:Con2} and \eqref{eq:Con3} to give a non-trivial range for $Q_3$ we must have
\begin{equation}
Q_2<x^{1/16-10\delta-9\epsilon}.\label{eq:Con8}
\end{equation}

We see that \eqref{eq:Con1} and \eqref{eq:Con7} are implied by \eqref{eq:Con3} and \eqref{eq:Con8}. Recalling $Q_1Q_2Q_3=x^{1/2+\delta}$, we see \eqref{eq:Con4} and \eqref{eq:Con5} are implied by \eqref{eq:Con2}. Thus we are left with \eqref{eq:Con2},\eqref{eq:Con3}, \eqref{eq:Con6} and \eqref{eq:Con8}, which give the constraints
\[
Q_2<x^{1/16-10\delta-9\epsilon},\qquad \max\Bigl(\frac{x^{1/10+11\delta+10\epsilon}}{Q_2},Q_2 x^{14\delta+10\epsilon}\Bigr)<Q_3<\frac{x^{1/10-3\delta-5\epsilon}}{Q_2^{3/5}}.
\]
By symmetry, these then cover the range $N\in[x^{2/5},x^{3/5}]$, giving the result.
\end{proof}
%
%
%%%%%%%%%%%%%%%%%%%%%%%%%%
%
%
We are now in a position to establish Theorems \ref{thrm:WeakEquidistribution}-\ref{thrm:Minorant} assuming Propositions \ref{prpstn:MainProp}-\ref{prpstn:Triple}.
%
%
%%%%%%%%%%%%%%%%%%%%%%
%
%
\section{Proof of Theorem \ref{thrm:WeakEquidistribution}}
%
%
%%%%%%%%%%%%%%%%%%%%%%
%
%
We now establish Theorem \ref{thrm:WeakEquidistribution} from Proposition \ref{prpstn:MainProp} and Proposition \ref{prpstn:Triple} using the Heath-Brown identity.
%
%
%%%%%%%%%%%%%%%%%%%%%%%%%%
%
%
\begin{proof}
By partial summation, it suffices to show the result for integers $n$ weighted by weight $\Lambda(n)\mathbf{1}_{P^-(n)\ge x^\epsilon}/\log{x}$ rather than primes, and by dyadic dissection it suffices to establish it for $n\sim x$. We apply the Heath-Brown identity (Lemma \ref{lmm:HeathBrown}) with $k=5$, and multiply by $\mathbf{1}_{P^-(n)\ge x^\epsilon}$. This gives
\begin{equation}
\frac{\Lambda(n)\mathbf{1}_{P^-(n)\ge x^\epsilon}}{\log{x}}=\sum_{j=1}^5 \frac{(-1)^j \binom{5}{j}}{\log{x}} \sum_{\substack{n=m_1\cdots m_j n_1\cdots n_{j}\\ m_1,\,\dots,\,m_j\le 2x^{1/5}\\ P^-(m_1),\dots,P^-(n_j)\ge x^\epsilon}}\mu(m_1)\cdots \mu(m_k)\log{n_{1}}.
\label{eq:HeathBrown}
\end{equation}
Define the intervals $\mathcal{B}$ and $\mathcal{G}$ by
\begin{align*}
\mathcal{B}&:=\Bigl[\frac{x^{2/5}}{4},x^{2/5+6\delta}(\log{x})^{2C}\Bigr]\cup\Bigl[\frac{x^{1/2-3\delta}}{(\log{x})^C},2x^{1/2}\Bigr],\\
\mathcal{G}&:=\Bigl[x^{2/5+6\delta}(\log{x})^{2C},\frac{x^{1/2-3\delta}}{(\log{x})^C}\Bigr].
\end{align*}
We split the right hand side of \eqref{eq:HeathBrown} into terms where some sub-product of $n_1,\dots,n_j,$ $m_1,\dots,m_j$ lies in $\mathcal{G}$, terms where no subproduct lies in $\mathcal{G}$ but some subproduct lies in $\mathcal{B}$, and terms with no subproduct in $\mathcal{B}\cup\mathcal{G}$. Explicitly, this gives
\[
\frac{\Lambda(n)\mathbf{1}_{P^-(n)\ge x^\epsilon}}{\log{x}}=\rho_1(n)+\rho_2(n)+\rho_3(n),
\]
where
\begin{align*}
\rho_1(n)&:=\sum_{j=1}^5 \frac{(-1)^j \binom{5}{j}}{\log{x}}\hspace{-1cm}\sum_{\substack{n_1\cdots n_j m_1\cdots m_j=n \\ P^-(n)\ge x^\epsilon \\ m_1,\dots,m_j \le 2x^{1/5}\\ \prod_{i\in\mathcal{I}_1}n_i\prod_{i\in\mathcal{I}_2}m_i\in\mathcal{G}\text{ some }\mathcal{I}_1,\mathcal{I}_2\subseteq \{1,\dots,j\}}}\hspace{-1cm}\mu(m_1)\cdots \mu(m_j)\log{n_1},\\
\rho_2(n)&:=\sum_{j=1}^5 \frac{(-1)^j \binom{5}{j}}{\log{x}}\hspace{-1cm}\sum_{\substack{n_1\cdots n_j m_1\cdots m_j=n \\ P^-(n)\ge x^\epsilon \\ m_1,\dots,m_j \le 2x^{1/5}\\ \prod_{i\in\mathcal{I}_1}n_i\prod_{i\in\mathcal{I}_2}m_i\notin\mathcal{G}\text{ all }\mathcal{I}_1,\mathcal{I}_2\subseteq \{1,\dots,j\}\\ \prod_{i\in\mathcal{I}_1}n_i\prod_{i\in\mathcal{I}_2}m_i\in\mathcal{B}\text{ some }\mathcal{I}_1,\mathcal{I}_2\subseteq \{1,\dots,j\}}}\hspace{-1cm}\mu(m_1)\cdots \mu(m_j)\log{n_1},\\
\rho_3(n)&:=\sum_{j=1}^5 \frac{(-1)^j \binom{5}{j}}{\log{x}}\hspace{-1cm}\sum_{\substack{n_1\cdots n_j m_1\cdots m_j=n \\ P^-(n)\ge x^\epsilon \\ m_1,\dots,m_j \le 2x^{1/5}\\ \prod_{i\in\mathcal{I}_1}n_i\prod_{i\in\mathcal{I}_2}m_i\notin\mathcal{G}\cup\mathcal{B}\text{ all }\mathcal{I}_1,\mathcal{I}_2\subseteq \{1,\dots,j\}}}\hspace{-1cm}\mu(m_1)\cdots \mu(m_j)\log{n_1}.
\end{align*}
By Lemma \ref{lmm:TypeII}, we see that $\rho_1(n)=\rho_{\mathcal{G}}(n)$ satisfies
\[
\sum_{q_1\sim Q_1}\sum_{q_2\sim Q_2}\sup_{(a,q_1 q_2)=1}\Bigl|\sum_{\substack{n\sim x\\ n\equiv a\Mod{q_1 q_2}}}\rho_{1}(n)-\frac{1}{\phi(q_1q_2)}\sum_{\substack{n\sim x\\ (n,q_1q_2)=1}}\rho_{1}(n)\Bigr|\ll_A \frac{x}{(\log{x})^A}.
\]
By Lemma \ref{lmm:Trivial}, we see that $\rho_2(n)$ satisfies $|\rho_2(n)|\ll \rho_\mathcal{B}(n)$ which is equdistributed in arithmetic progressions, and so
\begin{align*}
\sum_{q_1\sim Q_1}\sum_{q_2\sim Q_2}&\sup_{(a,q_1 q_2)=1}\Bigl|\sum_{\substack{n\sim x\\ n\equiv a\Mod{q_1 q_2}}}\rho_{2}(n)-\frac{1}{\phi(q_1q_2)}\sum_{\substack{n\sim x\\ (n,q_1q_2)=1}}\rho_{2}(n)\Bigr|\\
&\ll \sum_{q_1\sim Q_1}\sum_{q_2\sim Q_2}\Bigl(\sup_{(a,q_1 q_2)=1}\sum_{\substack{n\sim x\\ n\equiv a\Mod{q_1 q_2}}}\rho_{\mathcal{B}}(n)+\frac{1}{\phi(q_1q_2)}\sum_{\substack{n\sim x\\ (n,q_1q_2)=1}}\rho_{\mathcal{B}}(n)\Bigr)\\
&\ll \Bigl(\sum_{\substack{n\sim x}}\rho_{\mathcal{B}}(n)\Bigr)\Bigl(\sum_{q_1\sim Q_1}\sum_{q_2\sim Q_2}\frac{1}{\phi(q_1q_2)}\Bigr)\\
&\ll \frac{\delta x}{\log{x}}+\frac{x\log\log{x}}{(\log{x})^2}.
\end{align*}
Thus we are left to consider the contribution of $\rho_3(n)$ when no subproduct lies in $\mathcal{G}\cup \mathcal{B}=[x^{2/5}/4,2x^{1/2}]$. Since $m_1\cdots m_j n_1\cdots n_j\sim x$, this means that no subproduct lies in $[x^{2/5},2x^{3/5}]$. First we consider the case when $n_i<x^{2/5}$ for all $i$.

By relabelling any $n_i<x^{1/5}$, we may assume that $x^{2/5}>n_1\ge n_2\ge \dots \ge n_{j_1}\ge x^{1/5}\ge m_1\ge \dots\ge m_{j_2}$. We see that $j_1\ge 1$, since otherwise all factors would be at most $x^{1/5}$, so some subproduct would lie in $[x^{2/5},x^{3/5}]$. If $n_1m_1\cdots m_j<x^{2/5}$ for some $0\le j< j_2$, then $n_1 m_1\cdots m_{j+1}< x^{3/5}$ since $m_{j+1}\le x^{1/5}$.  Since no subproduct lies in $[x^{2/5},x^{3/5}]$, we see $n_1m_1\dots m_{j+1}< x^{2/5}$. By applying this for $j=0,1,\dots,j_2-1$ in turn, we see that $n_1m<x^{2/5}$ where $m=\prod_{i=1}^{j_2}m_i$. Since $x\le n_1\cdots n_{j_1}m\le (n_1 m)^{j_1}\le x^{2j_1/5}$, we see that $j_1\ge 3$. Since $n_i>x^{1/5}$ for all $i\in\{1,\dots,j_1\}$, we see that $n_{1}n_{2}\ge x^{2/5}$, and so $n_{1}n_{2}> 2x^{3/5}$ since there are no subproducts in $[x^{2/5},2x^{3/5}]$. But then $n_1\cdots n_{j_1}> 2x^{1/5+j_1/5}$, so we must have $j_1\le 3$. Therefore $j_1=3$. Finally, since $n_1m<x^{2/5}$, we see that $n_1n_2n_3m^3\le (n_1m)^3\le x^{6/5}$, so $m<x^{1/10}$ since $n_1n_2n_3m \sim x$.

We are almost able to reduce to Proposition \ref{prpstn:Triple} and Proposition \ref{prpstn:MainProp} via Lemma \ref{lmm:Buchstab}, but unfortunately the ranges for $m$ do not quite overlap. To get around this, we use the fact that almost all $q_2\sim Q_2$ have a divisor in $\mathcal{I}_0:=[x^{28\delta} (\log{x})^{5C},x^{1/100}]$ and so we can use Lemma \ref{lmm:Triple}. Indeed, using a sieve upper bound (e.g. Lemma \ref{lmm:Sieve}) we see that
\begin{align}
\sum_{\substack{q_2\sim Q_2\\ \not\exists d|q_2\text{ s.t. }d\in\mathcal{I}_0}}1\ll \sum_{q\le x^{28\delta}(\log{x})^{5C}}\sum_{\substack{r\le 2Q_2/q\\ P^-(r)\ge x^{1/100}}}1&\ll \frac{Q_2}{\log{x}} \sum_{q\le x^{28\delta}(\log{x})^{5C}}\frac{1}{q_2}\nonumber\\
&\ll Q_2\Bigl(\delta+\frac{\log\log{x}}{\log{x}}\Bigr).
\label{eq:BadQBound}
\end{align}
Another  simple sieve upper bound  (e.g. Lemma \ref{lmm:Sieve}) gives
\begin{align}
\Bigl|\sum_{\substack{n\sim x\\ n\equiv a\Mod{q_1q_2} }}\rho_3(n)-\frac{1}{\phi(q_1q_2)}\sum_{\substack{n\sim x\\  (n,q_1q_2)=1}}\rho_3(n)\Bigr|&\ll \sum_{\substack{n\sim x\\ n\equiv a\Mod{q_1q_2}\\ P^-(n)\ge x^\epsilon }}1+\frac{1}{\phi(q_1q_2)}\sum_{\substack{n\sim x\\ (n,q_1q_2)=1\\ P^-(n)\ge x^{\epsilon} }}1\nonumber\\
&\ll \frac{x}{\phi(q_1q_2)\log{x}}.
\label{eq:TrivSieveBound}
\end{align}
Combining \eqref{eq:BadQBound} and \eqref{eq:TrivSieveBound}, we see that the contribution from $q_2$ with no factor in $\mathcal{I}_0$ is acceptably small. Thus we only need to consider the contribution when $q_2$ has a factor in $\mathcal{I}_0$, and so it suffices to show that
\begin{align*}
\sum_{q_1\sim Q_1}\sum_{d\sim D}\sum_{q_2'\sim Q_2'}\sup_{(a,q_1d q_2')=1}\Bigl|\sum_{\substack{n\sim x\\ n\equiv a\Mod{q_1d q_2'}}}\widetilde{\rho}_3(n)-\frac{1}{\phi(q_1d q_2')}\sum_{\substack{n\sim x\\ (n,q_1d q_2')=1}}\widetilde{\rho}_3(n)\Bigr|\\
\ll_A\frac{x}{(\log{x})^A}
\end{align*}
over all choices of $D,Q_2'$ with $D Q_2'\asymp Q_2$ and $x^{28\delta}(\log{x})^{5C}\le D\le x^{1/100}$, where
\[
\widetilde{\rho}_3(n):=\sum_{\substack{n=m n_1 n_2 n_3\\ n_i m\le x^{2/5}/4\,\forall i\\ P^-(n)\ge x^\epsilon}}\gamma_m
\]
for some 1-bounded $\gamma_m$ suppprted on $m\le x^{1/10}$. This now follows from applying Lemma \ref{lmm:Separation} (after splitting according to positive and negative real and imaginary parts) and Lemma \ref{lmm:Triple}. 

Finally, we consider the case when $n_i>x^{3/5}$ for some $i$. By Lemma \ref{lmm:Buchstab} these terms are of the form
\[
\sum_{\substack{n=n'm'\\ m'\le x^{2/5+\epsilon} }}\alpha_{m'}+\sum_{\substack{n=m'p_1p_2n'\\ p_2> p_1 \\ m'<x^{2/5+\epsilon}<m'p_1\\ p_1<x^\epsilon}}\beta_{m',p_1}\mathbf{1}_{P^-(n')\ge p_2 \text{ or }n'=1}
\]
for some 1-bounded coefficients $\alpha_{m'}, \beta_{m',p_1}$. It is trivial that the first term above is equidistributed in arithmetic progressions, and (after applying Lemma \ref{lmm:Separation} to remove the dependencies) the second term is also equidistributed by Proposition \ref{prpstn:MainProp} (grouping $m'p_1$ and $n'p_2$ together). This completes the proof.
\end{proof}
%
%
%%%%%%%%%%%%%%%%%%%%%%
%
%
%%%%%%%%%%%%%%%%%%%%%%
%
%
\section{Proof of Theorem \ref{thrm:AlmostUniform}}
%
%
%%%%%%%%%%%%%%%%%%%%%%
%
%
We now establish Theorem \ref{thrm:AlmostUniform} from Lemma \ref{lmm:ExtendedTypeII} (which relies on Proposition \ref{prpstn:SecondProp} and Proposition \ref{prpstn:Zhang}) and Proposition \ref{prpstn:Triple}. The proof is similar to the proof of Theorem \ref{thrm:WeakEquidistribution}. 
%
%
%%%%%%%%%%%%%%%%%%%%%%%%%%
%
%
\begin{proof}
Since the implied constant depends on $\delta$, we may assume that $\delta>100\epsilon$. 
By partial summation and dyadic dissection, it suffices to consider integers $n\sim x$ weighted by $\Lambda(n)/\log{x}$ instead of primes $p\le x$ and moduli $q_1\sim Q_1$, $q_2\sim Q_2$, $q_3\sim Q_3$ with $Q_1Q_2Q_3\le x^{1/2+\delta}$. By the Heath-Brown identity, we have
\begin{equation}
\Lambda(n)=\sum_{j=1}^5 (-1)^j \binom{5}{j} \sum_{\substack{n=m_1\cdots m_j n_1\cdots n_{j}\\ m_1,\,\dots,\,m_j\le 2x^{1/5}\\ P^-(m_1),\dots,P^-(n_j)\ge x^\epsilon}}\mu(m_1)\cdots \mu(m_k)\log{n_{1}}.
\label{eq:HeathBrown2}
\end{equation}
We split the right hand side of \eqref{eq:HeathBrown2} according to whether a sub-product of $n_1,\dots,n_j,$ $m_1,\dots,m_j$ lies in $[x^{2/5}/4,4x^{3/5}]$ or not. Let $\rho_1(n)$ denote the  terms with a subproduct in $[x^{2/5}/4,4x^{3/5}]$ and $\rho_2(n)$ denote the terms with no subproduct in $[x^{2/5}/4,4x^{3/5}]$. By Lemma \ref{lmm:Separation} and Lemma \ref{lmm:ExtendedTypeII}, we see that
\begin{align*}
\sum_{q_1\sim Q_1}\sum_{q_2\sim Q_2}\sup_{(b,q_1q_2)=1}\sum_{q_3\sim Q_3}\sup_{\substack{(a,q_1 q_2 q_3)=1\\ a\equiv b\Mod{q_1q_2} }}\Bigl|\sum_{n\sim x}\rho_{1}(n)\Bigl(\mathbf{1}_{ n\equiv a\Mod{q_1 q_2 q_3}}-\frac{\mathbf{1}_{(n,q_1q_2q_3)=1}}{\phi(q_1q_2 q_3)}\Bigr)\Bigr|\\
\ll_A \frac{x}{(\log{x})^A}.
\end{align*}
Thus we are left to consider the contribution of $\rho_2(n)$ when no subproduct lies in $[x^{2/5}/4,4x^{3/5}]$. As in the proof of Theorem \ref{thrm:WeakEquidistribution}, after relabelling we may assume that $\rho_2(n)$ is of the form
\[
\rho_2(n)=\sum_{\substack{n_1n_2n_3m=n\\ n_1,n_2,n_3\in [x^{1/5},x^{2/5}]\\ m\le x^{2/5}/\max(n_1,n_2,n_3)}}\alpha_m
\]
for some coefficients $|\alpha_n|\le \tau(n)^8$. Since $Q_2Q_3\ge x^{1/10+11\delta+10\epsilon}x^{-4\delta}(\log{x})^{-C}>x^{1/10+\epsilon}$, we see that Lemma \ref{lmm:Separation} and Proposition \ref{prpstn:Triple} give
\[
\sum_{q_1\sim Q_1}\sum_{q_2\sim Q_2}\sum_{q_3\sim Q_3}\sup_{(a,q_1 q_2 q_3)=1}\Bigl|\sum_{n\sim x}\rho_{2}(n)\Bigl(\mathbf{1}_{ n\equiv a\Mod{q_1 q_2 q_3}}-\frac{\mathbf{1}_{(n,q_1q_2q_3)=1}}{\phi(q_1q_2 q_3)}\Bigr)\Bigr|\ll_A \frac{x}{(\log{x})^A}.
\]
This gives the result.
\end{proof}
%
%
%%%%%%%%%%%%%%%%%%%%%%
%
%
%\newpage
%
%
%%%%%%%%%%%%%%%%%%%%%%
%
%
\section{Proof of Theorem \ref{thrm:Minorant}}
%
%
%%%%%%%%%%%%%%%%%%%%%%
%
%
We now establish Theorem \ref{thrm:Minorant} from Proposition \ref{prpstn:MainProp} using Harman's sieve (\cite{Harman}). As mentioned in the introduction, the numerical side of these estimates could be improved considerably.
%
%
%%%%%%%%%%%%%%%%%%%%%%
%
%
\begin{proof}
Clearly we may assume that $x$ is sufficiently large in terms of $\delta$. By dyadic dissection it suffices to show the result summing over $n\sim x$ rather than $n\le x$. To simplify notation, set $z_1:=x^{1/7}, z_2:=x^{3/7}, z_3:=x^{4/7}$. Let
\[
\rho(n,z):=\begin{cases}
1,\qquad &P^-(n)> z,\\
0,&\text{otherwise,}
\end{cases}
\]
and
\begin{align*}
\mathcal{G}&:=[x^{3/7+8\delta},x^{1/2-4\delta}]\cup[x^{1/2+4\delta},x^{4/7-8\delta}],\\
\mathcal{B}&:=[x^{3/7},x^{3/7+8\delta}]\cup[x^{1/2-4\delta},x^{1/2+4\delta}]\cup[x^{4/7-8\delta},x^{4/7}].
\end{align*}
For $Q_2\in [x^{2/5+5\delta},x^{3/7}]$, it follows from Proposition \ref{prpstn:MainProp} that terms with a divisor in $\mathcal{G}$ will satisfy suitable equidistribution estimates. Trivially any convolution involving a smooth sequence of length greater than $x^{1/2+\delta}$ also equidistributes suitably. To construct our minorant $\rho$ we wish to decompose $\rho(n,\sqrt{2x})$ (the indicator function of the primes) into various terms which are either equidistributed or have a reasonable lower bound which is equidistributed. We do this following Harman's sieve (see \cite{Harman}.) Since $\mathcal{B}$ consists of intervals which are short on a logarithmic scale, terms with a factor in $\mathcal{B}$ will ultimately contribute negligibly.

By Buchstab's identity (inclusion-exclusion on the smallest prime factor)
\begin{align*}
\rho(n,\sqrt{2x})&=\rho(n,z_1)-\hspace{-0.2cm}\sum_{\substack{p|n\\ z_1<p\le z_2}}\rho\Bigl(\frac{n}{p},p\Bigr)-\hspace{-0.2cm}\sum_{\substack{p|n\\ z_2< p\le \sqrt{2x}} }\rho\Bigl(\frac{n}{p},p\Bigr)=:S_1(n)-S_2(n)-S_3(n).
\end{align*}
	For $n\sim x$ and $p\le \sqrt{2x}$, we see $\rho(n/p,p)=\rho(n/p,\min(p,\sqrt{2x/p}))$ since if $p>(2x)^{1/3}$ this counts primes $n/p$. Decomposing the middle term $S_2$ further gives
\begin{align*}
S_2(n)&=\sum_{\substack{p|n\\ z_1<p\le z_2}}\rho\Bigl(\frac{n}{p},\min\Bigl(p,\sqrt{\frac{2x}{p}}\Bigr)\Bigr)=\sum_{\substack{p|n\\ z_1<p\le z_2}}\rho\Bigl(\frac{n}{p},z_1\Bigr)-\hspace{-0.2cm}\sum_{\substack{p_1p_2|n\\ z_1<p_2\le p_1\le z_2\\ p_1p_2^2\le 2x}}\rho\Bigl(\frac{n}{p_1p_2},p_2\Bigr)\\
&=\sum_{\substack{p|n\\ z_1<p\le z_2}}\rho\Bigl(\frac{n}{p},z_1\Bigr)-\sum_{\substack{p_1p_2|n\\ z_1<p_2\le p_1\le z_2\\ p_1p_2\le z_2\\ p_1 p_2^2\le z_3}}\rho\Bigl(\frac{n}{p_1p_2},p_2\Bigr)-\sum_{\substack{p_1p_2|n\\ z_1<p_2\le p_1\le z_2\\ p_1p_2\le z_2\\ p_1 p_2^2> z_3}}\rho\Bigl(\frac{n}{p_1p_2},p_2\Bigr)\\
&\quad-\sum_{\substack{p_1p_2|n\\ z_1<p_2\le p_1\le z_2\\ z_2<p_1p_2\le z_3}}\rho\Bigl(\frac{n}{p_1p_2},p_2\Bigr)-\sum_{\substack{p_1p_2|n\\ z_1<p_2\le p_1\le z_2\\ p_1p_2^2\le 2x\\ z_3< p_1p_2}}\rho\Bigl(\frac{n}{p_1p_2},p_2\Bigr)\\
&=:S_{2,1}(n)-S_{2,2}(n)-S_{2,3}(n)-S_{2,4}(n)-S_{2,5}(n).
\end{align*}
Furthermore, we see that
\[
S_{2,2}(n)=\hspace{-0.2cm}\sum_{\substack{p_1p_2|n\\ z_1<p_2\le p_1\le z_2\\ p_1p_2\le z_2\\ p_1 p_2^2\le z_3}}\hspace{-0.2cm}\rho\Bigl(\frac{n}{p_1p_2},z_1\Bigr)-\hspace{-0.2cm}\sum_{\substack{p_1p_2p_3|n\\ z_1<p_3\le p_2\le p_1\le z_2\\ p_1p_2\le z_2 \\p_1 p_2^2\le z_3}}\hspace{-0.2cm}\rho\Bigl(\frac{n}{p_1p_2 p_3},p_3\Bigr)=:S_{2,2,1}(n)-S_{2,2,2}(n).
\]
Note that in $S_{2,2,2}(n)$ since $x^{1/7}<p_3\le p_2\le p_1\le x^{3/7}$ and $p_1p_2^2\le x^{4/7}$ we have $p_1p_2p_3\in[x^{3/7},x^{4/7}]$. Thus the $S_{2,2,2}(n)$ terms (and also the $S_3(n),S_{2,4}(n)$ terms) are close to being suitable for our Type II estimate Proposition \ref{prpstn:MainProp}. Specifically,
\begin{align*}
S_3(n)&=\sum_{\substack{p|n\\ z_2< p\le \sqrt{2x} \\ p\in \mathcal{G}}}\rho\Bigl(\frac{n}{p},p\Bigr)+\hspace{-0.1cm}\sum_{\substack{p|n\\ z_2< p\le \sqrt{2x} \\ p\in \mathcal{B}}}\rho\Bigl(\frac{n}{p},p\Bigr)=:S_{3}^\mathcal{G}(n)+S_3^\mathcal{B}(n),\\
S_{2,4}(n)&=\sum_{\substack{p_1p_2|n\\ z_1<p_2\le p_1\le z_2\\ p_1p_2\in\mathcal{G} }}\rho\Bigl(\frac{n}{p_1p_2},p_2\Bigr)+\hspace{-0.1cm}\sum_{\substack{p_1p_2|n\\ z_1<p_2\le p_1\le z_2\\ p_1p_2\in\mathcal{B} }}\rho\Bigl(\frac{n}{p_1p_2},p_2\Bigr)=:S_{2,4}^\mathcal{G}(n)+S_{2,4}^\mathcal{B}(n),\\
S_{2,2,2}(n)&=\sum_{\substack{p_1p_2p_3|n\\ z_1<p_3\le p_2\le p_1\le z_2\\ p_1p_2\le z_2 \\p_1 p_2^2\le z_3\\ p_1p_2p_3\in\mathcal{G}}}\rho\Bigl(\frac{n}{p_1p_2p_3},p_3\Bigr)+\hspace{-0.1cm}\sum_{\substack{p_1p_2p_3|n\\ z_1<p_3\le p_2\le p_1\le z_2\\ p_1p_2\le z_2 \\p_1 p_2^2\le z_3\\ p_1p_2p_3\in\mathcal{B}}}\rho\Bigl(\frac{n}{p_1p_2p_3},p_3\Bigr)\\
&=:S_{2,2,2}^\mathcal{G}(n)+S_{2,2,2}^\mathcal{B}(n).
\end{align*}
Let $z_0:=x^{1/14-4\delta}$, and let $e\le z_2$ be such that $e|n$. By two applications of Lemma \ref{lmm:Buchstab}, we have for some 1-bounded coefficients $\alpha_d,\alpha'_d,\beta_d,\beta'_d$
\begin{align*}
\rho\Bigl(\frac{n}{e},z_1\Bigr)&=\sum_{\substack{d_1|n/e\\ d_1\le z_2/e}}\alpha_{d_1}\rho\Bigl(\frac{n}{d_1 e},z_0\Bigr)+\sum_{\substack{d_1p_1|n/e\\ d_1\le z_2/e< d_1p_1\\ z_0< p_1\le z_1\\ P^-(d_1)\ge p_1}}\beta_{d_1}\rho\Bigl(\frac{n}{d_1 p_1 e},p_1\Bigr)\\
&=\sum_{\substack{d_2d_1|n/e\\ d_2d_1\le z_2/e}}\alpha'_{d_2}\alpha_{d_1}\rho\Bigl(\frac{n}{d_1 d_2 e},1\Bigr)+\sum_{\substack{d_2p_2d_1|n/e \\ d_2d_1\le z_2/e<  d_2d_1p_2\\ 1<p_2\le z_0 \\ P^-(d_2)\ge p_2}}\beta'_{d_2}\alpha_{d_1}\rho\Bigl(\frac{n}{d_1 d_2 p_2 e},p_2\Bigr)\\
&\qquad+\sum_{\substack{d_1p_1|n/e\\ d_1\le z_2/e< d_1p_1\\ z_0< p_1\le z_1\\ P^-(d_1)\ge p_1\\ d_1p_1e\in\mathcal{G} }}\beta_{d_1}\rho\Bigl(\frac{n}{d_1 p_1 e},p_1\Bigr)+\sum_{\substack{d_1p_1|n/e\\ d_1\le z_2/e< d_1p_1\\ z_0< p_1\le z_1\\ P^-(d_1)\ge p_1\\ d_1p_1e\in\mathcal{B} }}\beta_{d_1}\rho\Bigl(\frac{n}{d_1 p_1 e},p_1\Bigr)\\
&=:T^{\text{triv}}(n,e)+T^{\mathcal{G}}_1(n,e)+T_2^{\mathcal{G}}(n,e)+T^{\mathcal{B}}(n,e). 
\end{align*}
We observe that in $T_1^{\mathcal{G}}(n,e)$ since $p_2\le x^{1/14-4\delta}$ we have $ed_2d_1p_2\le z_2x^{1/14 - 4\delta}=x^{1/2-4\delta}$, and so $ed_2d_1p_2\in\mathcal{G}$.
Using this decomposition we see that
\begin{align*}
S_1(n)&=T^{\text{triv}}(n,1)+\Bigl(T^{\mathcal{G}}_1(n,1)+T_2^{\mathcal{G}}(n,1)\Bigr)+T^{\mathcal{B}}(n,1)=:S_{1}^{\text{triv}}(n)+S_{1}^{\mathcal{G}}(n)+S_{1}^{\mathcal{B}}(n),\\
S_{2,1}(n)&=\sum_{\substack{p|n\\ z_1<p\le z_2}}T^{\text{triv}}(n,p)+\sum_{\substack{p|n\\ z_1<p\le z_2}}\Bigl(T_1^{\mathcal{G}}(n,p)+T_2^{\mathcal{G}}(n,p)\Bigr)+\sum_{\substack{p|n\\ z_1<p\le z_2}}T^{\mathcal{B}}(n,p)\\
&=:S_{2,1}^{\text{triv}}(n)+S_{2,1}^{\mathcal{G}}(n)+S_{2,1}^{\mathcal{B}}(n).
\end{align*}
Similarly, we write
\[
S_{2,2,1}(n)=S_{2,2,1}^{\text{triv}}(n)+S_{2,2,1}^{\mathcal{G}}(n)+S_{2,2,1}^{\mathcal{B}}(n).
\]
Thus, putting this all together we find
\begin{align*}
\rho(n,\sqrt{2x})&=S^{\text{triv}}(n)+S^{\mathcal{G}}(n)+S^{\mathcal{B}}(n)+S_{2,3}(n)+S_{2,5}(n),
\end{align*}
where
\begin{align*}
S^{\text{triv}}(n)&:=S_1^\text{triv}(n)-S_{2,1}^\text{triv}(n)+S^\text{triv}_{2,2,1}(n),\\
S^{\mathcal{G}}(n)&:=S_1^\mathcal{G}(n)-S_{2,1}^{\mathcal{G}}(n)+S_{2,2,1}^{\mathcal{G}}(n)-S_{2,2,2}^{\mathcal{G}}(n)+S_{2,4}^{\mathcal{G}}(n)-S_3^\mathcal{G}(n),\\
S^{\mathcal{B}}(n)&:=S_1^\mathcal{B}(n)-S_{2,1}^{\mathcal{B}}(n)+S_{2,2,1}^{\mathcal{B}}(n)-S_{2,2,2}^{\mathcal{B}}(n)+S_{2,4}^{\mathcal{B}}(n)-S_3^\mathcal{B}(n).
\end{align*}
By Lemma \ref{lmm:Separation} (to remove dependencies from inequalities) and Proposition \ref{prpstn:MainProp} since each of $S_1^{\mathcal{G}}(n),S_{2,1}^{\mathcal{G}}(n),S_{2,2,1}^{\mathcal{G}}(n),S_{2,2,2}^{\mathcal{G}}(n),S_{2,4}^{\mathcal{G}}(n),S_3^{\mathcal{G}}(n)$ are supported on $n$ with a factor in $\mathcal{G}$, we have that
\[
\sum_{q_1\le Q_1}\sum_{q_2\le Q_2}\sup_{(a,q_1q_2)=1}\Bigl|\sum_{n\sim x}S^\mathcal{G}(n)\Bigl(\mathbf{1}_{n\equiv a\Mod{q_1q_2}}-\frac{\mathbf{1}_{(n,q_1q_2)=1}}{\phi(q_1q_2)}\Bigr)\Bigr|\ll_A\frac{x}{(\log{x})^A}.
\]
Similarly, since each of $S_1^{\mathcal{B}}(n),S_{2,1}^{\mathcal{B}}(n),S_{2,2,1}^{\mathcal{B}}(n),S_{2,2,2}^{\mathcal{B}}(n),S_{2,4}^{\mathcal{B}}(n),S_3^{\mathcal{B}}(n)$ are supported on $n$ with a factor in $\mathcal{B}$ and $P^-(n)\ge z_0$, we have that
\[
-C\rho_B(n)\le S^{\mathcal{B}}(n)\le C\rho_\mathcal{B}(n)
\]
for some suitably large absolute constant $C$, where $\rho_{\mathcal{B}}$ is the function defined in Lemma \ref{lmm:Trivial}. We recall from Lemma \ref{lmm:Trivial} that $\rho_{\mathcal{B}}(n)$ satisfies
\[
\sum_{q_1\le Q_1}\sum_{q_2\le Q_2}\sup_{(a,q_1q_2)=1}\Bigl|\sum_{n\sim x}\rho_{\mathcal{B}}(n)\Bigl(\mathbf{1}_{n\equiv a\Mod{q_1q_2}}-\frac{\mathbf{1}_{(n,q_1q_2)=1}}{\phi(q_1q_2)}\Bigr)\Bigr|\ll_A\frac{x}{(\log{x})^A}.
\]

Finally, recalling the definition of $T^{\text{triv}}(n,e)$, we see that for $e<z_2$ and $q<x^{1-\epsilon}/z_2$, we have that
\begin{align*}
\sum_{\substack{n\sim x\\ e|n}}T^{\text{triv}}(n,e)\mathbf{1}_{n\equiv a\Mod{q}}&=\sum_{\substack{d_1d_2\le z_2/e\\ (d_1d_2,q)=1}}\alpha_{d_2}'\alpha_{d_1}\sum_{\substack{n'\sim x/(d_1d_2e)\\ n'\equiv a\overline{d_1d_2e}\Mod{q}}}1\\
&=\sum_{\substack{n\sim x\\ e|n}}T^{\text{triv}}(n,e)\mathbf{1}_{(n,q)=1}+O_A\Bigl(\frac{x}{q e (\log{x})^A}\Bigr).
\end{align*}
Thus
\[
\sum_{q_1\le Q_1}\sum_{q_2\le Q_2}\sup_{(a,q_1q_2)=1}\Bigl|\sum_{n\sim x}S^\text{triv}(n)\Bigl(\mathbf{1}_{n\equiv a\Mod{q_1q_2}}-\frac{\mathbf{1}_{(n,q_1q_2)=1}}{\phi(q_1q_2)}\Bigr)\Bigr|\ll_A\frac{x}{(\log{x})^A}.
\]
With this set-up, we define our minorant $\rho(n)$ by
\begin{equation}
\rho(n):=S^\text{triv}(n)+S^{\mathcal{G}}(n)-C\rho_{\mathcal{B}}(n).
\end{equation}
Since $S_{2,3}(n),S_{2,5}(n)\ge 0$ and $S^{\mathcal{B}}(n)\ge -C\rho_\mathcal{B}(n)$, we see that
\[
\rho(n)\le \rho(n,\sqrt{2x}).
\]
Since $S^{\text{triv}}(n),S^{\mathcal{G}}(n),\rho_{\mathcal{B}}(n)$ are equidistributed in arithmetic progressions, we also have
\[
\sum_{q_1\le Q_1}\sum_{q_2\le Q_2}\sup_{(a,q_1q_2)=1}\Bigl|\sum_{n\sim x}\rho(n)\Bigl(\mathbf{1}_{n\equiv a\Mod{q_1q_2}}-\frac{\mathbf{1}_{(n,q_1q_2)=1}}{\phi(q_1q_2)}\Bigr)\Bigr|\ll_A\frac{x}{(\log{x})^A}.
\]
This gives the first and third claims of Theorem \ref{thrm:Minorant}. We are therefore left to establish the bound $\sum_{n\sim x}\rho(n)\ge \sum_{p\sim x}1/8$. We note that
\[
\rho(n)\ge \rho(n,\sqrt{2x})-S_{2,3}(n)-S_{2,5}(n)-2C \rho_{\mathcal{B}}(n).
\]
Thus, by partial summation, the prime number theorem, Lemma \ref{lmm:Buchstab2} and Lemma \ref{lmm:Trivial}, we have that
\begin{align*}
\sum_{n\sim x}\rho(n)&\ge \frac{(1+o(1))x}{\log{x}}\Biggl(1-\int_{4/21}^{2/7}\int_{2/7-u/2}^{\min(u,3/7-u)}\omega\Bigl(\frac{1-u-v}{v}\Bigr)\frac{ du dv}{u v^2}\\
&\qquad -\int_{2/7}^{3/7}\int_{4/7-u}^{\min(u,(1-u)/2)}\omega\Bigl(\frac{1-u-v}{v}\Bigr)\frac{ du dv}{u v^2}-O(\delta)\Biggr).
\end{align*}
(Here $\omega(u)$ is the Buchstab function described in Lemma \ref{lmm:Buchstab2}.) Crudely bounding $\omega(v)\le 1$ and then calculating the integrals gives
\[
\sum_{n\sim x}\rho(n)\ge \frac{(1+o(1))x}{\log{x}}\Bigl(\frac{25}{12}-\frac{19}{6}\log{2}+\frac{1}{4}\log{3}-O(\delta)\Bigr)\ge \frac{1}{8}\sum_{p\sim x}1
\]
for $x$ large enough and $\delta$ small enough. This gives the result.
\end{proof}
%
%
%%%%%%%%%%%%%%%%%%%%%%
%
%
%\newpage
%
%
%%%%%%%%%%%%%%%%%%%%%%
%
%
\section{Further preparatory lemmas}
%
%
%%%%%%%%%%%%%%%%%%%%%%
%
%
We are left to establish Propositions \ref{prpstn:MainProp}-\ref{prpstn:Triple}. Before embarking upon this, we first collect some basic lemmas for use later on.
%
%
%%%%%%%%%%%%%%%%%%%%%%
%
%
\begin{lmm}[Poisson Summation]\label{lmm:Completion}
Let $C>0$ and $f:\mathbb{R}\rightarrow\mathbb{R}$ be a smooth function which is supported on $[-10,10]$ and satisfies $\|f^{(j)}\|_\infty\ll ((j+1)\log{x})^{j C}$ for all $j\ge 0$, and let $M,q\le x$. Then we have
\[
\sum_{m\equiv a\Mod{q}} f\Bigl(\frac{m}{M}\Bigr)=\frac{M}{q}\hat{f}(0)+\frac{M}{q}\sum_{1\le |h|\le H}\hat{f}\Bigl(\frac{h M}{q}\Bigr)e\Bigl(\frac{ah}{q}\Bigr)+O_C(x^{-100}),
\]
for any choice of $H\ge q(\log{x})^{2C+1} /M$.
\end{lmm}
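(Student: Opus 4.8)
The statement to prove is the Poisson summation lemma (Lemma~\ref{lmm:Completion}): for a smooth $f$ supported on $[-10,10]$ with controlled derivatives, and $M,q\le x$, one has
\[
\sum_{m\equiv a\Mod{q}} f\Bigl(\frac{m}{M}\Bigr)=\frac{M}{q}\hat{f}(0)+\frac{M}{q}\sum_{1\le |h|\le H}\hat{f}\Bigl(\frac{h M}{q}\Bigr)e\Bigl(\frac{ah}{q}\Bigr)+O_C(x^{-100}),
\]
for any $H\ge q(\log{x})^{2C+1}/M$.

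The plan is the standard one. First I would write $m=a+qn$ and apply the Poisson summation formula to the function $n\mapsto f((a+qn)/M)$, which is smooth and compactly supported (hence Schwartz), to get
\[
\sum_{m\equiv a\Mod q}f\Bigl(\frac mM\Bigr)=\sum_{n\in\mathbb Z} f\Bigl(\frac{a+qn}{M}\Bigr)=\sum_{h\in\mathbb Z}\int_{-\infty}^\infty f\Bigl(\frac{a+qt}{M}\Bigr)e(-ht)\,dt.
\]
The substitution $u=(a+qt)/M$, so $t=(Mu-a)/q$ and $dt=(M/q)\,du$, turns the $h$-th integral into $\frac Mq e(ah/q)\int f(u)e(-hMu/q)\,du=\frac Mq\hat f\bigl(\tfrac{hM}{q}\bigr)e\bigl(\tfrac{ah}{q}\bigr)$. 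This already gives the $h=0$ main term and the full (infinite) tail; the only remaining task is to truncate the tail at $|h|\le H$ with error $O_C(x^{-100})$.

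The main obstacle — really the only substantive point — is the decay estimate for $\hat f$ needed to discard $|h|>H$. Integrating by parts $j$ times gives $\hat f(\xi)\ll \|f^{(j)}\|_\infty\,\mathrm{supp}(f)\,|\xi|^{-j}\ll ((j+1)\log x)^{jC}(10\pi)^{-j}|\xi|^{-j}$ (up to the factor from the support length, which is harmless). For $|h|>H$ we have $|hM/q|>(\log x)^{2C+1}$, so choosing $j=\lfloor\log x\rfloor$ (say) makes each term $\ll ((j+1)\log x)^{jC}\big/\bigl((\log x)^{2C+1}\bigr)^{j}\ll (\log x)^{-jC'}$ for some $C'>0$ depending on $C$, since $(j+1)\log x=O((\log x)^2)$ is much smaller than $(\log x)^{2C+1}$ raised to the same power once $C\ge 1$. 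Summing over $|h|>H$ costs at most a factor $O(x)$ (as $q\le x$), and the per-term bound $(\log x)^{-j C'}$ with $j\asymp\log x$ beats any fixed power of $x$; hence the total truncation error is $O_C(x^{-100})$. One must also check the prefactor $M/q\le M\le x$ is absorbed, which it is. This completes the proof.

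Two small points to keep in mind while writing it out. First, one should be slightly careful that the derivative bound on $f$ is only assumed for the rescaled variable in the paper's statement of $f$ itself (not $f(\cdot/M)$), but this is exactly what is needed since integration by parts in $u$ acts on $f$ directly after the change of variables — no extra factors of $M/q$ appear because the substitution is linear. Second, the optimal choice of $j$ can be taken as any slowly growing function, e.g. $j=\lceil 200/C\cdot\log x/\log\log x\rceil$ would also work and makes the arithmetic transparent; I would just pick $j=\lfloor\log x\rfloor$ and note that $((j+1)\log x)^{C}\le (\log x)^{3C}$ for large $x$ while the denominator contributes $(\log x)^{2C+1}$ per power, so the ratio is $\le (\log x)^{-1}$ per factor and the whole tail is $\ll x\cdot(\log x)^{-\lfloor\log x\rfloor}=O_C(x^{-100})$.
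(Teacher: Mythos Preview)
Your argument is the standard Poisson summation proof and is correct; the paper does not actually prove this lemma but simply cites \cite[Lemma~12.4]{May1}, where the same argument is carried out. One minor expository point: the tail $\sum_{|h|>H}$ is an infinite sum, so rather than saying it ``costs at most a factor $O(x)$'' you should use the $|h|^{-j}$ decay to bound $\sum_{|h|>H}|h|^{-j}\ll H^{1-j}$ directly (after which your choice $j=\lfloor\log x\rfloor$ gives the claimed $O_C(x^{-100})$, and the qualifier ``once $C\ge 1$'' is unnecessary since the saving of $(\log x)^{-1}$ per integration by parts holds for every $C>0$).
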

\begin{proof}
This is \cite[Lemma 12.4]{May1}.
\end{proof}
%
%
%%%%%%%%%%%%%%%%%%%%%%%%%%%%%%
%
%
\begin{lmm}[Summation with coprimality constraint]\label{lmm:TrivialCompletion}
Let $C>0$ and $f:\mathbb{R}\rightarrow\mathbb{R}$ be a smooth function which is supported on $[-10,10]$ and satisfies $\|f^{(j)}\|_\infty\ll ((j+1)\log{x})^{j C}$ for all $j\ge 0$. Then we have
\[
\sum_{(m,q)=1}f\Bigl(\frac{m}{M}\Bigr)=\frac{\phi(q)}{q}M+O(\tau(q)(\log{x})^{2C}).
\]
\end{lmm}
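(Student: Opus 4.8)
The plan is to detect the coprimality constraint by M\"obius inversion and then evaluate the resulting arithmetic‑progression sums by Poisson summation. Writing $\mathbf{1}_{(m,q)=1}=\sum_{d\mid(m,q)}\mu(d)$ gives
\[
\sum_{(m,q)=1}f\Bigl(\frac{m}{M}\Bigr)=\sum_{d\mid q}\mu(d)\sum_{m\equiv 0\Mod{d}}f\Bigl(\frac{m}{M}\Bigr).
\]
Before treating the inner sums I would record the standard decay bound for $\hat f$: integrating by parts twice and using $\|f''\|_1\le 20\|f''\|_\infty\ll_C(\log x)^{2C}$ gives $|\hat f(\xi)|\ll_C(\log x)^{2C}\xi^{-2}$ for $\xi\neq0$, while $|\hat f(0)|\le 20\|f\|_\infty\ll 1$. (More integrations by parts would yield arbitrary polynomial decay, but two suffice here.) We may assume $M,q\le x$, the claim being trivial otherwise.

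For the divisors $d\mid q$ with $d\le 10M$ I would apply Poisson summation in the shape of Lemma~\ref{lmm:Completion}, with modulus $d$, residue class $a=0$, and $H=\lceil 10(\log x)^{2C+1}\rceil\ge d(\log x)^{2C+1}/M$, which gives
\[
\sum_{m\equiv 0\Mod{d}}f\Bigl(\frac{m}{M}\Bigr)=\frac{M}{d}\hat f(0)+\frac{M}{d}\sum_{1\le|h|\le H}\hat f\Bigl(\frac{hM}{d}\Bigr)+O_C(x^{-100}).
\]
(Since $f$ is supported on the positive reals in every application, the one‑sided sum here agrees with the two‑sided sum underlying Poisson summation.) The nonzero frequencies are controlled by the decay bound: $\frac{M}{d}\sum_{h\ge 1}|\hat f(hM/d)|\ll_C\frac{M}{d}(\log x)^{2C}(d/M)^2\sum_{h\ge 1}h^{-2}\ll_C(\log x)^{2C}\cdot\frac{d}{M}\ll_C(\log x)^{2C}$, using $d\le 10M$. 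For $d>10M$ Poisson is ineffective, so instead I would invoke the compact support of $f$: $f(m/M)\ne 0$ forces $|m|\le 10M<d$, so $m=0$ is the only surviving multiple of $d$, whence $\sum_{m\equiv 0\Mod{d}}f(m/M)=f(0)=O(1)$; as $\frac{M}{d}\hat f(0)=O(1)$ too, the difference $\sum_{m\equiv 0\Mod{d}}f(m/M)-\frac{M}{d}\hat f(0)$ is again $O(1)$ in this range.

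Combining the two ranges, each divisor $d\mid q$ contributes a difference $\sum_{m\equiv 0\Mod{d}}f(m/M)-\frac{M}{d}\hat f(0)$ of size $O_C((\log x)^{2C})$ (plus a negligible $O_C(x^{-100})$), so after inserting $\mu(d)$ and summing over $d\mid q$ the total error is $O_C(\tau(q)(\log x)^{2C})$, while the zero‑frequency terms combine, via $\sum_{d\mid q}\mu(d)/d=\phi(q)/q$, into the stated main term. I do not anticipate a genuine difficulty here: the only steps requiring any care are the elementary decay estimate for $\hat f$ and the observation that one must abandon Poisson summation in favour of the support of $f$ once $d$ exceeds the support length $10M$ — this is precisely what keeps the per‑divisor error uniform in $d$ and $M$.
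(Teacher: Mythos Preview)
Your proof is correct and follows the standard Möbius-plus-Poisson route. The paper itself does not give a proof here but simply cites \cite[Lemma 12.6]{May1}, so there is no in-paper argument to compare against; your approach is exactly what one would expect that cited lemma's proof to look like.

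Two minor remarks. First, your main term is $\frac{\phi(q)}{q}M\hat f(0)$ rather than the $\frac{\phi(q)}{q}M$ printed in the statement; your version is the correct one, and indeed this is how the lemma is actually applied later in the paper (see the proof of Lemma~\ref{lmm:TripleCompletion}, where the $\hat\psi_2(0)$ factor appears). Second, in the large-$d$ range you write that ``$m=0$ is the only surviving multiple of $d$'', but under the paper's convention sums run over positive integers, so the sum is simply empty and equals $0$; since $M/d<1/10$ the difference from $\frac{M}{d}\hat f(0)$ is still $O(1)$, so your conclusion stands unchanged.
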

\begin{proof}
This is \cite[Lemma 12.6]{May1}.
\end{proof}
%
%
%%%%%%%%%%%%%%%%%%%%%%%%%%%%%%
%
%
\begin{lmm}[Completion of inverses]\label{lmm:InverseCompletion}
Let $C>0$ and $f:\mathbb{R}\rightarrow\mathbb{R}$ be a smooth function which is supported on $[-10,10]$ and satisfies $\|f^{(j)}\|_\infty\ll ((j+1)\log{x})^{j C}$ for all $j\ge 0$. Let $(d,q)=1$. Then we have for any $H\ge (\log{x})^{2C+1} d q/N$
\begin{align*}
\sum_{\substack{(n,q)=1\\ n\equiv n_0\Mod{d}}}&f\Bigl(\frac{n}{N}\Bigr)e\Bigl(\frac{b\overline{n}}{q}\Bigr)=\frac{N}{d q}\sum_{|h|\le H}\hat{f}\Bigl(\frac{h N}{d q}\Bigr)e\Bigl(\frac{n_0\overline{q}h}{d}\Bigr)S(h\overline{d},b;q)+O_C(x^{-100}),
\end{align*}
where $S(m,n;c)$ is the standard Kloosterman sum
\[
S(m,n;c):=\sum_{\substack{b\Mod{c}\\ (b,c)=1}}e\Bigl(\frac{m b+ n \overline{b}}{c}\Bigr).
\]
\end{lmm}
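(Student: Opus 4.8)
The statement is essentially an amalgamation of two standard moves: detecting the congruence $n\equiv n_0\Mod d$ by orthogonality of additive characters modulo $d$, and applying Poisson summation (in the form of Lemma \ref{lmm:Completion}) in the variable $n$ against the additive-multiplicative phase $e(b\overline n/q)$ to length $N$. First I would write $\mathbf{1}_{n\equiv n_0\Mod d}=\frac1d\sum_{k\Mod d}e(k(n-n_0)/d)$, so that the left-hand side becomes
\[
\frac1d\sum_{k\Mod d}e\Bigl(\frac{-k n_0}{d}\Bigr)\sum_{(n,q)=1}f\Bigl(\frac nN\Bigr)e\Bigl(\frac{kn}{d}+\frac{b\overline n}{q}\Bigr).
\]
The inner sum is over all $n$ coprime to $q$; I would detect coprimality by restricting $n$ to residue classes $c\Mod q$ with $(c,q)=1$, and on each such class both $\overline n\equiv\overline c\Mod q$ and the phase $e(kn/d)$ depend on $n$ only through $n\Mod{dq}$ (using $(d,q)=1$ and CRT). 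So the innermost object is a sum of $f(n/N)$ over a single residue class modulo $dq$, which is exactly the setting of Lemma \ref{lmm:Completion} with modulus $dq\le x$ and with $H\ge dq(\log x)^{2C+1}/N$, producing a main term $\frac{N}{dq}\hat f(0)$ plus frequencies $h$ with $1\le|h|\le H$ and an error $O_C(x^{-100})$, the latter being harmless after summing over the $O(dq)\le O(x^{2})$ residue classes and the $k$'s (one absorbs the polynomial loss into the exponent $100$, or equivalently states the final error with a slightly smaller exponent — I would just keep $x^{-100}$ by starting from $x^{-200}$ in the citation, which is legitimate since Lemma \ref{lmm:Completion} gives any fixed negative power).

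The bulk of the work is then purely algebraic bookkeeping of the exponential phases. After Poisson, each term carries a factor $e(h c'/(dq))$ where $c'$ is the representative (mod $dq$) of the class on which $kn/d+b\overline n/q$ was evaluated; splitting $h c'/(dq)$ via CRT into its mod-$d$ and mod-$q$ parts and summing over $c$ coprime to $q$ collapses the $q$-part into a Kloosterman sum, while summing over $k\Mod d$ collapses the $d$-part into a Ramanujan-type sum that forces $n_0\overline q h$ to appear. Concretely, I expect the $q$-aspect to yield $\sum_{(c,q)=1}e((h\overline d c + b\overline c)/q)=S(h\overline d,b;q)$ after the change of variable $c\mapsto$ its inverse, and the $d$-aspect to yield, after summing the geometric-type sum over $k$, the phase $e(n_0\overline q h/d)$ together with the normalising factor $1/d$ that combines with the $N/(dq)$ from Poisson to give the stated $N/(dq)$ out front. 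I would carry out this CRT splitting carefully, since getting the inverses ($\overline d$ modulo $q$, $\overline q$ modulo $d$) in the right places is where sign/label errors creep in; the identity $1/(dq)\equiv \overline q/d+\overline d/q\Mod 1$ is the key bookkeeping fact.

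The main obstacle, such as it is, is not analytic but notational: keeping the two moduli $d$ and $q$ disentangled through the completion and verifying that the twist $e(n_0\overline q h/d)$ and the argument $h\overline d$ of the Kloosterman sum emerge with the correct inverses. There is a minor point that Lemma \ref{lmm:Completion} as stated applies to $f$ supported on $[-10,10]$ and modulus $\le x$; here the effective modulus is $dq$, and one needs $dq\le x$, which is exactly the regime of interest (in applications $d,q$ are both comfortably below $x^{1/2}$), but to be safe I would note that the lemma is only invoked when $dq\le x$ — or, if one wants it unconditionally, one rescales so that the modulus in Lemma \ref{lmm:Completion} is at most $x$, which is automatic once $N\ge dq(\log x)^{2C+1}/H$ is the hypothesis anyway. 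Finally I would check that the error term survives: there are $O(dq)$ residue classes and $O(d)$ values of $k$, so the total accumulated error is $O(d^2 q\, x^{-200})=O(x^{-100})$, as claimed.
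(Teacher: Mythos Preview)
Your proposal is correct and follows the standard argument; the paper itself does not give a proof here but simply cites \cite[Lemma 12.5]{May1}, and the argument there is essentially the one you outline (Poisson summation modulo $dq$ followed by CRT bookkeeping to extract the Kloosterman sum and the phase $e(n_0\overline{q}h/d)$).

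One small simplification: your detour through additive characters $\frac{1}{d}\sum_{k\Mod d}e(k(n-n_0)/d)$ is redundant. After you split $(n,q)=1$ into residue classes $c\Mod q$, the inner sum is over $n\equiv c\Mod q$ with a phase $e(kn/d)$ that still depends on $n\Mod d$, so you are not yet in ``a single residue class modulo $dq$'' as you write; to get there you must split once more over $n\Mod d$, which just undoes the character expansion. It is cleaner to observe from the outset that the conditions $n\equiv n_0\Mod d$ and $(n,q)=1$ together restrict $n$ to exactly $\phi(q)$ residue classes $a\Mod{dq}$, on each of which $e(b\overline{n}/q)=e(b\overline{a}/q)$ is constant, apply Lemma~\ref{lmm:Completion} directly to each class, and then evaluate
\[
\sum_{\substack{a\Mod{dq}\\ a\equiv n_0\Mod d\\ (a,q)=1}}e\Bigl(\frac{b\overline{a}}{q}\Bigr)e\Bigl(\frac{ah}{dq}\Bigr)=e\Bigl(\frac{n_0\overline{q}h}{d}\Bigr)\sum_{\substack{c\Mod q\\ (c,q)=1}}e\Bigl(\frac{b\overline{c}+h\overline{d}c}{q}\Bigr)=e\Bigl(\frac{n_0\overline{q}h}{d}\Bigr)S(h\overline{d},b;q)
\]
via $e(ah/(dq))=e(a\overline{q}h/d)e(a\overline{d}h/q)$. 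This saves a layer of summation and keeps the error accounting to $\phi(q)$ applications of Lemma~\ref{lmm:Completion} rather than $O(d^2q)$.
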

\begin{proof}
This is \cite[Lemma 12.5]{May1}, with a slightly different presentation of the terms.
\end{proof}
%
%
%%%%%%%%%%%%%%%%%%%%%%%%%%%%%%
%
%
\begin{lmm}[Weil bound]\label{lmm:Kloosterman}
Let $S(m,n;c)$ be the standard Kloosterman sum (as given in Lemma \ref{lmm:InverseCompletion}). Then we have that
\[
S(m,n;c)\ll \tau(c)c^{1/2}\gcd(m,n,c)^{1/2}.
\]
\end{lmm}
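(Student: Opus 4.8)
### Proof plan for the Weil bound (Lemma~\ref{lmm:Kloosterman})

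The plan is to reduce everything to the classical Weil estimate for complete Kloosterman sums to \emph{prime} modulus via the Chinese Remainder Theorem and the twisted multiplicativity of Kloosterman sums, and then to handle prime powers by an elementary argument. First I would recall the factorization property: if $c = c_1 c_2$ with $(c_1,c_2)=1$, then $S(m,n;c) = S(m\overline{c_2}^2, n; c_1)\,S(m\overline{c_1}^2,n;c_2)$ (equivalently $S(m,n;c_1c_2)=S(\overline{c_2}m,\overline{c_2}n;c_1)S(\overline{c_1}m,\overline{c_1}n;c_2)$, depending on how one normalizes), which follows by writing each residue $b \bmod c$ uniquely as $b = b_1 c_2 \overline{c_2} + b_2 c_1 \overline{c_1}$ and expanding. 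Since $\gcd(m,n,c_1c_2) = \gcd(m,n,c_1)\gcd(m,n,c_2)$ and $\tau(c_1c_2)=\tau(c_1)\tau(c_2)$, the claimed bound is multiplicative, so it suffices to prove it for $c = p^k$ a prime power.

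For $c = p$ prime, this is exactly Weil's bound: $|S(m,n;p)| \le 2\sqrt{p}\,\gcd(m,n,p)^{1/2}$, which I would simply cite (it follows from the Riemann Hypothesis for curves over finite fields, or from Weil's original estimate for the relevant exponential sum; when $p \mid m$ and $p\mid n$ the sum is $p-1$, when $p \mid \gcd(m,n)$ is impossible with only one of them divisible one gets a Ramanujan-type sum of size $\le 1$ in absolute value after the shift, and otherwise Weil gives $2\sqrt p$). Since $\tau(p) = 2$, this gives $|S(m,n;p)| \le \tau(p) p^{1/2}\gcd(m,n,p)^{1/2}$, as required.

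For $c = p^k$ with $k \ge 2$, I would use the standard explicit evaluation of Kloosterman sums to prime-power modulus (stationary phase / Hensel lifting): writing $n = p^\alpha n'$, $m = p^\beta m'$ with $p \nmid m'n'$, one completes the square to show $|S(m,n;p^k)| \le 2 p^{k/2} \gcd(m,n,p^k)^{1/2}$ when $p$ is odd (with a slightly different but still $\ll p^{k/2}\gcd(\cdot)^{1/2}$ constant when $p = 2$), the sum vanishing unless a certain congruence solubility condition holds. Again $\tau(p^k) = k+1 \ge 2$ absorbs the constant $2$. Assembling the prime-power bounds via the multiplicativity established in the first step yields
\[
|S(m,n;c)| \le \prod_{p^k \| c} \tau(p^k) (p^k)^{1/2} \gcd(m,n,p^k)^{1/2} = \tau(c)\, c^{1/2}\gcd(m,n,c)^{1/2}.
\]

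The main obstacle — really the only non-routine input — is the prime-modulus case, which rests on Weil's Riemann Hypothesis for curves (or equivalently Deligne's bounds in this one-variable case); everything else is elementary bookkeeping with the Chinese Remainder Theorem and Hensel's lemma. In practice for a paper of this kind one simply cites the Weil bound from a standard reference (e.g. \cite{IwaniecKowalski}), so the "proof" is a one-line appeal to the literature plus the remark that the multiplicativity in $c$ makes the stated form with $\tau(c)$ and $\gcd(m,n,c)$ immediate from the prime-power case.
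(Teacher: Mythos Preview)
Your proposal is correct, and in fact more detailed than what the paper does: the paper's proof is a single line citing \cite[Lemma 12.3]{May1}, exactly the ``one-line appeal to the literature'' you anticipated in your final paragraph. Your sketch of the underlying argument (twisted multiplicativity via CRT, Weil for prime modulus, elementary stationary-phase evaluation for prime powers) is the standard route and is sound.
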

\begin{proof}
This is \cite[Lemma 12.3]{May1}.
\end{proof}
%
%
%%%%%%%%%%%%%%%%%%%%%%%%%%%%%%
%
%
\begin{lmm}[Properties of $F$ sum]\label{lmm:FSum}
Define
\begin{align*}
F(h_1,h_2,h_3;a,q)&:=\sum_{\substack{b_1,b_2,b_3\in(\mathbb{Z}/q\mathbb{Z})^\times \\ b_1b_2b_3=a}}e\Bigl(\frac{h_1b_1+h_2b_2+h_3b_3}{q}\Bigr),\\
\Kl_3(a;q)&:=\frac{1}{q}\sum_{\substack{b_1,b_2,b_3\in \mathbb{Z}/q\mathbb{Z}\\ b_1b_2b_3=a}}e\Bigl(\frac{b_1+b_2+b_3}{q}\Bigr).
\end{align*}
Then we have the following:
\begin{enumerate}
\item If $(q_1,q_2)=1$ then
\[
F(h_1,h_2,h_3;a;q_1q_2)=F(h_1,h_2,h_3;a \overline{q_1}^3;q_2)F(h_1,h_2,h_3;a\overline{q_2}^3;q_1).
\]
\item If $(b,q)=1$ then
\[
F(h_1,h_2,h_3;a;q)=F(b h_1,b h_2,b h_3;a \overline{b}^3;q).
\]
\item If $(a,q)\ne 1$ then 
\[
F(h_1,h_2,h_3;a;q)=0.
\]
\item If $(a,q)=1$ and $\gcd(h_1,h_2,h_3,q)=d$ then 
\[
F(h_1,h_2,h_3; a ;q)=\frac{\phi(q)^2}{\phi(q/d)^2} F\Bigl(\frac{h_1}{d},\frac{h_2}{d},\frac{h_3}{d};a;\frac{q}{d}\Bigr).
\]
\item If $(a,q)=1$ and $\gcd(h_1h_2h_3,q)=1$ then 
\[
F(h_1,h_2,h_3;a;q)=q\Kl_3(ah_1h_2h_3;q).
\]
\item If $(a,q)=1$ and $\gcd(h_1h_2h_3,q)\ne 1$ and $\gcd(h_1,h_2,h_3,q)=1$ and $\mu^2(q)=0$  then 
\[
F(h_1,h_2,h_3;a;q)=0.
\]
\item If $(a,q)=1$ and $q|h_1h_2h_3$ and $\gcd(h_1,h_2,h_3,q)=1$ and $\mu^2(q)=1$ then $F(h_1,h_2,h_3;a;q)$ depends only on $(h_1,q)$, $(h_2,q)$, $(h_3,q)$ and $q$, and satisfies
\[
|F(h_1,h_2,h_3;a,q)|\ll \frac{(h_1,q)(h_2,q)(h_3,q)}{q}.
\]
\end{enumerate}
\end{lmm}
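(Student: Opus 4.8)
The plan is to treat the seven assertions more or less independently: parts (1)--(5) should follow from the Chinese Remainder Theorem together with linear changes of variables in the summation, while parts (6)--(7) will be reduced to prime powers via the multiplicativity of part (1). For part (1) I would use the isomorphism $(\mathbb{Z}/q_1q_2\mathbb{Z})^\times\cong(\mathbb{Z}/q_1\mathbb{Z})^\times\times(\mathbb{Z}/q_2\mathbb{Z})^\times$ and the identity $1/(q_1q_2)\equiv \overline{q_2}/q_1+\overline{q_1}/q_2\pmod 1$, which splits both the exponential factor and the constraint $b_1b_2b_3=a$ across the two moduli; a rescaling $b_i\mapsto\overline{q_2}b_i$ modulo $q_1$ (respectively $b_i\mapsto\overline{q_1}b_i$ modulo $q_2$) absorbs the inverse into the constraint and produces the twisted parameters $a\overline{q_2}^3$ and $a\overline{q_1}^3$. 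Part (2) is the substitution $b_i\mapsto bb_i$, which sends the constraint to $b_1b_2b_3=a\overline{b}^3$ and replaces $h_ib_i$ by $(bh_i)b_i$; part (3) is immediate since a product of units is a unit, so the summation set is empty when $(a,q)\ne1$.

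For part (4), writing $h_i=dg_i$ the summand $e(h_ib_i/q)=e(g_ib_i/(q/d))$ depends on $b_i$ only modulo $q/d$. I would group the triples $(b_1,b_2,b_3)$ according to their residues modulo $q/d$ and count the fibres of the surjection $(\mathbb{Z}/q\mathbb{Z})^\times\to(\mathbb{Z}/(q/d)\mathbb{Z})^\times$, each of size $\phi(q)/\phi(q/d)$: given admissible residues, $b_1$ and $b_2$ each have $\phi(q)/\phi(q/d)$ lifts, after which $b_3=a\overline{b_1b_2}$ is forced and automatically has the correct residue, yielding the factor $(\phi(q)/\phi(q/d))^2$ in front of $F(g_1,g_2,g_3;a;q/d)$. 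Part (5) is the substitution $c_i=h_ib_i$, which turns the constraint into $c_1c_2c_3=ah_1h_2h_3$ and the summand into $e((c_1+c_2+c_3)/q)$; since $(ah_1h_2h_3,q)=1$ forces every $c_i$ to be a unit, the unit restriction is automatic and the sum is exactly $q\,\Kl_3(ah_1h_2h_3;q)$.

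For parts (6) and (7) I would first apply part (1) to factor $F(h_1,h_2,h_3;a;q)$ as a product over the prime powers $p^{v_p(q)}\,\|\,q$ of local factors $F(h_1,h_2,h_3;a_p;p^{v_p(q)})$ with each $a_p$ a unit, noting that the $\gcd$ hypotheses descend to each local factor. For a prime $p$ dividing $h_1h_2h_3$ but not all of $h_1,h_2,h_3$, say $p\mid h_1$ and $p\nmid h_3$ after permuting indices (which the sum permits), one eliminates $b_1=a_p\overline{b_2b_3}$, so that modulo $p$ the factor $e(h_1b_1/p)=1$ disappears and the local sum at a squarefree prime becomes $\bigl(\sum_{b_2\ne0}e(h_2b_2/p)\bigr)\bigl(\sum_{b_3\ne0}e(h_3b_3/p)\bigr)$, a product of Ramanujan-type sums of modulus at most $(h_1,p)(h_2,p)(h_3,p)/p$ depending only on $(h_1,p),(h_2,p),(h_3,p)$; multiplying over $p\mid q$ gives the bound and the claimed dependence in part (7) (and in particular shows the product is independent of $a$). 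For part (6), one chooses a prime $p$ with $p^2\mid q$, puts $e:=v_p(q)\ge2$, and again reduces to the configuration $p\mid h_1$, $p\nmid h_3$; parametrising the local sum by $b_2,b_3$ with $b_1=a_p\overline{b_2b_3}$ and replacing $b_3$ by $b_3(1+p^{e-1}s)$ with $s$ running over $\mathbb{Z}/p\mathbb{Z}$, the term $h_1b_1$ is unchanged modulo $p^e$ because $p\mid h_1$, while $h_3b_3$ acquires $p^{e-1}h_3b_3s$; summing over $s$ produces the factor $\sum_{s\bmod p}e(h_3b_3s/p)=0$, so the local factor vanishes and hence so does $F$.

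I expect the routine manipulations in parts (1)--(5) to go through without real difficulty; the main obstacle should be part (6). There one must make the $p^{e-1}$-shift (a stationary-phase type argument) valid uniformly in the exponent $e\ge2$ and in every admissible pattern of divisibilities of $p$ into $h_1,h_2,h_3$, and — more delicately — one has to argue from $\gcd(h_1h_2h_3,q)\ne1$, $\gcd(h_1,h_2,h_3,q)=1$ and $\mu^2(q)=0$ that the vanishing mechanism applies to a genuine local factor, i.e. that some prime $p$ with $p^2\mid q$ divides one but not all of the $h_i$; this is the step that needs the most care, together with keeping track of the unit twists $a\overline{q_i}^3$ from part (1) throughout the multiplicative reduction (these twists affect neither the $\gcd$ conditions nor the vanishing, but the bookkeeping must be explicit).
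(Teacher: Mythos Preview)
The paper does not give its own proof of this lemma; it simply cites \cite[Lemma 19.3]{May1}. So there is no argument in the paper to compare against, and your outline for parts (1)--(5) and (7) is correct and is the standard way these identities are established.

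Your concern about part (6) is not just a matter of bookkeeping: the implication you flag as ``the step that needs the most care'' is in fact \emph{false}, and with it the statement of (6) for general composite $q$. Take $q=12$, $a=1$, $(h_1,h_2,h_3)=(3,1,1)$. Then $(a,q)=1$, $\gcd(h_1h_2h_3,q)=3\ne 1$, $\gcd(h_1,h_2,h_3,q)=1$, and $\mu^2(12)=0$, yet a direct computation (or part (1) together with part (5) at the prime $2$ and your Ramanujan-sum calculation at the prime $3$) gives
\[
F(3,1,1;1;12)=F(3,1,1;1;3)\cdot F(3,1,1;3;4)=1\cdot 4\Kl_3(1;4)=-4i\neq 0.
\]
The hypotheses of (6) do not force any \emph{repeated} prime of $q$ to divide some $h_i$; here the only repeated prime is $2$, which is coprime to $h_1h_2h_3$. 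So there is no hope of deducing (6) as stated from your local vanishing argument, because the global statement is wrong.

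What your $p^{e-1}$-shift argument \emph{does} prove, cleanly and correctly, is the prime-power case: if $q=p^e$ with $e\ge 2$, $(a,p)=1$, $p\mid h_1h_2h_3$ and $p\nmid\gcd(h_1,h_2,h_3)$, then $F(h_1,h_2,h_3;a;p^e)=0$. This is all that is ever needed --- in the paper (6) is only invoked after the multiplicative splitting of part (1), so only the local statement is used --- and it is presumably what the cited lemma actually asserts. You should record (6) at prime powers and drop the attempt to globalise it.
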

\begin{proof}
This is \cite[Lemma 19.3]{May1}.
\end{proof}
%
%
%%%%%%%%%%%%%%%%%%%%%%
%
%
\begin{lmm}[Minkowski-reduced basis]\label{lmm:Basis}
Let $\Lambda\subseteq\mathbb{R}^k$ be a lattice and $\|\cdot\|$ the Euclidean norm on $\mathbb{R}^k$. Then there is a set $\{\mathbf{v}_1,\dots,\mathbf{v}_r\}$ of linearly independent vectors in $\mathbb{R}^k$ such that
\begin{enumerate}
\item $\{\mathbf{v}_1,\dots,\mathbf{v}_r\}$ is a basis:
\[
\Lambda=\mathbf{v}_1\mathbb{Z}+\dots+\mathbf{v}_r\mathbb{Z}.
\]
\item The $\mathbf{v}_i$ are quasi-orthogonal: For any $x_1,\dots,x_r\in\mathbb{R}$ we have
\[
\|x_1\mathbf{v}_1+\dots+x_r\mathbf{v}_r\|\asymp \sum_{i=1}^r\|x_i\mathbf{v}_i\|.
\]
\item The sizes of the $\mathbf{v}_i$ are controlled by successive minima: If $\lambda_1\le \lambda_2\dots\le \lambda_r$ are the successive minima of $\Lambda$, then $\|\mathbf{v}_i\|\asymp \lambda_i$ for all $i$. In particular,
\[
\|\mathbf{v}_1\|\cdots\|\mathbf{v}_r\|\asymp \det(\Lambda).
\]
\end{enumerate}
The implied constants above depend only on the ambient dimension $k$. Here $\det(\Lambda)$ is the $r$-dimensional volume of the fundamental parallelepiped, given by
\[
\Bigl\{\sum_{i=1}^r x_i\mathbf{v}_i:\,x_1,\dots,x_r\in[0,1]\Bigr\},
\]
and the $j^{th}$ successive minimum is the smallest quantity $\lambda_j$ such that $\Lambda$ contains $j$ linearly independent vectors of norm at most $\lambda_j$.
\end{lmm}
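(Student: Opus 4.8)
The plan is to build the basis from lattice vectors that nearly realise the successive minima, and then extract the quasi-orthogonality by a short linear-algebra argument. Write $r$ for the rank of $\Lambda$ and $\lambda_1\le\cdots\le\lambda_r$ for its successive minima. The two classical inputs I would invoke from the geometry of numbers are: (i) Minkowski's second theorem, giving $\lambda_1\cdots\lambda_r\asymp_r\det(\Lambda)$; and (ii) the fact that the successive minima are realised, up to constants depending only on $r$, by an honest basis --- there is a basis $\mathbf{v}_1,\dots,\mathbf{v}_r$ of $\Lambda$ with $\|\mathbf{v}_i\|\le C(r)\,\lambda_i$ for every $i$. (One cannot in general take the $\mathbf{v}_i$ to be vectors attaining the $\lambda_i$ once $r\ge 5$; the correct substitute statement is classical, e.g.\ via the theory of Minkowski-reduced or Korkine--Zolotareff-reduced bases.) Conclusion (1) is then immediate from the construction.

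Next I would deduce conclusion (3), together with the comparison $\|\mathbf{v}_i\|\asymp_r\lambda_i$, from (i), (ii), and the Hadamard inequality $\det(\Lambda)\le\prod_i\|\mathbf{v}_i\|$. Indeed $\prod_i\|\mathbf{v}_i\|\le C(r)^r\prod_i\lambda_i\asymp_r\det(\Lambda)\le\prod_i\|\mathbf{v}_i\|$, so $\prod_i\|\mathbf{v}_i\|\asymp_r\det(\Lambda)\asymp_r\prod_i\lambda_i$; and since each $\|\mathbf{v}_i\|\le C(r)\lambda_i$ while the product of the $\|\mathbf{v}_i\|$ agrees with the product of the $\lambda_i$ up to a constant, each individual $\|\mathbf{v}_i\|$ must also satisfy $\|\mathbf{v}_i\|\gg_r\lambda_i$.

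For the quasi-orthogonality (2) the idea is to normalise: let $A$ be the $k\times r$ matrix with columns $\mathbf{v}_i/\|\mathbf{v}_i\|$. Each column is a unit vector, so the Hilbert--Schmidt norm of $A$ equals $\sqrt r$ and hence its largest singular value is $\le\sqrt r$; on the other hand the product of the singular values of $A$ equals the $r$-dimensional volume of the parallelepiped spanned by its columns, namely $\det(\Lambda)/\prod_i\|\mathbf{v}_i\|$, which is $\asymp_r 1$ by (3). Therefore the least singular value $\sigma$ of $A$ satisfies $\sigma\ge\bigl(\det(\Lambda)/\prod_i\|\mathbf{v}_i\|\bigr)\big/(\sqrt r)^{\,r-1}\gg_r 1$. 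Writing $y_i=x_i\|\mathbf{v}_i\|$, so that $\sum_i x_i\mathbf{v}_i=A\mathbf{y}$, we obtain $\|\sum_i x_i\mathbf{v}_i\|=\|A\mathbf{y}\|\ge\sigma\,\|\mathbf{y}\|_2\gg_r\|\mathbf{y}\|_1=\sum_i\|x_i\mathbf{v}_i\|$, while the reverse inequality is just the triangle inequality; this gives (2). All implied constants depend only on $r\le k$, hence only on $k$, as required. I expect the only real obstacle to be pinning down input (ii) with a clean reference or self-contained argument --- precisely the place where the naive "use vectors attaining $\lambda_i$" fails in dimension $\ge 5$; everything downstream is routine.
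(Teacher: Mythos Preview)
Your argument is correct. The paper does not actually prove this lemma; it simply cites \cite[Lemma~4.1]{Polys} and moves on, so your self-contained sketch supplies strictly more than the paper does. Your two classical inputs are standard: Minkowski's second theorem for (i), and for (ii) the existence of a basis with $\|\mathbf{v}_i\|\ll_r\lambda_i$ (e.g.\ via Mahler's theorem that a Minkowski-reduced basis satisfies this, or via the Korkine--Zolotareff reduction). The deduction of (3) from Hadamard plus these inputs is clean, and the singular-value argument for (2) is a nice way to package quasi-orthogonality --- perhaps slicker than the more common Gram--Schmidt approach. As you anticipated, the only place requiring a reference rather than a one-line argument is input (ii), and you have correctly flagged why the naive choice of vectors realising the $\lambda_i$ can fail to be a basis for $r\ge 5$.
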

\begin{proof}
This is \cite[Lemma 4.1]{Polys}.
\end{proof}
%
%
%%%%%%%%%%%%%%%%%%%%%%
%
%
\begin{lmm}[Barban-Davenport-Halberstam type estimate]\label{lmm:Barban}
Let $B>0$ and let $\alpha_n$ be a complex sequence which satisfies the Siegel-Walfisz condition \eqref{eq:SiegelWalfisz} and satisfies $|\alpha_n|\le \tau(n)^{B}$. Then for any $A>0$ there is a constant $C=C(A,B)$ such that if $Q<N/(\log{N})^{C}$ we have
\[
\sum_{q\le Q}\tau(q)^{B}\sum_{\substack{b\Mod{q}\\ (b,q)=1}}\Bigl|\sum_{n\sim N}\alpha_n \Bigl(\mathbf{1}_{n\equiv b\Mod{q}}-\frac{\mathbf{1}_{(n,q)=1}}{\phi(q)}\Bigr)\Bigr|^2\ll_{A,B} \frac{N^2}{(\log{N})^A}.
\]
\end{lmm}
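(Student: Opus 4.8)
The plan is to reduce the claimed $L^2$ bound to a classical Barban–Davenport–Halberstam estimate by exploiting the Siegel–Walfisz hypothesis on $\alpha_n$. First I would recall the standard large-sieve/dispersion approach: writing $E(N;q,b):=\sum_{n\sim N}\alpha_n(\mathbf{1}_{n\equiv b\Mod q}-\mathbf{1}_{(n,q)=1}/\phi(q))$, one has the orthogonality identity expressing $\sum_{(b,q)=1}|E(N;q,b)|^2$ in terms of the sum over primitive characters $\chi \Mod q$ of $|\sum_{n\sim N}\alpha_n\chi(n)|^2$, divided by $\phi(q)$. Thus the target sum is, up to the harmless factor $\tau(q)^B$, essentially $\sum_{q\le Q}\frac{1}{\phi(q)}\sum_{\chi\Mod q}^{\ast}\bigl|\sum_{n\sim N}\alpha_n\chi(n)\bigr|^2$, where $\ast$ denotes summation over primitive characters. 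Grouping by conductor, this is bounded by $\sum_{d\le Q}\frac{1}{\phi(d)}\bigl(\sum_{d\mid q,\,q\le Q}\tau(q)^B/\text{(lower order)}\bigr)\sum_{\chi\Mod d}^{\ast}\bigl|\sum_{n\sim N}\alpha_n\chi(n)\bigr|^2$, and the inner double sum over $d\le Q$ and primitive $\chi\Mod d$ of $|\sum_n\alpha_n\chi(n)|^2$ is exactly what the multiplicative large sieve inequality controls, giving $\ll (Q^2+N)\sum_{n\sim N}|\alpha_n|^2 \ll (Q^2+N)N(\log N)^{O_B(1)}$.

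The point is that the large sieve alone only gives a bound of the shape $(Q^2+N)N(\log N)^{O(1)}$, which is $\gg N^2(\log N)^{O(1)}$ when $Q$ is close to $N$ — not good enough. To win the extra factors of $\log N$, I would split the character sum into small conductors $d\le (\log N)^{C}$ and large conductors $(\log N)^C<d\le Q$. For the large conductors, the large sieve as above suffices since we are summing over $\gg (\log N)^C$ worth of moduli and the trivial-per-modulus bound loses only finitely many logs, so choosing $C$ large enough handles this range. For the small conductors $d\le (\log N)^C$, the large sieve is too lossy, but here the Siegel–Walfisz condition \eqref{eq:SiegelWalfisz} (applied with $q=d$, $d=1$, and a large exponent $A'$) directly gives $|\sum_{n\sim N}\alpha_n\chi(n)|\ll_{\chi} N/(\log N)^{A'}$ for each fixed non-principal $\chi$ of small conductor — indeed decomposing $\chi$-twisted sums into residue classes mod $d$ and invoking \eqref{eq:SiegelWalfisz}. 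Squaring and summing over the $O((\log N)^{2C})$ such characters gives a contribution $\ll N^2(\log N)^{2C-2A'}$, which is acceptable once $A'$ is taken large in terms of $A$ and $C$.

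The main obstacle is bookkeeping the conductor reduction cleanly with the weight $\tau(q)^B$ present: one must verify that $\sum_{d\mid q,\,q\le Q}\tau(q)^B/\phi(q) \ll \tau(d)^{O_B(1)}(\log Q)^{O_B(1)}/\phi(d)$ so that the divisor weights do not destroy the savings, which is a routine multiplicative estimate. A secondary technical point is that the Siegel–Walfisz input produces an \emph{ineffective} constant (via the possible Siegel zero), but this is already acknowledged in the remark after Corollary \ref{crllry:LowerBound} and is acceptable. I would also need the standard bound $\sum_{n\sim N}\tau(n)^{2B}\ll N(\log N)^{O_B(1)}$ to feed into the large-sieve step, which is classical. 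Assembling the two ranges gives $\sum_{q\le Q}\tau(q)^B\sum_{(b,q)=1}|E(N;q,b)|^2\ll_{A,B} N^2/(\log N)^A$ provided $C=C(A,B)$ is chosen sufficiently large, as required.
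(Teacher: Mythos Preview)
The paper does not prove this lemma at all: it simply cites \cite[Lemma~12.7]{May1}. Your sketch supplies the standard argument (orthogonality to pass to characters, reduction to primitive characters of conductor $d$, Siegel--Walfisz for $d\le(\log N)^{C}$, large sieve for larger $d$), which is indeed how such statements are proved.

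There is one point where your sketch is too optimistic. After the conductor reduction you arrive at roughly
\[
(\log N)^{O_B(1)}\sum_{d\le Q}\frac{\tau(d)^{O_B(1)}}{\phi(d)}\sum_{\chi\Mod{d}}^{*}\Bigl|\sum_{n\sim N}\alpha_n\chi(n)\Bigr|^2,
\]
and for $d>(\log N)^{C}$ you assert that ``the large sieve suffices since \dots\ the trivial-per-modulus bound loses only finitely many logs''. This implicitly treats $\tau(d)^{B}$ as bounded by a fixed power of $\log N$, which is false pointwise: for $d$ near $Q\sim N/(\log N)^{C}$ one can have $\tau(d)^{B}$ as large as $\exp\bigl(c_B\log N/\log\log N\bigr)$, and pulling such a factor outside a dyadic large-sieve bound destroys the logarithmic saving. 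The routine multiplicative estimate you quote for $\sum_{q\le Q,\,d\mid q}\tau(q)^{B}/\phi(q)$ is correct but does not by itself resolve this.

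The fix is standard and easy: before doing anything else, split off the moduli $q$ with $\tau(q)>(\log N)^{K}$. For these, expanding the square gives the crude bound $\sum_{(b,q)=1}|E(N;q,b)|^2\ll N^{2}(\log N)^{O_B(1)}/\phi(q)$ (via Lemma~\ref{lmm:Divisor}), and Rankin's trick yields
\[
\sum_{\substack{q\le Q\\ \tau(q)>(\log N)^{K}}}\frac{\tau(q)^{B}}{\phi(q)}\le (\log N)^{-K}\sum_{q\le Q}\frac{\tau(q)^{B+1}}{\phi(q)}\ll(\log N)^{O_B(1)-K},
\]
which is acceptable for $K$ large in terms of $A,B$. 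On the remaining $q$ one has $\tau(q)^{B}\le(\log N)^{BK}$ genuinely, and then your small/large conductor split with Siegel--Walfisz and the large sieve goes through exactly as you describe. With this amendment your plan is complete.
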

\begin{proof}
This is \cite[Lemma 12.7]{May1}. Note that the implied constant in Lemma \ref{lmm:Barban} depend on the implied constants in \eqref{eq:SiegelWalfisz}.
\end{proof}
%
%
%%%%%%%%%%%%%%%%%%%%%%
%
%
\begin{lmm}[Splitting into coprime sets]\label{lmm:FouvryDecomposition}
Let $\mathcal{N}\subseteq \mathbb{Z}_{>0}^2$ be a set of pairs $(a,b)$ satisfying:
\begin{enumerate}
\item $a,b\le x^{O(1)}$,
\item $\gcd(a,b)=1$,
\item The number of prime factors of $a$ and of $b$ is $\ll (\log\log{x})^3$. 
\end{enumerate}
Then there is a partition $\mathcal{N}=\mathcal{N}_1\sqcup\mathcal{N}_2\sqcup\dots \sqcup\mathcal{N}_J$ into $J$ disjoint subsets with
\[
J\ll \exp\Bigl(O(\log\log{x})^4\Bigr),
\]
and such that if $(a,b)$ and $(a',b')$ both lie in the same subset $\mathcal{N}_j$, then $\gcd(a,b')=\gcd(a',b)=1$. 
\end{lmm}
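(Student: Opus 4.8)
## Proof proposal for Lemma \ref{lmm:FouvryDecomposition}

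The plan is to reduce the combinatorial partitioning problem to a graph-colouring argument. Define a graph $G$ whose vertex set is $\mathcal{N}$, and join $(a,b)$ to $(a',b')$ by an edge whenever $\gcd(a,b')>1$ or $\gcd(a',b)>1$. A valid partition into the subsets $\mathcal{N}_j$ is then exactly a proper colouring of $G$, so by the greedy colouring bound it suffices to show that $G$ has maximum degree $\ll \exp(O(\log\log x)^4)$, whence $\chi(G)\ll \exp(O(\log\log x)^4)$ as well, giving the claimed bound on $J$.

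To bound the degree of a fixed vertex $(a,b)$, first I would count the neighbours $(a',b')$ with $\gcd(a,b')>1$; the count with $\gcd(a',b)>1$ is symmetric. If $\gcd(a,b')>1$ then $b'$ is divisible by some prime $p\mid a$. By hypothesis (3), $a$ has $\ll (\log\log x)^3$ distinct prime factors, so there are $\ll (\log\log x)^3$ choices for such a prime $p$. The key step is then to bound, for each fixed prime $p\mid a$, the number of pairs $(a',b')\in\mathcal{N}$ with $p\mid b'$. Here one uses that $b'$, like every second coordinate in $\mathcal{N}$, has $\ll(\log\log x)^3$ prime factors, and that $a',b'\le x^{O(1)}$; the point is that specifying one prime factor of $b'$ together with the constraint on the number of its prime factors does not by itself bound the number of $b'$, so one needs a genuine counting input.

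The cleanest way to get that input is to control the \emph{pairs} directly rather than the values $b'$: I would use the bound for the number of integers up to $x^{O(1)}$ having at most $k$ prime factors, which is $\ll x^{O(1)} (\log\log x)^{k}/k!$ or, more usefully here, pass to a dyadic decomposition and use that the number of such integers in a dyadic block $[T,2T]$ is $\ll T (\log\log x)^{O(1)}$ (a standard consequence of the Hardy–Ramanujan / Sathe–Selberg estimates, or simply of Mertens plus the union bound over the $\ll (\log\log x)^{O(1)}$ choices of which primes are "small"). Combined with the bound on the number of primes $p\mid a$ and the observation that, once $p$ is fixed, the factor $p$ is removed from $b'$ and one is left counting $b'/p$ in a shorter range, a short computation gives that each vertex has $\ll \exp(O(\log\log x)^4)$ neighbours — the exponential arising because $(\log\log x)^{O(1)}$ raised to the power $(\log\log x)^3$ (the number of prime factors) is $\exp(O((\log\log x)^3\log\log\log x))=\exp(O(\log\log x)^4)$.

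The main obstacle is precisely making the counting in this last step honest: naively, "$b'$ has one prescribed prime factor and $\ll(\log\log x)^3$ prime factors total" still allows $x^{O(1)}$ many values of $b'$, so the degree bound must come from a more careful accounting — either by bounding the number of admissible prime \emph{supports} for $b'$ (there are $\ll \exp(O(\log\log x)^4)$ of them, since choosing $\le (\log\log x)^3$ primes from among the $\ll \log x$ primes below $x^{O(1)}$, after further restricting to those needed to make $\gcd(a,b')>1$, is where the combinatorics bites) and noting that each support with the divisibility constraint $p\mid b'$ determines $(a',b')$ up to $\ll 1$ choices given $\mathcal{N}$'s structure, or by appealing to a mean-value bound such as $\sum_{n\le X,\,\omega(n)\le k}1/n \ll (\log\log X)^{k}/k!$ applied in a dyadic range. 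I would set up the graph-colouring reduction first, then isolate this counting lemma about integers with few prime factors, prove it using Mertens' theorem and a union bound over prime supports, and finally assemble the degree estimate; I expect the support-counting lemma to be the only non-routine ingredient.
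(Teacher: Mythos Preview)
Your graph-colouring reduction is a natural first move, but the degree bound you are aiming for is simply false, and the rest of the proposal does not recover from this. Take $\mathcal{N}=\{(2,1)\}\cup\{(p,2):\ p\text{ odd prime},\ p\le x\}$. Every pair here satisfies the hypotheses, yet $(2,1)$ is adjacent to each $(p,2)$ since $\gcd(2,2)>1$, so the vertex $(2,1)$ has degree $\sim x/\log x$. Greedy colouring via the maximum degree therefore gives nothing. Your attempted repairs both contain errors: there are $\asymp x^{O(1)}/\log x$ primes below $x^{O(1)}$, not $\ll\log x$; and $\mathcal{N}$ is an \emph{arbitrary} set satisfying (1)--(3), so the prime support of $b'$ does not determine $(a',b')$ up to $O(1)$ choices.

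The point is that the chromatic number here is small for structural reasons, not because the degrees are small. One clean route: any valid class is exactly the set of $(a,b)$ compatible with some fixed $2$-colouring $\chi:\{\text{primes}\}\to\{A,B\}$, meaning $\chi(p)=A$ for all $p\mid a$ and $\chi(p)=B$ for all $p\mid b$; within such a class one automatically has $\gcd(a,b')=\gcd(a',b)=1$. So it suffices to find a family of $J$ colourings $\chi$ such that every $(a,b)\in\mathcal{N}$ is compatible with at least one of them. Since each pair involves at most $2K$ primes (with $K\ll(\log\log x)^3$), a random $\chi$ is compatible with a fixed pair with probability $\ge 2^{-2K}$, and a short union-bound or explicit splitter construction yields $J\ll 2^{O(K)}K\log x\ll\exp(O(\log\log x)^4)$. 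This is the kind of argument underlying the cited lemma; the work is in constructing a small separating family of colourings of the primes, not in bounding degrees.
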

\begin{proof}
This is \cite[Lemma 12.2]{May1}.
\end{proof}
%
%
%%%%%%%%%%%%%%%%%%%%%%
%
%
\begin{lmm}[Divisor function bounds]\label{lmm:Divisor}
Let $|b|< x-y$ and $y\ge q x^\epsilon$. Then we have
\[
\sum_{\substack{x-y\le n\le x\\ n\equiv a\Mod{q}}}\tau(n)^C\tau(n-b)^C\ll \frac{y}{q} (\tau(q)\log{x})^{O_{C}(1)}.
\]
\end{lmm}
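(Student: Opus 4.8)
\emph{Proof proposal.} The plan is to separate the two divisor factors by Cauchy--Schwarz and then invoke a Shiu-type upper bound for a single non-negative multiplicative function over a short interval in an arithmetic progression. First I would record the pointwise inequality $\tau(n)^C\le \tau_K(n)$ with $K:=2^C$; this follows by iterating $\tau_j(n)\tau_k(n)\le \tau_{jk}(n)$, which in turn follows from the sub-multiplicativity $\tau_j(mn)\le \tau_j(m)\tau_j(n)$ together with the identity $(\tau_j\ast\cdots\ast\tau_j)=\tau_{jk}$ ($k$ factors). Hence it suffices to bound
\[
\Sigma:=\sum_{\substack{x-y\le n\le x\\ n\equiv a\Mod{q}}}\tau_K(n)\,\tau_K(n-b),
\]
and Cauchy--Schwarz gives $\Sigma\le \Sigma_1^{1/2}\Sigma_2^{1/2}$, where
\[
\Sigma_1:=\sum_{\substack{x-y\le n\le x\\ n\equiv a\Mod{q}}}\tau_K(n)^2,\qquad
\Sigma_2:=\sum_{\substack{x-y\le n\le x\\ n\equiv a\Mod{q}}}\tau_K(n-b)^2.
\]
In $\Sigma_2$ I would substitute $m=n-b$: the hypothesis $|b|<x-y$ ensures that $m$ runs over positive integers in an interval of length $y$ contained in $(0,2x)$ with $m\equiv a-b\Mod{q}$, so $\Sigma_2$ has exactly the same shape as $\Sigma_1$.

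To estimate $\Sigma_1$ (and $\Sigma_2$ identically) I would first remove the common factor $g:=\gcd(a,q)$ by writing $n=gn'$, so that $n'$ runs over an interval of length $y/g$ with $n'\equiv a/g\Mod{q/g}$ and $\gcd(a/g,q/g)=1$, and $\tau_K(gn')^2\le \tau_K(g)^2\tau_K(n')^2\le \tau(q)^{2(K-1)}\tau_K(n')^2$. I would then apply Shiu's theorem to the non-negative multiplicative function $f=\tau_K^2$, which satisfies $f(p^\ell)\ll_K(\ell+1)^{2K-2}$ and $f(m)\ll_{K,\epsilon}m^\epsilon$: since $y\ge qx^\epsilon$ and $y<x$ force $q\le yx^{-\epsilon}\le y^{1-\epsilon}$, hence $q/g\le(y/g)^{1-\epsilon}$, and the interval has length $y/g\ge y/q\ge x^\epsilon$ (a fixed power of $x$), the hypotheses of Shiu's theorem are met and yield
\[
\Sigma_1\ll_K \tau(q)^{2(K-1)}\cdot\frac{y/g}{\phi(q/g)}\cdot\frac{1}{\log x}\exp\Big(\sum_{p\le x}\frac{\tau_K(p)^2}{p}\Big)\ll_K \frac{y}{q}\,(\tau(q)\log x)^{O_K(1)},
\]
using $\sum_{p\le x}\tau_K(p)^2/p=K^2\log\log x+O(1)$ and $\tfrac{y/g}{\phi(q/g)}\ll\tfrac{y}{q}\cdot\tfrac{q/g}{\phi(q/g)}\ll \tfrac{y}{q}\log\log x$. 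The same bound holds for $\Sigma_2$ after removing $\gcd(a-b,q)$ instead, and combining the two via Cauchy--Schwarz gives $\Sigma\ll\tfrac{y}{q}(\tau(q)\log x)^{O_C(1)}$, which is the claim.

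The only substantial ingredient is Shiu's theorem, and this is where the real difficulty sits: over an interval of length merely $y\ge qx^\epsilon$, the naive divisor-switching bound for $\sum_{x-y<n\le x}\tau_K(n)^2$ produces an error term of size $\asymp x$, which dwarfs the target $\asymp y/q$, so one genuinely needs the short-interval sieve argument (of fundamental-lemma type) underpinning Shiu's estimate in order to control the error. The remaining steps --- the reduction $\tau^C\le\tau_K$, the Cauchy--Schwarz split, and the $\gcd$ bookkeeping --- are entirely routine.
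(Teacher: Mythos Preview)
Your argument is correct. The paper does not give a self-contained proof here --- it simply cites \cite[Lemma~7.7]{May1} --- but your route (majorise $\tau^C$ by $\tau_{2^C}$, split the two factors by Cauchy--Schwarz, extract the gcd with the modulus to reduce to a primitive residue class, and apply Shiu's theorem to the resulting single sums) is the standard proof of this kind of estimate and is almost certainly what underlies the cited reference.
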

\begin{proof}
This is \cite[Lemma 7.7]{May1}.
\end{proof}
%
%
%%%%%%%%%%%%%%%%%%%%%%
%
%
\begin{lmm}[Most moduli have small square-full part]\label{lmm:Squarefree}
Let $Q<x^{1-\epsilon}$ and $A,B>0$. Let $\gamma_b$ be a complex sequence satisfying $|\gamma_b|\le \tau(b)^{B}$. Let $sq(n)$ denote the square-full part of $n$. (i.e. $sq(n)=\prod_{p:p^2|n}p^{\nu_p(n)}$). Then for $C=C(A,B)$ sufficiently large in terms of $A,B$ we have that
\[
\sum_{\substack{q\sim Q\\ sq(q)\ge (\log{x})^C}}\sup_{\substack{a,a'\\ (a a',q)=1}}\Bigl|\sum_{b\le  x}\gamma_b\Bigl(\mathbf{1}_{b\equiv a\Mod{q}}-\mathbf{1}_{b\equiv a'\Mod{q}}\Bigr)\Bigr|\ll_{A,B_0} \frac{x}{(\log{x})^A}.
\]
\end{lmm}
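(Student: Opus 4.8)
The plan is to exploit the fact that if $sq(q)\ge (\log x)^C$ then $q$ has a divisor $d$ in a conveniently short range which is itself "square-full enough", and to bound the number of such moduli, after which the inner sum can be estimated trivially using a divisor-function bound. First I would observe that the inner double-difference $\sum_{b\le x}\gamma_b(\mathbf{1}_{b\equiv a\Mod q}-\mathbf{1}_{b\equiv a'\Mod q})$ is, by the triangle inequality, at most $\sum_{b\le x,\, b\equiv a\Mod q}\tau(b)^B+\sum_{b\le x,\, b\equiv a'\Mod q}\tau(b)^B$, and by Lemma \ref{lmm:Divisor} (applied with $C=B$, $b=0$, $y=x$, which is legitimate since $q<x^{1-\epsilon}$ so $y\ge qx^\epsilon$) this is $\ll (x/q)(\tau(q)\log x)^{O_B(1)}$. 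So it suffices to show
\[
\sum_{\substack{q\sim Q\\ sq(q)\ge (\log x)^C}}\frac{(\tau(q)\log x)^{O_B(1)}}{q}\ll \frac{1}{(\log x)^A}.
\]
Since $q\sim Q$, we have $1/q\ll 1/Q$, and after pulling out $\log x$ factors this reduces to showing that $\sum_{q\sim Q,\, sq(q)\ge (\log x)^C}\tau(q)^{O_B(1)}\ll Q(\log x)^{-A-O_B(1)}$.

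The key step is the counting bound: the number of $q\sim Q$ whose square-full part exceeds $(\log x)^C$ (weighted by $\tau(q)^{O(1)}$) is small. Write $q=sq(q)\cdot m$ where $sq(q)=s$ is square-full and $(s,m)$ has no common prime with $m$ square-free relative to... more precisely every prime dividing $s$ divides $q$ to a power $\ge 2$ and $m$ is the "square-free-supported" complementary part, with $\gcd(s,m)$ possibly sharing primes — it is cleanest to just write $q=s m$ with $s$ square-full, $s\ge (\log x)^C$, and sum over all factorizations, overcounting by at most $\tau(q)$. Then
\[
\sum_{\substack{q\sim Q\\ sq(q)\ge (\log x)^C}}\tau(q)^{O_B(1)}\ll \sum_{\substack{s\text{ square-full}\\ s\ge (\log x)^C}}\tau(s)^{O_B(1)}\sum_{m\le 2Q/s}\tau(m)^{O_B(1)}\ll Q\sum_{\substack{s\text{ square-full}\\ s\ge (\log x)^C}}\frac{\tau(s)^{O_B(1)}(\log x)^{O_B(1)}}{s},
\]
using the standard bound $\sum_{m\le Y}\tau(m)^{k}\ll Y(\log Y)^{2^k-1}$. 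Finally the tail sum over square-full $s$ converges rapidly: since $\sum_{s\text{ square-full}}\tau(s)^{k}/s^{1-\eta}$ converges for small $\eta>0$ (the Dirichlet series $\sum_s \tau(s)^k s^{-\sigma}$ over square-full $s$ has an Euler product convergent for $\sigma>1/2$), we get $\sum_{s\ge (\log x)^C,\, s\text{ square-full}}\tau(s)^{k}/s\ll (\log x)^{-\eta C/2}\sum_s \tau(s)^k s^{-1+\eta}\ll (\log x)^{-\eta C/2}$. Choosing $C=C(A,B)$ large enough that $\eta C/2$ exceeds $A$ plus all the accumulated $O_B(1)$ powers of $\log x$ then gives the claimed bound.

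The main obstacle — really the only point requiring care — is bookkeeping the various powers of $\log x$: the divisor-function bound from Lemma \ref{lmm:Divisor}, the bound $\sum_{m\le Y}\tau(m)^k\ll Y(\log Y)^{O(1)}$, and the overcounting factor $\tau(q)$ each contribute $\log x$ powers with exponents depending on $B$, and one must verify that the single power $(\log x)^{-\eta C/2}$ coming from the square-full tail, with $C$ chosen last, dominates all of them; this is routine but must be stated so that the final choice of $C$ is seen to depend only on $A$ and $B$. One should also handle the trivial case $Q\le (\log x)^C$ separately (then the sum is empty, since $sq(q)\le q\le Q\le (\log x)^C$), so that $2Q/s\ge 1$ is available in the range where the argument runs.
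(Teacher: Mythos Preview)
Your argument is correct. The paper itself does not give a proof here; it simply cites \cite[Lemma 12.8]{May1} and observes that the argument there is uniform in the residue class. Your direct proof---trivially bounding the inner sum via Lemma \ref{lmm:Divisor} and then controlling the weighted count of $q\sim Q$ with large square-full part by splitting $q=s m$ and using the convergent tail of $\sum_{s\text{ square-full}}\tau(s)^k s^{-1+\eta}$---is the standard route and is presumably what the cited lemma does.

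Two small clean-ups, neither affecting correctness: first, the decomposition $q=sq(q)\cdot m$ with $m$ squarefree and $(m,sq(q))=1$ is unique, so no overcounting factor is needed and $\tau(q)=\tau(sq(q))\tau(m)$ exactly; second, the Rankin bound gives $(\log x)^{-\eta C}$ rather than $(\log x)^{-\eta C/2}$, though this only helps. Also, when invoking Lemma \ref{lmm:Divisor} with $b=0$ you should take $y=x-1$ rather than $y=x$ so that the hypothesis $|b|<x-y$ is formally satisfied.
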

\begin{proof}
This is a slight reformulation of \cite[Lemma 12.8]{May1}, noting that the argument there is actually uniform in the residue class.
\end{proof}
%
%
%%%%%%%%%%%%%%%%%%%%%%
%
%
\begin{lmm}[Most moduli have small smooth part]\label{lmm:Smooth}
Let $Q<x^{1-\epsilon}$ and $A,B>0$. Let $\gamma_b$ be a complex sequence with $|\gamma_b|\le \tau(n)^{B}$ and set $z_0:=x^{1/(\log\log{x})^3}$ and $y_0:=x^{1/\log\log{x}}$. Let $sm(n;z)$ denote the $z$-smooth part of $n$. (i.e. $sm(n;z)=\prod_{p\le z}p^{\nu_p(n)}$). Then we have that
\[
\sum_{\substack{q\sim Q\\ sm(q;z_0)\ge y_0}}\sup_{\substack{a,a'\\ (aa',q)=1}}\Bigl|\sum_{b\le  x}\gamma_b\Bigl(\mathbf{1}_{b\equiv a\Mod{q}}-\mathbf{1}_{b\equiv a'\Mod{q}}\Bigr)\Bigr|\ll_{A,B} \frac{x}{(\log{x})^A}.
\]
\end{lmm}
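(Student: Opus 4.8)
The plan is to prove this exactly as the square-full analogue, Lemma~\ref{lmm:Squarefree} (i.e.\ \cite[Lemma 12.8]{May1}): the set of moduli $q\sim Q$ with $sm(q;z_0)\ge y_0$ is so sparse that even the trivial bound for the inner sum, summed over this set, is already small enough. First I would bound the inner supremum trivially. Since $|\gamma_b|\le\tau(b)^B$ and $q\le x^{1-\epsilon}$, a standard divisor-function bound in arithmetic progressions (Shiu's theorem; cf.\ Lemma~\ref{lmm:Divisor}, after a dyadic splitting of the range of $b$) gives, for $(aa',q)=1$,
\[
\Bigl|\sum_{b\le x}\gamma_b\bigl(\mathbf{1}_{b\equiv a\Mod q}-\mathbf{1}_{b\equiv a'\Mod q}\bigr)\Bigr|\le\sum_{\substack{b\le x\\ b\equiv a\Mod q}}\tau(b)^B+\sum_{\substack{b\le x\\ b\equiv a'\Mod q}}\tau(b)^B\ll\frac{x}{q}(\tau(q)\log x)^{O_B(1)}.
\]
It is essential that this bound is only polylogarithmic in $q$ and $\log x$ rather than of size $x^{\epsilon}$; this is the one input beyond elementary counting. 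Since $q\asymp Q$ throughout, it therefore suffices to prove that for every fixed $C'>0$ we have
\[
\sum_{\substack{q\sim Q\\ sm(q;z_0)\ge y_0}}\tau(q)^{C'}\ll_{A,C'}\frac{Q}{(\log x)^A},
\]
the factors $(\log x)^{O_B(1)}$ being harmlessly absorbed into $A$.

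To prove this count, factor $q=q_sq_r$ where $q_s:=sm(q;z_0)$ is the $z_0$-smooth part, so that $(q_s,q_r)=1$, $P^-(q_r)>z_0$, $\tau(q)=\tau(q_s)\tau(q_r)$, $q_r\le 2Q/q_s$, and the constraint becomes $y_0\le q_s\le 2Q$. Summing first over $q_r$, the standard moment bound for the divisor function over $z_0$-rough integers (Shiu's theorem, or Rankin's trick with Mertens) yields
\[
\sum_{\substack{q_r\le 2Q/q_s\\ P^-(q_r)>z_0}}\tau(q_r)^{C'}\ll\frac{Q}{q_s}\Bigl(\frac{\log x}{\log z_0}\Bigr)^{O_{C'}(1)}=\frac{Q}{q_s}(\log\log x)^{O_{C'}(1)},
\]
using $\log z_0=\log x/(\log\log x)^3$. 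It then remains to bound $\sum_{q_s}\tau(q_s)^{C'}/q_s$ over $z_0$-smooth $q_s\ge y_0$, for which I would apply Rankin's trick with the critical exponent $\sigma=1/\log z_0$:
\[
\sum_{\substack{q_s\ge y_0\\ q_s\ z_0\text{-smooth}}}\frac{\tau(q_s)^{C'}}{q_s}\le y_0^{-\sigma}\prod_{p\le z_0}\sum_{k\ge0}\frac{(k+1)^{C'}}{p^{k(1-\sigma)}}\ll_{C'}(\log x)^{O_{C'}(1)}\,y_0^{-\sigma},
\]
where the bound on the Euler product is crude (each local factor is $1+O_{C'}(p^{-(1-\sigma)})$ and $\sigma$ is small), while $y_0^{-\sigma}=\exp(-\log y_0/\log z_0)=\exp(-(\log\log x)^2)$. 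Combining the three displays,
\[
\sum_{\substack{q\sim Q\\ sm(q;z_0)\ge y_0}}\tau(q)^{C'}\ll Q\,(\log x)^{O_{C'}(1)}\exp\bigl(-(\log\log x)^2\bigr)\ll_{A,C'}\frac{Q}{(\log x)^A}
\]
for $x$ large, since $\exp(-(\log\log x)^2)$ decays faster than any fixed power of $\log x$. This gives the lemma.

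The only point requiring genuine care is the interplay of the two savings, which is exactly why $z_0$ and $y_0$ are chosen with those sizes: restricting the $z_0$-rough part $q_r$ costs only a factor $(\log x/\log z_0)^{O(1)}=(\log\log x)^{O(1)}$ since $z_0$ is a very large power of $x$, whereas forcing the $z_0$-smooth part $q_s$ to exceed $y_0$ saves $\exp(-\log y_0/\log z_0)=\exp(-(\log\log x)^2)$ via Rankin's trick, and this super-polylogarithmic saving comfortably beats every power of $\log x$ (including the powers lost to the divisor bounds). Apart from this calibration, the argument is bookkeeping identical to the square-full case of Lemma~\ref{lmm:Squarefree}, and in particular the supremum over residue classes is retained for free because the trivial bound above is already uniform in $a,a'$.
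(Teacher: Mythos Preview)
Your proof is correct and follows essentially the same approach as the paper, which simply cites \cite[Lemma 12.9]{May1} (whose argument is precisely the sparse-set-plus-trivial-bound strategy you reconstruct). The key calibration you highlight---that the Rankin saving $y_0^{-1/\log z_0}=\exp(-(\log\log x)^2)$ beats the polylogarithmic losses from the divisor bounds and the $(\log\log x)^{O(1)}$ loss from the rough part---is exactly the point of the choice of $z_0,y_0$, and your verification of this is accurate.
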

\begin{proof}
This is a slight reformulation of \cite[Lemma 12.9]{May1}, noting that the argument there is actually uniform in the residue class.
\end{proof}
%
%
%%%%%%%%%%%%%%%%%%%%%%
%
%
With these lemmas established, we now prove Propositions \ref{prpstn:MainProp}-\ref{prpstn:Triple} in turn.
%
%
%%%%%%%%%%%%%%%%%%%%%%
%
%
\section{Main Type II estimate}\label{sec:MainProp}
%
%
%%%%%%%%%%%%%%%%%%%%%%
%
%
In this section we establish Proposition \ref{prpstn:MainProp}, which is the main technical result in this paper.

As remarked in Section \ref{sec:Outline}, our aim is to combine a number of applications of Cauchy-Schwarz to smooth the unknown coefficients and allow for an effective use of completion of sums. By carefully handling suitable side cases to maintain control of the intermediate stages, we eventually arrive at a 4-variable summation which can be handled adequately using the Weil bound for Kloosterman sums. Despite this being a multi-dimensional sum we do not use the more advanced theory due to Deligne since the final sums factor into Kloosterman sums.

 For Theorem \ref{thrm:WeakEquidistribution} it is vital that we only allow losses of size $x^{O(\delta)}(\log{x})^{O(1)}$ in the bounds on $R,N$ (and that the estimates are uniform in $\delta$), which means some care is required when performing completion of sums. This is despite the fact we have power-saving estimates in most of the ranges involved; at the endpoints we only just have a suitable log-power saving, and the full range is critical for Theorem \ref{thrm:WeakEquidistribution}.
 
The key to the proof of Proposition \ref{prpstn:MainProp} is to understand the following sum
\begin{align*}
\mathscr{S}&:=\sum_{q\sim Q}\sum_{r_0\sim R_0}\sum_{\substack{r_1',r_2'\sim R'\\(r_1',r_2')=1}}c_{q,r_0r_1'}\overline{c_{q,r_0r_2'}}\sum_{\substack{n_1,n_2\sim N\\ n_1\overline{a_{q,r_0r_1'}}\equiv n_2\overline{a'_{q,r_0r_2'}}\Mod{qr_0}\\ (n_1,q r_0 r_1')=(n_2,q r_0 r_2')=1\\ \tau(n_1),\tau(n_2)\le (\log{x})^{C_1} }}\alpha_{n_1}\overline{\alpha}_{n_2}\\
&\qquad\qquad \times\sum_{\substack{m\equiv a_{q,r_0r_1'}\overline{n_1}\Mod{qr_0r_1'}\\ m\equiv a'_{q,r_0r_2'}\overline{n_2}\Mod{r_2'}}}\psi\Bigl(\frac{m}{M}\Bigr),
\end{align*}
where $c_{q,r_0,r},\alpha_n$ are complex sequences supported on $(q,r)=1$ with $r$ square-free and $\tau(qr)\le (\log{x})^B$, and $a_{q,r},a'_{q,r}$ are integer sequences with $(a_{q,r},qr)=(a'_{q,r},qr)=1$. Here we recall that $\psi$ is a fixed function supported on $[1/2,5/2]$ satisfying $|\psi^{(j)}(x)|\ll (4^j j!)^2$ for all $j\ge 0$, $x\in\mathbb{R}$.

 We will estimate this sum over Lemmas \ref{lmm:GCD}-\ref{lmm:OffDiag}, leading to Lemma \ref{lmm:MainConclusion}. We then conclude Proposition \ref{prpstn:MainProp} as a consequence of this estimate. We first remove the possibility that $r_0$ is large.
%
%
%%%%%%%%%%%%%%%%%%%%%%
%
%
\begin{lmm}[Bound for terms with large GCD]\label{lmm:GCD}
Let $A>0$ and $C_1=C_1(A)$ be sufficiently large in terms of $A$. Let $QR=x^{1/2+\delta}\ge x^{1/2}(\log{x})^{-A}$ and $R'\asymp R/R_0$
\[
N>Q x^{2\delta}(\log{x})^{3C_1}>x^\epsilon,
\]
and let $a_{q,r}$ and $a'_{q,r}$ be integer sequences coprime to $qr$. Let $\mathscr{S}'$ be given by
\begin{align*}
\mathscr{S}'&:=\sum_{q\sim Q}\sum_{r_0\sim R_0}\sum_{\substack{r_1',r_2'\sim R'\\ (r_1',r_2')=1\\ (r_1'r_2',r_0q)=1}}\sum_{\substack{n_1,n_2\sim N\\ n_1\overline{a_{q,r_0r_1'}}\equiv n_2\overline{a'_{q,r_0r_2'}}\Mod{qr_0}\\ (n_1,q r_0 r_1')=(n_2,q r_0 r_2')=1}}\sum_{\substack{m\sim M\\m\equiv a_{q,r_0r_1'}\overline{n_1}\Mod{qr_0r_1'}\\ m\equiv a'_{q,r_0r_2'}\overline{n_2}\Mod{r_2'}}}1.
\end{align*}
Then if $R_0\ge N/( Q(\log{x})^{C_1})$ we have
\[
\mathscr{S}'\ll_A \frac{MN^2}{(\log{x})^{A} Q}.
\]
\end{lmm}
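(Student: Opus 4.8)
The plan is to estimate $\mathscr{S}'$ by trivially bounding the inner sum over $m$ and then carefully controlling the number of admissible tuples $(q,r_0,r_1',r_2',n_1,n_2)$. First I would observe that the congruence conditions on $m$ modulo $qr_0r_1'$ and modulo $r_2'$ (which is coprime to $qr_0r_1'$ since $(r_1',r_2')=1$ and $(r_1'r_2',r_0q)=1$) pin $m$ down modulo $qr_0r_1'r_2'$. Since $m\sim M$ and $QR_0R'^2 = Q R R' \asymp x^{1/2+\delta} R'$, the number of such $m$ is $\ll 1 + M/(QR_0R'^2)$. The key point is that in the regime where $R_0$ is large (namely $R_0\ge N/(Q(\log x)^{C_1})$) we have $QR_0R'^2$ comfortably larger than $M$: indeed $QR_0 R'^2 \gg (N/(\log x)^{C_1})R'^2$, and since $NR'^2 = N R'\cdot R' $ and $NM\asymp x$ with $R = R_0 R' $, $QR = x^{1/2+\delta}$, one checks $QR_0R'^2 \gg N R'^2/(\log x)^{C_1}$ while $M \asymp x/N$, so $M/(QR_0R'^2) \ll x/(N^2 R'^2 (\log x)^{-C_1})$, which using $N > Qx^{2\delta}(\log x)^{3C_1}$ and $QR'R_0 R' = x^{1/2+\delta}$ should be shown to be $\ll (\log x)^{-A-C_1'}$ for suitable large $C_1$. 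Hence the $m$-sum contributes at most $O(1)$ for each tuple, and more precisely an average saving.

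Next I would count the tuples $(q,r_0,r_1',r_2',n_1,n_2)$ subject to the congruence $n_1\overline{a_{q,r_0r_1'}}\equiv n_2\overline{a'_{q,r_0r_2'}}\Mod{qr_0}$. Here the natural move is: fix $q,r_0,r_1',r_2'$ (there are $\ll QR_0R'^2$ choices), then fix $n_1\sim N$ freely ($\ll N$ choices), and note that the congruence then determines $n_2$ modulo $qr_0$; since $n_2\sim N$ and $N \gg Q R_0 \gg q r_0$ is \emph{not} automatic, one must be careful — in fact $qr_0 \asymp Q R_0$ and in the relevant range $R_0 \ge N/(Q(\log x)^{C_1})$ gives $QR_0 \ge N/(\log x)^{C_1}$, so the number of $n_2\sim N$ in a fixed residue class mod $qr_0$ is $\ll 1 + N/(QR_0) \ll (\log x)^{C_1}$. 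Multiplying, the total count of tuples is $\ll QR_0R'^2\cdot N \cdot (\log x)^{C_1}$.

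Combining, $\mathscr{S}' \ll \big(QR_0R'^2 N (\log x)^{C_1}\big)\cdot \big(1 + M/(QR_0R'^2)\big) \ll QR_0R'^2 N (\log x)^{C_1} + MN(\log x)^{C_1}$. The target bound is $MN^2/((\log x)^A Q)$; since $M\asymp x/N$ this is $x N/((\log x)^A Q)$. One then checks both terms are dominated by this: for the second term $MN(\log x)^{C_1} = xN/N \cdot (\log x)^{C_1}$... \emph{(here I'd rather keep $M$)}: $MN(\log x)^{C_1} \le MN^2/((\log x)^A Q)$ iff $N \ge Q(\log x)^{A+C_1}$, which holds by hypothesis $N > Qx^{2\delta}(\log x)^{3C_1}$ for $C_1$ large; for the first term, using $R_0R' = R$ so $R_0R'^2 \le R^2/R_0 \le R^2 \cdot Q(\log x)^{C_1}/N$ (from $R_0 \ge N/(Q(\log x)^{C_1})$), we get $QR_0R'^2 N (\log x)^{C_1} \le Q R'^2 R_0 \cdot N(\log x)^{C_1} $; a cleaner route is to note $QR_0R'^2 \le QR^2/R_0 \le Q^2 R^2 (\log x)^{C_1}/N = x^{1+2\delta}(\log x)^{C_1}/N$, hence the first term is $\ll x^{1+2\delta}(\log x)^{2C_1}$, and this is $\le MN^2/((\log x)^A Q) = xN/((\log x)^A Q)$ precisely when $N \ge Q x^{2\delta}(\log x)^{A+2C_1}$ — again guaranteed by the hypothesis on $N$ for $C_1=C_1(A)$ large.

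\textbf{Main obstacle.} The delicate point is not any single estimate but making the bookkeeping of the logarithmic powers consistent: the trivial $m$-count and the trivial $n_2$-count each cost a factor $(\log x)^{C_1}$ (coming from the "$1+$" in the counting, i.e. from tuples where the modulus exceeds the range), and one must verify that the hypothesis $N > Qx^{2\delta}(\log x)^{3C_1}$ together with $R_0 \ge N/(Q(\log x)^{C_1})$ genuinely buys back $(\log x)^{A+O(C_1)}$ so that $C_1 = C_1(A)$ can be chosen to close the argument. I expect a slightly cleaner organization is to bound the $m$-sum by $M/(QR_0R'^2) + 1$, split $\mathscr{S}'$ accordingly, and in the "$+1$" piece use divisor-type bounds (Lemma \ref{lmm:Divisor} is overkill here; elementary counting suffices) to control the resulting divisor-closed count of $(n_1,n_2,q,r_0,r_1',r_2')$ with $n_1n_2$ in a fixed progression.
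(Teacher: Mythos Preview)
Your approach is correct and genuinely more elementary than the paper's. The paper instead treats the congruence $n_1\overline{a_{q,r_0r_1'}}\equiv n_2\overline{a'_{q,r_0r_2'}}\Mod{qr_0}$ as defining a rank-$2$ lattice in $\mathbb{Z}^2$ of determinant $qr_0$, picks a Minkowski-reduced basis (Lemma~\ref{lmm:Basis}), writes $(n_1,n_2)=\lambda_1\mathbf{z}_1+\lambda_2\mathbf{z}_2$, and separates the generic case $\lambda_1\ne 0$ (where the number of pairs is $\ll N^2/(QR_0)$) from the degenerate case $\lambda_1=0$ (where $n_1,n_2$ are proportional and a divisor-function bound on $\sum_{\lambda_2 m\equiv c}\tau(\lambda_2 m)$ is needed). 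This yields the sharper intermediate bound
\[
\mathscr{S}'\ll \frac{MN^2}{QR_0}+N^2R'^2+(\log x)^{O(1)}MN+x^{o(1)}QR_0R'^2,
\]
and then each term is checked against the target.

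Your route---fix $n_1$ freely and count $n_2$ in its forced residue class mod $qr_0$---replaces the lattice count $N^2/(QR_0)$ by the cruder $N(1+N/(QR_0))\ll N(\log x)^{C_1}$ in the relevant range $QR_0\ge N/(\log x)^{C_1}$. This loses a factor $(\log x)^{C_1}$ relative to the paper but avoids the Minkowski basis entirely and the separate $\lambda_1=0$ analysis. Since the hypothesis $N>Qx^{2\delta}(\log x)^{3C_1}$ has enough slack to absorb the extra logarithms (your verification of both terms against $MN^2/((\log x)^AQ)$ is correct for $C_1\ge 3A/2$), nothing is lost in the final conclusion. The paper's lattice argument would matter if one wanted a bound valid uniformly over all $R_0$, but for this lemma only the large-$R_0$ regime is needed and your argument suffices.
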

%
%
%%%%%%%%%%%%%%%%%%%%%%%%%%
%
%
Thus provided  $N$ is a bit larger than $Q$, we only need to consider $R_0<N/((\log{x})^{C_1} Q)$.
%
%
%%%%%%%%%%%%%%%%%%%%%%%%%%
%
%
\begin{proof}
Let $r_0,r_1',r_2',q$ be given. We see that the congruence $n_1\overline{a_{q,r_0r_1'}}\equiv n_2\overline{a_{q,r_0r_2'}}\Mod{qr_0}$ on $n_1,n_2$ forces the ordered pair $(n_1,n_2)$ to lie in a lattice $\Lambda\subseteq \mathbb{Z}^2$ of determinant $qr_0$. Let $\{\mathbf{z}_1,\mathbf{z}_2\}$ be a Minkowski-reduced basis for this lattice, as given by Lemma \ref{lmm:Basis}. Then there are constants $L_1,L_2$ (depending only on $r_0,r_1',r_2',q$) such that any pair $n_1,n_2\sim N$ with $n_1\overline{a_{q,r_0r_1'}}\equiv n_2\overline{a_{q,r_0r_2}}\Mod{qr_0}$ is given by
\[
\begin{pmatrix}
n_1\\
n_2
\end{pmatrix}=\lambda_1\mathbf{z}_1+\lambda_2\mathbf{z}_2,
\]
for some integers $\lambda_1,\lambda_2$ with $|\lambda_1|\le L_1$ and $|\lambda_2|\le L_2$. Moreover, $L_1,L_2$ satisfy $L_1L_2\asymp N^2/\det(\Lambda)\ll N^2/(QR_0)$ and $L_1,L_2\le N$. Without loss of generality let $L_1\le L_2$. 

We first consider the contribution to $\mathscr{S}'$ from all terms with $\lambda_1>0$, so we must have $L_1\ge 1$. In this case there are $O(L_1L_2)\ll N^2/(QR_0)$ choices of $\lambda_1,\lambda_2$. Given a choice of $\lambda_1$ and $\lambda_2$, the congruences $m\equiv a_{q,r_0r_1'}\overline{n_1}\Mod{qr_0r_1'}$ and $m\equiv a_{q,r_0r_2'}\overline{n_2}\Mod{r_2'}$ force $m$ to lie in a single residue class $\Mod{qr_0r_1'r_2'}$. Thus there are $O(1+M/(QR_0 R'{}^2))$ solutions $m$ for each choice of $n_1,n_2,q,r_0,r_1',r_2'$. Thus the total contribution from all of these terms is
\begin{align}
\ll \sum_{q\sim Q}\sum_{r_0\sim R_0}\sum_{r_1',r_2'\sim R'}\sum_{\substack{\lambda_1\ll L_1\\ \lambda_2\ll L_2\\ L_1>1}}\Bigl(1+\frac{M}{QR_0 R'{}^2}\Bigr)&\ll QR_0 R'{}^2L_1L_2\Bigl(\frac{M}{QR_0 R'{}^2}+1\Bigr)\nonumber\\
&\ll \frac{MN^2}{Q R_0}+N^2 R'{}^2.\label{eq:S'Bound1}
\end{align}

We now consider the contribution to $\mathscr{S}'$ from those terms with  $\lambda_1=0$, so $n_1=\lambda_2z_{21}$ and $n_2=\lambda_2z_{22}$ for some integers $z_{21},z_{22}$ depending only on $q,r_0,r_1'r_2'$. Thus the congruences $m\equiv a_{q,r_0r_1'}\overline{n_1}\Mod{qr_0r_1'}$ and $m\equiv a_{q,r_0r_2'}\overline{n_2}\Mod{r_2'}$ simplify to fix $\lambda_2m$ to lie in a single residue class $\Mod{qr_0r_1'r_2'}$. Thus, for a given choice of $q,r_0,r_1',r_2'$, the number of choices of $\lambda_2,m$ is
\[
\ll \sup_{c\Mod{q r_0r_1'r_2'}}\sum_{\substack{k\ll L_2M\\ k\equiv c\Mod{q r_0r_1'r_2'} }}\tau(k)\ll \frac{M L_2}{Q R_0 R'{}^2}(\log{x})^{O(1)}+x^{o(1)}.
\]
Thus the total contribution from all of these terms is
\begin{align}
\ll \sum_{q\sim Q}\sum_{r_0\sim R_0}\sum_{r_1',r_2'\sim R'}\Bigl(\frac{ML_2(\log{x})^{O(1)}}{QR_0R'{}^2}+x^{o(1)}\Bigr)&\ll Q R_0 R'{}^2 \Bigl(\frac{ML_2}{Q R_0 R'{}^2}(\log{x})^{O(1)}+x^{o(1)}\Bigr)\nonumber\\
&\ll (\log{x})^{O(1)}MN+x^{o(1)}Q R_0 R'{}^2.\label{eq:S'Bound2}
\end{align}
Putting together \eqref{eq:S'Bound1} and \eqref{eq:S'Bound2}, we obtain
\begin{equation}
\mathscr{S}'\ll  \frac{MN^2}{Q R_0}+N^2R'{}^2+(\log{x})^{O(1)}MN+x^{o(1)} Q R_0 R'{}^2.
\label{eq:S'Bound}
\end{equation}
We recall that we wish to show $\mathscr{S}'\ll_A MN^2/((\log{x})^{A} Q)$. Recalling that $R'\asymp R/R_0$, we see that \eqref{eq:S'Bound} gives this provided
\begin{align}
R_0&>(\log{x})^A,\label{eq:R0Bound1}\\
R_0&>\frac{(\log{x})^{A/2} Q^{1/2}R}{M^{1/2}}\asymp (\log{x})^{A/2}\Bigl(\frac{QR}{x^{1/2}}\Bigr) \Bigl(\frac{N}{Q}\Bigr)^{1/2},\label{eq:R0Bound2}\\
N&>(\log{x})^{A+O(1)} Q,\label{eq:NBound}\\
R_0&>\frac{x^{\epsilon} Q^2R^2}{M N^2 }.\label{eq:R0Bound3}
\end{align}
We recall that $MN\asymp x$ and $QR \asymp x^{1/2+\delta}$. In particular,  if we have
\begin{equation}
N>Q x^{2\delta}(\log{x})^{3C_1}>x^\epsilon,\label{eq:NBound1}
\end{equation}
for $C_1=C_1(A)$ sufficiently large then \eqref{eq:NBound} is clearly satisfied and all of  \eqref{eq:R0Bound1}, \eqref{eq:R0Bound2} and \eqref{eq:R0Bound3} are satisfied provided $R_0\ge N/((\log{x})^{C_1} Q)$. This gives the result.
\end{proof}
%
%
%%%%%%%%%%%%%%%%%%%%%%
%
%
\begin{lmm}[Fourier Expansion]\label{lmm:Fourier}
Let $A,B>0$ and let $C_1=C_1(A,B)$ and $B_2=B_2(A,B)$ be sufficiently large in terms of $A,B$. Let $R_0<N/((\log{x})^{C_1} Q)$, $R'\ll R/R_0$ and $QR=x^{1/2+\delta}$. Let $|c_{q,r}|\le 1$ and $|\alpha_n|\le \tau(n)^B$ be complex sequences with $\alpha_n$ satisfying the Siegel-Walfisz condition \eqref{eq:SiegelWalfisz} and $c_{q,r}$ supported on square-free $r$ with $(r,q)=1$ and $\tau(q r)\le (\log{x})^B$. Let $a_{q,r},a'_{q,r}$ be an integer sequences with $(a_{q,r},qr)=(a'_{q,r},qr)=1$. Set
\begin{align*}
\mathscr{S}&:=\sum_{q\sim Q}\sum_{r_0\sim R_0}\sum_{\substack{r_1',r_2'\sim R'\\(r_1',r_2')=1}}c_{q,r_0r_1'}\overline{c_{q,r_0r_2'}}\sum_{\substack{n_1,n_2\sim N\\ n_1\overline{a_{q,r_0r_1'}}\equiv n_2\overline{a'_{q,r_0r_2'}}\Mod{qr_0}\\ (n_1,qr_0r_1')=(n_2,qr_0r_2')=1\\ \tau(n_1),\tau(n_2)\le (\log{x})^{B_2} }}\alpha_{n_1}\overline{\alpha}_{n_2}\\
&\qquad\qquad \times\sum_{\substack{m\equiv a_{q,r_0r_1'}\overline{n_1}\Mod{qr_0r_1'}\\ m\equiv a'_{q,r_0r_2'}\overline{n_2}\Mod{r_2'}}}\psi\Bigl(\frac{m}{M}\Bigr),\\
\mathscr{S}_{MT}&:=\sum_{q\sim Q}\sum_{r_0\sim R_0}\sum_{\substack{r_1',r_2'\sim R'\\ (r_1',r_2')=1}}c_{q,r_0r_1'}\overline{c_{q,r_0r_2'}}\sum_{\substack{n_1,n_2\sim N\\ (n_1,qr_0r_1')=(n_2,qr_0r_2')=1\\ \tau(n_1),\tau(n_2)\le (\log{x})^{B_2 }}}\alpha_{n_1}\overline{\alpha}_{n_2}\frac{M\hat{\psi}(0)}{q r_0r_1'r_2'\phi(q r_0)}.
\end{align*}
Then we have
\[
\mathscr{S}=\mathscr{S}_{MT}+\frac{M(\log{x})^{2B B_2} }{Q R_0 R'{}^2}\mathscr{S}_2+O_A\Bigl(\frac{M N^2}{Q (\log{x})^{2A} }\Bigr),
\]
where $H:=Q N R_0 R'{}^2(\log{x})^5/ x$, 
\begin{align*}
\mathscr{S}_2&:=\sum_{q\sim Q}\sum_{r_0\sim R_0}\sum_{\substack{r_1,r_2\sim R'\\ (r_1,r_2)=1}}c'_{q,r_0,r_1}\overline{c'_{q,r_0,r_2}}\hspace{-1cm}\sum_{\substack{n_1,n_2\sim N\\ n_1\overline{a_{q,r_0r_1}}=n_2\overline{a'_{q,r_0r_2}}\Mod{q r_0}\\ (n_1,qr_0r_1)=(n_2,qr_0r_2)=1}}\hspace{-1cm}\alpha'_{n_1}\overline{\alpha'_{n_2}}\sum_{1\le |h|\le H}\hat{\psi}\Bigl(\frac{h M}{q r_0r_1r_2}\Bigr)\xi,\\
\xi&:=e\Bigl(\frac{a_{q,r_0r_1}h\overline{n_1 r_1r_2}}{q r_0}\Bigr)e\Bigl(\frac{a_{q,r_0r_1}h\overline{n_1 q r_0 r_2}}{r_1}\Bigr)\Bigl(\frac{a'_{q,r_0r_2}h\overline{n_2 q r_0 r_1}}{r_2}\Bigr),
\end{align*}
for some $1$-bounded coefficients $\alpha'_n$ and $c'_{q,r_0,r}$ satisfying $|c'_{q,r_0,r}|\le |c_{q,r_0r}|$.
\end{lmm}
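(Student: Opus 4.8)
The plan is to apply Poisson summation to the innermost sum over $m$: its zero frequency yields the main term $\mathscr{S}_{MT}$ once the congruence constraint linking $n_1$ and $n_2$ is replaced by its expected density, while the nonzero frequencies, after a Fourier--Chinese-Remainder-Theorem rearrangement, become $\mathscr{S}_2$. The only genuinely analytic ingredient is the Barban--Davenport--Halberstam estimate of Lemma~\ref{lmm:Barban}, invoked to handle the zero frequency; everything else is bookkeeping.

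First I would record that, since $c_{q,r}$ is supported on square-free $r$ coprime to $q$ with $\tau(qr)\le(\log x)^B$, for every tuple contributing to $\mathscr{S}$ the integers $q,r_0,r_1',r_2'$ are pairwise coprime and $d:=qr_0r_1'r_2'$ is square-free; hence the two congruences $m\equiv a_{q,r_0r_1'}\overline{n_1}\Mod{qr_0r_1'}$ and $m\equiv a'_{q,r_0r_2'}\overline{n_2}\Mod{r_2'}$ combine by the Chinese Remainder Theorem to a single congruence $m\equiv\mu_0\Mod d$ with $(\mu_0,d)=1$ and $\mu_0=\mu_0(q,r_0,r_1',r_2',n_1,n_2)$. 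Applying Lemma~\ref{lmm:Completion} to $\sum_{m\equiv\mu_0\Mod d}\psi(m/M)$ (the fixed $\psi$ satisfying the required smoothness hypotheses, and $H$ being an admissible cutoff since $H\asymp d(\log x)^{O(1)}/M$ and $\widehat{\psi}$ decays faster than any power) gives
\[
\sum_{m\equiv\mu_0\Mod d}\psi\Bigl(\frac mM\Bigr)=\frac Md\widehat{\psi}(0)+\frac Md\sum_{1\le|h|\le H}\widehat{\psi}\Bigl(\frac{hM}{d}\Bigr)e\Bigl(\frac{\mu_0h}{d}\Bigr)+O(x^{-100}).
\]
The $O(x^{-100})$ contribution, summed over the $\le x^{O(1)}$ relevant tuples, is absorbed into the stated error. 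Substituting the middle term and splitting $e(\mu_0h/d)$ by CRT into factors to the moduli $qr_0$, $r_1'$, $r_2'$ (using $\mu_0\equiv a_{q,r_0r_1'}\overline{n_1}\Mod{qr_0r_1'}$ and $\mu_0\equiv a'_{q,r_0r_2'}\overline{n_2}\Mod{r_2'}$) produces exactly the three factors comprising $\xi$; renaming $r_i'\mapsto r_i$, putting $\alpha'_n:=\alpha_n\mathbf{1}_{\tau(n)\le(\log x)^{B_2}}/(\log x)^{BB_2}$ (which is $1$-bounded as $|\alpha_n|\le\tau(n)^B\le(\log x)^{BB_2}$ on its support) and $c'_{q,r_0,r}:=c_{q,r_0r}\,Q^{1/2}R_0^{1/2}R'/(q^{1/2}r_0^{1/2}r)$ (which is $1$-bounded and satisfies $|c'_{q,r_0,r}|\le|c_{q,r_0r}|$ since $q>Q,\ r_0>R_0,\ r>R'$), the prefactor $M/d$ combines with these normalisations to turn the nonzero-frequency contribution into precisely $\frac{M(\log x)^{2BB_2}}{QR_0R'{}^2}\mathscr{S}_2$.

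It remains to show that the zero-frequency term $\mathscr{S}_{MT}^{\ast}$ (the expression $\mathscr{S}$ with $\psi(m/M)$ replaced by $M\widehat{\psi}(0)/d$) equals $\mathscr{S}_{MT}+O_A(MN^2/(Q(\log x)^{2A}))$. Writing $\ell:=qr_0$, rewriting the congruence $n_1\overline{a_{q,r_0r_1'}}\equiv n_2\overline{a'_{q,r_0r_2'}}\Mod\ell$ as $n_1\equiv cn_2\Mod\ell$ with $c$ a unit, and setting $\gamma_n:=\alpha_n\mathbf{1}_{\tau(n)\le(\log x)^{B_2}}$, the difference $\mathscr{S}_{MT}^{\ast}-\mathscr{S}_{MT}$ equals $M\widehat{\psi}(0)\sum_{q,r_0,r_1',r_2'}\frac{c_{q,r_0r_1'}\overline{c_{q,r_0r_2'}}}{\ell\,r_1'r_2'}\sum_{n_2\sim N,\,(n_2,\ell r_2')=1}\overline{\gamma_{n_2}}\,\Delta_{cn_2}(\ell;r_1')$, where $\Delta_b(\ell;r_1')$ denotes the discrepancy of $\gamma_n\mathbf{1}_{(n,r_1')=1}$ in the class $b\Mod\ell$. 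Here $\gamma_n\mathbf{1}_{(n,r_1')=1}$ still obeys a Siegel--Walfisz bound for all exponents up to $\gg B_2$ (for moduli $\le N^{1-\epsilon}$ by the divisor-moment estimate of Lemma~\ref{lmm:Divisor}, and trivially for larger moduli since $|\gamma_n|\le(\log x)^{O(B_2)}$), at the cost of a $\tau(r_1')^{O(1)}$ loss allowed by \eqref{eq:SiegelWalfisz}. Cauchy--Schwarz in $n_2$ — using $\sum_{n_2\sim N}|\gamma_{n_2}|^2\ll N(\log x)^{O(B)}$ and, since $\ell\asymp QR_0<N/(\log x)^{C_1}$, that $cn_2$ is roughly equidistributed over residues $\Mod\ell$ — bounds the inner sum by $\ll N(\log x)^{O(B)}\ell^{-1/2}(\sum_{(b,\ell)=1}|\Delta_b(\ell;r_1')|^2)^{1/2}$; two further Cauchy--Schwarz steps, over $r_1'$ and over the $\le\tau(\ell)$ factorisations $\ell=qr_0$, reduce matters to bounding $\sum_{r_1'\sim R'}r_1'^{-1}\sum_{\ell\le 2QR_0}\sum_{(b,\ell)=1}|\Delta_b(\ell;r_1')|^2$, which by Lemma~\ref{lmm:Barban} (applicable as $2QR_0<N/(\log x)^{C_1}$ with $C_1$ large) is $\ll_{A',B}N^2(\log x)^{O(1)-A'}$ for any $A'$. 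Collecting the pieces yields $\mathscr{S}_{MT}^{\ast}-\mathscr{S}_{MT}\ll MN^2(\log x)^{O(B)-A'/2}/(QR_0)$, and since $R_0\ge1$, taking $A'$ and $B_2$ large in terms of $A,B$ (hence $C_1=C_1(A,B)$) gives the required bound.

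The main obstacle is this last, zero-frequency step. The modulus $\ell=qr_0$ fed into Lemma~\ref{lmm:Barban} may be only a power of $\log x$ below $N$, so one must check that there is still headroom, that the loss is a genuine negative power of $\log x$ rather than a power of $x$, and that disposing of the coprimality to $r_1'$ and of the divisor truncation costs only powers of $\log x$; moreover the two extra Cauchy--Schwarz steps and the sums over $r_1',r_2'$ and $\ell=qr_0$ must be organised so that nothing deteriorates when $R_0$ is large. By comparison the Poisson application and the identification of the nonzero frequencies with $\mathscr{S}_2$ are mechanical.
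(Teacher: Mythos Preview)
Your proof is correct and follows essentially the same approach as the paper: Poisson summation on the $m$-sum via Lemma~\ref{lmm:Completion}, CRT splitting of the resulting exponential to identify the nonzero frequencies with $\mathscr{S}_2$, and Barban--Davenport--Halberstam (Lemma~\ref{lmm:Barban}) to control the zero-frequency discrepancy. The only organisational differences are that the paper performs a single Cauchy--Schwarz over all outer variables $(q,r_0,r_1,r_2,n_1)$ rather than your iterated applications, and removes the divisor truncation $\tau(n)\le(\log x)^{B_2}$ via Lemma~\ref{lmm:Divisor} \emph{before} invoking Lemma~\ref{lmm:Barban} rather than arguing that the truncated sequence still satisfies Siegel--Walfisz---both routes are valid and yield the same bound.
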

%
%
%%%%%%%%%%%%%%%%%%%%%%%%%%
%
%
A key point is that $\mathscr{S}_{MT}$ is independent of the choice of residue classes $a_{q,r}$, and so to show that $\mathscr{S}$ is approximately independent of the residue classes it suffices to show that $\mathscr{S}_2$ is small.
%
%
%%%%%%%%%%%%%%%%%%%%%%%%%%
%
%
\begin{proof}
To simplify notation we suppress some of the dependencies on $q,r_0$ by letting $b_{r}:=a_{q,r_0r}$, $b'_r:=a'_{q,r_0r}$, $c'_r:=c_{q,r_0r}R'Q^{1/2}R_0^{1/2}/(rr_0^{1/2}q^{1/2})$ and $q_0:=qr_0$. (We note that for $r\sim R'$ we have $c'_r\le 1$.) Similarly, let $\alpha''_n:=\alpha_n\mathbf{1}_{\tau(n)\le (\log{x})^{B_2}}$. By Lemma \ref{lmm:Completion}, for $H:=QR_0 R'{}^2N(\log{x})^5/x$ we have that (noting that $(q r_0 r_1,r_2)=1$)
\begin{equation}
\sum_{\substack{m\equiv b_{r_1}\overline{n_1}\Mod{q_0r_1}\\ m\equiv b'_{r_2}\overline{n_2}\Mod{r_2}}}\psi\Bigl(\frac{m}{M}\Bigr)=\frac{M}{q_0r_1r_2}\sum_{|h|\le H}\hat{\psi}\Bigl(\frac{Mh}{q_0r_1r_2}\Bigr)\xi+O(x^{-100}),
\label{eq:FourierExpansion}
\end{equation}
where, as in the statement of the lemma, we put
\[
\xi:=e\Bigl(\frac{b_{r_1}h\overline{n_1 r_1r_2}}{q_0}\Bigr)e\Bigl(\frac{b_{r_1}h\overline{n_1 q_0 r_2}}{r_1}\Bigr)\Bigl(\frac{b'_{r_2}h\overline{n_2 q_0 r_1}}{r_2}\Bigr).
\]
We substitute \eqref{eq:FourierExpansion} into our expression for $\mathscr{S}$, giving
\begin{align*}
\mathscr{S}&=\frac{M}{QR_0 R'{}^2}\sum_{q\sim Q}\sum_{r_0\sim R_0}\sum_{\substack{r_1,r_2\sim R'\\ (r_1,r_2)=1}}c'_{r_1}\overline{c'_{r_2}}\sum_{\substack{n_1,n_2\sim N\\ n_1\overline{b_{r_1}}=n_2\overline{b'_{r_2}}\Mod{q_0}}}\alpha''_{n_1}\overline{\alpha''_{n_2}}\sum_{|h|\le H}\hat{\psi}\Bigl(\frac{M h}{q_0r_1'r_2'}\Bigr)\xi\\
&\qquad+O(x^{-10}).
\end{align*}
We separate out the $h=0$ term, which contributes to $\mathscr{S}$ a total
\begin{align*}
\frac{M\hat{\psi}(0)}{Q R_0 R'{}^2}&\sum_{q\sim Q}\sum_{r_0\sim R_0}\sum_{\substack{r_1,r_2\sim R'\\ (r_1,r_2)=1}}c'_{r_1}\overline{c'_{r_2}}\sum_{\substack{n_1,n_2\sim N\\ n_1\overline{b_{r_1}}=n_2\overline{b'_{r_2}}\Mod{q_0}\\ (n_1,q_0r_1)=(n_2,q_0r_2)=1}}\alpha''_{n_1}\overline{\alpha''_{n_2}}\\
&=\frac{M\hat{\psi}(0)}{QR_0 R'{}^2}\sum_{q\sim Q}\sum_{r_0\sim R_0}\sum_{\substack{r_1,r_2\sim R'\\ (r_1,r_2)=1}}c'_{r_1}\overline{c'_{r_2}}\sum_{\substack{n_1,n_2\sim N\\ (n_1,q_0r_1)=(n_2,q_0r_2)=1}}\alpha''_{n_1}\overline{\alpha''_{n_2}}\frac{1}{\phi(q_0)}\\
&\qquad+\frac{M \hat{\psi}(0)}{Q  R_0 R'{}^2}\sum_{q\sim Q}\sum_{r_0\sim R_0}\sum_{\substack{r_1,r_2\sim R'\\  (r_1,r_2)=1}}c'_{r_1}\overline{c'_{r_2}}\sum_{\substack{n_1\sim N\\ (n_1,q_0r_1)=1}}\alpha''_{n_1}\\
&\qquad\qquad \times\sum_{\substack{n_2\sim N\\ (n_2,q_0r_2)=1}}\overline{\alpha''_{n_2}}\Bigl(\mathbf{1}_{n_2\equiv b'_{r_2}n_1\overline{b_{r_1}}\Mod{q_0}}-\frac{1}{\phi(q_0)}\Bigr).
\end{align*}
Recalling that $c'_r=c_{q,r_0r}R' Q^{1/2}R_0^{1/2}/(r r_0^{1/2} q^{1/2})$ we see that the first term above is the expression $\mathscr{S}_{MT}$ given by the lemma. By Cauchy-Schwarz, we have
\begin{align*}
&\frac{M\hat{\psi}(0)}{Q  R_0 R'{}^2}\sum_{q\sim Q}\sum_{r_0\sim R_0}\sum_{\substack{r_1,r_2\sim R'\\  (r_1,r_2)=1}}c'_{r_1}\overline{c'_{r_2}}\sum_{\substack{n_1\sim N\\ (n_1,q_0r_1)=1}}\alpha''_{n_1}\\
&\qquad \times \sum_{\substack{n_2\sim N\\ (n_2,q_0r_2)=1}}\overline{\alpha''_{n_2}}\Bigl(\mathbf{1}_{n_2\equiv b'_{r_2}n_1\overline{b_{r_1}}\Mod{q_0}}-\frac{1}{\phi(q_0)}\Bigr)\\
&\ll \frac{M N (\log{x})^{O_B(1)}}{Q R_0}\Bigl(\sup_{r_2\sim R'}\sum_{q_0\sim QR_0}\sum_{\substack{b\Mod{q_0}\\ (b,q_0)=1}}\Bigl|\hspace{-0.2cm}\sum_{\substack{n\sim N\\ (n,r_2 q_0)=1 \\ \tau(n)\le (\log{x})^{B_2} }}\hspace{-0.2cm}\alpha_n\Bigl(\mathbf{1}_{n\equiv b\Mod{q_0}}-\frac{1}{\phi(q_0)}\Bigr)\Bigr|^2\Bigr)^{1/2}.
\end{align*}
Here we used the fact that for any choice of $b\Mod{q_0}$ and $r_1,r_2$ there are $O(N/(QR_0))$ choices of $n_1$ such that $b'_{r_2}n_1\overline{b_{r_1}}\equiv b\Mod{q_0}$ since $R_0<N/Q$, and recalled that $|\alpha''_n|\le \tau(n)^B$. Finally we substituted $\alpha''_n=\alpha_n\mathbf{1}_{\tau(n)\le (\log{x})^{B_2}}$.

By Lemma \ref{lmm:Divisor}, we may remove the condition $\tau(n)\le (\log{x})^{B_2}$ at the cost of an error term of size
\begin{align*}
&\frac{MN (\log{x})^{O_B(1)}}{Q R_0}\Bigl(\sum_{q\sim Q}\sum_{r_0\sim R_0}\sum_{\substack{b\Mod{q_0}\\ (b,q_0)=1}}\Bigl|\sum_{\substack{n\sim N\\ n\equiv b\Mod{q_0}}}\frac{\tau(n)^{B+1}}{(\log{x})^{B_2}}\Bigr|^2\Bigr)^{1/2}\\
&\ll \frac{M N^2 (\log{x})^{O_B(1)-B_2}}{Q R_0}.
\end{align*}
This is $O_A(MN^2/Q\log^{2A}{x})$ provided $B_2$ is large enough in terms of $A$ and $B$.

Since $\alpha_n$ satisfies the Siegel-Walfisz condition, we see that $\mathbf{1}_{(n,r_2)=1}\alpha_n$ also does for any choice of $r_2$. Therefore, since $R_0<N/((\log{x})^{C_1} Q)$, by the Barban-Davenport-Halberstam Theorem (Lemma \ref{lmm:Barban}), for $C_1$ sufficiently large in terms of $A,B$ we have
\[
(\log{x})^{O_B(1)}\sum_{q_0\sim Q R_0}\sum_{\substack{b\Mod{q_0}\\ (b,q_0)=1}}\Bigl|\sum_{\substack{n\sim N\\ (n,r_2q_0)=1}}\alpha_n\Bigl(\mathbf{1}_{n\equiv b\Mod{q_0}}-\frac{1}{\phi(q_0)}\Bigr)\Bigr|^2\ll_A \frac{N^2}{(\log{x})^{2A}}.
\]
Thus contribution to $\mathscr{S}$ from the terms with $h=0$ is $\mathscr{S}_{MT}+O_A(MN^2/(Q\log^{2A}{x}))$. By letting $\alpha_n':=\alpha''_n/(\log{x})^{B B_2}\le 1$ and letting $\mathscr{S}_2$ denote the terms with $h\ne 0$, we obtain the result.
\end{proof}
%
%
%%%%%%%%%%%%%%%%%%%%%%
%
%
We note that $\mathscr{S}_2$ depends on $B$ since $c'_{q,r_0,r}$ is supported on $\tau(q r_0r)\le (\log{x})^B$, but not on $A,B_2$ or $C_1$. We wish to show that for every choice of $A_2>0$ we have
\[ 
\mathscr{S}_2\ll_{A_2,B} \frac{N^2 R'{}^2 R_0}{(\log{x})^{A_2}}.
\]
%
%
%%%%%%%%%%%%%%%%%%%%%
%
%
\begin{lmm}[Simplify moduli]\label{lmm:Simplify}
Let $\mathscr{S}_2$ be as in Lemma \ref{lmm:Fourier}. Then we have that
\[
\mathscr{S}_2\ll (\log{x})^4\sup_{\substack{D_1\le D_2}}\sum_{d_1\sim D_1}\sum_{d_2\sim D_2}\sum_{q\sim Q}\sum_{r_0\sim R_0}|\mathscr{S}_3|,
\]
where
\[
\mathscr{S}_3:=\sum_{\substack{r_1'\sim R_1'}}c_{r_1'}\sum_{\substack{r_2'\sim R_2'\\  (r_1',r_2')=1}}\overline{c'_{r_2'}}\sum_{\substack{n_1,n_2\sim N\\ n_1\overline{b_{r_1'}}=n_2\overline{b'_{r_2'}}\Mod{q r_0}\\ (n_1,q r_0 d_1 r_1')=(n_2,q r_0 d_2 r_2')=1}}\alpha'_{n_1}\overline{\alpha'_{n_2}}\sum_{\substack{h'\sim H''\\ (h',r_1'r_2')=1}}\xi,
\]
and where $R_1':=R'/  d_1$, $R_2':=R' / d_2$, $H''\le H/(d_1 d_2)$, $c_{r}$ and $c_r'$ are 1-bounded sequences (depending on $q,r_0,d_1,d_2$) satisfying $|c_{r}|\le |c'_{q,r_0,d_1r}|$ and $|c'_{r}|\le |c'_{q,r_0,d_2r}|$ supported on $\tau(r)\le (\log{x})^B$, $b_r$, $b_r'$ are integer sequences (depending on $q,r_0,d_1,d_2$) satisfying $(b_r,q r_0 r)=(b'_{r},q r_0 r)=1$, and 
\[
\xi:=e\Bigl(\frac{b_{r_1'}h'\overline{n_1 r_1'r_2'}}{q r_0}\Bigr)e\Bigl(\frac{b_{r_1'}h'\overline{n_1 q r_0 r_2'}}{r_1'}\Bigr)\Bigl(\frac{b'_{r_2'}h'\overline{n_2 q r_0 r_1'}}{r_2'}\Bigr).
\]
\end{lmm}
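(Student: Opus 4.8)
The plan is a purely combinatorial rearrangement of $\mathscr{S}_2$: from each nonzero $h$ we extract the parts it shares with $r_1$ and with $r_2$, so that $r_1,r_2$ get replaced by cofactors coprime to the remaining piece of $h$. Concretely, for $h\neq 0$ and square-free $r_1,r_2\sim R'$ with $(r_1,r_2)=1$, I would set $d_1:=(h,r_1)$, $d_2:=(h,r_2)$, $r_1':=r_1/d_1$, $r_2':=r_2/d_2$, $h':=h/(d_1d_2)$. Since $(r_1,r_2)=1$ we have $(d_1,d_2)=1$, hence $d_1d_2\mid h$ and $h'\in\mathbb{Z}$; and since $r_1$ is square-free the cofactor $r_1/(h,r_1)$ is coprime to $h$, so $(h',r_1')=(h',r_2')=(r_1',r_2')=1$ and $d_1r_1'$, $d_2r_2'$ are square-free. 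The map $(r_1,r_2,h)\mapsto(d_1,d_2,r_1',r_2',h')$ is injective, so this change of variables turns $\mathscr{S}_2$ into a sum over $q,r_0,d_1,d_2,r_1',r_2',n_1,n_2,h'$ which I would then reorganise with $(q,r_0,d_1,d_2)$ on the outside.

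Substituting the change of variables, the argument of $\hat\psi$ is unchanged since $hM/(qr_0r_1r_2)=h'M/(qr_0r_1'r_2')$. For the phase $\xi$ I would use that $d_i\mid r_i$ and $(r_i,qr_0)=1$, so $(d_1d_2,qr_0)=1$ and $d_1d_2\overline{d_1d_2}\equiv1\Mod{qr_0}$; this collapses the first factor of $\xi$ to $e(b_{r_1'}h'\overline{n_1r_1'r_2'}/(qr_0))$ with $b_{r_1'}:=a_{q,r_0d_1r_1'}$. For the second factor, working modulo $r_1=d_1r_1'$, the $d_1$ in $h=d_1d_2h'$ cancels the $d_1$ in the denominator, while replacing $d_2\overline{d_2}$ by $1$ alters the phase only by an integer after dividing by $r_1'$, leaving $e(b_{r_1'}h'\overline{n_1qr_0r_2'}/r_1')$; symmetrically the third factor becomes $e(b'_{r_2'}h'\overline{n_2qr_0r_1'}/r_2')$ with $b'_{r_2'}:=a'_{q,r_0d_2r_2'}$. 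The coprimality conditions $(n_1,qr_0r_1)=1$, $(n_2,qr_0r_2)=1$ and the congruence on $n_1,n_2$ transform to exactly those in $\mathscr{S}_3$.

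Next I would clear up the residual coprimality. The constraint $(r_1,r_2)=1$ becomes $(d_1r_1',d_2r_2')=1$, which is $(r_1',r_2')=1$ together with $(r_1',d_2)=(r_2',d_1)=(d_1,d_2)=1$; the latter, along with the conditions that $d_1r_1'$ be square-free and coprime to $qr_0$, I would absorb into redefined $1$-bounded coefficients $c_{r_1'}$, $c'_{r_2'}$ — legitimate since these may depend on $q,r_0,d_1,d_2$ — which still obey $|c_r|\le|c'_{q,r_0,d_1r}|$ and are supported on $\tau(r)\le(\log x)^B$ (as $\tau(d_1r)\le(\log x)^B$). At this stage $\mathscr{S}_2=\sum_{q\sim Q}\sum_{r_0\sim R_0}\sum_{d_1}\sum_{d_2}$ of an inner sum which is $\mathscr{S}_3$ except that $h'$ runs over $1\le|h'|\le H/(d_1d_2)$ rather than a dyadic block.

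Finally I would pass to dyadic pieces: localise $d_1\sim D_1$, $d_2\sim D_2$ and $|h'|\sim H''$ (with $H''\le H/(d_1d_2)$), and split off the sign of $h'$ — the $h'<0$ part becoming, after complex conjugation and replacing $c_{r_1'},c'_{r_2'},\alpha'_n$ by their conjugates, a sum of exactly the same shape and modulus. There are $O((\log x)^4)$ pieces, so the triangle inequality together with moving the absolute values inside the $d_1,d_2,q,r_0$ summations gives $\mathscr{S}_2\ll(\log x)^4\sup_{D_1\le D_2}\sum_{d_1\sim D_1}\sum_{d_2\sim D_2}\sum_{q\sim Q}\sum_{r_0\sim R_0}|\mathscr{S}_3|$; the supremum may be restricted to $D_1\le D_2$ because interchanging the roles of the subscripts $1$ and $2$ carries a piece to one with $D_1,D_2$ swapped and equal absolute value. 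The only thing needing care is the bookkeeping in the second paragraph — checking that $d_1$ and $d_2$ genuinely drop out of all three phases and that the residue-class and coefficient sequences are relabelled consistently — so I expect no real obstacle; this lemma has no analytic content.
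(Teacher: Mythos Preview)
Your approach is essentially the paper's: extract $d_1=(h,r_1)$, $d_2=(h,r_2)$, rewrite everything in terms of $r_1',r_2',h'$, simplify the phase (your verification that $d_1,d_2$ drop out of all three exponential factors is correct), and pass to dyadic blocks with the symmetry $D_1\le D_2$.

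There is, however, one genuine omission. You never remove the weight $\hat\psi(h'M/(qr_0r_1'r_2'))$. When you write ``the inner sum is $\mathscr{S}_3$ except that $h'$ runs over $1\le|h'|\le H/(d_1d_2)$'', the $\hat\psi$ factor is still present; it depends jointly on $h',r_1',r_2',q,r_0$ and cannot be absorbed into any of the sequences $c_{r_1'},c'_{r_2'},\alpha'_n$. The target $\mathscr{S}_3$ has no such weight. The paper handles this by performing partial summation in $q_0,r_1,r_2,h$ \emph{before} extracting GCDs, using the derivative bound $\partial_h^{j_1}\partial_{r_1}^{j_2}\partial_{r_2}^{j_3}\partial_{q_0}^{j_4}\hat\psi(hM/(q_0r_1r_2))\ll h^{-j_1}r_1^{-j_2}r_2^{-j_3}q_0^{-j_4}$ to strip the weight at the cost of a supremum over truncation points, which are then absorbed into the coefficients; only afterward does it split $h$ dyadically and pull out $d_1,d_2$. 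In your ordering you could instead do partial summation in $r_1',r_2',h'$ for each fixed $q,r_0,d_1,d_2$ after your dyadic decomposition --- since these are already localised to dyadic intervals this costs only $O(1)$, so you remain within the $(\log x)^4$ budget --- but the step must be carried out explicitly, and your claim that ``this lemma has no analytic content'' is not quite right.
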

%
%
%%%%%%%%%%%%%%%%%%%%%%%%%%
%
%
\begin{proof}
We first wish to separate the dependencies between the $h,r_1,r_2,q,r_0$ variables in the $\hat{\psi}(h M/(qr_0r_1r_2))$ factor, which we do by partial summation. Let $q_0:=qr_0$ as before. Since $\hat{\psi}^{(j)}(x)\ll_{j,k} |x|^{-k}$ for any $j,k\in\mathbb{Z}_{\ge 0}$, we have that
\[
\frac{\partial^{j_1+j_2+j_3+j_4}}{\partial h^{j_1}\partial r_1^{j_2}\partial r_2^{j_3}\partial q_0^{j_4}  }\hat{\psi}\Bigl(\frac{h M}{q_0r_1r_2}\Bigr)\ll_{j_1,j_2,j_3,j_4} h^{-j_1}r_1^{-j_2}r_2^{-j_3}q_0^{-j_4}.
\]
Thus, by partial summation
\begin{align*}
\mathscr{S}_2&=\sum_{q\sim Q}\sum_{r_0\sim R_0}\sum_{\substack{r_1,r_2\sim R'\\ (r_1,r_2)=1}}c'_{q,r_0,r_1}\overline{c'_{q,r_0,r_2}}\hspace{-1 cm}\sum_{\substack{n_1,n_2\sim N\\ n_1\overline{a_{q,r_0r_1}}=n_2\overline{a'_{q,r_0r_2}}\Mod{q_0}\\ (n_1,q_0r_1)=(n_2,q_0r_2)=1}}\hspace{-1 cm}\alpha'_{n_1}\overline{\alpha'_{n_2}}\sum_{1\le |h|\le H}\hat{\psi}\Bigl(\frac{Mh}{q_0r_1r_2}\Bigr)\xi\\
&\ll \log{x} \sup_{t_1,t_2,t_3,t_4}\Bigl|\sum_{q\sim Q}\sum_{\substack{r_0\sim R_0\\ qr_0\le t_4}}\sum_{\substack{r_1,r_2\sim R'\\  (r_1,r_2)=1\\ r_1\le t_1\\ r_2\le t_2}}c'_{q,r_0,r_1}\overline{c'_{q,r_0,r_2}}\hspace{-1cm}\sum_{\substack{n_1,n_2\sim N\\ n_1\overline{a_{q,r_0r_1}}=n_2\overline{a'_{q,r_0r_2}}\Mod{q r_0}\\ (n_1,q r_0 r_1)=1\\ (n_2,q r_0 r_2)=1}}\hspace{-1cm}\alpha'_{n_1}\overline{\alpha'_{n_2}}\sum_{1\le |h|\le t_3}\xi\Bigr|.
\end{align*}
Let the supremum occur at $t_1',t_2',t_3',t_4'$. We let $c_{q,r_0,r_1}'':=\mathbf{1}_{r_1\le t_1',qr_0\le t_4'}c_{q,r_0,r_1}'$ and $c'''_{q,r_0,r_2}:=\mathbf{1}_{r_2\le t_2',qr_0\le t_4'}c_{q,r_0,r_2}'$ (noting that these are $1$-bounded). We split $h$ into dyadic ranges, and note that since the terms with $h>0$ are the complex conjugates of the terms with $h<0$, it suffices to just bound the terms with $h>0$. Thus we find for some $H'\le H$
\begin{align*}
\mathscr{S}_2&\ll (\log{x})^2\sum_{q\sim Q}\sum_{r_0\sim R_0}\Bigl|\sum_{\substack{r_1,r_2\sim R'\\  (r_1,r_2)=1}}c''_{q,r_0,r_1}c'''_{q,r_0,r_2}\hspace{-0.5cm}\sum_{\substack{n_1,n_2\sim N\\ n_1\overline{a_{q,r_0r_1}}=n_2\overline{a'_{q,r_0r_2}}\Mod{q r_0}\\ (n_1,q r_0 r_1)=(n_2,q r_0 r_2)=1}}\hspace{-0.5cm}\alpha'_{n_1}\overline{\alpha'_{n_2}}\sum_{h\sim H'} \xi\Bigr|.
\end{align*}
We now wish to remove potential common factors between $h$ and $r_1,r_2$. Let $d_1=(h,r_1)$ and $d_2=(h,r_2)$ and let $r_1=d_1r_1'$, $r_2=r_2'd_2$ and $h=d_1d_2h'$. By putting each of $d_1,d_2$ into dyadic ranges, we see that
\[
\mathscr{S}_2\ll (\log{x})^4\sup_{\substack{D_1,D_2}}\sum_{d_1\sim D_1}\sum_{d_2\sim D_2}\sum_{q\sim Q}\sum_{r_0\sim R_0}|\mathscr{S}_2'|,
\]
where $\mathscr{S}_2'=\mathscr{S}_2'(d_1,d_2,q,r_0)$ is given by
\[
\mathscr{S}_2':=\sum_{\substack{r_1'\sim R_1'}}c_{r_1'}\sum_{\substack{r_2'\sim R_2'\\  (r_1',r_2')=1}}\overline{c'_{r_2'}}\sum_{\substack{n_1,n_2\sim N\\ n_1\overline{b_{r_1'}}=n_2\overline{b'_{r_2'}}\Mod{q r_0}\\ (n_1,q r_0d_1r_1')=(n_2,q r_0d_2r_2')=1}}\alpha'_{n_1}\overline{\alpha'_{n_2}}\sum_{\substack{h'\sim H''\\ (h',r_1'r_2')=1}}\xi,
\]
where $c_{r_1'}:=c''_{q,r_0,d_1 r_1'}$, $c'_{r_2}:=c'''_{q,r_0,d_2 r_2'}$, $b_{r_1'}:=a_{q,r_0d_1 r_1'}$, $b'_{r_2'}:=a'_{q,r_0d_2 r_2'}$ and
\begin{align*}
R_1'&:=\frac{R'}{d_1},\quad R_2':=\frac{R'}{d_2},\quad H'':=\frac{H'}{d_1 d_2},
\end{align*}
 and where
\begin{align*}
\xi&=e\Bigl(\frac{a_{q,r_0r_1}h\overline{n_1 r_1r_2}}{q r_0}\Bigr)e\Bigl(\frac{a_{q,r_0r_1}h\overline{n_1 q r_0 r_2}}{r_1}\Bigr)\Bigl(\frac{a'_{q,r_0r_2}h\overline{n_2 q r_0 r_1}}{r_2}\Bigr)\\
&=e\Bigl(\frac{b_{r_1'}h'\overline{n_1 r_1'r_2'}}{q r_0}\Bigr)e\Bigl(\frac{b_{r_1'}h'\overline{n_1 q r_0 r_2'}}{r_1'}\Bigr)\Bigl(\frac{b'_{r_2'}h'\overline{n_2 q r_0 r_1'}}{r_2'}\Bigr).
\end{align*}
By symmetry we may assume without loss of generality that $D_1\le D_2$, so $R_1'\gg R_2'$. This gives the result.
\end{proof}
%
%
%%%%%%%%%%%%%%%%%%%%%%
%
%
Recalling that we wish to show that $\mathscr{S}_2\ll N^2 R_0 R'{}^2/(\log{x})^{A_2}$, we see that we wish to show for every choice of $A_3>0$
\[
\mathscr{S}_3\ll_{A_3,B} \frac{N^2 R'_1 R_2'}{Q(\log{x})^{A_3} }.
\]
Our first step is to apply Cauchy-Schwarz to remove the eliminate $\alpha_{n_1}$ coefficients. We cannot simultaneously eliminate the $\alpha_{n_2}$ coefficients because the modulus would increase too much if we didn't keep the $q$ variable on the outside of the summation, but the diagonal terms would contribute too much if the inner sum only involved a subset of the $r_1,r_2,h$ variables.
%
%
%%%%%%%%%%%%%%%%%%%%%%
%
%
\begin{lmm}[First Cauchy]\label{lmm:Cauchy}
Let $\mathscr{S}_3$ be as in Lemma \ref{lmm:Simplify}. Then we have
\[
\mathscr{S}_3^2\ll (\log{x})^2 N R_1'\sup_{\substack{E_1,R_1''\\ E_1 R_1''\asymp R_1'}} |\mathscr{S}_4|,
\]
where 
\begin{align*}
\mathscr{S}_4&:=\sum_{e_1\sim E_1}\sum_{\substack{r_1'\sim R_1''}}\eta_{e_1,r_1'}\sum_{\substack{r_2,r_3\sim R_2'\\ (e_1 r_1',r_2r_3)=1}}\overline{c'_{r_2}}c'_{r_3}\sum_{\substack{h_2,h_3\sim H''\\ h_2r_3\equiv h_3r_2\Mod{e_1}\\ (h_2,e_1 r_1' r_2)=1\\ (h_3,e_1 r_1' r_3)=1\\ (h_2r_3-h_3r_2,r_1')=1 }}\sum_{(n_1,q_0e_1 r_1')=1}\psi\Bigl(\frac{n_1}{N}\Bigr)\xi_0\xi_1\\
&\qquad\times \Biggl(\sum_{\substack{n_2\sim N\\ n_2\overline{b'_{r_2}}=n_1\overline{b_{e_1 r_1'}}\Mod{q_0}\\ (n_2,q_0d_2r_2)=1}}\overline{\alpha'_{n_2}}\xi_2\Biggr) \Biggl(\sum_{\substack{n_3\sim N\\ n_3\overline{b'_{r_3}}=n_1\overline{b_{e_1 r_1'}}\Mod{q_0}\\ (n_3,q_0d_2r_3)=1}}\alpha'_{n_3}\xi_3\Biggr),
\end{align*}
 where $q_0:=q r_0$, and $|\eta_{e_1,r_1'}|\le |c_{e_1 r_1'}|$ is supported on square-free coprime $e_1,r_1'$ with $\tau(e_1 r_1')\le (\log{x})^B$ and $(e_1 r_1',q_0)=1$, and where 
\begin{align*}
\xi_0&:=e\Bigl(\frac{b_{e_1 r_1'}(h_2\overline{r_2}-h_3\overline{r_3})\overline{n_1 e_1 r_1'}}{q_0}\Bigr),\qquad &
\xi_1&:=e\Bigl(\frac{b_{e_1 r_1'}(h_2\overline{r_2}-h_3\overline{r_3})\overline{n_1 e_1 q_0}}{r_1'}\Bigr),\\
\xi_2&:=e\Bigl(\frac{b'_{r_2}h_2\overline{n_2 q_0 e_1 r_1'}}{r_2}\Bigr),\qquad &
\xi_3&:=e\Bigl(-\frac{b'_{r_3}h_3\overline{n_3 q_0 e_1 r_1'}}{r_3}\Bigr).
\end{align*}
\end{lmm}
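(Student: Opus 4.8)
The plan is to apply the Cauchy--Schwarz inequality to $\mathscr{S}_3$ with the pair $(r_1',n_1)$ as the outer variable, so as to eliminate the unknown coefficients $\alpha'_{n_1}$ while retaining $\alpha'_{n_2}$ (keeping $q$ and $r_0$ frozen throughout). Writing $\mathscr{S}_3=\sum_{r_1'\sim R_1'}\sum_{n_1\sim N,\,(n_1,q r_0 d_1 r_1')=1}c_{r_1'}\alpha'_{n_1}\,F(r_1',n_1)$, where $F(r_1',n_1)$ denotes the inner sum over $r_2',n_2,h'$, Cauchy--Schwarz gives $|\mathscr{S}_3|^2\le\bigl(\sum_{r_1'\sim R_1'}\sum_{n_1\sim N}|c_{r_1'}\alpha'_{n_1}|^2\bigr)\bigl(\sum_{r_1'\sim R_1'}\sum_{n_1\sim N,\,(n_1,q r_0 d_1 r_1')=1}|F(r_1',n_1)|^2\bigr)$. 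Since $c_{r_1'}$ and $\alpha'_{n_1}$ are $1$-bounded the first factor is $\ll NR_1'$; in the second factor, by positivity I may replace the sharp cutoff $\mathbf{1}_{n_1\sim N}$ by the smooth majorant $\psi(n_1/N)$ and discard the condition $(n_1,d_1)=1$ (retaining $(n_1,q r_0 r_1')=1$), only enlarging the sum. It then remains to show that this smoothed square sum is $\ll(\log x)^2\sup_{E_1,R_1''}|\mathscr{S}_4|$.

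Next I would expand the square $|F(r_1',n_1)|^2$ into a double sum, the variables $(r_2,n_2,h_2)$ coming from $F$ and $(r_3,n_3,h_3)$ from $\overline F$. By construction of $\xi$, the product $\xi(r_2,n_2,h_2)\overline{\xi(r_3,n_3,h_3)}$ is a product of additive characters to the moduli $q_0:=q r_0$, $r_1'$, $r_2$ and $r_3$; the characters modulo $q_0$ and modulo $r_1'$ carry the $\overline{n_1}$-dependence and, after clearing the denominators $\overline{r_2},\overline{r_3}$, are governed by the integer $h_2 r_3-h_3 r_2$ (note $h_2\overline{r_2}-h_3\overline{r_3}\equiv(h_2 r_3-h_3 r_2)\overline{r_2 r_3}$ since $(r_2 r_3,r_1')=1$). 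To make the eventual completion of the $n_1$-sum clean I would then factor $r_1'=e_1 r_1''$ with $e_1:=(r_1',h_2 r_3-h_3 r_2)$ — a genuine gcd as $r_1'$ is squarefree — so that the character modulo $e_1$ becomes trivial, the character modulo $r_1''$ survives (yielding $\xi_1$, the factor $\overline{e_1}$ arising from the CRT split of the $r_1'$-modulus), and $(h_2 r_3-h_3 r_2,r_1'')=1$. This produces exactly the side conditions $h_2 r_3\equiv h_3 r_2\Mod{e_1}$ and $(h_2 r_3-h_3 r_2,r_1')=1$ recorded in $\mathscr{S}_4$, and the characters modulo $q_0$, $r_1''$, $r_2$, $r_3$ become $\xi_0,\xi_1,\xi_2,\xi_3$.

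Finally I would rename $r_1''$ as $r_1'$, observe that the coefficient attached to $(e_1,r_1')$ is $|c_{e_1 r_1'}|^2\le|c_{e_1 r_1'}|$ and is supported on squarefree coprime $e_1,r_1'$ with $\tau(e_1 r_1')\le(\log x)^B$ and $(e_1 r_1',q_0)=1$ (so it may serve as $\eta_{e_1,r_1'}$), and note that the congruences $n_1\overline{b_{r_1'}}\equiv n_j\overline{b'_{r_j}}\Mod{q_0}$ inherited from $\mathscr{S}_3$ are precisely those displayed for $n_2,n_3$, namely $n_j\,\overline{b'_{r_j}}\equiv n_1\,\overline{b_{e_1 r_1'}}\Mod{q_0}$ ($j=2,3$). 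Dyadically decomposing $e_1\sim E_1$ (which forces $r_1''\sim R_1''$ with $E_1 R_1''\asymp R_1'$), at the cost of a power of $\log x$, each resulting piece is $\ll|\mathscr{S}_4|$; and since each such piece is literally $\sum_{e_1,r_1',\dots}\sum_{n_1}\psi(n_1/N)\,|\cdots|^2\ge0$, inserting absolute values is free. Multiplying by the first Cauchy--Schwarz factor $\ll NR_1'$ and collecting the logarithmic losses yields $\mathscr{S}_3^2\ll(\log x)^2 NR_1'\sup_{E_1,R_1''}|\mathscr{S}_4|$.

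The step requiring the most care is the phase bookkeeping of the second paragraph: verifying that after clearing $\overline{r_2},\overline{r_3}$ and extracting $e_1$ the character modulo $e_1$ is genuinely trivial, and that all residual coprimality conditions ($(h_2,e_1 r_1' r_2)=1$, $(h_3,e_1 r_1' r_3)=1$, $(e_1 r_1',q_0)=1$, $(h_2 r_3-h_3 r_2,r_1')=1$, etc.) are exactly those appearing in $\mathscr{S}_4$, so that $n_1$ enters only through $\overline{n_1}$ to the fixed moduli $q_0,r_1',r_2,r_3$. The Cauchy--Schwarz itself and the dyadic bookkeeping are routine; the conceptual point — choosing to square in $(r_1',n_1)$ rather than in a larger or smaller collection of variables, so that $\alpha_{n_2}$ and enough ``entropy'' in $r_2,h_2$ survive for the later diagonal analysis — is as explained in the outline and poses no technical difficulty at this stage.
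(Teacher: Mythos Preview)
Your approach is essentially identical to the paper's: Cauchy--Schwarz in $(r_1,n_1)$, insert the smooth majorant $\psi(n_1/N)$, expand the square, extract $e_1=\gcd(h_2r_3-h_3r_2,r_1)$, and dyadically localise $e_1\sim E_1$. Two small slips are worth flagging. First, your Cauchy as written places all of $c_{r_1'}$ in the first factor, so the second factor carries no $c$-weight; yet you later say the coefficient attached to $(e_1,r_1')$ is $|c_{e_1r_1'}|^2$. The paper instead applies Cauchy with weight $|c_{r_1}|$, i.e.\ bounds $|\mathscr{S}_3|^2\le\bigl(\sum_{r_1,n_1}|c_{r_1}|\,|\alpha'_{n_1}|^2\bigr)\bigl(\sum_{r_1,n_1}|c_{r_1}|\,|F|^2\bigr)$, so that $|c_{r_1}|$ (and with it the square-free, coprimality and $\tau$-support) survives in the second factor; your version also works provided you restrict to square-free $r_1'$ coprime to $q_0$ \emph{before} expanding, which is legitimate by positivity. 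Second, the individual dyadic pieces in $E_1$ are \emph{not} of the form $\sum\psi(n_1/N)\,|\cdots|^2$, because $e_1$ depends on the cross-variables $h_2,h_3,r_2,r_3$ of the expanded square; but this does not matter, since $\mathscr{S}_3''\ge 0$ is the full sum over dyadic ranges and the triangle inequality gives $\mathscr{S}_3''\le\sum_{E_1}|\mathscr{S}_4(E_1)|\ll(\log x)^2\sup_{E_1}|\mathscr{S}_4|$ directly.
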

%
%
%%%%%%%%%%%%%%%%%%%%%%%%%%
%
%
\begin{proof}
To simplify notation we let $q_0:=q r_0$. We Cauchy in $n_1$ and $r_1$. This gives
\[
\mathscr{S}_3^2\ll NR_1' \mathscr{S}_3',
\]
where (dropping the condition $(n_1,d_1)=1$ for an upper bound)
\begin{align*}
\mathscr{S}_3'&:=\sum_{\substack{r_1\sim R_1' }}|c_{r_1}|\sum_{\substack{n_1\sim N\\ (n_1,r_1 q_0)=1}}\Bigl|\sum_{\substack{r_2\sim R_2'\\ (r_2,r_1)=1}}\overline{c'_{r_2}}\sum_{\substack{n_2\sim N\\ n_1\overline{b_{r_1}}=n_2\overline{b'_{r_2}}\Mod{q_0}\\ (n_2,q_0d_2r_2)=1}}\overline{\alpha'_{n_2}}\sum_{\substack{h\sim H''\\ (h,r_1r_2)=1}}\xi\Bigr|^2.
\end{align*}
We insert a smooth majorant for the $n_1$ summation, giving the upper bound
\begin{align*}
\mathscr{S}_3'&\le \sum_{\substack{r_1\sim R_1'}}|c_{r_1}|\sum_{(n_1,r_1 q_0)=1}\psi\Bigl(\frac{n_1}{N}\Bigr)\Bigl|\sum_{\substack{r_2\sim R_2'\\ (r_2,r_1)=1}}\overline{c'_{r_2}}\sum_{\substack{n_2\sim N\\ n_1\overline{b_{r_1}}=n_2\overline{b'_{r_2}}\Mod{q_0}\\ (n_2,q_0d_2r_2)=1}}\overline{\alpha'_{n_2}}\sum_{\substack{h\sim H''\\ (h,r_1r_2)=1}}\xi\Bigr|^2.
\end{align*}
Let $\mathscr{S}_3''$ denote the right hand side above. Expanding the square, we see that
\begin{align*}
\mathscr{S}_3''&=\sum_{\substack{r_1\sim R_1'}}|c_{r_1}|\sum_{\substack{r_2,r_3\sim R_2' \\ (r_2r_3,r_1)=1}}\overline{c'_{r_2}}c'_{r_3}\sum_{\substack{h_2,h_3\sim H''\\ (h_2,r_1r_2)=1\\ (h_3,r_1r_3)=1}}\sum_{(n_1,q_0r_1)=1}\psi\Bigl(\frac{n_1}{N}\Bigr)\xi_0\xi_1\\
&\qquad\times \Biggl(\sum_{\substack{n_2\sim N\\ n_2\overline{b'_{r_2}}=n_1\overline{b_{r_1}}\Mod{q_0}\\ (n_2,q_0d_2r_2)=1}}\overline{\alpha'_{n_2}}\xi_2\Biggr) \Biggl(\sum_{\substack{n_3\sim N\\ n_3\overline{b'_{r_3}}=n_1\overline{b_{r_1}}\Mod{q_0}\\ (n_3,q_0d_2r_3)=1}}\alpha'_{n_3}\xi_3\Biggr),
\end{align*}
where
\begin{align*}
\xi_0&:=e\Bigl(\frac{b_{r_1}(h_2\overline{r_2}-h_3\overline{r_3})\overline{n_1 r_1}}{q_0}\Bigr),\qquad &
\xi_1&:=e\Bigl(\frac{b_{r_1}(h_2\overline{r_2}-h_3\overline{r_3})\overline{n_1q_0}}{r_1}\Bigr),\\
\xi_2&:=e\Bigl(\frac{b'_{r_2}h_2\overline{n_2q_0r_1}}{r_2}\Bigr),\qquad &
\xi_3&:=e\Bigl(-\frac{b'_{r_3}h_3\overline{n_3q_0r_1}}{r_3}\Bigr).
\end{align*}
We now wish to extract possible common factors. Let
\[
e_1:=\gcd(h_2r_3-h_3r_2,r_1),
\]
and let $r_1=e_1 r_1'$. Since $c_r$ is supported on square-free $r$, we only need to consider $(r_1',e_1)=1$. We then see that
\[
\xi_1=e\Bigl(\frac{b_{r_1}(h_2\overline{r_2}-h_3\overline{r_3})\overline{n_1 q_0}}{r_1}\Bigr)=e\Bigl(\frac{b_{e_1r_1'}(h_2\overline{r_2}-h_3\overline{r_3})\overline{n_1 e_1 q_0}}{r_1'}\Bigr).
\]
We now put $e_1,r_1'$ into dyadic ranges. Taking the worst ranges, we see that
\begin{align*}
\mathscr{S}_3''&\ll (\log{x})^2\sup_{\substack{E_1,R_1''\\ E_1R_1''\asymp R_1}}\mathscr{S}_4,
\end{align*}
where $\mathscr{S}_4$ is given by
\begin{align*}
\mathscr{S}_4&:=\sum_{e_1 \sim E_1}\sum_{\substack{r_1'\sim R_1''}}\eta_{e_1,r_1'}\sum_{\substack{r_2,r_3\sim R_2'\\ (e_1 r_1',r_2 r_3)=1}}\overline{c'_{r_2}}c'_{r_3}\sum_{\substack{h_2,h_3\sim H''\\ h_2r_3\equiv h_3r_2\Mod{e_1}\\ (h_2,e_1 r_1' r_2)=1\\ (h_3,e_1 r_1' r_3)=1\\ (h_2r_3-h_3r_2,r_1')=1 }}\sum_{(n_1,q_0 e_1 r_1')=1}\psi\Bigl(\frac{n_1}{N}\Bigr)\xi_0\xi_1\\
&\qquad\times \Biggl(\sum_{\substack{n_2\sim N\\ n_2\overline{b'_{r_2}}=n_1\overline{b_{e_1 r_1'}}\Mod{q_0}\\ (n_2,q_0 d_2 r_2)=1}}\overline{\alpha'_{n_2}}\xi_2\Biggr) \Biggl(\sum_{\substack{n_3\sim N\\ n_3\overline{b'_{r_3}}=n_1\overline{b_{e_1 r_1'}}\Mod{q_0}\\ (n_3,q_0d_2r_3)=1}}\alpha'_{n_3}\xi_3\Biggr).
\end{align*}
Here $|\eta_{e_1,r_1'}|\le |c_{e_1 r_1'}|$ is supported on $e_1 r_1'\sim R_1'$ with $e_1 r_1'$ square-free and $\tau(e_1 r_1')\le(\log{x})^B$ and $(e_1 r_1',q_0)=1$. This gives the result.
\end{proof}
%
%
%%%%%%%%%%%%%%%%%%%%%%%%%%
%
%
Recalling that we wish to show that $\mathscr{S}_3\ll N^2 R_1' R_2'/(Q(\log{x})^{A_3})$, we see that we wish to show for every choice of $A_4>0$
\[
\mathscr{S}_4\ll_{A_4,B} \frac{N^3 E_1 R_1''R_2'{}^2}{(\log{x})^{A_4} Q^2 }.
\]
%
%
%%%%%%%%%%%%%%%%%%%%%%
%
%
\begin{lmm}[Pseudo-diagonal terms]\label{lmm:Diag1}
Let $A,B>0$, let $R_0\ll N/Q$ and $\mathscr{S}_4$ be as in Lemma \ref{lmm:Cauchy}, where we recall that $|\eta_{e_1,r_1'}|\le 1$ is supported on $\tau(e_1 r_1')\le (\log{x})^B$. Let $C=C(A,B)$ sufficiently large in terms of $A$ and $B$, and let $R_1''$ and $N$ satisfy
\begin{align*}
R_1''&\ll \Bigl(\frac{N}{Q R_0}\Bigr)^{2/3},\\
N&\ll \frac{x^{1/2-3\delta}}{(\log{x})^C}.
\end{align*}
Then we have that
\[
\mathscr{S}_4\ll_{A,B} \frac{N^3 E_1 R_1'' R_2'{}^2}{ (\log{x})^{A} Q^2}.
\]
\end{lmm}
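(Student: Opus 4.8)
The plan is to make the smooth variable $n_1$ the innermost sum and evaluate it by completion. The congruences present in $\mathscr{S}_4$ pin $n_1,n_2,n_3$ into a common residue class modulo $q_0:=qr_0$, so one may eliminate $n_1$ in favour of $n_2$ (subject only to $n_2\overline{b'_{r_2}}\equiv n_3\overline{b'_{r_3}}\pmod{q_0}$), and then the inner $n_1$–sum becomes a sum of $\psi(n_1/N)e(a_1\overline{n_1}/r_1')$ over $n_1$ in a fixed class modulo $q_0$, for a suitable $a_1\pmod{r_1'}$ (the residual condition $(n_1,e_1)=1$ being removed by M\"obius, at the cost of $\tau(e_1)\le(\log x)^B$). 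I would evaluate this via Lemma \ref{lmm:InverseCompletion} with modulus $r_1'$ and congruence modulus $q_0$; the completion length is $H_1\asymp(\log x)^{O(1)}q_0r_1'/N$.

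The first key point is that the hypothesis $R_1''\ll (N/(QR_0))^{2/3}$, together with the lower bound $N/(QR_0)\gg(\log x)^{C_1}$ inherited from Lemmas \ref{lmm:GCD}--\ref{lmm:Fourier} (choosing $C_1$ large in terms of $A,B$), forces $q_0r_1'/N$ to be negligible, so that $H_1<1$ and only the zero frequency survives; this is precisely why the exponent $\tfrac23$, rather than merely $R_1''<N/(QR_0)$, is imposed. The second key point is that the surviving zero-frequency term is the Ramanujan sum $c_{r_1'}(a_1)$, and since the condition $(h_2r_3-h_3r_2,r_1')=1$ is already built into $\mathscr{S}_4$ and $(r_2r_3,r_1')=1$, we have $(a_1,r_1')=1$, whence $c_{r_1'}(a_1)=\mu(r_1')$: the $n_1$–sum collapses to $\hat{\psi}(0)\mu(r_1')N/(q_0r_1')$ up to an admissible error. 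This Ramanujan/M\"obius structure is the ``pseudo-diagonal'' mechanism; it gains a full factor $r_1'$ (not just $\sqrt{r_1'}$) over the trivial bound for the $n_1$–sum and deposits an explicit $\mu(r_1')$ in the main term.

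It then remains to estimate
\[
\frac{N}{q_0}\sum_{e_1\sim E_1}\sum_{r_2,r_3\sim R_2'}\overline{c'_{r_2}}c'_{r_3}\sum_{\substack{h_2,h_3\sim H''\\ h_2r_3\equiv h_3r_2(e_1)}}\sum_{\substack{n_2,n_3\sim N\\ n_2\overline{b'_{r_2}}\equiv n_3\overline{b'_{r_3}}(q_0)}}\overline{\alpha'_{n_2}}\alpha'_{n_3}\,\xi_0\xi_2\xi_3\sum_{r_1'\sim R_1''}\frac{\mu(r_1')\eta_{e_1,r_1'}}{r_1'}.
\]
For this I would use: $|\alpha'_n|\le 1$; the congruence $n_2\overline{b'_{r_2}}\equiv n_3\overline{b'_{r_3}}\pmod{q_0}$, which cuts the $(n_2,n_3)$–count to $\asymp N^2/q_0$ (valid since $N>q_0$, from $R_0\ll N/Q$); the count $\asymp H''(1+H''/E_1)$ of admissible pairs $(h_2,h_3)$; the bookkeeping identities $E_1R_1''\asymp R_1'\ll R/R_0$, $R_2'\ll R/R_0$, $QR=x^{1/2+\delta}$, together with the explicit upper bound $H''\le QNR_0R'^2(\log x)^5/x$, which with $R\le x^{1/10-3\delta}/(\log x)^C$ and $N\le x^{1/2-3\delta}/(\log x)^C$ keeps $H''$ under control; and, where the trivial bound on the $(n_2,n_3)$–sum is not by itself sufficient, cancellation in the M\"obius sum over $r_1'$ (the phases $\xi_0,\xi_2,\xi_3$ depending on $r_1'$ only through inverses modulo $q_0,r_2,r_3$, so $\sum_{r_1'}\mu(r_1')(\cdots)/r_1'$ enjoys square-root-type cancellation). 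Assembling these estimates yields $\mathscr{S}_4\ll N^3E_1R_1''R_2'^2/((\log x)^AQ^2)$.

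The main obstacle is this last step: when $H''$ is a genuine power of $x$ and $R_0$ is small, the purely trivial bound on the $(n_2,n_3)$–sum falls short, so one must extract actual savings from the M\"obius sum over $r_1'$ (and, where needed, from the $h_2r_3\equiv h_3r_2\pmod{e_1}$ structure), all while keeping every loss to a power of $\log x$ rather than $x^{O(\delta)}$. It is this uniformity — critical for Theorem \ref{thrm:WeakEquidistribution}, since the full range is tight there — that makes the argument delicate, and it is where the precise shapes of all the hypotheses (especially the $\tfrac23$-power bound on $R_1''$ and the slack in the ranges of $R$ and $N$) are used.
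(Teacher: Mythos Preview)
Your treatment of the $n_1$-sum is close to the paper's, but your claim that the completion length $H_1$ falls below $1$ is not quite right: after M\"obius over $f\mid e_1$ the completion length is $(\log x)^{O(1)}q_0r_1'f/N$, and for large $f$ this need not be small. The paper instead keeps all frequencies $|\ell_1|\le L_1(f)$, applies the Weil bound to the nonzero ones, and \emph{then} uses $R_1''\ll(N/(QR_0))^{2/3}$ to conclude that the zero-frequency contribution $N/(QR_0R_1'')$ dominates the Weil contribution $R_1''^{1/2}$. That comparison is where the $\tfrac23$ exponent actually enters, not through $H_1<1$. This is a minor point; the resulting bound on the $n_1$-sum is the same.

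The real gap is in your final step. After the $n_1$-sum collapses you propose to save via ``square-root-type cancellation'' in $\sum_{r_1'}\mu(r_1')\eta_{e_1,r_1'}/r_1'$. This cannot work: the coefficients $\eta_{e_1,r_1'}$ are \emph{arbitrary} $1$-bounded sequences (they absorbed the unknown $c_{q,r_0r}$ at the stage of Lemma~\ref{lmm:Cauchy}), so $\mu(r_1')\eta_{e_1,r_1'}$ is itself an arbitrary $1$-bounded sequence and carries no M\"obius structure whatsoever. Nor can the phases $\xi_0,\xi_2,\xi_3$ supply the missing cancellation, since $R_1''$ may be tiny compared to the moduli $q_0,r_2,r_3$, and short sums of arbitrary coefficients against $e(\overline{r_1'}\cdot\,)$ have no known square-root bound. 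You correctly observe that the trivial bound on the $(n_2,n_3)$-sum is insufficient, but the proposed remedy is not available.

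The paper's route after bounding the $n_1$-sum is genuinely different. It applies Cauchy--Schwarz to the sum over $b\pmod{q_0}$ to decouple the $n_2$- and $n_3$-sums, then reorganises the outer variables $e_1,r_1',h_2,h_3,r_3$ according to the residue class $c\equiv h_2\overline{e_1r_1'}\pmod{r_2}$. The count $n_{c;r_2}$ of outer tuples producing a given $c$ is shown to be $\ll(\log x)^B H''{}^2R_1''$ uniformly in $c$, after which one extends the $c$-sum to all of $\mathbb{Z}/r_2\mathbb{Z}$ and invokes orthogonality in $c\pmod{r_2}$ and $b\pmod{q_0}$ to collapse $\sum_c\sum_b\bigl|\sum_{n_2}\alpha'_{n_2}e(c\overline{n_2}\cdots)\bigr|^2$ to the diagonal $\sum_{n_2\equiv n_2'\pmod{r_2q_0}}$. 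This Parseval-type step is the actual source of the saving, and it is absent from your sketch.
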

%
%
%%%%%%%%%%%%%%%%%%%%%%%%%%
%
%
\begin{proof}
We split the summation in $\mathscr{S}_4$ according to the residue class of $n_1\overline{b_{e_1 r_1'}}\Mod{q_0}$, giving
\begin{align}
\mathscr{S}_4&:=\sum_{e_1 \sim E_1 }\sum_{\substack{r_1'\sim R_1'' }}\eta_{e_1,r_1'}\sum_{\substack{r_2,r_3\sim R_2'\\ (e_1 r_1',r_2r_3)=1}}\overline{c'_{r_2}}c'_{r_3}\sum_{\substack{h_2,h_3\sim H''\\ h_2r_3\equiv h_3r_2\Mod{e_1}\\ (h_2,e_1 r_1' r_2)=1\\ (h_3,e_1 r_1' r_3)=1\\ (h_2r_3-h_3r_2,r_1')=1 }}\sum_{\substack{b\Mod{q_0}\\ (b,q_0)=1}}\xi_0\nonumber\\
&\times \Biggl(\sum_{\substack{n_2\sim N\\ n_2\overline{b_{r_2}'}\equiv b\Mod{q_0} \\ (n_2,d_2r_2)=1}}\overline{\alpha'_{n_2}}\xi_2\Biggr)\Biggl(\sum_{\substack{n_3\sim N\\ n_3\overline{b'_{r_3}}\equiv b\Mod{q_0}\\ (n_3,d_2r_3)=1}}\alpha'_{n_3}\xi_3\Biggr) \Biggl(\sum_{\substack{n_1\\ n_1\overline{b_{e_1 r_1'}}\equiv b\Mod{q_0}\\ (n_1,e_1 r_1')=1}}\psi\Bigl(\frac{n_1}{N}\Bigr)\xi_1\Biggr),\label{eq:S41}
\end{align}
where
\[
\xi_0:=e\Bigl(\frac{(h_2\overline{r_2}-h_3\overline{r_3})\overline{b e_1 r_1'}}{q_0}\Bigr)
\]
doesn't depend on the $n_i$.

We concentrate on the inner sum over $n_1$. Since $R_1''\ll N/(Q R_0)$ the sum is essentially a complete sum, except for the coprimality constraint $(n_1,e_1)=1$. By M\"obius inversion, we have that
\begin{align*}
\sum_{\substack{n_1\\ n_1\overline{b_{e_1 r_1'}}\equiv b\Mod{q_0}\\ (n_1,e_1 r_1')=1}}\psi\Bigl(\frac{n_1}{N}\Bigr)\xi_1=\sum_{f|e_1 }\mu(f)\sum_{\substack{n_1\\ n_1\overline{b_{e_1 r_1'}}\equiv b\Mod{q_0}\\ (n_1,r_1')=1\\ f|n_1}}\psi\Bigl(\frac{n_1}{N}\Bigr)\xi_1.
\end{align*}
By Lemma \ref{lmm:InverseCompletion}, we have that for $L_1=L_1(f):=(\log{x})^5 q_0 r_1' f/N$
\begin{align}
&\sum_{\substack{n_1\overline{b_{e_1 r_1'}}\equiv b \Mod{q_0} \\ f|n_1\\ (n_1,r_1')=1}}\psi\Bigl(\frac{n_1}{N}\Bigr)e\Bigl(\frac{b_{e_1 r_1'}(h_2\overline{r_2}-h_3\overline{r_3})\overline{n_1e_1 q_0}}{r_1'}\Bigr)\nonumber\\
&\qquad=\frac{N}{q_0r_1' f}\sum_{|\ell_1|\le L_1}\hat{\psi}\Bigl(\frac{\ell_1 N}{q_0 r_1' f}\Bigr)S(b_{e_1 r_1'}(h_2\overline{r_2}-h_3\overline{r_3})\overline{f e_1 q_0},\ell_1\overline{q_0};r_1')e\Bigl(\frac{b_{e_1 r_1'}b\overline{f e_1 r_1'}}{q_0}\Bigr)\nonumber\\
&\qquad\qquad+O(x^{-100}).\label{eq:S4N1Sum}
\end{align}
We separate the term with $\ell=0$. Since $(b_{e_1 r_1'}(h_2\overline{r_2}-h_3\overline{r_3})\overline{f e_1 q_0},r_1')=1$, we see that $S(b_{e_1 r_1'}(h_2\overline{r_2}-h_3\overline{r_3})\overline{f e_1 q_0},0;r_1')$ is a Ramanujan sum, and therefore equal to $\mu(r_1')$. For the remaining terms we use the standard Kloosterman sum bound of  Lemma \ref{lmm:Kloosterman}. If $\tau(e_1 r_1')\le (\log{x})^B$, this gives
\begin{align*}
\sum_{\substack{n_1\\ n_1\overline{b_{e_1 r_1'}}\equiv b\Mod{q_0}\\ (n_1,e_1 r_1')=1}}\psi\Bigl(\frac{n_1}{N}\Bigr)\xi_1&\ll\sum_{f|e_1 }\frac{N}{q_0 r_1' f}\Bigl(1+\sum_{0<|\ell_1|\le L_1}r_1'{}^{1/2}\tau(r_1') (\ell_1,r_1')^{1/2} \Bigr)\\
&\ll (\log{x})^{2B+5}\Bigl(\frac{N}{Q R_0 R_1''}+R_1''{}^{1/2}\Bigr).
\end{align*}
By assumption of the lemma we have that $R_1''\ll N^{2/3}/ (QR_0)^{2/3}$, and so the first term dominates. Substituting this into \eqref{eq:S41} (recalling that $\eta_{e_1,r_1'}$ is 1-bounded and supported on $\tau(e_1 r_1')\le (\log{x})^B$ and that $c'_{r}$ is supported on $(r,q_0)=1$), we have
\begin{align}
\mathscr{S}_4&\ll \frac{(\log{x})^{2B+5} N}{Q R_0 R_1''}\sum_{e_1\sim E_1}\sum_{\substack{r_1'\sim R_1''\\ \tau(e_1 r_1')\le (\log{x})^B}}\sum_{\substack{r_2,r_3\sim R_2'\\ (q_0e_1 r_1',r_2r_3)=1}}\sum_{\substack{h_2,h_3\sim H''\\ h_2r_3\equiv h_3r_2\Mod{e_1}\\ (h_2,e_1 r_1' r_2)=1\\ (h_3,e_1 r_1' r_3)=1\\ (h_2r_3-h_3r_2,r_1')=1 }}|\mathscr{S}_4'|+O(x^{-10}),\label{eq:S42}
\end{align}
where
\begin{align*}
\mathscr{S}_4'&:=\sum_{\substack{b\Mod{q_0}\\ (b,q_0)=1}} \Bigl|\sum_{\substack{n_2\sim N\\ n_2\overline{b_{r_2}'}\equiv b\Mod{q_0} \\ (n_2,d_2r_2)=1}}\overline{\alpha'_{n_2}}\xi_2\Bigr|\Bigl|\sum_{\substack{n_3\sim N\\ n_3\overline{b'_{r_3}}\equiv b\Mod{q_0}\\ (n_3,d_2r_3)=1}}\alpha'_{n_3}\xi_3\Bigr| .
\end{align*}
We now apply Cauchy-Schwarz, giving
\begin{align}
\mathscr{S}_4'&\ll \sum_{\substack{b\Mod{q_0}\\ (b,q_0)=1}}\Bigl(\Bigl|\sum_{\substack{n_2\sim N\\ n_2\overline{b'_{r_2}}\equiv b\Mod{q_0}\\ (n_2,d_2r_2)=1}}\overline{\alpha'_{n_2}}\xi_2\Bigr|^2+\Bigl|\sum_{\substack{n_3\sim N\\ n_3\overline{b'_{r_3}}=b\Mod{q_0}\\ (n_3,d_3r_3)=1}}\alpha'_{n_3}\xi_3\Bigr|^2\Bigr).\label{eq:S4p}
\end{align}
Substituting \eqref{eq:S4p} into \eqref{eq:S42} and using the symmetry in $d_2,d_3,n_2,r_2,n_3,r_3$, we see that
\begin{align*}
\mathscr{S}_4&\ll \frac{(\log{x})^{2B+5} N}{Q R_0 R_1''}\sum_{e_1\sim E_1}\sum_{\substack{r_1'\sim R_1''\\  \tau(e_1 r_1')\le (\log{x})^B}}\sum_{\substack{r_2,r_3\sim R_2'\\ (q_0 e_1 r_1',r_2r_3)=1}}\sum_{\substack{h_2,h_3\sim H''\\ h_2r_3\equiv h_3r_2\Mod{e_1}\\ (h_2,e_1 r_1' r_2)=1\\ (h_3,e_1 r_1' r_3)=1\\ (h_2r_3-h_3r_2,r_1')=1 }}\\
&\qquad \times\sum_{\substack{b\Mod{q_0}\\ (b,q_0)=1}}\Biggl|\sum_{\substack{n_2\sim N\\ n_2\overline{b'_{r_2}}\equiv b\Mod{q_0}\\ (n_2,d_2r_2)=1}}\overline{\alpha'_{n_2}}\xi_2\Biggr|^2+O(x^{-10}).
\end{align*}
We recall that $\xi_2=e(b'_{r_2} h_2\overline{n_2 e_1 r_1' q_0}/r_2)$. We split the summation according to the residue class of $h_2\overline{r_1' e_1 }\Mod{r_2}$, giving
\begin{align}
\mathscr{S}_4&\ll \frac{(\log{x})^{2B+5} N}{QR_0R_1''}\sum_{\substack{r_2\sim R_2'\\ (r_2,q_0)=1}}\,\sum_{\substack{c\Mod{r_2}\\ (c,r_2)=1}}n_{c;r_2}\sum_{\substack{b\Mod{q_0}\\ (b,q_0)=1}}\Biggl|\sum_{\substack{n_2\sim N\\ n_2\overline{b'_{r_2}}\equiv b\Mod{q_0}\\ (n_2,d_2r_2)=1}}\overline{\alpha'_{n_2}}e\Bigl(\frac{b'_{r_2}c\overline{n_2q_0}}{r_2}\Bigr)\Biggr|^2\nonumber\\
&\qquad +O(x^{-10}),\label{eq:S43}
\end{align}
where 
\[
n_{c;r_2}:=\sum_{e_1\sim E_1}\sum_{\substack{r_1'\sim R_1''\\ (e_1 r_1',r_2)=1\\ \tau(e_1 r_1')\le (\log{x})^B}}\sum_{\substack{h_2\sim H''\\ (h_2,r_2)=1\\ h_2\overline{e_1 r_1'}\equiv c\Mod{r_2} }}\sum_{r_3\sim R_2'}\sum_{\substack{h_3\sim H''\\ e_1 | h_3r_2-h_2r_3}}1.
\]
We concentrate on $n_{c'r_2}$. There are $O(H''{}^2)$ choices of $h_2,h_3$. Given a choice of $h_2$, there are $O(E_1 R_1''/R_2)$ choices of $r_1=e_1 r_1'$ satisfying $r_1 \equiv h_2\overline{c}\Mod{r_2}$. Given a choice of $r_1$, there are $\tau(r_1)\le (\log{x})^B$ choices of $e_1,r_1'$ such that $e_1 r_1'=r_1$. Given a choice of $h_2,h_3,e_1$,there are $O(1+R_2/E_1)$ choices of $r_3\equiv h_3r_2\overline{h_2}\Mod{e_1}$ (recall $(h_2,e_1)=1$). Therefore if $E_1\le R_2$, we have that $n_{c;r_2}\ll (\log{x})^B H''{}^2 R_1/E_1\ll (\log{x})^B H''{}^2 R_1''$.

If instead $E_1>R_2$, we let $h_1:=(h_3r_2-h_2r_3)/e_1$ which is of size $O(H'' R_2/E_1)$. We then see that $h_1e_1\equiv h_2 r_3\Mod{r_2}$ and $h_2\equiv c e_1 r_1'\Mod{r_2}$, so $h_1\equiv c r_1'r_3\Mod{r_2}$. There are $O(H''{}^2 R_2 R_1''/E_1)$ choices of $h_1,h_2,r_1'$. Given a choice of $h_1,r_1'$ there are $O(1)$ choices of $r_3\equiv h_1\overline{c r_1'}\Mod{r_2}$. Given a choice of $h_2,r_1'$ there are $O(E_1/R_2)$ choices of $e_1\equiv h_2\overline{c r_1'}\Mod{r_2}$. Finally, given a choice of $h_1,h_2,e_1,r_3$, there is at most one choice of $h_3=(h_1e_1+h_2r_3)/r_2$. Thus in total there are $O(H''{}^2 R_1'')$ choices, and so 
\begin{equation}
n_{c;r_2}\ll (\log{x})^B H''{}^2 R_1''
\label{eq:S4Count}
\end{equation}
 regardless of the size of $E_1$.

Substituting this bound \eqref{eq:S4Count} into \eqref{eq:S43}, and then extending the $b,c$ summations, we find that
\begin{align*}
\mathscr{S}_4&\ll \frac{(\log{x})^{3B+5} N H''{}^2}{QR_0}\sum_{\substack{r_2\sim R_2'\\ (r_2,q_0)=1}}\sum_{\substack{c\Mod{r_2}}}\sum_{b\Mod{q_0}}\Biggl|\sum_{\substack{n_2\sim N\\ n_2\overline{b'_{r_2}}\equiv b\Mod{q_0}\\ (n_2,d_2r_2)=1}}\overline{\alpha'_{n_2}}e\Bigl(\frac{b'_{r_2}c\overline{n_2q_0}}{r_2}\Bigr)\Biggr|^2\\
&\qquad +O(x^{-10})\\
&\ll \frac{(\log{x})^{3B+5}  N H''{}^2 R_2'}{QR_0}\sum_{\substack{r_2\sim R_2'\\ (r_2,q_0)=1}}\sum_{\substack{n_2,n_3\sim N\\ n_2\equiv n_3\Mod{r_2 q_0}\\ (n_2n_3,d_2r_2)=1}}\alpha'_{n_3}\overline{\alpha'_{n_2}}+O(x^{-10})\\
&\ll \frac{(\log{x})^{3B+5}  N^2 H''{}^2 R_2' R}{QR_0}.
\end{align*}
In the final line we used the fact that $\alpha'_n$ is 1-bounded and that $N\ll x^{1/2}\ll Q R$.

We recall that $H''\ll (\log{x})^5 Q N R_0 R_1'R_2'/x$ and $R_2'\ll R/R_0$, so this gives
\begin{equation}
\mathscr{S}_4\ll (\log{x})^{3B+10}\frac{N^4 Q  R_1'{}^2 R_2'{}^2 R^2 }{x^{2}}.
\label{eq:S4DiagBound}
\end{equation}
Thus we obtain $\mathscr{S}_4\ll N^3 E_1 R_1'' R_2'{}^2/((\log{x})^{A} Q^2)$ provided
\begin{equation}
N\ll \frac{x^{2} }{(\log{x})^C R^3Q^3}=\frac{x^{1/2-3\delta}}{(\log{x})^C}
\end{equation}
for $C=C(A,B)$ sufficiently large. This gives the result.
\end{proof}
%
%
%%%%%%%%%%%%%%%%%%%%%%
%
%
\begin{lmm}[Off-diagonal terms, second Cauchy]\label{lmm:SecondCauchy}
Let $\mathscr{S}_4$ be as in Lemma \ref{lmm:Cauchy}, let $R_0\ll N/Q$ and
\[
R_1''> \Bigl(\frac{N}{ Q R_0}\Bigr)^{2/3}.
\]
Then we have that
\[
|\mathscr{S}_4|^2\ll (\log{x}) \frac{ N^2 R_2'{}^2}{QR_0}\Bigl(\mathscr{S}_5+\mathscr{S}_6\Bigr),
\]
where $\mathscr{S}_5,\mathscr{S}_6$ are given by
\begin{align*}
\mathscr{S}_5&:=\sum_{e_1,e_1'\sim E_1}\sum_{\substack{r_1,r_1'\sim R_1''\\ e_1 r_1,e_1'r_1'\in\mathcal{R} }}\sum_{\substack{r_2,r_3\sim R_2'\\ (e_1 e_1'r_1 r_1',r_2r_3)=1 \\ r_2,r_3\in\mathcal{R}}}\mathop{\sideset{}{^*}\sum}_{\substack{h_2,h_2',h_3,h_3'\sim H'' \\ e_1'r_1'(h_2r_3-h_3r_2)= e_1r_1(h_2'r_3-h_3'r_2)}}|\mathscr{S}_7|,\\
\mathscr{S}_{6}&:=\sum_{e_1,e_1'\sim E_1}\sum_{\substack{r_1,r_1'\sim R_1''\\ e_1 r_1,e_1'r_1'\in\mathcal{R} }}\sum_{\substack{r_2,r_3\sim R_2'\\ (e_1 e_1' r_1 r_1',r_2r_3)=1\\ r_2,r_3\in\mathcal{R}}}\mathop{\sideset{}{^*}\sum}_{\substack{h_2,h_2',h_3,h_3'\sim H'' \\ e_1'r_1'(h_2r_3-h_3r_2)\ne e_1 r_1(h_2'r_3-h_3'r_2)}}|\mathscr{S}_7|,\\
\mathscr{S}_7&:=\sum_{\substack{n_1, n_1',n_2,n_3\\ n_1'\overline{b_{e_1' r_1'}}\equiv n_1\overline{b_{e_1 r_1}}\Mod{q_0}\\ n_2\overline{b'_{r_2}}\equiv n_1\overline{b_{e_1 r_1}}\Mod{q_0} \\ n_3\overline{b'_{r_3}}\equiv n_1\overline{b_{e_1 r_1}}\Mod{q_0}\\ (n_1,q_0 e_1 r_1)=(n_1',e_1'r_1')=1\\ (n_2,r_2)=(n_3,r_3)=1}}\psi\Bigl(\frac{n_1}{N}\Bigr)\psi\Bigl(\frac{n_1'}{N}\Bigr)\psi\Bigl(\frac{n_2}{N}\Bigr)\psi\Bigl(\frac{n_3}{N}\Bigr)\xi_0'\xi_1'\xi_{1'}'\xi_2'\xi_3',
\end{align*}
and $\mathcal{R}:=\{r:\mu^2(r)=1,\,\tau(r)\le(\log{x})^B,\,(r,q_0)=1\}$,
\begin{align*}
\xi_{0}'&:=e\Bigl(\frac{b_{e_1r_1}\overline{n_1}(\overline{e_1 r_1}(h_2\overline{r_2}-h_3\overline{r_3})-\overline{e_1' r_1'}(h_2'\overline{r_2}-h_3'\overline{r_3}))}{q_0}\Bigr),\\
\xi_{1}'&:=e\Bigl(\frac{b_{e_1r_1}(h_2\overline{r_2}-h_3\overline{r_3})\overline{n_1 e_1 q_0}}{r_1}\Bigr),\\
\xi_{1'}'&:=e\Bigl(\frac{-b_{e_1'r_1'}(h_2'\overline{r_2}-h_3'\overline{r_3})\overline{n_1'e_1'q_0}}{r_1'}\Bigr),\\
\xi_{2}'&:=e\Bigl(\frac{b'_{r_2}\overline{n_2}(h_2\overline{e_1 r_1}-h_2'\overline{e_1' r_1'})\overline{q_0}}{r_2}\Bigr),\\
\xi_{3}'&:=e\Bigl(\frac{-b'_{r_3}\overline{n_3}(h_3\overline{e_1 r_1}-h_3'\overline{e_1' r_1'})\overline{q_0}}{r_3}\Bigr).
\end{align*}
Here we use $\mathop{\sideset{}{^*}\sum}$  to denote the fact we have the additional conditions that
\begin{align*}&(h_2'r_3-h_3'r_2,r_1')=1,\quad(h_2r_3-h_3r_2,r_1)=1,\quad (h_2,e_1 r_1r_2)=1,\quad  (h_3,e_1 r_1 r_3)=1,\\
&(h_2',e_1'r_1'r_2)=1, \quad (h_3',e_1'r_1'r_3)=1,\quad h_2r_3-h_3r_2\equiv 0\Mod{e_1}, \quad h_2'r_3-h_3'r_2\equiv 0\Mod{e_1'}.
\end{align*}
\end{lmm}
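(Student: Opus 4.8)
The plan is to eliminate the remaining arbitrary coefficients $\alpha'_{n_2},\alpha'_{n_3}$ by a second application of Cauchy--Schwarz, keeping (as in Lemma~\ref{lmm:Cauchy}) the variables $q,r_0$, hence $q_0:=q r_0$, on the outside, and --- this is the essential point --- keeping $n_2$ and $n_3$ \emph{linked} modulo $q_0$, so that the diagonal factor of the Cauchy--Schwarz carries the extra saving of size $\asymp QR_0$ appearing in the statement.

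First I would reorganise $\mathscr{S}_4$. Using that $\eta_{e_1,r_1'}$ is supported on squarefree $e_1 r_1'$ with $\tau(e_1 r_1')\le(\log x)^B$ and $(e_1 r_1',q_0)=1$, and that $c'_{r_2},c'_{r_3}$ are supported on squarefree moduli with at most $(\log x)^B$ divisors and coprime to $q_0$, we may freely restrict $e_1 r_1', r_2, r_3$ to $\mathcal{R}$. Next, the congruences $n_2\overline{b'_{r_2}}\equiv n_1\overline{b_{e_1 r_1'}}\equiv n_3\overline{b'_{r_3}}\Mod{q_0}$ in $\mathscr{S}_4$ force the link $n_2\overline{b'_{r_2}}\equiv n_3\overline{b'_{r_3}}\Mod{q_0}$; conversely, once $(r_2,r_3,n_2,n_3)$ satisfying this link are fixed, the congruences merely pin down $n_1\Mod{q_0}$. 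Writing everything in terms of the residue class $b:=n_1\overline{b_{e_1 r_1'}}\Mod{q_0}$ (equivalently $b\equiv n_2\overline{b'_{r_2}}\Mod{q_0}$) removes $n_1$ from the phase $\xi_0$, so I would rewrite
\[
\mathscr{S}_4=\sum_{\substack{r_2,r_3\in\mathcal{R},\ r_2,r_3\sim R_2'\\ n_2,n_3\sim N,\ (n_2,r_2)=(n_3,r_3)=1\\ n_2\overline{b'_{r_2}}\equiv n_3\overline{b'_{r_3}}\Mod{q_0}}}\overline{c'_{r_2}}\,c'_{r_3}\,\overline{\alpha'_{n_2}}\,\alpha'_{n_3}\,\mathscr{T},
\]
where $\mathscr{T}=\mathscr{T}(r_2,r_3,n_2,n_3)$ is the sum over $e_1,r_1',h_2,h_3$ (retaining all coprimality and divisibility constraints of $\mathscr{S}_4$ and the phases $\xi_0,\xi_2,\xi_3$) of the inner smooth sum over $n_1\equiv n_2\overline{b'_{r_2}}b_{e_1 r_1'}\Mod{q_0}$ with weight $\psi(n_1/N)\xi_1$.

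Then I would apply Cauchy--Schwarz over the outer variables $(r_2,r_3,n_2,n_3)$ subject to the link. Since $|c'_r|,|\alpha'_n|\le1$, the first (diagonal) factor is at most
\[
\sum_{r_2,r_3\sim R_2'}\ \sum_{n_2\sim N}\#\bigl\{n_3\sim N:\ n_3\equiv n_2 b'_{r_3}\overline{b'_{r_2}}\Mod{q_0}\bigr\}\ll R_2'{}^2\,N\Bigl(\frac{N}{q_0}+1\Bigr)\ll\frac{N^2 R_2'{}^2}{QR_0},
\]
using $q_0\asymp QR_0\ll N$, which holds since $R_0\ll N/Q$. In the second factor I would expand $|\mathscr{T}|^2$, introducing a primed copy $(e_1',r_1',h_2',h_3',n_1')$ of the five inner variables, and then interchange the order of summation so that the doubled variables sit outside and $r_2,r_3,n_2,n_3$ inside. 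The two congruences then read $n_1'\overline{b_{e_1' r_1'}}\equiv n_2\overline{b'_{r_2}}\equiv n_1\overline{b_{e_1 r_1}}\Mod{q_0}$, which, together with $n_3\overline{b'_{r_3}}\equiv n_1\overline{b_{e_1 r_1}}\Mod{q_0}$ (inherited from the link), are exactly the congruences of $\mathscr{S}_7$; all of $n_1,n_1',n_2,n_3$ now carry the smooth weight $\psi(\cdot/N)$; and the phases multiply out as expected --- $\xi_2\overline{\xi_2'}$ and $\xi_3\overline{\xi_3'}$ collapse (same $n_2$ resp. $n_3$, same $r_2$ resp. $r_3$) into $\xi_2'$ and $\xi_3'$, $\xi_1$ and $\overline{\xi_1'}$ become $\xi_1'$ and $\xi_{1'}'$ (distinct moduli $r_1$, $r_1'$), and $\xi_0\overline{\xi_0'}$ becomes $\xi_0'$. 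Bounding $\mathscr{S}_7$ by its absolute value over the outer variables and splitting those variables according to whether the integer identity $e_1' r_1'(h_2 r_3-h_3 r_2)=e_1 r_1(h_2' r_3-h_3' r_2)$ holds ($\mathscr{S}_5$) or not ($\mathscr{S}_6$) yields the claim; the stray $\log x$ absorbs the crude estimates (a dyadic decomposition, the rounding $\lceil N/q_0\rceil\ll N/q_0$).

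The substance of the argument is bookkeeping: carrying the squarefreeness, the $\tau\le(\log x)^B$ support, and every coprimality and divisibility constraint through the second Cauchy--Schwarz and the expansion of the square, and matching the five resulting exponential phases precisely with those in the definition of $\mathscr{S}_7$. The only genuinely delicate point --- as opposed to routine manipulation --- is that $n_2$ and $n_3$ must be kept linked modulo $q_0$ throughout: dropping this link would replace the diagonal factor $N^2R_2'{}^2/(QR_0)$ by $N^2R_2'{}^2$, losing the factor $QR_0$ (a positive power of $x$ in the regime of interest) and destroying the estimate. The estimation of $\mathscr{S}_5$ and $\mathscr{S}_6$ themselves is left to the subsequent lemmas.
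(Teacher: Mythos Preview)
Your proposal is correct and follows essentially the same route as the paper: rearrange $\mathscr{S}_4$ so that $(r_2,r_3,n_2,n_3)$ (with the congruence link $n_2\overline{b'_{r_2}}\equiv n_3\overline{b'_{r_3}}\Mod{q_0}$) sit outside, Cauchy--Schwarz there, bound the diagonal factor by $N^2R_2'{}^2/(QR_0)$ via $R_0\ll N/Q$, insert smooth majorants $\psi(n_2/N),\psi(n_3/N)$ by positivity, expand the square, combine the phases using the congruence $n_1'\overline{b_{e_1'r_1'}}\equiv n_1\overline{b_{e_1r_1}}\Mod{q_0}$, take absolute values over the outer variables, and split on the integer identity. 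One small correction: no dyadic decomposition occurs in this step, and the $(\log x)$ in the stated bound is in fact slack (the paper's argument gives the estimate without it).
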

%
%
%%%%%%%%%%%%%%%%%%%%%%%%%%
%
%
\begin{proof}
Rearranging the order of summation in $\mathscr{S}_4$, we have that
\begin{align*}
\mathscr{S}_4&:=\sum_{\substack{r_2,r_3\sim R_2}}\overline{c'_{r_2}}c'_{r_3}\sum_{\substack{n_2\sim N\\ (n_2,q_0d_2 r_2)=1}}\overline{\alpha'_{n_2}}\sum_{\substack{n_3\sim N\\ n_3\overline{b'_{r_3}}\equiv n_2\overline{b'_{r_2}}\Mod{q_0}\\ (n_3, q_0 d_2 r_3)=1}}\alpha'_{n_3}\sum_{\substack{e_1\sim E_1 \\  (e_1,r_2r_3)=1}}\sum_{\substack{h_2,h_3\sim H''\\ h_2r_3\equiv h_3r_2\Mod{e_1}\\ (h_2,e_1 r_2)=(h_3,e_1 r_3)=1 }}\\
&\qquad\times  \Biggl(\sum_{\substack{r_1'\sim R_1''\\ (r_1',h_2h_3r_2r_3)=1\\ (h_2r_3-h_3r_2,r_1')=1}}\eta_{e_1,r_1'}\sum_{\substack{n_1\\ n_1\overline{b_{e_1 r_1'}}\equiv n_2\overline{b'_{r_2}}\Mod{q_0}\\ (n_1,e_1 r_1')=1}}\psi\Bigl(\frac{n_1}{N}\Bigr)\xi_0\xi_1\xi_2\xi_3\Biggr).
\end{align*}
We Cauchy in $r_2,r_3,n_2,n_3$, giving (using the fact that $R_0\ll N/Q$)
\begin{equation}
|\mathscr{S}_4|^2\ll  \frac{N^2 R_2'{}^2}{QR_0}\mathscr{S}_4'',
\label{eq:S4Off}
\end{equation}
where $\mathscr{S}_4''$ is given by
\begin{align*}
\mathscr{S}_4''&:=\sum_{\substack{r_2,r_3\sim R_2'\\ r_2,r_3\in\mathcal{R}}}\,\sum_{\substack{n_2,n_3\sim N\\ n_2\overline{b'_{r_2}}\equiv n_3\overline{b'_{r_3}}\Mod{q_0}\\ (n_2,q_0r_2)=(n_3,q_0 r_3)=1}}|\mathscr{S}_4''|^2,\\
\mathscr{S}_4'''&:=\sum_{\substack{r_1'\sim R_1''\\ e_1\sim E_1\\ (e_1r_1',r_2r_3)=1 }}\eta_{e_1,r_1'}\sum_{\substack{h_2,h_3\sim H''\\ h_2r_3\equiv h_3r_2\Mod{e_1}\\ (h_2,e_1 r_1'r_2)=1\\ (h_3,e_1 r_1' r_3)=1\\ (h_2r_3-h_3r_2,r_1')=1}}\sum_{\substack{n_1\\ n_1\overline{b_{e_1 r_1'}}\equiv n_2\overline{b'_{r_2}}\Mod{q_0}\\ (n_1,e_1 r_1')=1}}\psi\Bigl(\frac{n_1}{N}\Bigr)\xi_0\xi_1\xi_2\xi_3.
\end{align*}
Here we have used the fact that $c'_{r}$ is 1-bounded and supported on $r\in \mathcal{R}:=\{r:\,\mu^2(r)=1,\,\tau(r)\le (\log{x})^B\}$, and we dropped the conditions $(d_2,r_2)=(d_2,r_3)=1$ for an upper bound.

We insert a smooth majorant for the $n_2,n_3$ summations and expand the square, giving
\begin{align}
\mathscr{S}_4''&\le\sum_{\substack{r_2,r_3\sim R_2'\\ r_2,r_3\in\mathcal{R}}}\,\sum_{\substack{n_2,n_3\\ n_2\overline{b'_{r_2}}\equiv n_3\overline{b'_{r_3}}\Mod{q_0}\\ (n_2,q_0 r_2)=(n_3,q_0 r_3)=1}}\psi\Bigl(\frac{n_2}{N}\Bigr)\psi\Bigl(\frac{n_3}{N}\Bigr)|\mathscr{S}_4'''|^2\nonumber\\
&=\sum_{e_1,e_1'\sim E_1}\sum_{\substack{r_1,r_1'\sim R_1''}}\eta_{e_1,r_1}\overline{\eta_{e_1',r_1'}}\sum_{\substack{r_2,r_3\sim R_2'\\ (e_1 e_1'r_1r_1',r_2r_3)=1 \\ r_2,r_3\in\mathcal{R}}}\,\mathop{\sideset{}{^*}\sum}_{\substack{h_2,h_2',h_3,h_3'\sim H''}}
\nonumber\\
&\qquad \times \sum_{\substack{n_1, n_1',n_2,n_3\\ n_1'\overline{b_{e_1'r_1'}}\equiv n_1\overline{b_{e_1 r_1}}\Mod{q_0}\\ n_2\overline{b'_{r_2}}\equiv n_1\overline{b_{e_1 r_1}}\Mod{q_0} \\ n_3\overline{b'_{r_3}}\equiv n_1\overline{b_{e_1 r_1}}\Mod{q_0}\\ (n_2,q_0 r_2)=(n_3,q_0 r_3)=1\\ (n_1,e_1 r_1)=(n_1',e_1' r_1')=1}}\psi\Bigl(\frac{n_1}{N}\Bigr)\psi\Bigl(\frac{n_1'}{N}\Bigr)\psi\Bigl(\frac{n_2}{N}\Bigr)\psi\Bigl(\frac{n_3}{N}\Bigr)\xi_0'\xi_1'\xi_{1'}'\xi_2'\xi_3',\label{eq:S4pp}
\end{align}
where
\begin{align*}
\xi_0'&:=e\Bigl(\frac{b_{e_1 r_1}(h_2\overline{r_2}-h_3\overline{r_3})\overline{n_1 e_1 r_1}-b_{e_1'r_1'}(h_2'\overline{r_2}-h_3'\overline{r_3})\overline{n_1' e_1'r_1'}}{q_0}\Bigr),
\end{align*}
and $\xi_1',\xi_{1'}',\xi_2',\xi_3'$ are as given in the statement of the lemma. Here we use $\mathop{\sideset{}{^*}\sum}$ in the summation over $h_2,h_2',h_3,h_3'$ to denote the fact we have the additional conditions that
\begin{align*}
&(h_2'r_3-h_3'r_2,r_1')=1,\quad(h_2r_3-h_3r_2,r_1)=1,\quad (h_2,e_1r_1 r_2)=1,\quad  (h_3,e_1r_1 r_3)=1,\\
&(h_2',e_1'r_1'r_2)=1, \quad (h_3',e_1'r_1'r_3)=1,\quad h_2r_3-h_3r_2\equiv 0\Mod{e_1},\quad h_2'r_3-h_3'r_2\equiv 0\Mod{e_1'}.
\end{align*}
Since we have the condition $n_1'\overline{b_{e_1' r_1'}}\equiv n_1\overline{b_{e_1r_1} }\Mod{q_0}$, we see that $\xi_0'$ simplifies to give
\begin{align*}
\xi_{0}'&:=e\Bigl(\frac{b_{e_1 r_1}\overline{n_1}(\overline{e_1 r_1}(h_2\overline{r_2}-h_3\overline{r_3})-\overline{e_1'r_1'}(h_2'\overline{r_2}-h_3'\overline{r_3}))}{q_0}\Bigr),
\end{align*}
which matches the expression given in the statement of the lemma. We insert absolute values around the $n_1,n_1',n_2,n_3$ summation, and recall that $|\eta_{e_1,r_1}|\le 1$ and supported on $e_1r_1\in\mathcal{R}$. This means \eqref{eq:S4pp} simplifies to give
\begin{equation}
\mathscr{S}_4''\le \sum_{e_1,e_1'\sim E_1}\sum_{\substack{r_1,r_1'\sim R_1''\\ e_1r_1,e_1'r_1'\in\mathcal{R} }}\sum_{\substack{r_2,r_3\sim R_2'\\ (e_1 e_1'r_1r_1',r_2r_3)=1 \\ r_2,r_3\in\mathcal{R}}}\,\mathop{\sideset{}{^*}\sum}_{\substack{h_2,h_2',h_3,h_3'\sim H''}}|\mathscr{S}_7|,
\label{eq:S4pp2}
\end{equation}
where $\mathscr{S}_7$ is as given by the lemma. Finally, we separate our upper bound \eqref{eq:S4pp2} into $\mathscr{S}_5$, the `diagonal' terms with $e_1'r_1'(h_2r_3-h_3r_2)=e_1r_1(h_2'r_3-h_3'r_2)$, and $\mathscr{S}_6$, the `off-diagonal' terms with with $e_1'r_1'(h_2r_3-h_3r_2)\ne e_1r_1(h_2'r_3-h_3'r_2)$. Thus we find that
\begin{equation}
\mathscr{S}_4''\le \mathscr{S}_5+\mathscr{S}_6,
\label{eq:S4pp3}
\end{equation}
where $\mathscr{S}_5,\mathscr{S}_6$ are as given by the lemma. Substituting \eqref{eq:S4pp3} into our upper bound \eqref{eq:S4Off} for $\mathscr{S}_4$ then gives the result.
\end{proof}
%
%
%%%%%%%%%%%%%%%%%%%%%%%%%%
%
%
We are left to show that
\[
|\mathscr{S}_5|,|\mathscr{S}_6|\ll\frac{ (E_1 R_1''R_2')^2 N^4 R_0}{(\log{x})^{A} Q^3}.
\]
First we consider the contribution from $\mathscr{S}_5$, the terms with $e_1'r_1'(h_2r_3-h_3 r_2)=e_1r_1(h_2'r_3-h_3'r_2)$.
%
%
%%%%%%%%%%%%%%%%%%%%%%
%
%
\begin{lmm}[Diagonal terms]\label{lmm:Diag2}
Let $A,B>0$ and let $\mathscr{S}_5=\mathscr{S}_5(B)$ be as in Lemma \ref{lmm:SecondCauchy}. Let $R_1''$ satisfy
\[
R_1''\ge \Bigl(\frac{N}{ Q R_0}\Bigr)^{2/3},
\]
and let $N,R$ satisfy
\begin{align*}
x^{6\delta}(\log{x})^C<R&< N< \frac{x^{1/2-2\delta}}{(\log{x})^C},
\end{align*}
for some $C=C(A,B)$ sufficiently large in terms of $A$ and $B$. Then we have that
\[
\mathscr{S}_5\ll  \frac{(E_1 R_1''R_2')^2N^4}{(\log{x})^{A} Q^3 R_0}.
\]
\end{lmm}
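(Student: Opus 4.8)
The plan is to estimate $\mathscr{S}_5$ by a direct counting argument, controlling the number of solutions of the diagonal equation $e_1'r_1'(h_2r_3-h_3r_2)=e_1r_1(h_2'r_3-h_3'r_2)$ together with the congruence constraints defining the $n_i$-summation in $\mathscr{S}_7$, and then bounding $\mathscr{S}_7$ trivially by the volume of the range of $(n_1,n_1',n_2,n_3)$. First I would bound $|\mathscr{S}_7|\ll (N/(QR_0))^2 \cdot N \cdot (\text{something})$: the four congruences $n_1'\overline{b_{e_1'r_1'}}\equiv n_1\overline{b_{e_1r_1}}$, $n_2\overline{b'_{r_2}}\equiv n_1\overline{b_{e_1r_1}}$, $n_3\overline{b'_{r_3}}\equiv n_1\overline{b_{e_1r_1}}$ modulo $q_0=qr_0$ pin down $n_1',n_2,n_3$ to single residue classes $\bmod\ q_0$ once $n_1$ is chosen, so — since $N\gg Q R_0$ would fail but actually $N/(QR_0)$ is the right count here — there are $\ll N$ choices of $n_1$ and $\ll 1+N/(QR_0)$ choices each of $n_1',n_2,n_3$. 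Hence $|\mathscr{S}_7|\ll N\,(N/(QR_0))^3$ (using $N>QR_0$, which follows from $R_0\ll N/Q$ and room to spare). [One should double-check whether a cleaner bound $N^4/(QR_0)^3$ or $N^2 (N/(QR_0))^2$ is what the downstream inequality wants; the exponent of $\log x$ we can afford to lose is unbounded, so constants and $\tau$-factors are harmless.]

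Next I would count the diagonal tuples. Write the equation as $e_1'r_1'(h_2r_3-h_3r_2)=e_1r_1(h_2'r_3-h_3'r_2)$. Fix $r_2,r_3\sim R_2'$ (that is $\asymp (R_2')^2$ choices), fix $e_1,e_1'\sim E_1$, $r_1,r_1'\sim R_1''$ ($\asymp (E_1R_1'')^2$ choices), and fix $h_2,h_3\sim H''$ ($\asymp (H'')^2$ choices). This determines the left-hand side, a nonzero integer of size $\ll E_1R_1''R_2'H''$; actually by the starred coprimality conditions $h_2r_3-h_3r_2$ is coprime to $r_1$ and divisible by $e_1$, and similarly on the other side, so the quantity $L:=h_2r_3-h_3r_2$ is determined and nonzero, and then $e_1'r_1'L/(e_1r_1)=h_2'r_3-h_3'r_2$ is a fixed nonzero integer $K$ (we may discard the zero case, where the argument is easier). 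The equation $h_2'r_3-h_3'r_2=K$ in $h_2',h_3'\sim H''$ has $\ll 1 + H''/(R_2') \cdot (\text{gcd factor})\ll (H''/R_2'+1)\tau(\cdots)$ solutions — a linear Diophantine equation in two bounded variables, so $\ll H''/R_2' + 1$ solutions up to divisor factors. Therefore the total number of diagonal tuples is
\[
\ll (\log x)^{O_B(1)}\,(R_2')^2 (E_1R_1'')^2 (H'')^2 \Bigl(\frac{H''}{R_2'}+1\Bigr).
\]
Multiplying by the bound for $|\mathscr{S}_7|$ and inserting $H''\ll (\log x)^5 QNR_0R_1'R_2'/x$ together with $R_2'\ll R/R_0$, $E_1R_1''\asymp R_1'\le R$, I would check that the resulting quantity is $\ll (E_1R_1''R_2')^2 N^4/((\log x)^A Q^3 R_0)$ precisely in the stated ranges $R>x^{6\delta}(\log x)^C$ (needed to make the $H''/R_2'$ term dominate and to absorb the entropy deficit from $R$ being small) and $N< x^{1/2-2\delta}/(\log x)^C$ (needed so that the $n_i$-count $N(N/QR_0)^3$ is not too large relative to the target). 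The hypothesis $R_1''\ge (N/(QR_0))^{2/3}$ is presumably used to guarantee we are genuinely in the regime where the second Cauchy–Schwarz was applied, and to ensure $R_1''$ is large enough that the counting of $r_1'$ given $L$ is as claimed.

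The main obstacle I anticipate is bookkeeping the many coprimality and divisibility side-conditions in $\mathop{\sideset{}{^*}\sum}$ carefully enough that the linear-equation count $h_2'r_3-h_3'r_2=K$ really does have only $\ll (H''/R_2'+1)x^{o(1)}$ solutions — in particular one must verify that $\gcd(r_2,r_3)=1$ (which holds since the starred conditions force $(h_2r_3-h_3r_2,r_1)=1$ etc., but more directly $r_2,r_3\in\mathcal R$ and the exponential sums only make sense with $(r_2,r_3)=1$, which should be available from an earlier reduction or can be inserted freely) so that the congruence $h_3'r_2\equiv -K\ (\mathrm{mod}\ r_3)$ pins $h_3'$ modulo $r_3$ and then $h_2'$ is determined up to $O(1+H''/R_2')$ values. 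A secondary subtlety is the degenerate case $L=0$, i.e. $h_2r_3=h_3r_2$: then by coprimality $r_2\mid h_3$, $r_3\mid h_2$, forcing $h_2,h_3$ into a sparse set of size $\ll (H''/R_2')^2$, and correspondingly $K=0$ forces $h_2'r_3=h_3'r_2$ as well, so this contributes an even smaller amount and is absorbed. Once these counts are pinned down, the rest is the routine substitution of the values of $H'',R_2',E_1R_1''$ and comparison of powers of $x$, where the loss budget in powers of $\log x$ is unlimited so only the powers of $x^\delta$ must be tracked, matching the displayed constraints.
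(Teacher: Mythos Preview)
Your approach has a genuine gap: bounding $|\mathscr{S}_7|$ trivially by $N^4/(QR_0)^3$ discards the oscillation in the phases $\xi_1',\xi_{1'}',\xi_2',\xi_3'$, and this oscillation is essential. On the diagonal one has $\xi_0'=1$, but the remaining phases do not trivialise; the paper Fourier-completes each of the $n_1',n_2,n_3,n_1$ sums via Lemma~\ref{lmm:InverseCompletion}, producing Kloosterman sums whose Weil bound yields roughly $\mathscr{S}_5'\ll (\log x)^{O_B(1)}\bigl(NR_2'+N^3/(QR_0)^2\bigr)$ in place of your $N^4/(QR_0)^3$. This saves a factor of about $N/(QR_0)$, and without it the final inequality fails: carrying your trivial bound through (even granting the tightest diagonal count) leads to a requirement $N\ll x^{1/2-\delta}R^{-1/3}$, which for $R$ near $x^{1/10}$ forces $N\ll x^{7/15-\delta}$, well short of the $N<x^{1/2-2\delta}$ in the statement and also short of the $N<x^{1/2-3\delta}$ needed downstream. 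So the ``losses are only in $\log x$'' heuristic is wrong here: the gap is a genuine power of $x$.

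There are also two structural points your counting misses. First, $(r_2,r_3)=1$ is \emph{not} available---after the second Cauchy both $r_2,r_3$ arise as independent copies of the same $R_2'$-variable and may share factors; the paper handles this by extracting $r'=(r_2,r_3)$ and reparametrising the diagonal via an auxiliary integer $\ell$ with $h_2e_1'-h_2'e_1=e_2\ell$, $h_3e_1'-h_3'e_1=e_3\ell$. Second, the diagonal equation together with the starred coprimality $(h_2r_3-h_3r_2,r_1)=(h_2'r_3-h_3'r_2,r_1')=1$ forces $r_1=r_1'$, which you do not use; combined with the $e_1\mid h_2e_3-h_3e_2$ constraint this brings the tuple count down to $\asymp (H'')^3E_1R_1''R_2'$, a factor $E_1R_1''$ smaller than your $(E_1R_1'')^2R_2'(H'')^3$. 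Both of these, however, are secondary to the main issue: the Kloosterman-sum cancellation in the $n_i$-sums cannot be bypassed.
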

%
%
%%%%%%%%%%%%%%%%%%%%%%%%%%
%
%
\begin{proof}
Since $\mathscr{S}_5$ is the terms with $e_1'r_1'(h_2r_3-h_3 r_2)=e_1r_1(h_2'r_3-h_3'r_2)$, we see that
\[
\xi'_0=1.
\]
 We recall that the summation in $\mathscr{S}_5$ is restricted to $(h_2r_3-h_3r_2,r_1)=1=(h_2'r_3-h_3'r_2,r_1')$. Thus we see that $e_1'r_1'(h_2r_3-h_3 r_2)=e_1r_1(h_2'r_3-h_3'r_2)$ implies that $r_1=r_1'$ and $r_3(h_2e_1'-h_2'e_1)=r_2(h_3e_1'-h_3'e_1)$. 
 
 We now wish to remove possible GCDs. Let $r'=\gcd(r_2,r_3)$ and so $r_2=r' e_2$, $r_3=r' e_3$ with $r',e_2,e_3$ pairwise coprime (recall $r_2,r_3$ are square-free). Then we see that $e_3(h_2e_1'-h_2'e_1)=e_2(h_3e_1'-h_3'e_1)$, so $h_2e_1'-h_2'e_1=e_2\ell$ and $h_3e_1'-h_3'e_1=e_3\ell$ for some $\ell$.  In these new variables, the phases $\xi_1',\xi_{1'}',\xi_2',\xi_3'$ simplify to give
 \begin{align*}
 \xi_1'&=e\Bigl(\frac{b_{e_1 r_1}\overline{n_1}(h_2\overline{e_2}-h_3\overline{e_3})\overline{r' q_0 e_1}}{r_1}\Bigr),\quad &
  \xi_{1'}'&=e\Bigl(-\frac{b_{e_1'r_1}\overline{n_1'}(h_2\overline{e_2}-h_3\overline{e_3}) \overline{r' q_0 e_1}}{r_1}\Bigr),\\
 \xi_2'&=e\Bigl(\frac{b'_{e_2r'}\overline{n_2}\ell \overline{r_1 q_0 e_1 e_1'}}{r'}\Bigr), &
 \xi_3'&=e\Bigl(\frac{-b'_{e_3r'}\overline{n_3}\ell \overline{r_1 q_0 e_1 e_1'}}{r'}\Bigr). 
 \end{align*}
We recall that the summation is also restricted by the condition $h_2r_3-h_3r_2\equiv 0\Mod{e_1}$. Since $(e_1,r_2)=1$ and $r'|r_2$, we have that $(r',e_1)=1$ so this condition simplifies to $h_2e_3-h_3e_2\equiv 0\Mod{e_1}$. The condition $h_2e_1'-h_2'e_1=e_2\ell$ gives the constraint $h_2e_1'-e_2\ell\equiv 0\Mod{e_1}$. We see that there is at most one choice of $h_2',h_3'$ for any given choice of $h_2,h_3,\ell,e_1,e_1',e_2,e_3$. Finally, we must have that $e_2\asymp e_3$ since $r_2,r_3\sim R'$. Thus, putting $r'$ into dyadic ranges, we see that this gives the bound
\begin{align}
\mathscr{S}_5&\ll (\log{x})\sup_{E_2\le R_2'}\sum_{e_1,e_1'\sim E_1}\sum_{\substack{e_2,e_3\asymp E_2\\ (e_2e_3,e_1 e_1')=1\\ (e_2,e_3)=1}}\sum_{\substack{r'\sim R_2'/E_2\\ e_2r'\in\mathcal{R}\\ e_3r'\in\mathcal{R}\\ (r',e_1 e_1')=1}}\sum_{\substack{r_1\sim R_1''\\ e_1r_1\in\mathcal{R}\\ e_1'r_1\in\mathcal{R} \\ (r_1,e_2e_3r')=1 }}\nonumber\\
&\qquad\times \mathop{\sideset{}{^*}\sum}_{\substack{h_2,h_3\sim H\\ h_2e_3\equiv h_3e_2\Mod{e_1}}}\sum_{\substack{\ell\ll H'' E_1 /E_2\\ h_2 e_1'-e_2\ell\equiv 0\Mod{e_1}}}|\mathscr{S}_5'|,
\label{eq:S91}
\end{align}
where $\mathscr{S}_5'=\mathscr{S}_5'(e_1,e_1',e_2,e_3,r_1,r',\ell,h_2,h_3)$ is given by
\begin{align}
\mathscr{S}_5':=&\sum_{\substack{n_1,n_1',n_2,n_3 '\\ n_1'\overline{b_{e_1'r_1}}\equiv n_1\overline{b_{e_1r_1}}\Mod{q_0} \\ n_2\overline{b'_{e_2 r'}}\equiv n_1\overline{b_{e_1 r_1}}\Mod{q_0} \\ n_3\overline{b'_{e_3 r'}}\equiv n_1\overline{b_{e_1 r_1}}\Mod{q_0}\\ (n_2,e_2 r')=(n_3,e_3 r')=1 \\ (n_1,q_0 e_1 r_1)=(n_1',e_1' r_1')=1}}\psi\Bigl(\frac{n_1}{N}\Bigr)\psi\Bigl(\frac{n_1'}{N}\Bigr)\psi\Bigl(\frac{n_2}{N}\Bigr)\psi\Bigl(\frac{n_3}{N}\Bigr)\xi_{1}'\xi_{1'}'\xi_{2}'\xi_{3}'.\label{eq:S9'1}
\end{align}
We concentrate on $\mathscr{S}_5'$. To simplify notation, let us define $k_1,k_1'\Mod{r_1}$ and $k_2,k_3\Mod{r'}$ by 
\begin{align*}
k_1&:=b_{e_1 r_1}(h_2\overline{e_2}-h_3\overline{e_3})\overline{r' e_1 q_0},\qquad
&k_1'&:=-b_{e_1' r_1}(h_2\overline{e_2}-h_3\overline{e_3})\overline{r' e_1 q_0},\\
k_2&:=b'_{e_2 r'}\ell\overline{r_1 e_1 e_1'q_0},\qquad
&k_3&:=-b'_{e_3 r'}\ell\overline{r_1 e_1 e_1'q_0}.
\end{align*}
We note that each of these are coprime to their respective modulus apart from possible common factors between $\ell$ and $r'$, and that $\xi_1',\xi_{1'}',\xi_2',\xi_3'$ simplify to
\begin{align*}
\xi_1'&=e\Bigl(\frac{\overline{n_1}k_1}{r_1}\Bigr),\qquad 
\xi_{1'}'&=e\Bigl(\frac{\overline{n_1'}k_1'}{r_1}\Bigr),\qquad
\xi_2'&=e\Bigl(\frac{\overline{n_2}k_2}{r'}\Bigr), \qquad
\xi_3'&=e\Bigl(\frac{\overline{n_3}k_3}{r'}\Bigr).
\end{align*}
By M\"obius inversion and then Lemma \ref{lmm:InverseCompletion}, for $L_1':=(\log{x})^5 q_0 f_1'r_1/N$ we have that
\begin{align}
& \sum_{\substack{n_1'\\ n_1'\overline{b_{e_1'r_1}}\equiv n_1\overline{b_{e_1 r_1}}\Mod{q_0}\\ (n_1',e_1'r_1)=1}}\psi\Bigl(\frac{n_1'}{N}\Bigr)\xi_{1'}'=\sum_{f_1'|e_1'}\mu(f_1') \sum_{\substack{n_1'\\ n_1'\overline{b_{e_1'r_1}}\equiv n_1\overline{b_{e_1 r_1}}\Mod{q_0}\\ (n_1',r_1)=1\\ f_1'|n_1'}}\psi\Bigl(\frac{n_1'}{N}\Bigr)\xi_{1'}'\nonumber\\
 &\qquad=\sum_{f_1'|e_1'}  \frac{\mu(f_1')N}{q_0f_1'r_1}\sum_{|\ell_1'|\le L_1' }\hat{\psi}\Bigl(\frac{N\ell_1'}{q_0f_1'r_1}\Bigr)e\Bigl(\frac{\ell_1' n_1 b_{e_1'r_1}\overline{b_{e_1 r_1} f_1'r_1}}{q_0}\Bigr)S(k_1'\overline{f_1'},\ell_1'\overline{q_0};r_1)\nonumber\\
 &\qquad\qquad+O(x^{-100}).\label{eq:N1'Sum}
\end{align}
Similarly, we find that for $L_2:=(\log{x})^5 q_0f_2r'/N$ we have
\begin{align}
&\sum_{\substack{n_2\\ n_2\overline{b'_{e_2r'}}\equiv n_1\overline{b_{e_1r_1}}\Mod{q_0}\\ (n_2,e_2r')=1}}\psi\Bigl(\frac{n_2}{N}\Bigr)\xi_{2}'=\sum_{f_2|e_2}\mu(f_2)\sum_{\substack{n_2\\ n_2\overline{b'_{e_2r'}}\equiv n_1\overline{b_{e_1 r_1}}\Mod{q_0}\\ (n_2,r')=1\\ f_2|n_2}}\psi\Bigl(\frac{n_2}{N}\Bigr)\xi_{2}'\nonumber\\
&\qquad=\sum_{f_2|e_2}\frac{\mu(f_2')N}{q_0f_2'r'}\sum_{|\ell_2|\le L_2}\hat{\psi}\Bigl(\frac{N\ell_2}{q_0f_2 r'}\Bigr)e\Bigl(\frac{\ell_2 n_1 b'_{e_2r'} \overline{b_{e_1 r_1}f_2 r'}}{q_0}\Bigr)S(k_2\overline{f_2},\ell_2\overline{q_0};r')\nonumber\\
 &\qquad\qquad+O(x^{-100}).\label{eq:N2Sum}
 \end{align}
 Finally, setting $L_3:=(\log{x})^5 q_0 f_3r'/N$, for the $n_3$ sum we have
 \begin{align}
&\sum_{\substack{n_3\\ n_3\overline{b'_{e_3r'}}\equiv n_1\overline{b_{e_1r_1}}\Mod{q_0}\\ (n_3,e_3r')=1}}\psi\Bigl(\frac{n_3}{N}\Bigr)\xi_{3}'=\sum_{f_3|e_3}\mu(f_3)\sum_{\substack{n_3\\ n_3\overline{b'_{e_3r'}}\equiv n_1\overline{b_{e_1r_1r}}\Mod{q_0}\\ (n_3,r')=1\\ f_3|n_3}}\psi\Bigl(\frac{n_3}{N}\Bigr)\xi_{3}'\nonumber\\
&\qquad=\sum_{f_3|e_3}\frac{\mu(f_3)N}{q_0 f_3 r'}\sum_{|\ell_3|\le L_3}\hat{\psi}\Bigl(\frac{N\ell_3}{q_0 f_3 r'}\Bigr)e\Bigl(\frac{\ell_3 n_1 b'_{e_3r'} \overline{b_{e_1r_1} f_3 r'}}{q_0}\Bigr)S(k_3\overline{f_3},\ell_3\overline{q_0};r')\nonumber\\
 &\qquad\qquad+O(x^{-100}).\label{eq:N3Sum}
\end{align}
Substituting \eqref{eq:N1'Sum}, \eqref{eq:N2Sum} and \eqref{eq:N3Sum} into \eqref{eq:S9'1} and swapping the order of summation then gives
\begin{align}
\mathscr{S}_5'&=\frac{N^3}{q_0^3 r_1 r'{}^2}\sum_{\substack{f_1'|e_1'\\ f_2|e_2 \\ f_3|e_3}}\frac{\mu(f_1')\mu(f_2)\mu(f_3)}{f_1'f_2f_3}\sum_{\substack{|\ell_1'|\le L_1'\\ |\ell_2|\le L_2 \\ |\ell_3|\le L_3}}\kappa_{\ell_1',\ell_2,\ell_3}\mathscr{S}_5''+O(x^{-10}),\label{eq:S5'1}
\end{align}
where $\mathscr{S}_5''=\mathscr{S}_5''(\ell_1',\ell_2,\ell_3)$ and $\kappa_{\ell_1',\ell_2,\ell_3}$ are given by
\begin{align*}
\mathscr{S}_5''&:=\sum_{(n_1,q_0 e_1 r_1)=1}\psi\Bigl(\frac{n_1}{N}\Bigr)\xi_1' e\Bigl(\frac{n_1 \overline{b_{e_1r_1}}(\ell_3 b'_{e_3r'}\overline{f_3r'}+\ell_2 b'_{e_2r'}\overline{f_2 r'}+\ell_1' b_{e_1'r_1}\overline{f_1' r_1})}{q_0}\Bigr),\\
\kappa_{\ell_1',\ell_2,\ell_3}&:=\hat{\psi}\Bigl(\frac{N\ell_1'}{q_0f_1'r_1}\Bigr)\hat{\psi}\Bigl(\frac{N\ell_2}{q_0f_2 r'}\Bigr)\hat{\psi}\Bigl(\frac{N\ell_3}{q_0 f_3 r'}\Bigr)S(k_1'\overline{f_1'},\ell_1'\overline{q_0};r_1)S(k_2\overline{f_2},\ell_2\overline{q_0};r')S(k_3\overline{f_3},\ell_3\overline{q_0};r').
\end{align*}
Since $(k_1',r_1)=1$ and $(k_2,r')=(k_3,r')=(\ell,r')$, and we only consider $\tau(r_i)\le (\log{x})^{B}$ (from the conditions $e_1 r_1,r_2,r_3\in\mathcal{R}$), by the standard Kloosterman sum bound (Lemma \ref{lmm:Kloosterman}) we have
\[
\kappa_{\ell_1',\ell_2,\ell_3}\ll (\log{x})^{3B} r_1^{1/2} r'(\ell,r').
\]
In the special case when $\ell_1'=0$ we see that $S(k_1'\overline{f_1'},\ell_1'\overline{q_0};r_1)$ is a Ramanujan sum and so of size $O(1)$. Thus we also have the bound
\[
\kappa_{0,\ell_2,\ell_3}\ll (\log{x})^{2B}  r'(\ell,r').
\]
Separating the $\ell_1'=0$ term and substituting these bounds into our expression \eqref{eq:S5'1} for $\mathscr{S}_5'$ gives
\begin{align}
\mathscr{S}_5' &\ll \frac{(\log{x})^{3B} N^3}{q_0^3 r_1^{1/2} r'}\sum_{\substack{f_1'|e_1'\\ f_2|e_2\\ f_3|e_3}}\frac{(\ell,r') }{f_1'f_2f_3}\Bigl(\sum_{\substack{0<|\ell_1'|\le L_1'\\ |\ell_2|\le L_2 \\ |\ell_3|\le L_3}}|\mathscr{S}_5''|+ \frac{1}{ r_1^{1/2} }\sum_{\substack{\ell_1'=0\\ |\ell_2|\le L_2 \\ |\ell_3|\le L_3}}|\mathscr{S}_5''|\Bigr)+O(x^{-10}).\label{eq:S9'2}
\end{align}
By Lemma \ref{lmm:InverseCompletion} again, we see that for $L_1:=(\log{x})^5 q_0f_1 r_1/N$ and for any $c\in \mathbb{Z}$
\begin{align*}
&\sum_{(n_1,q_0e_1r_1)=1}\psi\Bigl(\frac{n_1}{N}\Bigr)\xi_1' e\Bigl(\frac{n_1 c }{q_0}\Bigr)=\sum_{f_1|e_1}\mu(f_1)\sum_{\substack{(n_1,q_0r_1)=1\\ f_1|n_1}}\psi\Bigl(\frac{n_1}{N}\Bigr)\xi_1' e\Bigl(\frac{n_1 c }{q_0}\Bigr)\\
&=\sum_{f_1|e_1}\frac{\mu(f_1)N}{q_0f_1 r_1}\sum_{|\ell_1|\le L_1}\hat{\psi}\Bigl(\frac{\ell_1 N}{q_0f_1 r_1}\Bigr)S(k_1\overline{f_1},\ell_1\overline{q_0};r_1)\sum_{\substack{b\Mod{q_0}\\ (b,q_0)=1}}e\Bigl(\frac{b (c f_1+\ell_1\overline{r_1})}{q_0}\Bigr)\\
&\qquad +O(x^{-100})\\
&\ll \frac{N}{q_0 r_1}\sum_{f_1|e_1}\frac{1}{f_1}\sum_{\substack{|\ell_1|\le (\log{x})^5 q_0 f_1 r_1/N}}r_1^{1/2}\tau(r_1)(\ell_1+cf_1r_1,q_0)\\
&\ll \frac{(\log{x})^{2B+5} N}{r_1^{1/2} }\Bigl(\frac{q_0+q_0 r_1/N}{q_0}\Bigr).
\end{align*}
Here we used the standard Kloosterman sum bound (Lemma \ref{lmm:Kloosterman}) in the penultimate line. By assumption of the lemma, we have that $N>R$ so $q_0>q_0 r_1/N$. Thus we find that
\begin{equation}
\mathscr{S}_5''\ll (\log{x})^{2B+5}\frac{N}{r_1^{1/2}}.
\end{equation}
Substituting this into \eqref{eq:S9'2}, and recalling that $r'\sim R_2'/E_2$, $q_0\sim QR_0$ and $r_1'\sim R_1''$ gives
\begin{align}
\mathscr{S}_5'&\ll \frac{(\log{x})^{5B+5} N^4}{q_0^3 r_1 r'}\sum_{\substack{f_1'|e_1'\\ f_2|e_2\\ f_3|e_3}}\frac{(\ell,r')(1+L_2)(1+L_3)}{f_1'f_2f_3}\Bigl(L_1'+\frac{1}{r_1^{1/2}}\Bigr)\nonumber\\
&\ll  (\ell,r')\frac{(\log{x})^{8B+20}N^4 E_2}{Q^3 R_0^3 R_1'' R_2'}\Bigl(1+\frac{Q R_0 R_2'}{N E_2}\Bigr)^2\Bigl(\frac{Q R_0 R_1''}{N}+\frac{1}{ R_1''{}^{1/2}}\Bigr)\nonumber\\
%&\ll (\ell,r') (\log{x})^{8B+20}\Bigl(R_2'+\frac{N^2 E_2}{Q^2 R_0^2 R_2'}\Bigr)\Bigl(N+\frac{N^2}{Q R_0 R_1''{}^{3/2}}\Bigr)\nonumber\\
&\ll (\ell,r') (\log{x})^{8B+20}\Bigl(N R_2'+\frac{N^3}{Q^2 R_0^2}\Bigr).\label{eq:S9'3}
\end{align}
In the final line above we used the fact that $R_1''>N^{2/3}/(Q R_0)^{2/3}$ as assumed in the statement of the lemma to conclude $Q R_0 R_1''/N\gg R_1''{}^{-1/2}$ and the fact that $E_2\ll R_2'$ to simplify one term. Substituting \eqref{eq:S9'3} into \eqref{eq:S91} then gives
\begin{align}
\mathscr{S}_5&\ll (\log{x})^{O_B(1)}\Bigl(N R_2'+\frac{N^3 E_2}{Q^2 R_0^2 R_2'}\Bigr)\sup_{E_2\le R_2'}\sum_{e_1,e_1'\sim E_1}\sum_{\substack{e_2,e_3\asymp E_2\\ (e_2e_3,e_1 e_1')=1\\ (e_2,e_3)=1}}\sum_{\substack{r'\sim R_2'/E_2\\ e_2r',e_3r'\in\mathcal{R}\\ (r',e_1 e_1')=1}}\nonumber\\
&\qquad\times \sum_{\substack{r_1\sim R_1''\\ e_1r_1, e_1'r_1\in\mathcal{R} \\ (r_1,e_2e_3r')=1 }}\mathop{\sideset{}{^*}\sum}_{\substack{h_2,h_3\sim H\\ h_2e_3\equiv h_3e_2\Mod{e_1}}}\sum_{\substack{\ell\ll H'' E_1 /E_2\\ h_2 e_1'-e_2\ell\equiv 0\Mod{e_1}}}(\ell,r').
\label{eq:S92}
\end{align}
We now consider the summation above. We recall that $(h_2r_3-h_3r_2,r_1)=1$, that $r_1\sim R_1''>1$ so $h_2r_3-h_3r_2=(h_2e_3-h_3e_2)r'\ne 0$. Thus $h_2e_3\ne h_3e_2$, and so $e_1|h_2e_3-h_3e_2$ is not vacuous. Thus for any choice of $h_2,e_3,h_3,e_2$ there are at most  $\tau(h_2e_3-h_3e_2)$ choices of $e_1$. Given a choice of $\ell,h_2,e_2,e_1$ there are $O(1)$ choices of $e_1'$ satisfying $e_1'\equiv d_2\ell \overline{h}_2\Mod{e_1}$ with $e_1'\asymp e_1$. (Recall our summation is restricted to $(h_2,e_1)=1$.) Thus we see that
\begin{align*}
&\sum_{\substack{h_2,h_3\sim H}}\sum_{\substack{e_2,e_3\asymp E_2\\ h_3e_2\ne h_2e_3}}\sum_{\substack{e_1\sim E_1\\ e_1|h_2e_3-h_3e_2\\ (h_2,e_1)=1}}\sum_{\ell\ll H''E_1/E_2}\sum_{r'\sim R_2'/E_2}(\ell,r')\sum_{\substack{e_1'\sim E_1\\ e_1'\equiv d_2\ell \overline{h_2}\Mod{e_1}}}\sum_{r_1\sim R_1''}1\\
&\ll \sum_{h_2,h_3\sim H''}\sum_{\substack{e_2,e_3\sim E_2\\ h_3e_2\ne h_2e_3}}\tau(h_3e_2-h_2e_3)\frac{H'' E_1}{E_2}(\log{x})^{O(1)} \frac{ R_2'}{E_2} R_1''\\
&\ll (\log{x})^{O(1)} R_1'' R_2' H''{}^3 E_1.
\end{align*}
Substituting this into \eqref{eq:S92}, and using the bound $H''\ll (\log{x})^5 N Q R_0 E_1 R_1''R_2'/x$ (with $E_1 R_1'',R_2'\ll R/R_0$) gives
\begin{align}
\mathscr{S}_5&\ll (\log{x})^{O_B(1)}\Bigl(N R_2'+\frac{N^3}{Q^2 R_0^2}\Bigr)R_1'' R_2' H''{}^3 E_1\nonumber\\
&\ll (\log{x})^{O_B(1)}\frac{N^4 (R_1''R_2'E_1 )^2}{Q^3 R_0^2}\Bigl(\frac{Q^6 R^5}{x^3}+\frac{N^2 Q^4 R^4}{x^3}\Bigr).\label{eq:S93}
\end{align}
We wish to show that $\mathscr{S}_5\ll (R_1'' R_2' E_1 N^2)^2/((\log{x})^{A} Q^3 R_0)$. Recalling that $QR=x^{1/2+\delta}$, \eqref{eq:S93} gives this if
\begin{align}
N&<\frac{x^{1/2-2\delta}}{(\log{x})^C},\\
R&>x^{6\delta}(\log{x})^C,
\end{align}
for $C=C(A,B)$ sufficiently large in terms of $A,B$. This gives the result.
\end{proof}
%
%
%%%%%%%%%%%%%%%%%%%%%%
%
%
Finally, we consider $\mathscr{S}_6$. 
%
%
%%%%%%%%%%%%%%%%%%%%%%%%%%
%
%
\begin{lmm}[Off-diagonal terms]\label{lmm:OffDiag}
Let $A,B>0$ and let $\mathscr{S}_{6}=\mathscr{S}_6(B)$ be as in Lemma \ref{lmm:SecondCauchy}.  Let $R_1''$ and $R$ satisfy
\begin{align*}
R_1''&\ge \Bigl(\frac{N}{ Q R_0}\Bigr)^{2/3},\\
R&\ll \frac{x^{1/10-3\delta}}{(\log{x})^C}
\end{align*}
for some suitably large constant $C=C(A,B)$. Then we have that
\[
\sum_{q\sim Q}\sum_{r_0\sim R_0}\mathscr{S}_{6}\ll \frac{ (E_1 R_1'' R_2')^2 N^4}{(\log{x})^{A} Q^2}.
\]
\end{lmm}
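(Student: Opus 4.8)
The plan is to treat $\mathscr{S}_6$ — the off-diagonal terms where $e_1'r_1'(h_2r_3-h_3r_2)\ne e_1r_1(h_2'r_3-h_3'r_2)$ — by Fourier-completing the four inner summations over $n_1,n_1',n_2,n_3$ and then estimating the resulting exponential sum. First I would record that the congruences $n_1'\overline{b_{e_1'r_1'}}\equiv n_1\overline{b_{e_1r_1}}$, $n_2\overline{b'_{r_2}}\equiv n_1\overline{b_{e_1r_1}}$, $n_3\overline{b'_{r_3}}\equiv n_1\overline{b_{e_1r_1}}\pmod{q_0}$ tie all four variables together modulo $q_0$, so that after summing over $n_1$ in a residue class $b\pmod{q_0}$ the variables $n_1',n_2,n_3$ are determined modulo $q_0$; the remaining freedom in each $n_i$ is modulo $r_1$ (for $n_1,n_1'$) or modulo $r_2,r_3$ (for $n_2,n_3$). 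I would strip the coprimality constraints $(n_i,e_\bullet)=1$ by M\"obius inversion as in Lemmas~\ref{lmm:Diag1} and~\ref{lmm:Diag2}, then apply Lemma~\ref{lmm:InverseCompletion} (completion of inverses) to each of the four sums. This turns the phases $\xi_0'\xi_1'\xi_{1'}'\xi_2'\xi_3'$ into a product of Kloosterman sums to moduli $r_1$, $r_1'$, $r_2$, $r_3$ times a sum over a residue $b\pmod{q_0}$, which after summing over $b$ produces a further Kloosterman-type (or Ramanujan) sum to modulus $q_0$ — this is exactly the $j=4$ sum of exponential/Kloosterman sums predicted in the outline.

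The key point is that the congruence condition defining $\mathscr{S}_6$ forces $\xi_0'\ne 1$ generically, so one genuinely gains from the completion in the $n_1$ variable (modulus $q_0$), unlike the diagonal case $\mathscr{S}_5$. After completion, the $n_1$-sum becomes $\sum_{b\bmod q_0}e(b\cdot(\text{something depending on }\ell_1,\ell_2,\ell_3,h\text{'s})/q_0)$ twisted by $\xi_1'$-type phases; when the linear form in $b$ is nonzero modulo $q_0$ this vanishes, and one only picks up a contribution when it vanishes, which pins down $\ell_1'$ (say) in terms of the other completion variables $\ell_2,\ell_3$ and the $h$'s modulo $q_0$. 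The off-diagonal constraint then prevents too many solutions, and one is left with a clean Kloosterman-sum product $S(\cdot,\ell_1';r_1)S(\cdot,\ell_1';r_1')S(\cdot,\ell_2;r_2)S(\cdot,\ell_3;r_3)$ to bound via the Weil bound (Lemma~\ref{lmm:Kloosterman}), each contributing $r_i^{1/2}$. Then I would sum trivially over $e_1,e_1',r_1,r_1',r_2,r_3$, over the $h$-variables subject to the $\mathop{\sideset{}{^*}\sum}$ congruence restrictions modulo $e_1,e_1'$ (which save a factor $E_1^{-1}$ or so each as in the counting argument of Lemma~\ref{lmm:Diag1}), and over the completion lengths $\ell_i\ll L_i = (\log x)^5 q_0 r_i/N$. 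Collecting all the powers of $Q,R,R_0,N,H''$ (using $H''\ll (\log x)^5 NQR_0 R_1'R_2'/x$, $E_1R_1''\ll R/R_0$, $R_2'\ll R/R_0$, $q_0\sim QR_0$) and comparing with the target bound $(E_1R_1''R_2')^2 N^4/((\log x)^A Q^2)$ after summing over $q\sim Q,\ r_0\sim R_0$, the constraint that makes everything work out will be precisely $R\ll x^{1/10-3\delta}/(\log x)^C$ (together with $R_1''\ge (N/(QR_0))^{2/3}$, which as in Lemma~\ref{lmm:Diag2} ensures the main completion term dominates the Ramanujan-sum term).

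The main obstacle I expect is organising the completion of \emph{four} coupled variables cleanly: because the four $n_i$ live modulo $q_0$ in a correlated way but also each carries its own modulus $r_\bullet$, one must be careful about which variables to complete first and about the interaction between the modulus-$q_0$ part and the modulus-$r_\bullet$ parts of each phase (the $\overline{q_0}$ and $\overline{r_\bullet}$ factors appearing inside $\xi_1',\xi_2',\xi_3'$). A secondary difficulty is the bookkeeping of the $\mathop{\sideset{}{^*}\sum}$ coprimality and congruence restrictions through the M\"obius inversions, and tracking all $(\ell,r')$-type gcd factors so that the diagonal-in-the-completed-variables terms (where some $\ell_i$ vanishes, giving Ramanujan sums rather than square-root cancellation) remain under control — this is where the hypothesis $R_1''\ge (N/(QR_0))^{2/3}$ is used, exactly as in the previous two lemmas. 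Once the exponential sum is reduced to a product of honest Kloosterman sums the estimation is routine Weil-bound-plus-trivial-summation; the art is entirely in setting it up so that the final $R$-exponent is $1/10-3\delta$ and not worse.
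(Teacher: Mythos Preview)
Your overall architecture is right — complete all four $n$-sums, reduce to Kloosterman sums, Weil bound, sum trivially — but there is a genuine conceptual error in how you think the modulus-$q_0$ part works, and this is precisely where the off-diagonal hypothesis is actually used.

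You write that after completing $n_1',n_2,n_3$ the $n_1$-sum becomes a linear character sum $\sum_{b\bmod q_0}e(b\cdot(\text{linear form})/q_0)$ which ``vanishes unless the linear form is zero'', thereby pinning down $\ell_1'$ in terms of $\ell_2,\ell_3$. This is not what happens. The phase $\xi_0'$ carries a factor $e(b_{e_1r_1}k_0\overline{n_1}/q_0)$ depending on $\overline{n_1}$, not on $n_1$ linearly. Completing $n_1',n_2,n_3$ does introduce linear-in-$n_1$ phases (via the congruences linking them to $n_1$), but $\xi_0'$ stays. So when you finally complete the $n_1$-sum you get a \emph{genuine} Kloosterman sum $S(k_0,\tilde{k_0};q_0)$ to modulus $q_0$, where $\tilde{k_0}$ is a combination of the completion variables $\ell_1,\ell_1',\ell_2,\ell_3$. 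There is no orthogonality collapse; you only get square-root cancellation $\ll q_0^{1/2}\tau(q_0)\gcd(k_0,q_0)^{1/2}$. In total there are \emph{five} Kloosterman sums (to moduli $q_0,r_1,r_1',r_2,r_3$), not four.

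The off-diagonal condition enters differently from what you describe: it says that the integer lift $k_0'=e_1'r_1'(h_2r_3-h_3r_2)-e_1r_1(h_2'r_3-h_3'r_2)$ is nonzero, so that when you sum over $q\sim Q$, $r_0\sim R_0$ the factor $\gcd(k_0',q_0)^{1/2}$ is $O(\tau(k_0'))$ on average. This is why the lemma is stated with $\sum_{q}\sum_{r_0}\mathscr{S}_6$ on the left — the averaging over $q_0$ is essential, and you cannot fix $q_0$ and still win. Your plan as written never invokes the off-diagonal hypothesis in a way that gives a saving.

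There is one further wrinkle you have not anticipated: the Weil bound for $S(k_2,\ell_2;r_2)$ and $S(k_3,\ell_3;r_3)$ picks up factors $\gcd(k_2',r_2)^{1/2}$ and $\gcd(k_3',r_3)^{1/2}$ where $k_2'=h_2e_1'r_1'-h_2'e_1r_1$ and $k_3'=h_3e_1'r_1'-h_3'e_1r_1$, and these can vanish (even when $k_0'\ne 0$, since $k_0'=r_3k_2'-r_2k_3'$). The paper splits into cases $k_2'k_3'\ne 0$ versus $k_2'=0$ or $k_3'=0$ and handles them separately; the second case uses that $k_3'=0$ determines $h_3'e_1r_1$ from $h_3e_1'r_1'$ up to a divisor function. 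Without this case split the gcd factors would spoil the final bound.
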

\begin{proof}
To simplify notation, let us set $k_0\Mod{q_0}$, $k_1\Mod{r_1}$, $k_1'\Mod{r_1'}$, $k_2\Mod{r_2}$ and $k_3\Mod{r_3}$ to be 
\begin{align*}
k_0&:=\overline{e_1 r_1}(h_2\overline{r_2}-h_3\overline{r_3})-\overline{e_1' r_1'}(h_2'\overline{r_2}-h_3'\overline{r_3}),\\
k_1&:=b_{e_1r_1}(h_2\overline{r_2}-h_3\overline{r_3})\overline{e_1 q_0},\\
k_1'&:=-b_{e_1'r_1'}(h_2'\overline{r_2}-h_3'\overline{r_3})\overline{e_1'q_0},\\
k_2&:=b'_{r_2}(h_2\overline{e_1 r_1}-h_2'\overline{e_1' r_1'})\overline{q_0},\\
k_3&:=-b'_{r_3}(h_3\overline{e_1 r_1}-h_3'\overline{e_1' r_1'})\overline{q_0}.
\end{align*}
We will detect cancellation in the inner sum over $n_1,n_1',n_2,n_3$, and so we write
\begin{align}
\mathscr{S}_6&\ll\sum_{e_1,e_1'\sim E}\sum_{\substack{r_2,r_3\sim R_2'\\ r_2,r_3\in\mathcal{R}}}\sum_{\substack{r_1,r_1'\sim R_1''\\ e_1r_1,e_1'r_1'\in\mathcal{R}\\ (e_1e_1'r_1r_1',r_2r_3)=1}}\mathop{\sideset{}{^*}\sum}_{\substack{h_2,h_3,h_2',h_3'\sim H\\ r_1'(h_2r_3-h_3r_2)\ne r_1(h_2'r_3-h_3'r_2)}}|\mathscr{S}_6'|,\label{eq:S61}
\end{align}
where $\mathscr{S}_6'=\mathscr{S}_6'(e_1,e_1',r_1,r_1',r_2,r_3,h_2,h_3,h_2',h_3')$ is given by
\begin{align}
\mathscr{S}_6&':=\sum_{\substack{n_1,n_1',n_2,n_3 '\\ n_1'\overline{b_{e_1'r_1'}}\equiv n_1\overline{b_{e_1 r_1}}\Mod{q_0} \\ n_2\overline{b'_{r_2}}\equiv n_1\overline{b_{e_1 r_1}}\Mod{q_0} \\ n_3\overline{b'_{r_3}}\equiv n_1\overline{b_{e_1 r_1}}\Mod{q_0} \\ (n_1,q_0 e_1 r_1)=(n_1',e_1' r_1')=1\\ (n_2,r_2)=(n_3,r_3)=1}}\psi\Bigl(\frac{n_1}{N}\Bigr)\psi\Bigl(\frac{n_1'}{N}\Bigr)\psi\Bigl(\frac{n_2}{N}\Bigr)\psi\Bigl(\frac{n_3}{N}\Bigr)\xi'',\label{eq:S6'1}\\
\xi''&:=e\Bigl(\frac{k_0\overline{n_1}b_{e_1 r_1}}{q_0}\Bigr)e\Bigl(\frac{k_1\overline{n_1}}{r_1}\Bigr)e\Bigl(\frac{k_1'\overline{n_1'}}{r_1'}\Bigr)e\Bigl(\frac{k_2\overline{n_2}}{r_2}\Bigr)e\Bigl(\frac{k_3\overline{n_3}}{r_3}\Bigr).\nonumber
\end{align}
We Fourier-complete the summation over $n_1',n_2,n_3$ in turn. As in the proof of Lemma \ref{lmm:Diag2}, Lemma \ref{lmm:InverseCompletion} gives that for $L_1':=(\log{x})^5  q_0 f_1' r_1'/N$
\begin{align}
\sum_{\substack{n_1'\\ n_1'\overline{b_{e_1'r_1'}}\equiv n_1\overline{b_{e_1 r_1}}\Mod{q_0}\\ (n_1',e_1'r_1')=1}}&\psi\Bigl(\frac{n_1'}{N}\Bigr)e\Bigl(\frac{k_1'\overline{n_1'}}{r_1'}\Bigr)=O(x^{-100})\nonumber\\
&\hspace{-2.1cm}+\sum_{f_1'|e_1'}\frac{\mu(f_1')N}{q_0 f_1' r_1'}\sum_{|\ell_1'|\le L_1'}\hat{\psi}\Bigl(\frac{\ell_1' N}{q_0f_1' r_1'}\Bigr)S(k_1'\overline{f_1'},\ell_1'\overline{q_0};r_1')e\Bigl(\frac{\ell_1' b_{e_1'r_1'}\overline{b_{e_1 r_1}f_1' r_1'}n_1}{q_0}\Bigr),
\end{align}
Similarly, we obtain for $L_2:=(\log{x})^5  q_0 r_2/N$ and $L_3:=(\log{x})^5  q_0 r_3/N$
\begin{align}
\sum_{\substack{n_2\\ n_2\overline{b'_{r_2}}\equiv n_1\overline{b_{e_1 r_1}}\Mod{q_0}\\ (n_2,r_2)=1}}&\psi\Bigl(\frac{n_2}{N}\Bigr)e\Bigl(\frac{k_2\overline{n_2}}{r_2}\Bigr)=O(x^{-100})\nonumber\\
&+\frac{N}{q_0 r_2}\sum_{|\ell_2|\le L_2}\hat{\psi}\Bigl(\frac{\ell_2 N}{q_0 r_2}\Bigr)S(k_2,\ell_2\overline{q_0};r_2)e\Bigl(\frac{\ell_2 b'_{r_2}\overline{b_{e_1 r_1}r_2}n_1}{q_0}\Bigr),\\
\sum_{\substack{n_3\\ n_3\overline{b'_{r_3}}\equiv n_1\overline{b_{e_1 r_1}}\Mod{q_0}\\ (n_3,r_3)=1}}&\psi\Bigl(\frac{n_3}{N}\Bigr)e\Bigl(\frac{k_3\overline{n_3}}{r_3}\Bigr)=O(x^{-100})\nonumber\\
&+\frac{N}{q_0 r_3}\sum_{|\ell_3|\le L_3}\hat{\psi}\Bigl(\frac{\ell_3 N}{q_0 r_3}\Bigr)S(k_3,\ell_3\overline{q_0};r_3)e\Bigl(\frac{\ell_3 b'_{r_3}\overline{b_{e_1 r_1}r_3}n_1}{q_0}\Bigr).
\end{align}
We substitute each of these expressions into \eqref{eq:S6'1}. In each case the $O(x^{-100})$ error term contributes negligibly. Thus we obtain
\begin{align}
\mathscr{S}_6'&=\frac{N^3}{q_0^3 r_1'r_2r_3}\sum_{f_1'|e_1'}\frac{\mu(f_1')}{f_1'}\sum_{\substack{|\ell_1'|\le L_1'\\ |\ell_2|\le L_2 \\ |\ell_3|\le L_3}} \kappa'_{\ell_1',\ell_2,\ell_3}\sum_{n_1}\psi\Bigl(\frac{n_1}{N}\Bigr)\xi'''+O(x^{-10}),
\end{align}
where
\begin{align*}
\kappa'_{\ell_1',\ell_2,\ell_3}&:=\hat{\psi}\Bigl(\frac{\ell_1' N}{q_0r_1'}\Bigr)\hat{\psi}\Bigl(\frac{\ell_2 N}{q_0r_2}\Bigr)\hat{\psi}\Bigl(\frac{\ell_3 N}{q_0r_3}\Bigr)S(k_1'\overline{f_1'},\ell_1'\overline{q_0};r_1')S(k_2,\ell_2\overline{q_0};r_2)S(k_3,\ell_3\overline{q_0};r_3),\\
\xi'''&:=e\Bigl(\frac{b_{e_1 r_1}\overline{n_1}k_0}{q_0}\Bigr) e\Bigl(\frac{n_1\overline{b_{e_1 r_1}}(\ell_2 b'_{r_2}\overline{r_2}+\ell_3 b'_{r_3}\overline{r_3} +\ell_1' b_{e_1'r_1'}\overline{f_1'r_1'} )}{q_0}\Bigr)e\Bigl(\frac{k_1\overline{n_1}}{r_1}\Bigr).
\end{align*}
By Lemma \ref{lmm:InverseCompletion} again with $L_1=L_1(f_1):=(\log{x})^5  f_1 q_0 r_1/N$
\begin{align*}
\sum_{(n_1,q_0 e_1 r_1)=1}\psi\Bigl(\frac{n_1}{N}\Bigr)\xi'''&=\sum_{f_1| e_1 }\frac{\mu(f_1)N}{q_0 f_1 r_1}\sum_{|\ell_1|\le L_1}\hat{\psi}\Bigl(\frac{\ell_1 N}{q_0 f_1 r_1}\Bigr)S(k_1\overline{f_1},\ell_1\overline{q_0};r_1)S(k_0, \widetilde{k_0};q_0)\\
&\qquad +O(x^{-100}),
\end{align*}
where $k_0'=k_0'(\ell_1,\ell_1',\ell_2,\ell_3)\Mod{q_0}$ is given by
\[
\widetilde{k_0}:=\ell_1 b_{e_1 r_1}\overline{f_1r_1}+\ell_2 b'_{r_2}\overline{r_2}+\ell_3 b'_{r_3}\overline{r_3}+\ell_1' b_{e_1'r_1'}\overline{f_1'r_1'}.
\]
Let $k_0',k_2',k_3',d_0,d_2',d_3'\in\mathbb{Z}$ be defined by
\begin{align*}
 k_0'&:=e_1' r_1'(h_2r_3-h_3r_2)-e_1 r_1(h_2'r_3-h_3'r_2),\quad &d_0&:=\gcd(k_0',r_0),\\
k_2'&:=h_2e_1'r_1'-h_2'e_1r_1,&d_2'&:=\gcd(k_2',r_2),\\
 k_3'&:=h_3e_1'r_1'-h_3'e_1r_1,&d_3'&:=\gcd(k_3',r_3).
\end{align*}
 The standard Kloosterman sum bound (Lemma \ref{lmm:Kloosterman}) then gives $S(k_i,\ell_i,r_i)\ll r_i^{1/2}d_i'{}^{1/2}\tau(r_i)$ for $i\in\{2,3\}$, and $S(k_0,\widetilde{k_0},q_0)\ll q_0^{1/2}d_0^{1/2}\tau(q_0)$ (for these terms we ignore potential savings from when $\ell_2,\ell_3\ne 0$). Since $(r_1,h_2r_3-h_3r_2)=1$ and $(r_1',h_2'r_3-h_3'r_2)=1$ and $\tau(r_1),\tau(r_1')\le (\log{x})^B$, we have that $S(k_1,\ell_1,r_1),S(k_1',\ell_1',r_1')\ll R_1''{}^{1/2}(\log{x})^B$ and $S(k_1,0,r_1),S(k_2,0,r_1')\ll 1$. Thus, separating the terms with $\ell_1=0$ or $\ell_1'=0$, we find that
 \begin{align}
\mathscr{S}_6'&\ll (\log{x})^{3B}\frac{N^4 R_2' q_0^{1/2}d_2'{}^{1/2}d_3'{}^{1/2} d_0^{1/2}}{q_0^4R_1''{}^2 R_2'{}^2}\Bigl(\sum_{f_1|e_1}\frac{1}{f_1}\sum_{\substack{|\ell_1|\le L_1}}|S(k_1,\ell_1,r_1)|\Bigr)\nonumber\\
&\qquad \times \Bigl(\sum_{f_1'|e_1'}\frac{1}{f_1'}\sum_{\substack{|\ell_1'|\le L_1'}}|S(k_1',\ell_1',r_1')|\Bigr)\Bigl(\sum_{\substack{|\ell_2|\le L_2\\ |\ell_3|\le L_3}}1\Bigr)\nonumber\\
&\ll (\log{x})^{7B+20}\frac{N^4 q_0^{1/2} d_2'{}^{1/2} d_3'{}^{1/2} d_0^{1/2}}{q_0^4R_1''{}^2 R_2'}\Bigl(1+\frac{ q_0 R_2'}{N}\Bigr)^2\Bigl(1+\frac{ q_0 R_1''}{N}R_1''{}^{1/2}\Bigr)^2.
 \end{align}
 Since $R_1''\gg N^{2/3}/(Q R_0)^{2/3}$ and $R_2'\le R$ with $R>N/(Q R_0)$, this simplifies to
 \begin{align}
 \mathscr{S}_6'&\ll (\log{x})^{7B+20} \frac{R_1'' R^2 Q^{1/2} R_0^{1/2} d_2'{}^{1/2}d_3'{}^{1/2} d_0^{1/2}}{R_2'}.
 \end{align}
Substituting this into \eqref{eq:S61}, we see that
\begin{align}
\sum_{q\sim Q}\sum_{r_0\sim R_0}&\mathscr{S}_{6}\ll (\log{x})^{7B+20} \frac{R_1'' R^2 Q^{1/2} R_0^{1/2}}{R_2'}\sum_{e_1,e_1'\sim E_1} \sum_{\substack{r_1,r_1'\sim R_1''\\ e_1r_1,e_1'r_1'\in\mathcal{R}}}\sum_{\substack{r_2,r_3\sim R_2'\\ r_2,r_3\in\mathcal{R}\\ (r_2r_3,e_1e_1'r_1r_1')=1}}\nonumber\\
&\qquad \times\mathop{\sideset{}{^*}\sum}_{\substack{h_2,h_3,h_2',h_3'\sim H\\ k_0'\ne 0}} \sum_{q_0\sim Q R_0}\tau(q_0)d_0^{1/2}d_2'{}^{1/2}d_3'{}^{1/2}\nonumber\\
&\ll (\log{x})^{O_B(1)}\frac{ R_1'' R^2 Q^{3/2} R_0^{3/2} }{R_2'}\sum_{e_1,e_1'\sim E_1} \sum_{\substack{r_1,r_1'\sim R_1''\\ r_2,r_3\sim R_2'}}\mathop{\sideset{}{^*}\sum}_{\substack{h_2,h_3,h_2',h_3'\sim H\\ k_0\ne 0}}d_2'{}^{1/2}d_3'{}^{1/2}\tau(k_0'),\label{eq:S62}
\end{align}
Here we used the fact that $k_0'\ne 0$ for terms counted by $\mathscr{S}_{6}$ to bound the sum over $q_0$. With later estimates in mind, we will work a little bit harder than immediately necessary to produce a bound which is stronger in the $E_1$ aspect than directly required.

We first consider the terms with $k_2',k_3'\ne 0$. By the bound $d_2'{}^{1/2}d_3'{}^{1/2}\ll d_2'+d_3'$ and symmetry in $r_2,r_3$, it suffices to just consider $d_2'$ in place of $d_2'{}^{1/2}d_3'{}^{1/2}$ for these terms. We recall that the summation is constrained by $e_1|h_2r_3-h_3r_2$ and $e_1'|h_2'r_3-h_3'r_2$. Thus $r_3\equiv h_3r_2\overline{h_2}\Mod{e_1}$ and $r_3\equiv h_3'r_2\overline{h_2'}\Mod{e_1'}$. Thus we see that, using Lemma \ref{lmm:Divisor}
\begin{align*}
&\sum_{e_1,e_1'\sim E_1} \sum_{\substack{r_1,r_1'\sim R_1''\\ r_2,r_3\sim R_2'}}\mathop{\sideset{}{^*}\sum}_{\substack{h_2,h_3,h_2',h_3'\sim H\\ k_2',k_3',k_0'\ne 0}}d_2'{}^{1/2}d_3'{}^{1/2}\tau(k_0')\\
&\ll \sum_{e_1,e_1'\sim E_1}\sum_{r_1,r_1'\sim R_1''} \sum_{\substack{h_2,h_2',h_3,h_3'\ll H''}}\sum_{r_2\sim R_2'}d_2'\sum_{\substack{r_3\sim R_2' \\ r_3\equiv \overline{h_2'}h_3'r_2\Mod{e_1'}\\ r_3\equiv \overline{h_2}h_3r_2\Mod{e_1}\\ k_2',k_3',k_0'\ne 0}}\tau(k_0')\\
&\ll (\log{x})^{O_B(1)}R_2'\sum_{e_1,e_1'\sim E_1}\Bigl(\frac{R_2'(e_1,e_1')}{E_1^2}+x^{o(1)}\Bigr)\sum_{r_1,r_1'\sim R_1''} \sum_{\substack{h_2,h_2',h_3,h_3'\ll H''\\ k_2'\ne 0}}\tau(k_2')\\
&\ll (\log{x})^{O_B(1)}R_2'\Bigl(R_2'+E_1^2 x^{o(1)}\Bigr) R_1''{}^2 H''{}^4.
\end{align*}
Substituting this into \eqref{eq:S62} and using the bound $H''\ll (\log{x})^5 N Q R R_2'/x\ll (\log{x})^5 N Q R^2/(x R_0)$, we see the terms with $k_2',k_3'\ne 0$ contribute to \eqref{eq:S62} a total
\begin{align}
&\ll  (\log{x})^{O_B(1)} \frac{R_1'' R^2 Q^{3/2} R_0^{3/2}}{R_2'} R_2'\Bigl(R_2'+E_1^2 x^{o(1)}\Bigr) R_1''{}^2 H''{}^4\nonumber\\
&\ll  (\log{x})^{O_B(1)} \frac{R_1''{}^3 R_2'{}^2 Q^{11/2} N^4 R^9}{x^4 }+\frac{R_1''{}^3 R_2'{}^2 Q^{11/2} N^4 R^8 E_1^2}{x^{4-o(1)} }.
\label{eq:S6Bound1}
\end{align}
We now consider the terms with $k_2'=0$ or $k_3'=0$. Since $k_0'=r_3k_2'-r_2k_3'\ne 0$ we cannot have both $k_2'=0$ and $k_3'=0$. By symmetry, it suffices to consider the case when $k_2'\ne 0$ and $k_3'=0$, so $d_3'=r_3\sim R_2'$ and $d_2'=\gcd(h_2h_3'-h_3'h_2,r_2)$. Given a choice of $h_3,e_1'$ and $r_1'$, since $k'_3=0$ we see that $h_3'e_1r_1=h_3e_1'r_1'$ is fixed, so there are $\tau(h_3 e_1'r_1')$ choices of $h_3',e_1$ and $r_1$. Thus we see that
\begin{align*}
&\sum_{e_1,e_1'\sim E_1} \sum_{\substack{r_1,r_1'\sim R_1''\\ r_2,r_3\sim R_2'}}\mathop{\sideset{}{^*}\sum}_{\substack{h_2,h_3,h_2',h_3'\sim H\\ k_2',k_0'\ne 0\\ k_3'=0}}d_2'{}^{1/2}d_3'{}^{1/2}\tau(k_0')\\
&\ll R_2'{}^{1/2}\sum_{e_1'\sim E_1}\sum_{r_1'\sim R_1''}\sum_{h_2,h_2',h_3\sim H''}\sum_{e_1r_1h_3'|e_1'r_1'h_3}\sum_{r_2\sim R_2'}d_2'{}^{1/2}\sum_{\substack{r_3\sim R_2'\\ r_3\equiv h_3r_2\overline{h_2}\Mod{e_1}\\k_0',k_2'\ne 0}}\tau(k_0')\\
&\ll \Bigl(\frac{R_2'}{E_1}+x^{o(1)}\Bigr)R_2'{}^{3/2}\sum_{e_1'\sim E_1}\sum_{r_1'\sim R_1''}\sum_{h_3\sim H''}\sum_{e_1r_1h_3'|e_1'r_1'h_3}\sum_{\substack{h_2,h_2'\sim H''\\ h_2h_3'\ne h_3h_2'}}\tau(h_2h_3'-h_3h_2')\\
&\ll \Bigl(\frac{R_2'}{E_1}+x^{o(1)}\Bigr)R_2'{}^{3/2}E_1 R_1'' H''{}^3.
\end{align*}
Substituting this into \eqref{eq:S62} and using the bound $H''\ll (\log{x})^5 N Q R R_2'/x\ll (\log{x})^5 N Q R^2/(x R_0)$, we see the terms with $k_2'k_3'=0$ contribute to \eqref{eq:S62} a total
\begin{align}
&\ll  (\log{x})^{O_B(1)} \frac{R_1'' R^2 Q^{3/2} R_0^{3/2}}{R_2'}\Bigl(\frac{R_2'}{E_1}+x^{o(1)}\Bigr)R_2'{}^{3/2}E_1 R_1'' H''{}^3\nonumber\\
&\ll  (\log{x})^{O_B(1)}\frac{ R_1''{}^2 R_2'{}^{2}N^3 Q^{9/2} R^{15/2}}{x^3}+\frac{E_1 R_1''{}^2 R_2'{}^{2}N^3 Q^{9/2} R^{13/2}}{x^{3-o(1)}}.
\label{eq:S6Bound2}
\end{align}
We see that together \eqref{eq:S6Bound1} and \eqref{eq:S6Bound2} give
 \begin{align}
 \sum_{q\sim Q}\sum_{r_0\sim R_0}\mathscr{S}_{6}
 &\ll (\log{x})^{O_B(1)}\Bigl( \frac{R_1''{}^3 R_2'{}^2 Q^{11/2} N^4 R^9}{x^4 }+\frac{ R_1''{}^2 R_2'{}^{2}N^3 Q^{9/2} R^{15/2}}{x^3 }\Bigr)\nonumber\\
 &\qquad +x^{o(1)}\Bigl(\frac{R_1''{}^3 R_2'{}^2 Q^{11/2} N^4 R^8 E_1^2}{x^{4} }+\frac{E_1 R_1''{}^2 R_2'{}^{2}N^3 Q^{9/2} R^{13/2}}{x^{3} }\Bigr)\label{eq:S6Intermediate}\\
 &\ll  (\log{x})^{O_{B}(1)}\Bigl(\frac{E_1^2 N^4 Q^{11/2} R_1''{}^2 R_2'{}^2 R^{10}}{x^4 }+ \frac{E_1^2 N^3 Q^{9/2} R_1''{}^2 R_2'{}^2 R^{15/2}}{x^3 }\Bigr)\nonumber\\
 &\ll (\log{x})^{O_{B}(1)}\frac{E_1^2 N^4 Q^{11/2} R_1''{}^2 R_2'{}^2 R^{10}}{x^{4}  }.\label{eq:S6Bound}
 \end{align}
In the penultimate line we used the fact that $R>x^{2\epsilon}$ to see that the first two terms dominate after being multiplied by $E_1^2$, and in the final line we used the fact that $N Q R^2\ge Q^2 R^2\ge x$ to see that the first term dominates.

We recall that we want to show that $\sum_q\sum_{r_0}\mathscr{S}_{6}\ll (E_1R_1''R_2'N^2)^2/((\log{x})^{A} Q^2)$. Recalling that $QR=x^{1/2+\delta}$, we see that \eqref{eq:S6Bound} gives this if we have
\begin{align}
R<\frac{x^{1/10-3\delta}}{(\log{x})^C}
\end{align}
and $C=C(A,B)$ is sufficiently large in terms of $A$ and $B$. This finishes the proof.
\end{proof}
%
%
%%%%%%%%%%%%%%%%%%%%%%%%%%
%
%
\begin{lmm}\label{lmm:MainConclusion}
Let $A,B>0$, let $B_2=B_2(A,B)$ be sufficiently large in terms of $A$ and $B$ and let $C=C(A,B,B_2)$ be sufficiently large in terms of $A,B$ and $B_2$. Let $Q,R,M,N\ge 1$ be such that $QR=x^{1/2+\delta}\ge x^{1/2}(\log{x})^{-A}$ and $NM\asymp x$ with 
\[
Q x^{2\delta}(\log{x})^{C}< N < \frac{x^{1/2-3\delta}}{(\log{x})^C},\qquad x^{6\delta}(\log{x})^C\le R\le \frac{x^{1/10-3\delta}}{(\log{x})^C}.
\]
Let $\mathscr{S}$ be given by
\begin{align*}
\mathscr{S}&:=\sum_{q\sim Q}\sum_{\substack{r_1,r_2\sim R}}c_{q,r_1}\overline{c_{q,r_2}}\hspace{-0.2cm}\sum_{\substack{n_1,n_2\sim N\\ \tau(n_1),\tau(n_2)\le (\log{x})^{B_2}}}\hspace{-0.2cm}\alpha_{n_1}\overline{\alpha}_{n_2}\hspace{-0.2cm}\sum_{\substack{m\sim M\\mn_1\equiv a_{q,r_1} \Mod{qr_1}\\ m n_2\equiv a'_{q,r_2}\Mod{q r_2}}}\psi\Bigl(\frac{m}{M}\Bigr)
\end{align*}
for some 1-bounded coefficients $c_{q,r}$ supported on square-free $r$ with $(q,r)=1$ and $\tau(qr)\le (\log{x})^{B}$, and some coefficients $\alpha_n$ satisfying $|\alpha_n|\le \tau(n)^{B}$ and the Siegel-Walfisz condition \eqref{eq:SiegelWalfisz}, and some integer sequences $a_{q,r},a'_{q,r}$ satisfying $(a_{q,r},qr)=(a'_{q,r},qr)=1$. Then we have
\[
\mathscr{S}=\mathscr{S}_{MT}+O_{A,B}\Bigl(\frac{MN^2}{Q(\log{x})^{A}}\Bigr).
\]
where for some constant $C_1=C_1(A,B,B_2)$
\begin{align*}
\mathscr{S}_{MT}&:=\sum_{q\sim Q}\sum_{r_0\le N/((\log{x})^{C_1} Q)}\sum_{\substack{r_1',r_2'\sim R/r_0\\ (r_1',r_2')=1}}c_{q,r_0r_1'}\overline{c_{q,r_0r_2'}}\sum_{\substack{n_1,n_2\sim N\\ (n_1,qr_0r_1')=1\\ (n_2,q r_0 r_2')=1}}\frac{\alpha_{n_1}\overline{\alpha}_{n_2}M\hat{\psi}(0)}{q r_0r_1'r_2'\phi(q r_0)}.
\end{align*}
\end{lmm}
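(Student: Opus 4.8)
The plan is to assemble this statement from the lemmas already proved in this section; no new analytic input is needed, and the work is entirely bookkeeping. First I would open up $\mathscr{S}$ and use the structure of the congruences. Since $c_{q,r}$ is supported on square-free $r$ with $(q,r)=1$, the conditions $mn_1\equiv a_{q,r_1}\Mod{qr_1}$ and $mn_2\equiv a'_{q,r_2}\Mod{qr_2}$ can hold simultaneously only when, writing $r_0:=\gcd(r_1,r_2)$, $r_1=r_0r_1'$, $r_2=r_0r_2'$ (so $r_0,r_1',r_2'$ are pairwise coprime), one has $n_1\overline{a_{q,r_1}}\equiv n_2\overline{a'_{q,r_2}}\Mod{qr_0}$, in which case $m$ is fixed modulo $qr_0r_1'r_2'$ by $m\equiv a_{q,r_1}\overline{n_1}\Mod{qr_0r_1'}$ and $m\equiv a'_{q,r_2}\overline{n_2}\Mod{r_2'}$. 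Splitting $r_0$ and $r_1',r_2'$ into $O((\log x)^2)$ dyadic boxes $r_0\sim R_0$, $r_1',r_2'\sim R'\asymp R/R_0$, and noting that the hypotheses force $Q\ge x^{2/5+4\delta}(\log x)^C$ and hence $N\ge x^{2/5+6\delta}(\log x)^{2C}>x^\epsilon$, this exhibits $\mathscr{S}$ as a sum of $O((\log x)^2)$ pieces, each having exactly the shape of the sum $\mathscr{S}$ appearing in Lemmas~\ref{lmm:GCD}--\ref{lmm:Fourier}.

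Next I would dispose of the ranges with $R_0\ge N/((\log x)^{C_1}Q)$: using $|c_{q,r}|\le1$, $|\alpha_n|\le(\log x)^{BB_2}$ (from the restriction $\tau(n)\le(\log x)^{B_2}$) and $\psi\le1$, each such piece is $\le(\log x)^{2BB_2}\mathscr{S}'$ with $\mathscr{S}'$ as in Lemma~\ref{lmm:GCD} (after covering the support of $\psi(\cdot/M)$ by $O(1)$ dyadic $m$-ranges), and Lemma~\ref{lmm:GCD}, applicable since $N>Qx^{2\delta}(\log x)^{3C_1}>x^\epsilon$, bounds this by $MN^2/(Q(\log x)^{A'})$ for any prescribed $A'$; summing over the $O(\log x)$ such ranges makes the total admissible. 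For each remaining range I would apply Lemma~\ref{lmm:Fourier}, which writes the piece as $\mathscr{S}_{MT,R_0,R'}+\frac{M(\log x)^{2BB_2}}{QR_0R'^2}\mathscr{S}_2+O(MN^2/(Q(\log x)^{2A}))$, with $\mathscr{S}_{MT,R_0,R'}$ the corresponding main term (still carrying the divisor restriction). Summing these main terms over all small-$r_0$ ranges produces $\mathscr{S}_{MT}$ up to the condition $\tau(n_i)\le(\log x)^{B_2}$; the discrepancy is removed directly using $\sum_{n\sim N,\,\tau(n)>(\log x)^{B_2}}\tau(n)^B\ll N(\log x)^{O_B(1)-B_2}$ together with $\sum_{q\sim Q}\sum_{r_0,r_1',r_2'}(qr_0r_1'r_2'\phi(qr_0))^{-1}\ll(\log x)^{O(1)}/Q$, costing $\ll MN^2(\log x)^{O_B(1)-B_2}/Q$, admissible once $B_2$ is large in terms of $A,B$. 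It then remains only to prove that, for each small-$r_0$ range, $\mathscr{S}_2\ll N^2R'^2R_0/(\log x)^{A_2}$ for any prescribed $A_2$.

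For this I would run the chain Lemmas~\ref{lmm:Simplify}--\ref{lmm:OffDiag}. Lemma~\ref{lmm:Simplify} reduces $\mathscr{S}_2$ to $O((\log x)^4)\sup_{D_1\le D_2}\sum_{d_1,d_2,q,r_0}|\mathscr{S}_3|$ with $R_1':=R'/d_1\ge R_2':=R'/d_2$, and Lemma~\ref{lmm:Cauchy} gives $\mathscr{S}_3^2\ll(\log x)^2NR_1'\sup_{E_1R_1''\asymp R_1'}|\mathscr{S}_4|$. For each admissible pair $(E_1,R_1'')$: if $R_1''\ll(N/(QR_0))^{2/3}$, Lemma~\ref{lmm:Diag1} gives $\mathscr{S}_4\ll N^3E_1R_1''R_2'^2/((\log x)^AQ^2)$, leading (via $E_1R_1''\asymp R_1'$, $d_1R_1'\asymp d_2R_2'\asymp R'$, and summing over $d_1,d_2,q,r_0$) to a contribution $\ll N^2R'^2R_0(\log x)^{-(A-2)/2}$; if instead $R_1''>(N/(QR_0))^{2/3}$, Lemma~\ref{lmm:SecondCauchy} gives $|\mathscr{S}_4|^2\ll(\log x)\frac{N^2R_2'^2}{QR_0}(\mathscr{S}_5+\mathscr{S}_6)$, and then Lemma~\ref{lmm:Diag2} bounds $\sum_{q,r_0}\mathscr{S}_5$ and Lemma~\ref{lmm:OffDiag} bounds $\sum_{q,r_0}\mathscr{S}_6$, each by $(R_1'R_2')^2N^4/((\log x)^AQ^2)$, so that after applying Cauchy--Schwarz to pass from $\sum_{q,r_0}|\mathscr{S}_4|$ to $(QR_0\sum_{q,r_0}|\mathscr{S}_4|^2)^{1/2}$ this branch contributes $\ll N^2R'^2R_0(\log x)^{O(1)-A/4}$. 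In both cases $\mathscr{S}_2\ll N^2R'^2R_0(\log x)^{O(1)-A/4}$, which gives the required bound once the power-saving exponent $A$ in Lemmas~\ref{lmm:Diag1},~\ref{lmm:Diag2},~\ref{lmm:OffDiag} is taken large enough. Along the way one must check that the hypotheses of those lemmas ($R_0\ll N/Q$, $x^{6\delta}(\log x)^C<R<N<x^{1/2-2\delta}/(\log x)^C$, $R\ll x^{1/10-3\delta}/(\log x)^C$, and so on) all follow from the constraints on $Q,R,M,N$ in the statement, which they do.

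The only genuine obstacle is the accounting of logarithmic factors: one accumulates a factor $(\log x)^{2BB_2}$ from the trivial coefficient bound, $O((\log x)^{O(1)})$ from the dyadic decompositions and the Cauchy--Schwarz steps, and the $\tau$-removal error $(\log x)^{O_B(1)-B_2}$, and all of these must be beaten by the arbitrarily large power of $\log x$ saved in the interior estimates Lemmas~\ref{lmm:GCD},~\ref{lmm:Diag1},~\ref{lmm:Diag2},~\ref{lmm:OffDiag}. This is arranged by fixing the constants in the order $A,B$ first, then $B_2$ large in terms of $A,B$, and finally $C_1,C$ large in terms of $A,B,B_2$.
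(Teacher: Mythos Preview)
Your proposal is correct and follows essentially the same route as the paper: dyadic decomposition in $r_0=(r_1,r_2)$, Lemma~\ref{lmm:GCD} for large $R_0$, Lemma~\ref{lmm:Fourier} for small $R_0$, and then the chain Lemmas~\ref{lmm:Simplify}--\ref{lmm:OffDiag} with the same dichotomy on $R_1''$. The only organizational difference is that the paper applies Cauchy--Schwarz in $d_1,d_2,q,r_0$ \emph{before} invoking Lemma~\ref{lmm:Cauchy} (passing directly from $\sum|\mathscr{S}_3|$ to $(\sum|\mathscr{S}_3|^2)^{1/2}\ll(\sum|\mathscr{S}_4|)^{1/2}$), whereas you take the pointwise bound $|\mathscr{S}_3|\ll(NR_1')^{1/2}|\mathscr{S}_4|^{1/2}$ and then sum; both work, yours just costs an extra harmless Cauchy--Schwarz and a few more $\log x$ factors. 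You are also slightly more careful than the paper in explicitly removing the $\tau(n_i)\le(\log x)^{B_2}$ restriction when matching the main term from Lemma~\ref{lmm:Fourier} to the unrestricted $\mathscr{S}_{MT}$ of the present lemma.
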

%
%
%%%%%%%%%%%%%%%%%%%%%%%%%%
%
%
\begin{proof}
Let $r_0=(r_1,r_2)$. We see that the congruence conditions $m n_1\equiv a_{q,r_1}\Mod{qr_1}$ and $m n_2\equiv a'_{q,r_2}\Mod{q r_2}$ require that $(m n_1 n_2,q r_1 r_2)=1$ and that $n_1\overline{a_{q,r_1}}\equiv n_2\overline{a'_{q,r_2}}\Mod{q r_0}$. We now split $\mathscr{S}$ by putting $r_0$ into dyadic intervals $r_0\sim R_0$. Thus it suffices to show that for each $R_0\ll x$ we have
\begin{equation}
\mathscr{S}(R_0)=\begin{cases}
\mathscr{S}_{MT}(R_0)+O_A\Bigl(\frac{MN^2}{Q(\log{x})^{A+1}}\Bigr),\qquad & R_0\le N/((\log{x})^{C_1} Q),\\
O_A\Bigl(\frac{MN^2}{Q(\log{x})^{A+1}}\Bigr),& R_0>N/(x^{4\delta}(\log{x})^{C_1} Q),
\end{cases}
\label{eq:STarget}
\end{equation}
where $C_1=C_1(A,B,B_2)$ is a constant we will choose, and
\begin{align*}
\mathscr{S}(R_0)&:=\sum_{q\sim Q}\sum_{r_0\sim R_0}\sum_{\substack{r_1',r_2'\sim R/r_0\\ (r_1',r_2')=1}}c_{q,r_0r_1'}\overline{c_{q,r_0r_2'}}\sum_{\substack{n_1,n_2\sim N\\ n_1\overline{a_{q,r_0r_1'}}\equiv n_2\overline{a'_{q,r_0r_2'} } \Mod{q r_0}}}\alpha_{n_1}\overline{\alpha}_{n_2}\\
&\qquad\times\sum_{\substack{m\sim M\\m n_1\equiv a_{q,r_0r_1'} \Mod{qr_0 r_1'}\\ m n_2 \equiv a'_{q,r_0r_2'}\Mod{ r_2'}}}\psi\Bigl(\frac{m}{M}\Bigr),\\
\mathscr{S}_{MT}(R_0)&:=
\sum_{q\sim Q}\sum_{r_0\sim R_0}\sum_{\substack{r_1',r_2'\sim R/r_0\\ (r_1',r_2')=1}}c_{q,r_0r_1'}\overline{c_{q,r_0r_2'}}\sum_{\substack{n_1,n_2\sim N\\ (n_1,qr_0r_1')=1\\ (n_2,q r_0 r_2')=1}}\alpha_{n_1}\overline{\alpha}_{n_2}\frac{M\hat{\psi}(0)}{q r_0r_1'r_2'\phi(q r_0)}.
\end{align*}
To ease dependencies we restrict the support of $c_{q,r}$ to $r\sim R$, and so we may consider the summation with $r_1',r_2'\sim R'$ independent of $r_0$ for $R'\asymp R/R_0$. 

We may assume that $B_2$ is sufficiently large in terms of $A,B$ such that Lemma \ref{lmm:Fourier} applies. We then choose $C_1=C_1(A,B,B_2)$ sufficienty large in terms of $A,B,B_2$ such that Lemma \ref{lmm:Fourier} applies and Lemma \ref{lmm:GCD} gives a bound $\mathscr{S}'\ll MN^2/((\log{x})^{A+2BB_2}Q)$. Then if $R_0>N/((\log{x})^{C_1} Q)$ and $C>3C_1$ then \eqref{eq:STarget} follows from Lemma \ref{lmm:GCD}. Indeed we assume $N>Q x^{2\delta}(\log{x})^{C}$ and $|\alpha_n|\le \tau(n)^B\le (\log{x})^{B_2B}$, so Lemma \ref{lmm:GCD} gives the result provided $C>3C_1$. Thus we may assume that $R_0\le N/((\log{x})^{C_1} Q)$, and so by Lemma \ref{lmm:Fourier} it suffices to show for all choices of $R'\asymp R/R_0$
\[
\mathscr{S}_2\ll \frac{N^2 R^2}{R_0(\log{x})^{2A+2BB_2} },
\]
where $\mathscr{S}_2$ is as given in Lemma \ref{lmm:Fourier}. By Lemma \ref{lmm:Simplify}, Cauchy-Schwarz and Lemma \ref{lmm:Cauchy}, we have
\begin{align*}
\mathscr{S}_2&\ll (\log{x})^4\sup_{D_1\le D_2}\sum_{d_1\sim D_1}\sum_{d_2\sim D_2}\sum_{q\sim Q}\sum_{r_0\sim R_0}|\mathscr{S}_3|\\
&\ll (\log{x})^4 \sup_{D_1\le D_2}(D_1D_2QR_0)^{1/2}\Bigl(\sum_{d_1\sim D_1}\sum_{d_2\sim D_2}\sum_{q\sim Q}\sum_{r_0\sim R_0}|\mathscr{S}_3|^2\Bigr)^{1/2}\\
&\ll (\log{x})^5\sup_{\substack{D_1\le D_2\\ E_1R_1''\asymp R_1}}(D_2 Q N R)^{1/2}\Bigl(\sum_{d_1\sim D_1}\sum_{d_2\sim D_2}\sum_{q\sim Q}\sum_{r_0\sim R_0}|\mathscr{S}_4|\Bigr)^{1/2},
\end{align*}
where $\mathscr{S}_3$ and $\mathscr{S}_4$ are as given in Lemmas \ref{lmm:Simplify} and \ref{lmm:Cauchy} respectively. Thus it suffices to show that for all $d_1\sim D_1$, $d_2\sim D_2$
\begin{equation}
\sum_{q\sim Q}\sum_{r_0\sim R_0}|\mathscr{S}_4|\ll \frac{N^3 R^3}{(\log{x})^{4A+4BB_2+10} Q R_0^3 D_1 D_2^2}\asymp \frac{N^3 E_1 R_1'' R_2'{}^2 R_0}{(\log{x})^{4A+4BB_2+10} Q}.
\label{eq:S4Target}
\end{equation}
Lemma \ref{lmm:Diag1} gives this if $R_1''\asymp R/(R_0D_1 E_1)$ satisfies $R_1''\le N^{2/3}/(Q R_0)^{2/3}$, so we may assume that $R_1''>N^{2/3}/(Q R_0)^{2/3}$. In this case, we may apply Cauchy-Schwarz, Lemma \ref{lmm:SecondCauchy}, Lemma \ref{lmm:Diag2} and Lemma \ref{lmm:OffDiag} in turn to give
\begin{align*}
\sum_{q\sim Q}\sum_{r_0\sim R_0}|\mathscr{S}_4|& \ll (Q R_0)^{1/2}\Bigl(\sum_{q\sim Q}\sum_{r_0\sim R_0}|\mathscr{S}_4|^2\Bigr)^{1/2}\\
&\ll (\log{x})^{1/2}  N R_2'\Bigl(\sum_{q\sim Q}\sum_{r_0\sim R_0}(|\mathscr{S}_5|+|\mathscr{S}_6|)\Bigr)^{1/2}\\
&\ll (\log{x})^{1/2}  N R_2'\Bigl(  \frac{  ( E_1 R_1'' R_2')^2N^4 }{(\log{x})^{A_2} Q^2}\Bigr)^{1/2}\\
&\ll_{A_2,B}  \frac{N^3 E_1 R_1''  R_2'{}^2 R_0}{(\log{x})^{A_2/2-1/2} Q}
\end{align*}
provided $C$ is large enough in terms of $A_2$ and $B$. Choosing $A_2=8A+8 B B_2+21$ and $C=C(A,B,B_2)\ge 3C_1$ sufficiently large in terms of $A_2,B$, and $C_1$ then gives \eqref{eq:S4Target}, and hence the result.
\end{proof}
%
%
%%%%%%%%%%%%%%%%%%%%%%%%%%
%
%
%
%%%%%%%%%%%%%%%%%%%%%%%%%%
%
%
\begin{proof}[Proof of Proposition \ref{prpstn:MainProp}]
Let $S$ be the sum of interest
\[
S:=\sum_{q\sim Q}\sum_{r\sim R}\sup_{a\Mod{qr}}\Bigl|\sum_{m\sim M}\sum_{n\sim N}\alpha_n\beta_m\Bigl(\mathbf{1}_{n m\equiv a\Mod{q r}}-\frac{\mathbf{1}_{(nm,qr)=1}}{\phi(q r)}\Bigr)\Bigr|.
\]
First we simplify the moduli appearing. By Lemma \ref{lmm:Divisor} and the trivial bound, the contribution from $q,r$ with $\tau(qr)>(\log{x})^{B}$ is negligible if $B=B(A)$ is sufficiently large, so we may restrict the summation to $\tau(qr)\le (\log{x})^{B}$ for some fixed constant $B=B(A)\ge A$. Given $q,r$, let $q r= s^\square s^{\notsquare}$ be factored into square-full and square-free parts. By Lemma \ref{lmm:Squarefree} we only need to consider $s^\square\ll (\log{x})^{B_1}$ for some fixed constant $B_1=B_1(A)$. We now let $q'=s^\square (q,s^{\notsquare})$ and $r'=(r,s^{\notsquare})$, and so it suffices to show that for all $Q'\in[Q,Q(\log{x})^{B_1}]$, $R'\asymp QR/Q'$ we have
\begin{align*}
\sum_{q'\sim Q'}\tau_3(q')\hspace{-0.5cm}\sum_{\substack{r'\sim R'\\ (r',q')=1\\ \mu^2(r')=1\\ \tau(q' r')\le (\log{x})^{B} }}\hspace{-0.5cm}\sup_{(a,q' r')=1}\Bigl|\sum_{m\sim M}\sum_{\substack{n\sim N}}\alpha_n\beta_m\Bigl(\mathbf{1}_{n m\equiv a\Mod{q' r'}}-\frac{\mathbf{1}_{(m n,q' r')=1}}{\phi(q' r')}\Bigr)\Bigr|\\\ll_{A} \frac{x}{(\log{x})^{A+1} }.
\end{align*}
Since $\tau_3(q')\le \tau(q')^2\le (\log{x})^{2B}$, it suffices to show
\begin{align*}
\sum_{q'\sim Q'}\hspace{-0.2cm}\sum_{\substack{r'\sim R'\\ (r',q')=1\\ \mu^2(r')=1\\ \tau(q'r')\le (\log{x})^{B_1}}}\hspace{-0.2cm}\sup_{(a,q' r')=1}\Bigl|\sum_{m\sim M}\sum_{n\sim N}\alpha_n\beta_m\Bigl(\mathbf{1}_{n m\equiv a\Mod{q' r'}}-\frac{\mathbf{1}_{(m n,q' r')=1}}{\phi(q' r')}\Bigr)\Bigr|\\
\ll_{A}\frac{x}{(\log{x})^{A+2B+1}}.
\end{align*}
By Lemma \ref{lmm:Divisor} and the trivial bound, the contribution from $n$ with $\tau(n)\ge (\log{x})^{B_2}$ is negligible if $B_2$ is sufficiently large in terms of $A$ and $B$. Therefore we may restrict to $\tau(n)\le (\log{x})^{B_2}$, where we will later choose $B_2$ appropriately. Let $\alpha''_n:=\alpha_n\mathbf{1}_{\tau(n)\le(\log{x})^{B_2}}$ be $\alpha_n$ with this restricted support.

Let $a_{q',r'}$ be the residue class achieving the supremum, and $c_{q',r'}$ 1-bounded complex numbers to remove the absolute values. We restrict the support of $c_{q',r'}$ to $(r',q')=1$ with $\tau(q'r')\le (\log{x})^{B}$ and $r'$ square-free. Thus we wish to show
\[
\sum_{q'\sim Q'}\sum_{r'\sim R'}c_{q',r'}\sum_{m\sim M}\sum_{n\sim N}\alpha''_n\beta_m\Bigl(\mathbf{1}_{n m\equiv a_{q',r'}\Mod{q' r'}}-\frac{\mathbf{1}_{(m n,q' r')=1}}{\phi(q' r')}\Bigr)\ll_{A}\frac{x}{(\log{x})^{A+2B+1}}.
\]
By considering the average over $b_{q,r} \Mod{qr}$, it suffices to show that for any sequences $a_{q,r},b_{q,r}\Mod{qr}$ with $(a_{q,r}b_{q,r},qr)=1$ we have
\[
\sum_{q\sim Q}\sum_{\substack{r\sim R}}c_{q,r}\sum_{n\sim N}\sum_{m\sim M}\alpha''_n\beta_m\Bigl(\mathbf{1}_{n m\equiv a_{q,r}\Mod{q r}}-\mathbf{1}_{n m\equiv b_{q,r}\Mod{q r}}\Bigr)\ll_{A} \frac{x}{(\log{x})^{A+2B+1}}.
\]
We apply Cauchy-Schwarz in the $m$ and $q$ variables. Recalling that $|\beta_m|\le \tau(m)^A$ and $B=B(A)$ it suffices to show that for a suitable constant $A_2=A_2(A)$
\begin{align*}
\sum_{q\sim Q}\sum_{m\sim M}\Bigl|\sum_{r\sim R}c_{q,r}\sum_{\substack{n\sim N\\ \tau(n)\le (\log{x})^{B_2} }}\alpha_n\Bigl(\mathbf{1}_{n m\equiv a_{q,r}\Mod{q r}}-\mathbf{1}_{n m\equiv b_{q,r}\Mod{q r}}\Bigr)\Bigr|^2\\
\ll_{A_2} \frac{MN^2}{Q(\log{x})^{A_2}}.
\end{align*}
Inserting a smooth majorant for the $m$ summation then expanding the square, we see that it suffices to show that uniformly over all sequences $a_{q,r},a'_{q,r}$ coprime to $qr$ we have
\[
\mathscr{S}=X+O_{A_2}\Bigl(\frac{MN^2}{Q(\log{x})^{2A_2}}\Bigr)
\]
for some quantity $X$ independent of $a_{q,r}$ and $a'_{q,r}$, where
\begin{align*}
\mathscr{S}&:=\sum_{q\sim Q'}\sum_{\substack{r_1,r_2\sim R'}}c_{q,r_1}\overline{c_{q,r_2}}\sum_{\substack{n_1,n_2\sim N\\ \tau(n_1),\tau(n_2)\le(\log{x})^{B_2} }}\alpha_{n_1}\overline{\alpha}_{n_2}\sum_{\substack{m\sim M\\m\equiv a_{q,r_1}\overline{n_1}\Mod{qr_1}\\ m\equiv a'_{q,r_2}\overline{n_2}\Mod{q r_2}}}\psi\Bigl(\frac{m}{M}\Bigr).
\end{align*}
This now follows from Lemma \ref{lmm:MainConclusion} if first $B_2=B_2(A)$ chosen sufficiently large in terms of $A_2$ and $B$, and then $C=C(A)$ is chosen sufficiently large in terms of $A_2,B$ and $B_2$.
\end{proof}
%
%
%%%%%%%%%%%%%%%%%%%%%%%%%%
%
%
%
%
%%%%%%%%%%%%%%%%%%%%%%%%%%
%
%
%\newpage
%
%
%%%%%%%%%%%%%%%%%%%%%%%%%%
%
%
\section{Second Type II estimate}\label{sec:SecondProp}
%
%
%%%%%%%%%%%%%%%%%%%%%%%%%%
%
%
We now establish Proposition \ref{prpstn:SecondProp}. The proof is similar to that of Proposition \ref{prpstn:MainProp}, but we change some intermediate manipulations to exploit the additional assumptions on the moduli involved. This ultimately has the effect of reducing the modulus of the final exponential sums appearing, leading to an additional saving. For our applications we no longer need to worry about losing factors of $x^\epsilon$ since we will ultimately have a power-saving estimate.

The key quantity we need to understand for Proposition \ref{prpstn:SecondProp} is a variant of the sum $\mathscr{S}$ with special coefficients $c_{q,r}$. After performing the same initial steps this leads to estimating $\mathscr{A}_3$ in place of $\mathscr{S}_3$, which is given by the lemma below. We estimate this via Lemma \ref{lmm:AlternativeCauchy} and \ref{lmm:AltA4}, which leads to Lemma \ref{lmm:SecondConclusion}, our new variant of Lemma \ref{lmm:MainConclusion}. We then deduce Proposition \ref{prpstn:SecondProp} from Lemma \ref{lmm:SecondConclusion} in a similar manner to before.
%
%
%%%%%%%%%%%%%%%%%%%%%%%%%%
%
%
\begin{lmm}\label{lmm:AlternativeCauchy}
Let $B>0$, $d_1\sim D_1$, $d_2\sim D_2$, $q\sim Q$, $t_0\sim T_0$ and $q_0=qt_0$. Let $\gamma_{qrs}$, $\lambda_t$ and $\alpha'_n$ be 1-bounded complex sequences, and let $\tilde{\gamma}_{t}=\tilde{\gamma}_{t}(q,d_1,t_0)$ satisfy
\[
|\tilde{\gamma}_{t}|\le \frac{\mathbf{1}_{\tau(q d_1 t_0 t)\le (\log{x})^B}}{(\log{x})^B}\sum_{r\sim R}\sum_{\substack{s\sim S\\ rs=d_1 t_0 t\\ (rs,q)=1}}|\gamma_{q r s}|\mu^2(d_1 t_0 t).
\]
Let $H\ll N Q T_0 T_1 T_2/x^{1-\epsilon}$, $T_1\ll T/(T_0 D_1)$, $T_2\ll T/(T_0 D_2)$ and
\begin{align*}
\mathscr{A}_3&:=\sum_{\substack{t_1'\sim T_1\\ (t_1',q_0)=1}}\tilde{\gamma}_{t_1'}\sum_{\substack{t_2'\sim T_2\\  (t_2',q_0 t_1')=1}}\lambda_{t_2'}\sum_{\substack{n_1,n_2\sim N\\ n_1\overline{b_{t_1'}}=n_2\overline{b'_{t_2'}}\Mod{q_0}\\ (n_1,q_0d_1 t_1')=(n_2,q_0d_2t_2')=1}}\alpha'_{n_1}\overline{\alpha'_{n_2}}\sum_{\substack{h\sim H\\ (h,t_1't_2')=1}}\xi,\\
\xi&:=e\Bigl(\frac{b_{ t_1'}h\overline{n_1 t_1't_2'}}{q_0}\Bigr)e\Bigl(\frac{b_{ t_1'}h\overline{n_1 q_0 t_2'}}{t_1'}\Bigr)\Bigl(\frac{b'_{t_2'}h\overline{n_2 q_0 t_1'}}{t_2'}\Bigr).
\end{align*}
Then we have 
\[
|\mathscr{A}_3|^2\ll x^{o(1)} N T_1\sup_{\substack{E_1,S_1\\ E_1 S_1\asymp T_1 \\ E_1\ge R/(T_0 D_1)}}\min(E_1,R)\,| \mathscr{A}_{4}|
\]
where
\begin{align*}
\mathscr{A}_4&:=\sum_{e_1\sim E_1}\sum_{s_1\sim S_1}\eta_{e_1,s_1}\sum_{\substack{t_2,t_3\sim T_2\\ (t_2t_3,q_0 e_1 s_1)=1}}\lambda_{t_2}\overline{\lambda_{t_3}}\sum_{\substack{n_1\\ (n_1,q_0 e_1 s_1)=1}}\psi\Bigl(\frac{n_1}{N}\Bigr)\\
&\qquad \times \sum_{\substack{h_2,h_3\sim H\\ h_2 t_3\equiv h_3 t_2\Mod{e_1} \\ (h_2t_3-h_3t_2, s_1)=1 \\ (h_2,e_1 s_1 t_2)=1\\ (h_3,e_1 s_1 t_3)=1}}\sum_{\substack{n_2,n_3\sim N\\ n_2\overline{b'_{t_2}}\equiv n_1\overline{b_{e_1 s_1}}\Mod{q_0}\\ n_3\overline{b'_{t_3}}\equiv n_1\overline{b_{e_1 s_1 }}\Mod{q_0}\\ (n_2,q_0 d_2 t_2)=1\\ (n_3,q_0 d_2 t_3)=1}}\alpha'_{n_2}\overline{\alpha'_{n_3}}\xi''',\\
\xi'''&:=e\Bigl(\frac{b_{e_1 s_1}\overline{n_1 e_1 }(h_2\overline{t_2}-h_3\overline{t_3})}{q_0 s_1}\Bigr)e\Bigl(\frac{b'_{t_2}h_2\overline{n_2 q_0 e_1 s_1}}{t_2}\Bigr)e\Bigl(\frac{-b'_{t_3} h_3 \overline{n_3 q_0 e_1 s_1}}{t_3}\Bigr),
\end{align*}
and $|\eta_{e_1 ,s_1}|\le 1$ is supported on $e_1 s_1$ square-free with $\tau(e_1 s_1)\le (\log{x})^B$ and $(e_1 s_1,q_0)=1$.
\end{lmm}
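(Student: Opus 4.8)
\textbf{Proof proposal for Lemma \ref{lmm:AlternativeCauchy}.}

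The plan is to follow the same two-step structure as in the proof of Lemma \ref{lmm:Cauchy} (the ``First Cauchy'' step), but with the crucial difference that here the $t_1$-variable carries the special coefficient $\tilde\gamma_{t_1'}$, which already incorporates the divisor sum over $\gamma_{qrs}$ together with the factorization $r s = d_1 t_0 t_1'$. The sum $\mathscr{A}_3$ has the same shape as $\mathscr{S}_3$, except that $\tilde\gamma_{t_1'}$ is only $(\log x)^{O(1)}$-bounded in an averaged sense (rather than genuinely $1$-bounded) and is supported on square-free $d_1 t_0 t_1'$ with $\tau(q d_1 t_0 t_1') \le (\log x)^B$. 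First I would apply Cauchy--Schwarz in the $n_1$ and $t_1'$ variables, discarding the coprimality constraint $(n_1,d_1)=1$ for an upper bound; since $\sum_{t_1'} |\tilde\gamma_{t_1'}|^2 \ll x^{o(1)} T_1$ by the divisor bound, this gives $|\mathscr{A}_3|^2 \ll x^{o(1)} N T_1 \,\mathscr{A}_3'$, where $\mathscr{A}_3'$ is the analogue of $\mathscr{S}_3'$ with the $n_1,t_1'$ outside and the square over the $(t_2',n_2,h)$-sum inside. Then I would insert a smooth majorant $\psi(n_1/N)$ for the $n_1$-summation and expand the square, producing sums over $t_2,t_3 \sim T_2$, $h_2,h_3 \sim H$, and $n_2,n_3 \sim N$ with the congruences $n_2 \overline{b'_{t_2}} \equiv n_1 \overline{b_{t_1'}} \equiv n_3 \overline{b'_{t_3}} \pmod{q_0}$.

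The next step is the extraction of common factors exactly as in Lemma \ref{lmm:Cauchy}: set $e_1 := \gcd(h_2 t_3 - h_3 t_2, t_1')$ and write $t_1' = e_1 s_1$, using square-freeness to restrict to $(s_1,e_1)=1$. The phase $\xi_1$ then simplifies so that the $s_1$-modulus appears in $\xi'''$ as $e(b_{e_1 s_1}\overline{n_1 e_1}(h_2\overline{t_2}-h_3\overline{t_3})/(q_0 s_1))$ (combining the $q_0$ and $s_1$ exponentials, which is legitimate since $(s_1,q_0)=1$), and the $t_2,t_3$-moduli appear in the remaining two exponentials. Putting $e_1, s_1$ into dyadic ranges $e_1 \sim E_1$, $s_1 \sim S_1$ with $E_1 S_1 \asymp T_1$, and taking the worst range, gives the sum $\mathscr{A}_4$ of the stated shape. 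The coefficient $\eta_{e_1,s_1}$ absorbs the restricted-support dyadic pieces of $\tilde\gamma_{e_1 s_1}$ after dividing out the $(\log x)^{O(1)}$ normalization, so it is $1$-bounded and supported on square-free $e_1 s_1$ with $\tau(e_1 s_1) \le (\log x)^B$ and $(e_1 s_1, q_0) = 1$.

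The one genuinely new feature compared with Lemma \ref{lmm:Cauchy} is the factor $\min(E_1, R)$ in place of the factor $R_1'$ there, and the constraint $E_1 \ge R/(T_0 D_1)$ in the supremum. This comes from a more careful bookkeeping of the $t_1'$-sum: because $\tilde\gamma_{t_1'}$ is supported on $t_1'$ with $d_1 t_0 t_1' = rs$ for some $r \sim R$, $s \sim S$, the number of admissible $t_1' \sim T_1$ is controlled not just by $T_1$ but by the divisor structure — each valid $t_1'$ corresponds to at most $\tau(d_1 t_0 t_1') \ll (\log x)^B$ factorizations. When one passes to $e_1 s_1 = t_1'$ and dyadically localizes, the effective count of $e_1 \sim E_1$ inside the support is at most $\min(E_1, R)$, since $e_1$ divides $rs$ and $r \le 2R$ forces $e_1 \ll R$ once $E_1$ exceeds $R$; conversely the support condition forces $E_1 S_1 \asymp T_1$ with $S_1 \le S/(T_0 D_2)$-type bounds, which is what yields $E_1 \ge R/(T_0 D_1)$. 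I expect the main obstacle to be precisely tracking this divisor-support bookkeeping through the Cauchy--Schwarz and factor-extraction steps so that the $\min(E_1,R)$ saving (rather than the trivial $E_1$) is retained — this is the point of introducing the special coefficients $\tilde\gamma$, and it is what will eventually reduce the modulus of the final exponential sums and give the improved range in Proposition \ref{prpstn:SecondProp}. Everything else (the smooth majorant insertion, expanding the square, the coprimality shuffles, the dyadic pigeonholing) is routine and parallels Lemma \ref{lmm:Cauchy} verbatim.
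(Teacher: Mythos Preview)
Your overall plan---Cauchy--Schwarz in $(n_1,t_1')$, smooth majorant, expand the square, extract $e_1=\gcd(h_2t_3-h_3t_2,t_1')$, dyadic pigeonhole---does produce a sum of the shape $\mathscr{A}_4$, but it only yields the bound $|\mathscr{A}_3|^2\ll x^{o(1)}NT_1\sup_{E_1S_1\asymp T_1}|\mathscr{A}_4|$ with \emph{no} factor $\min(E_1,R)$ and \emph{no} constraint $E_1\ge R/(T_0D_1)$. Your attempt to recover these from ``divisor-support bookkeeping'' after the fact does not work: once you apply Cauchy--Schwarz with the weight $|\tilde\gamma_{t_1'}|^2$ absorbed into the first factor, the second factor carries weight $1$ on $t_1'$ and the factorization structure $d_1t_0t_1'=rs$ is gone. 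Your claim that ``$e_1$ divides $rs$ and $r\le 2R$ forces $e_1\ll R$'' is also false: $e_1\mid t_1'\mid rs$ with $rs\sim RS$, and there is no reason $e_1$ should see only the $r$-part.

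The paper proceeds differently. It opens up $|\tilde\gamma_{t_1'}|$ \emph{before} Cauchy--Schwarz, writing $t_1'=r's'$ with $r'=(t_1',r)$, $s'=(t_1',s)$; since $r\sim R$ and $d_1t_0\ll D_1T_0$ this forces $R/(D_1T_0)\ll r'\le 2R$. It then splits the $h$-sum according to the residue class $c\equiv h\overline{t_2'}\Mod{r'}$, introducing an extra outer sum of length $r'$, and only \emph{then} applies Cauchy--Schwarz in $(r',s',n_1,c)$. This gives the factor $R'^2S'N\asymp R'T_1N$ on the outside---the source of the extra $\min(E_1,R)$---and, crucially, fixing $c$ forces $h_2\overline{t_2}\equiv h_3\overline{t_3}\equiv c\Mod{r'}$ in the expanded square, i.e.\ $r'\mid(h_2t_3-h_3t_2)$, hence $r'\mid e_1$. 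That is where the constraint $E_1\ge R/(T_0D_1)$ comes from: it is engineered by the residue-class splitting, not deduced from support considerations afterwards. This is the whole point of the lemma and the reason Proposition~\ref{prpstn:SecondProp} beats Proposition~\ref{prpstn:MainProp} near $N\approx x^{2/5}$; your version of the argument recovers only Lemma~\ref{lmm:Cauchy}.
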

%
%
%%%%%%%%%%%%%%%%%%%%%%%%%%
%
%
\begin{proof}
We first swap the order of summation and use the upper bound for the coefficients $\tilde{\gamma}$. Given $r\sim R$ and $s\sim S$ with $r s=d_1 t_0 t_1'$ and $r s$ squarefree, let $r'=(t_1',r)$ and $s'=(t_1',s)$. We see that $r'\in [R/(D_1T_0),2R]$. Given $r',s'$ there are $\tau(d_1 t_0)\le x^{o(1)}$ choices of $r,s$. Thus, noting that $\tilde{\gamma_{t}}$ is supported on $\tau(t)\le(\log{x})^B$ with $t$ square-free and coprime to $q_0$, this gives
\begin{align*}
\mathscr{A}_3&\ll \sum_{t_1'\sim T_1}|\tilde{\gamma}_{t_1'}|\sum_{\substack{n_1\sim N\\ (n_1,q_0t_1')=1}}|\alpha'_{n_1}||\mathscr{A}_3'|\\
&\ll x^{o(1)}\sup_{\substack{R' S'\asymp  T_1\\ R/(D_1 T_0)\le R' \le R}}\sum_{r'\sim R'}\sum_{\substack{s'\sim S'\\  r' s'\in\mathcal{R} }}\sum_{\substack{n_1\sim N\\ (n_1,q_0r' s')=1}}|\mathscr{A}_3'|,
\end{align*}
where we recall $\mathcal{R}=\{r:\,\mu^2(r)=1,\tau(r)\le (\log{x})^B,\,(r,q_0)=1\}$ and
\begin{align*}
\mathscr{A}_3'&:=\sum_{\substack{t_2'\sim T_2\\ (t_2',q_0 r' s')=1}}\lambda_{t_2'}\sum_{\substack{n_2\sim N\\ n_2\overline{b'_{t_2'}}\equiv n_1\overline{b}_{r' s'}\Mod{q_0} \\ (n_2,q_0 d_2 t_2')=1}}\overline{\alpha'}_{n_2}\sum_{\substack{h\sim H\\ (h,r' s' t_2')=1}}\xi',\\
\xi'&:=e\Bigl(\frac{b_{r' s'}h\overline{n_1 t_2'}}{q_0 r' s'}\Bigr)\Bigl(\frac{b'_{t_2'}h\overline{n_2 q_0 r' s'}}{t_2'}\Bigr).
\end{align*}
We now split the summation according to the residue class of $h\overline{t_2'}\Mod{r'}$, and then apply Cauchy-Schwarz and insert a smooth majorant for the $n_1$ summation. Let $\mathscr{A}_3''$ be $\mathscr{A}_3'$ with the summation restricted by the condition $h\overline{t_2'}\equiv c\Mod{r'}$. This gives
\begin{align*}
\mathscr{A}_3&\ll x^{o(1)}\sup_{\substack{R' S'\sim T_1\\ R/(T_0 D_1)\le R'\le R}}\sum_{r'\sim R'}\sum_{\substack{s'\sim S'\\ r' s'\in\mathcal{R}}}\sum_{\substack{n_1\sim N\\ (n_1,q_0 r' s')=1}}\sum_{\substack{c\Mod{r'}\\ (c,r')=1}}|\mathscr{A}_3''|\\
&\ll x^{o(1)}\sup_{\substack{R' S'\sim T_1\\ R/(T_0 D_1)\le R'\le R}}\Bigl(R'{}^2 S' N\sum_{r'\sim R'}\sum_{\substack{s'\sim S'\\ r' s'\in\mathcal{R}}}\sum_{\substack{n_1\sim N\\ (n_1,q_0r' s')=1}}\sum_{\substack{c\Mod{r'}\\ (c,r')=1}}|\mathscr{A}_3''|^2\Bigr)^{1/2}\\
&\ll x^{o(1)}\sup_{\substack{R' S'\sim T_1\\ R/(T_0 D_1)\le R'\le R}}\Bigl(R' T_1 N\sum_{r'\sim R'}\sum_{\substack{s'\sim S'\\ r' s'\in\mathcal{R}}}\sum_{\substack{n_1\\ (n_1,q_0r' s')=1}}\psi\Bigl(\frac{n_1}{N}\Bigr)\sum_{\substack{c\Mod{r'}\\ (c,r')=1}}|\mathscr{A}_3''|^2\Bigr)^{1/2}\\
&\ll x^{o(1)}\sup_{\substack{R' S'\sim T_1\\ R/(T_0 D_1)\le R'\le R}}\Bigl(R' T_1 N|\mathscr{A}_3'''|\Bigr)^{1/2},
\end{align*}
where, expanding the square,
\begin{align*}
\mathscr{A}_3'''&:=\sum_{r'\sim R'}\sum_{\substack{s'\sim S'\\ r' s'\in\mathcal{R} }}\sum_{\substack{t_2,t_3\sim T_2\\ (t_2t_3,q_0 r' s')=1}}\lambda_{t_2}\overline{\lambda_{t_3}}\sum_{\substack{n_1\\ (n_1,q_0r' s')=1}}\psi\Bigl(\frac{n_1}{N}\Bigr)\\
&\qquad \times \sum_{\substack{h_2,h_3\sim H\\ h_2t_3\equiv h_3t_2\Mod{r'}  \\ (h_2,r' s' t_2)=1\\ (h_3,r' s' t_3)=1}}\sum_{\substack{n_2,n_3\sim N\\ n_2\overline{b'_{t_2}}\equiv n_1\overline{b_{r' s'}}\Mod{q_0}\\ n_3\overline{b'_{t_3}}\equiv n_1\overline{b_{r' s'}}\Mod{q_0}\\ (n_2,q_0d_2 t_2)=1\\ (n_3,q_0d_2t_3)=1}}\alpha'_{n_2}\overline{\alpha'_{n_3}}\xi''',\\
\xi'''&:=e\Bigl(\frac{b_{r' s'}\overline{n_1}(h_2\overline{t_2}-h_3\overline{t_3})}{q_0 r' s'}\Bigr)e\Bigl(\frac{b'_{t_2}h_2\overline{n_2 q_0 r' s'}}{t_2}\Bigr)e\Bigl(\frac{-b'_{t_3} h_3 \overline{n_3 q_0 r' s'}}{t_3}\Bigr).
\end{align*}
We wish to control some common divisors. Let $e_1=(h_2t_3-h_3t_2,r' s')$ and let $s_1 e_1=r' s'$, so $(s_1,h_2t_3-h_3t_2)=1$ since $r's'$ is square-free. Since $h_2\overline{t_2}\equiv h_3\overline{t_3}\Mod{r'}$, we see that $r'|e_1$. We also see that
\[
e\Bigl(\frac{b_{r' s'}\overline{n_1}(h_2\overline{t_2}-h_3\overline{t_3})}{q_0 r' s'}\Bigr)=e\Bigl(\frac{b_{e_1 s_1}\overline{n_1 e_1}(h_2\overline{t_2}-h_3\overline{t_3})}{q_0 s_1}\Bigr),
\]
and so $\xi'''$ simplifies slightly to give the expression of the lemma. Thus
\begin{align*}
\mathscr{A}_3'''&\ll x^{o(1)}\sup_{\substack{E_1 S_1\asymp T_1 \\ E_1\ge R_1}}\sum_{e_1\sim E_1}\sum_{s_1\sim S_1}\eta_{e_1,s_1}\sum_{\substack{t_2,t_3\sim T_2\\ (t_2t_3,q_0 e_1 s_1)=1}}\lambda_{t_2}\overline{\lambda_{t_3}}\sum_{\substack{n_1\\ (n_1,q_0 e_1 s_1)=1}}\psi\Bigl(\frac{n_1}{N}\Bigr)\\
&\qquad \times \sum_{\substack{h_2,h_3\sim H\\ h_2t_3\equiv h_3t_2\Mod{e_1} \\ (h_2t_3-h_3t_2,s_1)=1 \\ (h_2,e_1 s_1 t_2)=1\\ (h_3,e_1 s_1 t_3)=1}}\sum_{\substack{n_2,n_3\sim N\\ n_2\overline{b'_{t_2}}\equiv n_1\overline{b_{e_1 s_1}}\Mod{q_0}\\ n_3\overline{b'_{t_3}}\equiv n_1\overline{b_{e_1 s_1}}\Mod{q_0}\\ (n_2,q_0 d_2 t_2)=1\\ (n_3,q_0 d_2 t_3)=1}}\alpha'_{n_2}\overline{\alpha'_{n_3}}\xi''',\\
\eta_{e_1,s_1}&:=\sum_{\substack{r'\sim  R'\\ r'|e_1}}\sum_{\substack{s'\sim S'\\ r' s'=e_1 s_1\in\mathcal{R}}}\frac{1}{(\log{x})^B}\le 1.
\end{align*}
Noting that $R/(D_1T_0)\le R_1\le \min(E_1,R)$, this gives the result.
\end{proof}
%
%
%%%%%%%%%%%%%%%%%%%%%%%%%%
%
%
\begin{lmm}\label{lmm:AltA4}
Let $\mathscr{A}_4$ be as in Lemma \ref{lmm:AlternativeCauchy}. Let $N,R,T,Q$ satisfy $QT=x^{1/2+\delta}\ge x^{1/2-\epsilon/100}$ and 
\begin{align*}
R^2 x^{6\delta+4\epsilon}&\le T\le x^{1/10-3\delta-3\epsilon}R^{2/5},\\
x^{1/4+13\delta/2+3\epsilon}T&\le N\le \frac{x^{1/2-3\delta-4\epsilon}}{R}.
\end{align*}
Then we have that
\[
\sum_{q\sim Q}\sum_{t_0\sim T_0}|\mathscr{A}_4|\ll \frac{N^3 E_1 S_1 T_2^2}{x^\epsilon \min(E_1,R) Q}.
\]
\end{lmm}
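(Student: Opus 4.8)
The plan is to follow the same three--stage architecture used to control $\mathscr{S}_4$ in Section~\ref{sec:MainProp} (namely Lemmas~\ref{lmm:Diag1}, \ref{lmm:SecondCauchy}, \ref{lmm:Diag2} and \ref{lmm:OffDiag}), but now exploiting the fact that in $\mathscr{A}_4$ the phase $\xi'''$ has the $n_1$--component to modulus $q_0 s_1$ rather than $q_0 e_1 s_1$. Completing the $n_1$--summation therefore introduces a shorter Kloosterman sum, and it is precisely this reduced modulus that yields the extra factor $\min(E_1,R)^{-1}$ in the claimed bound; this is the ``de--amplification'' made possible by the additional factorisation hypothesis on the moduli.

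First I would dispose of a pseudo--diagonal range, say $S_1\ll (N/(QT_0))^{2/3}$, exactly as in Lemma~\ref{lmm:Diag1}. Here the inner $n_1$--sum is essentially complete: after removing the coprimality condition $(n_1,e_1)=1$ by M\"obius inversion and applying Lemma~\ref{lmm:InverseCompletion}, one separates the $\ell_1=0$ Ramanujan term (of size $\asymp N/(QT_0 s_1)$) from the rest (controlled by the Weil bound, Lemma~\ref{lmm:Kloosterman}), leaving a main contribution against which one splits by the residue class of $n_1\overline{b_{e_1 s_1}}\Mod{q_0}$, applies Cauchy--Schwarz in the $n_2,n_3$ sums, counts compatible tuples $(e_1,s_1,h_2,h_3,t_2,t_3)$ as in the $n_{c;r_2}$ computation there, and finishes with the trivial bound for $\sum_{t_2}\sum_{n_2\equiv n_3}\alpha'_{n_2}\overline{\alpha'_{n_3}}$. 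The upper constraint $N\le x^{1/2-3\delta-4\epsilon}/R$ together with $QT=x^{1/2+\delta}$ is what makes this term admissible.

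In the complementary range $S_1\gg (N/(QT_0))^{2/3}$ I would Cauchy--Schwarz a second time in the variables $(t_2,t_3,n_2,n_3)$, insert smooth majorants, and expand the square, producing (as in Lemma~\ref{lmm:SecondCauchy}) an eightfold sum over $h_2,h_2',h_3,h_3'$ against four $n$--variables $n_1,n_1',n_2,n_3$ tied by congruences modulo $q_0$. I would split off the diagonal part, where the linear form $e_1'r_1'(h_2 t_3-h_3 t_2)-e_1 r_1(h_2' t_3-h_3' t_2)$ vanishes: this forces $s_1=s_1'$ and a proportionality among the $h$'s, after which (as in Lemma~\ref{lmm:Diag2}) one strips GCDs, Fourier--completes $n_1',n_2,n_3$ and then $n_1$ by Lemma~\ref{lmm:InverseCompletion}, bounds the resulting Kloosterman sums --- now to the small moduli $s_1$ and $r'$ --- by Lemma~\ref{lmm:Kloosterman}, and counts solutions using Lemma~\ref{lmm:Divisor}, the constraint $R^2 x^{6\delta+4\epsilon}\le T$ providing the needed saving. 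For the off--diagonal part one Fourier--completes all four $n$--sums, applies the Weil bound to each Kloosterman factor, uses $k_0'\neq 0$ to bound the $q_0$--sum and the $h$--tuple count via divisor estimates, separating the subcases $k_2',k_3'\neq 0$ and $k_2'k_3'=0$ as in Lemma~\ref{lmm:OffDiag}; the condition $T\le x^{1/10-3\delta-3\epsilon}R^{2/5}$ is the one that closes this estimate.

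The main obstacle will be the exponent bookkeeping in the final off--diagonal step, rather than any single exponential--sum input: one must track the dependence on $E_1$ versus $R$ carefully enough that the full factor $\min(E_1,R)^{-1}$ (and not merely a square root of it) survives, verify that the assembled bound is dominated by its leading term in exactly the stated ranges, and in particular produce the new lower threshold $N\ge x^{1/4+13\delta/2+3\epsilon}T$ --- which has no analogue in Proposition~\ref{prpstn:MainProp} and arises from beating the $N^3/(Q^2T_0^2)$--type term generated by the second Cauchy--Schwarz. Optimising all these inequalities simultaneously, using $QT=x^{1/2+\delta}$ and $NM\asymp x$ to eliminate variables, is where the real effort lies; the exponential--sum estimates themselves are straightforward adaptations of those already carried out for Proposition~\ref{prpstn:MainProp}.
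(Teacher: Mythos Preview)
Your overall architecture is exactly the paper's: recognise that $\mathscr{A}_4$ has the same shape as $\mathscr{S}_4$ (with $S_1,T_2,T,T_0$ playing the roles of $R_1'',R_2',R,R_0$) and rerun the arguments of Lemmas~\ref{lmm:Diag1}--\ref{lmm:OffDiag} verbatim, then check the resulting exponent conditions against the new hypotheses. The paper states this identification explicitly at the start of its proof.

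However, your diagnosis of where the extra factor $\min(E_1,R)^{-1}$ comes from is wrong, and this could derail you. The $n_1$--phase in $\mathscr{A}_4$ is to modulus $q_0 s_1$, but the $n_1$--phase in $\mathscr{S}_4$ is already to modulus $q_0 r_1'$, not $q_0 e_1 r_1'$: the factor $e_1$ was removed in \emph{both} settings by the same trick ($e_1\mid h_2 r_3-h_3 r_2$ kills that part of the phase). So there is no structural difference in the Kloosterman moduli, and the completion/Weil steps are literally unchanged. The true source of the extra saving is the constraint $E_1\ge R/(T_0 D_1)$ coming from Lemma~\ref{lmm:AlternativeCauchy}: there one split into residue classes $\Mod{r'}$ before Cauchy--Schwarz, which forces $r'\mid e_1$ after squaring and hence gives a nontrivial lower bound on $E_1$ (equivalently, an upper bound $S_1\ll T/(E_1 D_1 T_0)$ that is genuinely smaller than $T_1$). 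This lower bound has no analogue in Lemma~\ref{lmm:Cauchy}, where $E_1$ can be as small as~$1$.

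Concretely, the constraint is used only at the very end of the off--diagonal bookkeeping. After reaching the four--term analogue of \eqref{eq:S6Intermediate} for $\sum_{q,t_0}\mathscr{A}_6$, you pull out $N^4 E_1^2 S_1^2 T_2^2/(Q^2\min(E_1,R)^2)$ from each term. For the first term this leaves a residual factor $S_1 Q^{15/2}T^9/(E_1^2 x^4)$; bounding one power of $S_1$ by $T/(E_1 D_1 T_0)$ and then invoking $E_1 D_1 T_0\ge R$ converts this to $Q^{15/2}T^{10}/(R x^4)$, which is what produces the hypothesis $T\le x^{1/10-3\delta-3\epsilon}R^{2/5}$. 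The second term yields $N\ge x^{1/4+13\delta/2+3\epsilon}T$ directly (no use of the $E_1$ lower bound), and the remaining terms are handled similarly or shown to be dominated. Once you locate the saving correctly, the rest of your plan goes through unchanged.
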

%
%
%%%%%%%%%%%%%%%%%%%%%%%%%%
%
%
\begin{proof}
The key observation is that $\mathscr{A}_4$ is of exactly the same form as $\mathscr{S}_4$ from Lemma \ref{lmm:Cauchy} (with $R_1''$ replaced by $S_1$, $R_2'$ replaced by $T_2$ etc), and so we can reuse the arguments from Lemmas \ref{lmm:Diag1}-\ref{lmm:OffDiag}. We require a slightly stronger bound on $\mathscr{A}_4$ since we wish to gain an additional factor of $\min(E_1,R)$, and the key thing that enables this is the bound $E_1\ge R/(T_0D_1)$ which ensures $S_1$ cannot be too large.

Specifically, if $S_1\ll N^{2/3}/(Q T_0)^{2/3}$ then the argument in the proof of Lemma \ref{lmm:Diag1} up to \eqref{eq:S4DiagBound} shows that
\[
\mathscr{A}_4\ll x^{o(1)} \frac{ N^4 Q T_1^2 T_2^2 T^2}{x^2}.
\]
This gives the result provided
\[
N< \frac{x^{2-2\epsilon}}{Q^3 T^3 R}.
\]
Recalling $QT=x^{1/2+\delta}$, this simplifies to
\begin{align}
N< \frac{x^{1/2-3\delta-2\epsilon}}{R}.
\label{eq:AltNBound1}
\end{align}
We now consider the contribution when $S_1\ge N^{2/3}/(Q T_0)^{2/3}$. The argument of Lemma \ref{lmm:SecondCauchy} gives
\[
|\mathscr{A}_4|^2\ll x^{o(1)}\frac{N^2 T_2^2}{Q T_0}(|\mathscr{A}_5|+|\mathscr{A}_6|),
\]
where, $\mathscr{A}_5,\mathscr{A}_6$ are defined analogously to $\mathscr{S}_5,\mathscr{S}_6$. Thus it suffices to show that
\[
\sum_{q\sim Q}\sum_{t_0\sim T_0}(|\mathscr{A}_5|+|\mathscr{A}_6|)\ll \frac{N^4 E_1^2 S_1^2 T_2^2}{x^{3\epsilon}\min(E_1,R)^2 Q^2}.
\]
The argument of the proof of Lemma \ref{lmm:Diag2} up to \eqref{eq:S93} shows that
\[
\mathscr{A}_5\ll \frac{E_1^2 S_1^2 T_2^2 N^4}{Q^3 T_0^2}\Bigl(\frac{Q^6 T^5}{x^{3-\epsilon}}+\frac{N^2  Q^4 T^4}{x^{3-\epsilon}}\Bigr).
\]
Recalling that $Q T=x^{1/2+\delta}$, this gives an acceptably small contribution if we have
\begin{align}
T&> R^2 x^{6\delta+4\epsilon},\label{eq:AltTBound1}\\
N&< \frac{x^{1/2-2\delta-2\epsilon}}{R}.
\label{eq:AltNBound2}
\end{align}
We note that \eqref{eq:AltNBound1} implies \eqref{eq:AltNBound2}.

Finally, following the proof of Lemma \ref{lmm:OffDiag} up to \eqref{eq:S6Intermediate} gives
\begin{align*}
&\sum_{q\sim Q}\sum_{t_0\sim T_0}\mathscr{A}_6 \ll (\log{x})^{O_B(1)}\Bigl( \frac{S_1^3 T_2^2 Q^{11/2} N^4 T^9}{x^4}+\frac{ S_1^2 T_2^{2} N^3 Q^{9/2} T^{15/2}}{x^3}\Bigr)\nonumber\\
 &\qquad\qquad\qquad\qquad+x^{o(1)}\Bigl(\frac{S_1^3 T_2^2 Q^{11/2} N^4 T^8 E_1^2}{x^{4}}+\frac{E_1 S_1^2 T_2^{2} N^3 Q^{9/2} T^{13/2}}{x^{3} }\Bigr)\\
 &\ll \frac{x^{o(1)} N^4 E_1^2 S_1^2 T_2^2}{Q^2 \min(E_1,R)^2}\Bigl( \frac{Q^{15/2} T^{10} }{E_1 D_1 x^4 T_0}+\frac{ Q^{13/2} T^{15/2}}{N x^3}+\frac{ Q^{15/2} T^9 R}{x^{4}}+\frac{R Q^{13/2} T^{13/2}}{N x^{3} }\Bigr).
\end{align*}
In the expression above we used the bound $S_1\ll T/(E_1 D_1T_0)$.

We recall that $E_1\ge R/(T_0D_1)$. Thus we see that this gives the desired bound $O(N^4 S_1^2 T_2^2/(Q^2 x^{2\epsilon}))$ provided we have
\[
 \frac{Q^{15/2} T^{10} }{R x^4 }+\frac{ Q^{13/2} T^{15/2}}{N x^3}+\frac{ Q^{15/2} T^9 R}{x^{4}}+\frac{R Q^{13/2} T^{13/2}}{N x^{3} }\ll \frac{1}{x^{3\epsilon}}.
\]
Since $R\ll T$, the second term is larger than the fourth term. Thus, since $QT=x^{1/2+\delta}$ we obtain the desired bound provided we have
\begin{align}
T&<x^{1/10-3\delta-3\epsilon}R^{2/5},\label{eq:AltTBound2}\\
N&>x^{1/4+13\delta/2+3\epsilon}T,\\
T&<\frac{x^{1/6-5\delta-3\epsilon}}{R^{2/3}}.\label{eq:AltTBound3}
\end{align}
Finally, we note that \eqref{eq:AltTBound1} and \eqref{eq:AltTBound2} imply that
\[
T=\frac{T^{5/3}}{T^{2/3}}<\frac{x^{1/6-5\delta-5\epsilon}R^{2/3}}{R^{4/3} x^{4\delta+2\epsilon/3}}<\frac{x^{1/6-5\delta-3\epsilon}}{R^{2/3}},
\]
so \eqref{eq:AltTBound3} follows from \eqref{eq:AltTBound1} and \eqref{eq:AltTBound2}. This gives the result.
\end{proof}
%
%
%%%%%%%%%%%%%%%%%%%%%%%%%%
%
%
\begin{lmm}\label{lmm:SecondConclusion}
Let $\delta,A,B>0$ and $B_2=B_2(A,B)$ be sufficiently large in terms of $A,B$. Let $M,N,R,T\ge 1$ satisfy $MN\asymp x$, $QT=x^{1/2+\delta}$ and
\begin{align*}
R^2 x^{6\delta+4\epsilon}&\le T\le x^{1/10-3\delta-3\epsilon}R^{2/5},\\
\max\Bigl(x^{1/4+13\delta/2+3\epsilon}T,\,Q x^{2\delta+3\epsilon}\Bigr)&\le N\le \frac{x^{1/2-3\delta-4\epsilon}}{R}.
\end{align*}
Let $\gamma_{qrs}$ be a 1-bounded complex sequence, and define 
\[
\gamma_{q,t}:=\frac{\mathbf{1}_{\tau(qt)\le (\log{x})^C}}{(\log{x})^C}\sum_{r\sim R}\sum_{\substack{s\sim S\\ rs=t\\ (rs,q)=1}}\gamma_{qrs}\mu^2(t).
\]
Let $|\alpha_n|\le \tau(n)^{B}$ be a complex sequence satisfying the Siegel-Walfisz condition \eqref{eq:SiegelWalfisz}, and let $a_{q,t},a'_{q,t}$ be integer sequences satisfying $(a_{q,t},qt)=(a'_{q,t},qt)=1$. Let
\begin{align*}
\mathscr{A}&:=\sum_{q\sim Q}\sum_{\substack{t_1,t_2\sim T}}\gamma_{q,t_1}\overline{\gamma_{q,t_2}}\hspace{-0.3cm}\sum_{\substack{n_1,n_2\sim N\\ \tau(n_1),\tau(n_2)\le (\log{x})^{B_2} }}\hspace{-0.3cm}\alpha_{n_1}\overline{\alpha}_{n_2}\sum_{\substack{m\sim M\\m\equiv a_{q,t_1}\overline{n_1}\Mod{qt_1}\\ m\equiv a'_{q,t_2}\overline{n_2}\Mod{q t_2}}}\psi\Bigl(\frac{m}{M}\Bigr).
\end{align*}
Then we have that
\[
\mathscr{A}=\mathscr{A}_{MT}+O_A\Bigl(\frac{M N^2}{Q (\log{x})^{2A}}\Bigr),
\]
where for some constant $C_1=C_1(A,B,B_2)$
\begin{align*}
\mathscr{A}_{MT}&:=\sum_{q\sim Q}\sum_{t_0\le N/((\log{x})^{C_1} Q)}\sum_{\substack{t_1',t_2'\sim T/t_0\\ (t_1',t_2')=1}}\gamma_{q,t_0t_1'}\overline{\gamma_{q,t_0t_2'}}\sum_{\substack{n_1,n_2\sim N\\ (n_1,q t_0t_1')=1\\ (n_2,q t_0 t_2')=1}}\alpha_{n_1}\overline{\alpha}_{n_2}\frac{M\hat{\psi}(0)}{q t_0 t_1' t_2'\phi(q t_0)}.
\end{align*}
\end{lmm}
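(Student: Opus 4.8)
\textit{Proof proposal.} The plan is to follow the proof of Lemma \ref{lmm:MainConclusion} step for step, replacing the single use of Lemma \ref{lmm:Cauchy} and the subsequent Lemmas \ref{lmm:Diag1}--\ref{lmm:OffDiag} by the sharper inputs Lemma \ref{lmm:AlternativeCauchy} and Lemma \ref{lmm:AltA4}, which exploit the product factorisation $t=rs$ of the moduli carried implicitly by the coefficients $\gamma_{q,t}$. First, as in Lemma \ref{lmm:MainConclusion}, set $t_0=(t_1,t_2)$ and dyadically decompose $t_0\sim T_0$. The congruences $m\equiv a_{q,t_1}\overline{n_1}\Mod{qt_1}$ and $m\equiv a'_{q,t_2}\overline{n_2}\Mod{qt_2}$ force $(mn_1n_2,qt_1t_2)=1$ and $n_1\overline{a_{q,t_1}}\equiv n_2\overline{a'_{q,t_2}}\Mod{qt_0}$, so the effective modulus is $qt_0t_1't_2'$ with $t_1',t_2'\sim T/T_0$ coprime. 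For $T_0>N/((\log x)^{C_1}Q)$ the contribution is negligible by Lemma \ref{lmm:GCD} applied with $T$ in place of $R$ (its hypothesis $N>Qx^{2\delta}(\log x)^{3C_1}$ being implied by $N\ge Qx^{2\delta+3\epsilon}$), provided $C_1=C_1(A,B)$ and then $C=C(A,B,B_2)$ are chosen large enough.

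For the surviving range $T_0\le N/((\log x)^{C_1}Q)$ we apply (the proof of) Lemma \ref{lmm:Fourier} with the 1-bounded sequence $c_{q,r}=\gamma_{q,r}$, which is supported on squarefree $r$ coprime to $q$ with $\tau(qr)\le(\log x)^C$ and whose corresponding Siegel--Walfisz input for $\alpha_n$ survives the extra coprimality restrictions so that Lemma \ref{lmm:Barban} still applies. This produces the main term $\mathscr{A}_{MT}$, which is independent of the residue classes $a_{q,t},a'_{q,t}$, together with the satellite sum $\mathscr{A}_2$ (the analogue of $\mathscr{S}_2$) and an admissible error of size $O_A(MN^2/(Q(\log x)^{2A}))$; the $\tau$-restrictions on $n_1,n_2$ in the main term are removed by Lemma \ref{lmm:Divisor} at negligible cost. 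It therefore remains to bound $\mathscr{A}_2$.

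As in Lemma \ref{lmm:Simplify}, partial summation to separate the variables in the $\hat\psi$ factor, followed by extracting the GCDs $d_1=(h,t_1)$, $d_2=(h,t_2)$, reduces the task to bounding a sum $\mathscr{A}_3$ of exactly the shape occurring in Lemma \ref{lmm:AlternativeCauchy}: here the coefficient attached to $t_1'$ is $\tilde\gamma_{t_1'}$, for which the averaged majorant $|\tilde\gamma_t|\le (\log x)^{-C}\mathbf{1}_{\tau(qd_1t_0t)\le(\log x)^C}\sum_{rs=d_1t_0t,\,(rs,q)=1}|\gamma_{qrs}|\mu^2(d_1t_0t)$ holds because $|\tilde\gamma_{t_1'}|\le|\gamma_{q,t_0d_1t_1'}|$. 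Applying Lemma \ref{lmm:AlternativeCauchy} in place of Lemma \ref{lmm:Cauchy} — performing the Cauchy step so as to split off the divisor $r'\ge R/(D_1T_0)$ of the modulus and collapse the $n_1$-sum along the complete residue class $h\overline{t_2'}\Mod{r'}$ — yields $|\mathscr{A}_3|^2\ll x^{o(1)}NT_1\min(E_1,R)|\mathscr{A}_4|$, and since $\mathscr{A}_4$ has precisely the structure of $\mathscr{S}_4$ (with $S_1,T_2,q_0$ playing the roles of $R_1'',R_2',q_0$), Lemma \ref{lmm:AltA4} — which reruns the arguments of Lemmas \ref{lmm:Diag1}--\ref{lmm:OffDiag} under the stronger constraint $E_1\ge R/(T_0D_1)$ — gives $\sum_{q\sim Q}\sum_{t_0\sim T_0}|\mathscr{A}_4|\ll N^3E_1S_1T_2^2/(x^\epsilon\min(E_1,R)Q)$. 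Chaining these two bounds via Cauchy--Schwarz in the $q,t_0$ variables, the two occurrences of $\min(E_1,R)$ cancel and (using $E_1S_1\asymp T_1$) one gets $\sum_{q}\sum_{t_0}|\mathscr{A}_3|\ll x^{o(1)-\epsilon/2}T_0^{1/2}N^2T_1T_2$; propagating this through the reductions of Lemma \ref{lmm:Simplify} and Lemma \ref{lmm:Fourier} and summing over the $O(\log x)$ dyadic ranges of $T_0$ gives $\mathscr{A}_2\ll MN^2/(Q(\log x)^{2A})$ with power-saving room to spare, as every estimate below the Fourier step has a genuine $x^{\epsilon}$-saving.

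The main obstacle is the bookkeeping of exponents: one has to check that the stated ranges for $N,R,T,Q$ are exactly what make Lemma \ref{lmm:AltA4} applicable in both regimes $S_1\le (N/(QT_0))^{2/3}$ and $S_1>(N/(QT_0))^{2/3}$, and — more delicately — that the gain $\min(E_1,R)$ extracted in Lemma \ref{lmm:AlternativeCauchy} is precisely enough to offset the loss forced by keeping $q$ on the outside of every summation. A secondary technical point is verifying that the residue-class-uniform form of the main term $\mathscr{A}_{MT}$ emerging from Lemma \ref{lmm:Fourier} matches the displayed expression after the $\tau$-truncations are removed.
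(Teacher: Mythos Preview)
Your proposal is correct and follows essentially the same route as the paper: dyadically decompose $t_0=(t_1,t_2)$, dispose of large $T_0$ via Lemma \ref{lmm:GCD}, extract $\mathscr{A}_{MT}$ via Lemma \ref{lmm:Fourier}, reduce to $\mathscr{A}_3$ via Lemma \ref{lmm:Simplify}, and then chain Lemma \ref{lmm:AlternativeCauchy} with Lemma \ref{lmm:AltA4} through Cauchy--Schwarz in $q,t_0$ so that the $\min(E_1,R)$ factors cancel. Your final bound $\sum_{q,t_0}|\mathscr{A}_3|\ll x^{o(1)-\epsilon/2}T_0^{1/2}N^2T_1T_2$ is in fact slightly sharper than the paper's displayed $N^2T_0T_1T_2/(x^{\epsilon/2}Q)$ (which appears to contain harmless typos), and your observation about removing the $\tau$-truncations from $\mathscr{A}_{MT}$ is a detail the paper leaves implicit.
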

%
%
%%%%%%%%%%%%%%%%%%%%%%%%%%
%
%
\begin{proof}
This is very similar to the proof of Lemma \ref{lmm:MainConclusion}, since our sum $\mathscr{A}$ is a special case of the sum $\mathscr{S}$ considered there, but with the special form of the coefficients $\gamma_{q,t}$. (It is this special form which enables us to use Lemma \ref{lmm:AlternativeCauchy} to get a result when $N\approx x^{2/5}$). Let $t_0=(t_1,t_2)$, and we consider $t_0\sim T_0$ for different choices of $T_0$. We first assume that $B_2=B_2$ is sufficiently large such that Lemma \ref{lmm:Fourier} applies. We then choose a constant $C_1=C_1(A,B,B_2)$ such that Lemma \ref{lmm:GCD} and Lemma \ref{lmm:Fourier} both apply. Thus, by Lemma \ref{lmm:GCD} there is a negligible contribution from $T_0>N/((\log{x})^{C_1}Q)$. By Lemma \ref{lmm:Fourier} and Lemma \ref{lmm:Simplify}, it suffices to show that for some sufficiently large constant $A_2=A_2(A,B)$
\begin{equation}
\sum_{q\sim Q}\sum_{t_0\sim T_0}|\mathscr{A}_3|\ll_{A_2} \frac{N^2 T_0 T_1 T_2}{(\log{x})^{A_2}},
\label{eq:A3Target}
\end{equation}
where $H\ll (\log{x})^5 N Q T_0  T_1 T_2/x$, $T_1\ll T/(D_1 T_0)$, $T_2\ll T/(D_2 T_0)$ and
\begin{align*}
\mathscr{A}_3&:=\sum_{\substack{t_1'\sim T_1}}\gamma_{t_1'}\sum_{\substack{t_2'\sim T_2\\  (t_1',t_2')=1}}\overline{\gamma'_{t_2'}}\sum_{\substack{n_1,n_2\sim N\\ n_1\overline{b_{t_1'}}=n_2\overline{b'_{t_2'}}\Mod{q t_0}\\ (n_1,q t_0d_1 t_1')=(n_2,q t_0d_2t_2')=1}}\alpha'_{n_1}\overline{\alpha'_{n_2}}\sum_{h\sim H}\xi,\\
\xi&:=e\Bigl(\frac{b_{t_1'}h'\overline{n_1 t_1't_2'}}{q t_0}\Bigr)e\Bigl(\frac{b_{t_1'}h\overline{n_1 q t_0 t_2'}}{t_1'}\Bigr)\Bigl(\frac{b'_{t_2'}h\overline{n_2 q t_0 t_1'}}{t_2'}\Bigr).
\end{align*}
for some sequences $\gamma_{t'},\gamma'_{t'}$ (depending on $q,t_0,d_1,d_2$) with $|\gamma_{t_1'}|\le |\gamma_{q,d_1t_0t_1'}|$, $|\gamma'_{t_2'}|\le |\gamma_{,d_2t_0t_2'}|$ and $b_{t},b'_t$ (depending on $q,t_0,d_1,d_2$) integer sequences with $(b_t,q t_0 d_1 t)=(b_t', q t_0 d_1 t)=1$ and some 1-bounded sequence $\alpha_n'$. 

Applying Lemmas \ref{lmm:AlternativeCauchy} and \ref{lmm:AltA4} in turn, we see that
\begin{align*}
\sum_{q\sim Q}\sum_{t_0\sim T_0}|\mathscr{A}_3|&\ll (QT_0)^{1/2}\Bigl(\sum_{q\sim Q}\sum_{t_0\sim T_0}|\mathscr{A}_3|^2\Big)^{1/2}\\
&\ll x^{o(1)}(Q N T_0 T_1)^{1/2} \Bigl(\sum_{q\sim Q}\sum_{t_0\sim T_0}\sup_{\substack{E_1,S_1\\ E_1 S_1\asymp T_1\\ E_1\ge R/(T_0D_1)}} \min(E_1,R)|\mathscr{A}_4|\Bigr)^{1/2}\\
&\ll x^{o(1)}(Q N T_0 T_1)^{1/2} \Bigl(\sup_{\substack{E_1,S_1\\ E_1 S_1\asymp T_1}}\frac{N^3 E_1 S_1 T_2^2}{x^\epsilon Q}\Bigr)^{1/2}\\
&\ll \frac{ N^2 T_0 T_1 T_2}{x^{\epsilon/2} Q}.
\end{align*}
This gives \eqref{eq:A3Target}, and hence the result.
\end{proof}
%
%
%%%%%%%%%%%%%%%%%%%%%%%%%%
%
%
\begin{proof}[Proof of Proposition \ref{prpstn:SecondProp}]
The argument use to show Proposition \ref{prpstn:SecondProp} is very similar to that for Proposition \ref{prpstn:MainProp}. By Lemma \ref{lmm:Divisor} and the trivial bound, the total contribution from $q_1,q_2,q_3$ with $\tau(q_1q_2q_3)\ge (\log{x})^C$ is $\ll x/(\log{x})^A$ provided $C$ is sufficiently large in terms of $A$. Therefore we only need to consider $\tau(q_1 q_2 q_3)\le (\log{x})^C$.

Given $q_1,q_2,q_3$, let $q_1q_2q_3=q^\square q^{\notsquare}$ with $q^\square$ squarefull and $q^{\notsquare}$ square-free. Let $q_i'=(q_i,q^{\notsquare})$ for $i\in\{1,2,3\}$. We then see that $q_1',q_2',q_3',q^\square$ are pairwise coprime with $q_1'q_2'q_3'q^\square=q_1q_2q_3$. By Lemma \ref{lmm:Squarefree} we only need to consider $q^\square\le x^{o(1)}$. Let $q:=q^\square q_1'$, $r:=q_2'$ and $s:=q_3'$. We then see it suffices to show that
\begin{align*}
\sum_{q\sim Q}\sum_{\substack{r\sim R\\ (r,q)=1\\ \mu^2(r)=1}}\sum_{\substack{s\sim S\\ (s,q r)=1\\ \mu^2(s)=1\\ \tau(q r s)\le (\log{x})^C}}\sup_{(a,qrs)=1}|\Delta(a;qrs)|\ll_A \frac{x}{(\log{x})^A},
\end{align*}
for all choices $Q,R,S$ with $Q=Q_1x^{o(1)}$, $R=Q_2x^{o(1)}$ and $S=Q_3x^{o(1)}$. By considering the average over $(a'_{q r s},q r s)=1$ and inserting 1-bounded coefficients $\gamma_{q r s}$ to remove the absolute values (whose support we restrict to $\tau(q r s)\le (\log{x})^C$), it suffices to show for all sequences $a_{q r s},a'_{q r s}$ with $(a_{q r s}a'_{q r s},q r s)=1$ that
\begin{align*}
\sum_{q\sim Q}\sum_{r\sim R}\sum_{\substack{s\sim S\\ (q,rs)=1}}\mu^2(rs)\gamma_{q r s}\tilde{\Delta}(a_{q r s},a'_{q r s};q r s)\ll_A \frac{x}{(\log{x})^A},
\end{align*}
where
\[
\tilde{\Delta}(a,b;q):=\sum_{n\sim N}\alpha_n \sum_{m\sim M}\beta_m \Bigl(\mathbf{1}_{n m\equiv a\Mod{q}}-\mathbf{1}_{n m\equiv b\Mod{q}}\Bigr).
\]
Let 
\begin{equation}
\gamma_{q,t}:=\frac{\mathbf{1}_{\tau(qt)\le (\log{x})^C}}{(\log{x})^C}\sum_{r\sim R}\sum_{\substack{s\sim S\\ rs=t\\ (rs,q)=1}}\gamma_{q r s}\mu^2(t).
\label{eq:GammaDef}
\end{equation}
Since $\gamma_{q r s}$ is 1-bounded and we have restricted to $\tau(q r s)\le (\log{x})^C$, we see that $\gamma_{q,t}$ is 1-bounded, and so it suffices to show that for all $T\asymp RS$ and all $A>0$
\[
\sum_{q\sim Q} \sum_{t\sim T}\gamma_{q,t}\tilde{\Delta}(a_{q t},a'_{qt};qt)\ll_A\frac{x}{(\log{x})^A}.
\]
By the trivial bound and Lemma \ref{lmm:Divisor}, there is a negligible contribution from $n$ with $\tau(n)\ge (\log{x})^{B_2}$ if $B_2\ge B_0(A)$. Thus we may restrict to $\tau(n)\le (\log{x})^{B_2}$ for some $B_2$ to be chosen later sufficiently large in terms of $A$.

This is now a special case of the sum considered in the proof of Proposition \ref{prpstn:MainProp}. By applying Cauchy-Schwarz in the $q,m$ variables (and inserting a smooth majorant for the $m$ summation), it suffices to show that for all choices of residue classes $a_{q,t},a'_{q,t}$ and all 1-bounded sequences $\gamma_{q r s}$ defining $\gamma_{q,t}$ in \eqref{eq:GammaDef} we have
\[
\mathscr{A}=X+O_A\Bigl(\frac{M N^2}{Q (\log{x})^{2A}}\Bigr)
\]
for some quantity $X$ independent of $a_{q,t},a'_{q,t}$, where
\begin{align*}
\mathscr{A}&:=\sum_{q\sim Q}\sum_{\substack{t_1,t_2\sim T}}\gamma_{q,t_1}\overline{\gamma_{q,t_2}}\hspace{-0.3cm}\sum_{\substack{n_1,n_2\sim N\\ \tau(n_1),\tau(n_2)\le (\log{x})^{B_2}}}\hspace{-0.3cm}\alpha_{n_1}\overline{\alpha}_{n_2}\sum_{\substack{m\sim M\\m\equiv a_{q,t_1}\overline{n_1}\Mod{qt_1}\\ m\equiv a'_{q,t_2}\overline{n_2}\Mod{q t_2}}}\psi\Bigl(\frac{m}{M}\Bigr).
\end{align*}
This estimate now follows from Lemma \ref{lmm:SecondConclusion}, provided $B_2$ is sufficiently large in terms of $A$ and provided we have
\begin{align*}
R^2 x^{6\delta+4\epsilon}&\le T\le x^{1/10-3\delta-3\epsilon}R^{2/5},\\
\max\Bigl(x^{1/4+13\delta/2+3\epsilon}T,\,Q x^{2\delta+3\epsilon}\Bigr)&\le N\le \frac{x^{1/2-3\delta-4\epsilon}}{R}.
\end{align*}
Recalling that $T\asymp RS$ and that $Q=Q_1x^{o(1)}$, $R=Q_2x^{o(1)}$, $S=Q_3x^{o(1)}$, we see that this give the result.
\end{proof}
%
%
%%%%%%%%%%%%%%%%%%%%%%%%%%
%
%
\begin{rmk}
It would be desirable to produce a variant of Proposition \ref{prpstn:MainProp} to cover the range $N\in[x^{1/2-3\delta-3\epsilon},2x^{1/2}]$ in the spirit of Proposition \ref{prpstn:SecondProp}. Unfortunately we have failed to accomplish this; the psuedo-diagonal terms of Lemma \ref{lmm:Diag1} render the first application of Cauchy-Schwarz in Lemma \ref{lmm:Cauchy} irrelevant; one obtains a subsum which is equivalent to the original. There doesn't seem to be an alternative to Lemma \ref{lmm:Cauchy} which doesn't quickly run into serious issues.
\end{rmk}
%
%
%%%%%%%%%%%%%%%%%%%%%%%%%%
%
%
\section{Zhang-style Type II estimate}\label{sec:Zhang}
%
%
%%%%%%%%%%%%%%%%%%%%%%%%%%
%
%
%\allowdisplaybreaks
We now prove Proposition \ref{prpstn:Zhang}. The proof of this proposition is very similar to the proof of the refined version of Zhang's Type II estimate \cite[\S12]{Zhang} as given by \cite[Proposition 7.2]{May1}. We require some mild generalisations to handle a slightly different setup and to handle some additional uniformity, but the fundamental content is the same. The key estimate is the following lemma.
%
%
%%%%%%%%%%%%%%%%%%%%%%%%%%
%
%
\begin{lmm}[Zhang exponential sum estimate]\label{lmm:Zhang}
Let $Q,K,R,R_0,M,N,H\le x^{O(1)}$ satisfy
\begin{align*}
H&\ll \frac{Q N R^2 K }{x^{1-\epsilon} R_0},\qquad R_0 K Q<N,\qquad K^5 R_0^3 Q^{7/2} R^{6} < x^{2-10\epsilon},\qquad N<\frac{x^{1-6\epsilon}}{Q K^2}.
\end{align*}
Let $c_{q,k,r}$ and $\alpha'_n$ be $1$-bounded complex sequences with $c_{q,k,r}$ supported on square-free $r$ with $P^-(r)\ge z_0:=x^{1/(\log\log{x})^3}$ and $q,r,k$ pairwise coprime. Let $a_{q,k,r},a'_{q,k,r}$ be two integer sequences satisfying $(a_{q,k,r},q  k r)=(a'_{q,k,r},q k r)=1$ and $a_{q,k,r}\equiv a'_{q,k,r}\Mod{q}$.  Define
\begin{align*}
\mathscr{Z}&:=\sum_{\substack{q\sim Q}}\sum_{k\sim K}\sum_{r_0\sim R_0}\sum_{\substack{r_1,r_2\sim R \\ (r_1,r_2)=1}}\sum_{\substack{n_1,n_2\sim N\\ n_1\overline{a_{q,k,r_0r_1}}\equiv n_2\overline{a'_{q,k,r_0r_2}}\Mod{q k r_0} \\ (n_1,q k r_0r_1)=(n_2,q k r_0r_2)=1}}\alpha'_{n_1}\overline{\alpha'_{n_2}}c_{q,k,r_0r_1}\overline{c_{q,k,r_0r_2}}\\
&\qquad\times \sum_{1\le |h|\le H}\hat{\psi}\Bigl(\frac{h M}{q k r_0 r_1 r_2}\Bigr)e\Bigl(\frac{a_{q,k,r_0r_1}h\overline{n_1 r_2}}{q k r_0 r_1}\Bigr)e\Bigl(\frac{a'_{q,k,r_0r_2}h\overline{n_2 q k r_0r_1}}{r_2}\Bigr),
\end{align*}
Then we have
\[
\mathscr{Z}\ll \frac{N^2 R^2 R_0}{x^\epsilon}.
\]
\end{lmm}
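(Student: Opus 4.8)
The plan is to adapt Zhang's Type II argument, in the refined form of \cite[Proposition 7.2]{May1}, to the present multi-modulus situation; the extra variable $k$ plays the role of the de-amplification, $q$-analogue-of-van-der-Corput parameter, and the whole point is that its presence shrinks the modulus of the final exponential sums enough for the Weil bound to be effective. The crucial structural input is the hypothesis $a_{q,k,r_0r_1}\equiv a'_{q,k,r_0r_2}\Mod{q}$: reducing the congruence $n_1\overline{a_{q,k,r_0r_1}}\equiv n_2\overline{a'_{q,k,r_0r_2}}\Mod{qkr_0}$ modulo $q$ shows that $n_1$ and $n_2$ are coupled \emph{diagonally} modulo $q$, i.e.\ $n_1\equiv n_2\Mod{q}$, while modulo the much smaller modulus $kr_0$ the coupling is a general congruence. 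Since $N\gg QKR_0$ this lets us parametrise the pair $(n_1,n_2)$ by $n_1$ together with a difference parameter of size $\ll N/(QKR_0)$.

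First I would carry out the standard preliminary reductions: decouple the weight $\hat{\psi}(hM/(qkr_0r_1r_2))$ from the arithmetic variables by partial summation in the spirit of Lemma \ref{lmm:Simplify}, restrict $h$ to a dyadic range, and extract the greatest common divisors of $h$ with $r_1$ and with $r_2$, so that the moduli in play become pairwise coprime; one also uses that, since the $r_i$ are square-free and $z_0$-rough with $r_i\le x^{O(1)}$, one has $\tau(r_i)=x^{o(1)}$, so all divisor losses are negligible. Next, following Zhang, I would apply Cauchy--Schwarz so as to eliminate both coefficient sequences $\alpha'_{n_i}$ (and the $c$-coefficients), keeping $n_1$ in the inner square with a smooth majorant $\psi(n_1/N)$; expanding the square duplicates the data $(r_2,n_2,h)$ into $(r_2,n_2,h_2)$ and $(r_3,n_3,h_3)$, and the diagonal relation modulo $q$ keeps the congruences on $n_2,n_3$ in terms of $n_1$ under control. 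The resulting inner sum over $n_1$ is essentially complete, with phase a rational function to a modulus built from $q,k,r_0,r_1,r_2,r_3$; completing it by Poisson summation and Lemma \ref{lmm:InverseCompletion}, and factoring via the Chinese Remainder Theorem, turns it into a short sum of products of Kloosterman sums, to which the Weil bound (Lemma \ref{lmm:Kloosterman}) applies termwise.

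As usual the contribution splits into a diagonal part, where the frequency of the completed sum vanishes (forcing a nontrivial multiplicative equation among the duplicated variables $r_1,r_1',h_2,h_3,h_2',h_3'$), and a genuinely oscillating off-diagonal part. The off-diagonal part is controlled by the Weil mechanism above, and its size is dictated by the hypotheses $H\ll QNR^2K/(x^{1-\epsilon}R_0)$ and $N<x^{1-6\epsilon}/(QK^2)$. The diagonal part is estimated by counting solutions of the defining equation using the divisor bound of Lemma \ref{lmm:Divisor}; this is the step that produces the constraint $K^5R_0^3Q^{7/2}R^6<x^{2-10\epsilon}$, and it uses $R_0KQ<N$ to bound the number of admissible $n_1$. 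Adding the two contributions under the stated hypotheses gives $\mathscr{Z}\ll N^2R^2R_0/x^\epsilon$.

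The main obstacle is, as in all arguments of this shape, the bookkeeping of the completion step: one must track the exact sizes of every completed modulus and of the resulting frequency ranges so that the square-root cancellation from the Weil bound genuinely beats the trivial bound with a power of $x^\epsilon$ to spare, and one must enumerate and estimate every degenerate configuration (vanishing frequencies, large gcds among the $h_i$ and $r_i$, coincidences between the duplicated variables). The hypothesis $a\equiv a'\Mod{q}$ is indispensable throughout: without the diagonal coupling modulo $q$, the first Cauchy--Schwarz would merely reproduce a sum equivalent to the original one (compare the remark following Proposition \ref{prpstn:SecondProp}) and the final exponential sum would be to too large a modulus for the Weil bound to yield any saving.
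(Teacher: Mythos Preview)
Your overall shape is right (exploit $a\equiv a'\Mod q$, Cauchy--Schwarz, complete, Weil, split diagonal/off-diagonal), but the description of the Cauchy--Schwarz step is muddled and does not match what actually works. The paper does \emph{one} Cauchy--Schwarz in the variables $q,k,r_0,n_1,n_2$ simultaneously (using $n_1\equiv n_2\Mod q$ so that the outer count is only $\asymp N^2KR_0$ rather than $N^2QKR_0$), inserts smooth majorants for \emph{both} $n_1$ and $n_2$, and the squared inner sum is over $(r_1,r_2,h)$. Expanding therefore duplicates $(r_1,r_2,h)\to(r_1,r_2,h),(r_1',r_2',h')$, not $(r_2,n_2,h)$ as you wrote. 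Your description (smooth only $n_1$, duplicate $(r_2,n_2,h)$) is the pattern of Lemma~\ref{lmm:Cauchy} for the main Type II estimate, and it does \emph{not} eliminate $\alpha'_{n_2}$; pushing on would lead you into exactly the $N\approx x^{1/2}$ obstruction you yourself flag at the end.

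Two further points. First, before Cauchy the paper invokes Lemma~\ref{lmm:FouvryDecomposition} to partition pairs $(r_1,r_2)$ into $\exp(O((\log\log x)^4))$ classes with the cross-coprimality $(r_1,r_2')=(r_1',r_2)=1$; this is what guarantees $(r_1r_1',r_2r_2')=1$ after expanding the square, so the completed sums factor cleanly into Kloosterman sums to coprime moduli. You need this step; ``$\tau(r_i)=x^{o(1)}$'' alone does not give it. Second, you have the roles of the hypotheses reversed: in the paper the diagonal terms $hr_1'r_2'=h'r_1r_2$ are bounded trivially (divisor bound) and this is where $N<x^{1-6\epsilon}/(QK^2)$ and $R_0KQ<N$ are used, while the off-diagonal terms are controlled by completing both $n_1$ and $n_2$ via Lemma~\ref{lmm:InverseCompletion} and applying the Weil bound, and it is this step that produces the constraint $K^5R_0^3Q^{7/2}R^6<x^{2-10\epsilon}$. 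Finally, the preliminary partial summation and gcd extraction you propose are not needed here: the $\hat\psi$ factors are simply bounded by $O(1)$ after Cauchy and carried through.
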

%
%
%%%%%%%%%%%%%%%%%%%%%%%%%%
%
%
\begin{proof}
Since we only consider $P^-(r_1),P^-(r_2)\ge z_0$, $r_1$ and $r_2$ have at most $(\log\log{x})^3$ prime factors. Therefore, by Lemma \ref{lmm:FouvryDecomposition}, there are $O(\exp(\log\log{x})^5))$ different sets $\mathcal{N}_1,\mathcal{N}_2,\dots$ which cover all possible pairs $(r_1,r_2)$, and such that if $(r_1,r_2),(r_1',r_2')\in\mathcal{N}_j$ then $\gcd(r_1,r_2')=\gcd(r_1',r_2)=1$. Taking the worst such set $\mathcal{N}$, we see that
 \begin{align*}
\mathscr{Z}&\ll \exp((\log\log{x})^5)|\mathscr{Z}_2|,\\
\mathscr{Z}_2&:=\sum_{q\sim Q}\sum_{k\sim K}\sum_{r_0\sim R_0}\sum_{\substack{r_1,r_2\sim R\\ (r_1,r_2)=1\\ (r_1,r_2)\in\mathcal{N} }}\sum_{\substack{n_1,n_2\sim N\\ n_1\overline{a_{q,k,r_0r_1}}\equiv n_2\overline{a'_{q,k,r_0r_2}}\Mod{q k r_0} \\ (n_1,q k r_0 r_1)=(n_2,q k r_0 r_2)=1}}\alpha'_{n_1}\overline{\alpha'_{n_2}}c_{q,k,r_0r_1}\overline{c_{q,k,r_0r_2}}\\
&\qquad\times \sum_{1\le |h|\le H}\hat{\psi}\Bigl(\frac{h M}{q k r_0 r_1 r_2}\Bigr)e\Bigl(\frac{a_{q,k,r_0r_1}h\overline{n_1 r_2}}{q k r_0 r_1}\Bigr)e\Bigl(\frac{a'_{q,k,r_0r_2}h\overline{n_2 q k  r_0 r_1}}{r_2}\Bigr).
\end{align*}
Since we wish to show $\mathscr{Z}\ll N^2 R^2 R_0/x^\epsilon$, it suffices to show $\mathscr{Z}_2\ll N^2 R^2 R_0/x^{2\epsilon}$. Since $(q,k r_0)=1$ and $a_{q,k,r_0r_1}\equiv a'_{q,k,r_0r_2}\Mod{q}$, we may split the conditions on the $n_1,n_2$ summation to $n_1\equiv n_2\Mod{q}$ and $n_1\overline{a_{q,k,r_0r_1}}\equiv n_2\overline{a_{q,k,r_0r_2}'}\Mod{k r_0}$. We now apply Cauchy-Schwarz in $n_1,n_2,k,r_0$ and $q$ to eliminate the $\alpha'$-coefficients and insert a smooth majorant for the $n_1$ and $n_2$ summations. This gives
\[
\mathscr{Z}_2^2\ll N Q K R_0\Bigl(1+\frac{N}{Q}\Bigr)|\mathscr{Z}_3|\ll N^2 K R_0|\mathscr{Z}_3|,
\]
where
\begin{align*}
\mathscr{Z}_3&:=\sum_{\substack{q\sim Q}}\sum_{k\sim K}\sum_{r_0\sim R_0}\sum_{\substack{n_1,n_2\sim N\\ n_1\equiv n_2\Mod{q}\\ (n_1n_2,q k r_0)=1}}\psi\Bigl(\frac{n_1}{N}\Bigr)\psi\Bigl(\frac{n_2}{N}\Bigr)|\mathscr{Z}_4|^2,\\
\mathscr{Z}_4&:=\sum_{\substack{r_1,r_2\sim R\\ (r_1,n_1 q k r_0 r_2)=1\\ (r_2,n_2 q k r_0 r_1)=1\\ (r_1,r_2)\in\mathcal{N} \\ n_1\overline{a_{q,k,r_0r_1}}\equiv n_2\overline{a'_{q,k,r_0r_2}}\Mod{r_0 k}}}c_{q,k,r_0r_1}\overline{c_{q,k,r_0r_2}}\sum_{1\le |h|\le H}\hat{\psi}\Bigl(\frac{h M}{q k r_0 r_1 r_2}\Bigr)\xi,\\
\xi&:=e\Bigl(\frac{a_{q,k,r_0r_1}h\overline{n_1 r_2}}{q k r_0 r_1}\Bigr)e\Bigl(\frac{a'_{q,k,r_0r_2}h\overline{n_2 q k r_0 r_1}}{r_2}\Bigr).
\end{align*}
Since we wish to show $\mathscr{Z}_2\ll N^2 R^2 R_0/x^{2\epsilon}$ and $N\gg Q$, it suffices to show that
\begin{equation}
\mathscr{Z}_3\ll\frac{ N^2 R^4 R_0}{K x^{4\epsilon}}.\label{eq:ZhangE4}
\end{equation}
Expanding the square and swapping the order of summation then gives
\begin{align*}
\mathscr{Z}_3&\le \sum_{q\sim Q}\sum_{k\sim K}\sum_{r_0\sim R_0}\sum_{\substack{r_1,r_1',r_2,r_2'\sim R\\ (r_1r_1', r_2 r_2')=1\\ (r_1r_1'r_2r_2',q k r_0)=1\\ a_{q,k,r_0r_1}\overline{a_{q,k,r_0r_1'}}\equiv a'_{q,k,r_0r_2}\overline{a'_{q,k,r_0r_2'}}\Mod{r_0 k}}}\sum_{1\le |h|,|h'|\le H}|\mathscr{Z}_5|,
\end{align*}
where
\begin{align*}
\mathscr{Z}_5&:=\sum_{\substack{n_1,n_2\\ n_1\equiv n_2\Mod{q} \\ n_1\overline{a_{q,k,r_0r_1}}\equiv n_2\overline{a_{q,k,r_0r_2}}\Mod{r_0k }\\ (n_1,q k r_0 r_1r_1')=1\\ (n_2,r_2r_2')=1}}\psi\Bigl(\frac{n_1}{N}\Bigr)\psi\Bigl(\frac{n_2}{N}\Bigr)e\Bigl(\frac{c_1\overline{n_1}}{q k r_0 r_1r_1'}\Bigr)e\Bigl(\frac{c_2\overline{n_2}}{r_2 r_2'}\Bigr),
\end{align*}
and where $c_1\Mod{q k r_0 r_1r_1'}$ and $c_2\Mod{r_2r_2'}$ are given by
\begin{align*}
c_1&=(a_{q,k, r_0r_1}h r_1'r_2'- a_{q,k, r_0 r_1'} h' r_1r_2)\overline{r_2r_2'},\\
c_2&=(a'_{q,k,r_0r_2} h r_1'r_2'- a'_{q,k,r_0r_2'} h' r_1r_2)\overline{q k r_0 r_1r_1'}.
\end{align*}
Here we used the fact that $(r_1,r_2),(r_1',r_2')\in\mathcal{N}$ to conclude that $(r_1r_1',r_2r_2')=1$.

We separate the `diagonal' terms $\mathscr{Z}_{=}$ with $h r_1' r_2'=h' r_1 r_2$ and the `off-diagonal' terms $\mathscr{Z}_{\ne}$ with $h r_1'r_2'\ne h' r_1r_2$.
\begin{equation}
\mathscr{Z}_3\le \mathscr{Z}_{=}+\mathscr{Z}_{\ne}.
\label{eq:Z4Split}
\end{equation}
We first consider the diagonal terms. Given a choice of $h,r_1',r_2'$ there are $x^{o(1)}$ choices of $h',r_1,r_2$ by the divisor bound. Thus, estimating the remaining sums trivially we have
\begin{equation}
\mathscr{Z}_{=}\ll x^{o(1)} Q K R_0 R^2 H N \Bigl(\frac{N}{Q R_0 K }+1\Bigr)\ll \frac{N^3 Q K R_0 R^4}{x^{1-2\epsilon}}.
\label{eq:ZEq}
\end{equation}
(Here we used the assumption that $N>Q K R_0$.)

Now we consider the off-diagonal terms. By Lemma \ref{lmm:InverseCompletion}, for $L:=x^\epsilon Q K R_0 R^2/N$ we have that
\begin{align*}
&\sum_{\substack{n_2\equiv n_1 a_{q,k,r_0r_2}\overline{a_{q,k,r_0r_1}}\Mod{q k r_0} \\ (n_2,r_2r_2')=1}}\psi\Bigl(\frac{n_2}{N}\Bigr)e\Bigl(\frac{c_2\overline{n_2}}{r_2 r_2'}\Bigr)=O(x^{-100})\\
&+\frac{N}{q k r_0 r_2 r_2'}\sum_{|\ell_2|\le L}\hat{\psi}\Bigl(\frac{\ell_2 N}{q k r_0 r_2r_2'}\Bigr)S(c_2,\ell_2\overline{q k r_0};r_2r_2')e\Bigl(\frac{\ell_2 n_1 a_{q,k,r_0r_2}\overline{a_{q,k,r_0r_1}r_2r_2'}}{q k r_0}\Bigr).
\end{align*}
Here $S(m,n;c)$ is the standard Kloosterman sum, and we used the fact  that $(q k r_0,r_2r_2')=1$. By Lemma \ref{lmm:InverseCompletion} again, we have that
\begin{align*}
&\sum_{(n_1,q k r_0 r_1 r_1')=1}\psi\Bigl(\frac{n_1}{N}\Bigr)e\Bigl(\frac{c_1\overline{n_1}}{q k r_0 r_1 r_1'}\Bigr)e\Bigl(\frac{\ell_2 n_1 a_{q,k,r_0r_2}\overline{a_{q,k,r_0r_1}r_2r_2'}}{q k r_0}\Bigr)\\
&=\frac{N}{q k r_0 r_1r_1'}\sum_{|\ell_1|\le L}\hat{\psi}\Bigl(\frac{\ell_1 N}{q k r_0 r_1 r_1'}\Bigr)S(c_1,\ell_1+\ell_2c_3;q k r_0 r_1 r_1')+O(x^{-100}),
\end{align*}
where $c_3\Mod{q k r_0r_1r_1'}$ is defined by
\[
c_3:= r_1r_1' a_{q,k,r_0r_2}\overline{a_{q,k,r_0r_1}r_2r_2'}.
\]
Thus, we see that $\mathscr{Z}_5$ is a sum of Kloosterman sums, given explicitly by
\begin{align*}
\mathscr{Z}_5&=\frac{N^2}{q^2 k^2 r_0^2 r_1 r_1'r_2 r_2'}\sum_{\substack{|\ell_1|\le L\\ |\ell_2|\le L}}\hat{\psi}\Bigl(\frac{\ell_2 N}{q k r_0 r_2r_2'}\Bigr)\hat{\psi}\Bigl(\frac{\ell_1 N}{q k r_0 r_1 r_1'}\Bigr)S(c_2,\ell_2\overline{q};r_2r_2')\\
&\qquad\times S(c_1,\ell_1+\ell_2c_3;q k r_0r_1 r_1')+O(x^{-10}).
\end{align*}
By the standard Kloosterman sum bound $S(m,n;c)\ll \tau(c) c^{1/2}(m,n,c)^{1/2}\ll c^{1/2+o(1)}(m,c)^{1/2}$ (Lemma \ref{lmm:Kloosterman}), we therefore obtain
\begin{align*}
\mathscr{Z}_5&\ll \frac{x^{o(1)}N^2}{Q^2 K^2 R_0^2 R^4}\sum_{\substack{|\ell_1|\le L\\ |\ell_2|\le L}}Q^{1/2}K^{1/2} R_0^{1/2} R^2 (c_2,r_2r_2')^{1/2}(c_1,r_1r_1')^{1/2}(c_1,k r_0 q)^{1/2}\\
&\ll x^{3\epsilon}Q^{1/2}K R_0 R^2 (h,r_1r_2)^{1/2}(h',r_1'r_2')^{1/2}(r_1'r_2',r_1r_2) (hr_1'r_2'-h' r_1r_2,q)^{1/2}.
\end{align*}
In the final line above we used the fact that $a_{q,k,r}\equiv a_{q,k,r}\Mod{q}$ and $(a_{q,k,r},q k r)=1$ to remove the dependencies on the residue classes. Substituting this into our expression for $\mathscr{Z}_{\ne}$ gives
\begin{align}
\mathscr{Z}_{\ne}&\ll x^{3\epsilon} Q^{1/2} K R_0 R^2\sum_{r_1,r_1'\sim R}\sum_{r_2,r_2'\sim R}(r_1r_1',r_2r_2')\sum_{\substack{1\le |h|,|h'|\le H\\ hr_1'r_2'\ne h'r_1r_2}}(h,r_1r_2)(h',r_1'r_2')\nonumber\\
&\qquad\times\sum_{k\sim K} \sum_{r_0\sim R_0}\sum_{q\sim Q}(hr_1'r_2'-h' r_1r_2,q)\nonumber\\
&\ll x^{4\epsilon} K^2 R_0^2 Q^{3/2} R^6 H^2\nonumber\\
&\ll \frac{N^2 Q^{7/2} R^{10} K^4 R_0^4}{ x^{2-6\epsilon}}.\label{eq:ZNeq}
\end{align}
Substituting \eqref{eq:ZEq} and \eqref{eq:ZNeq} into \eqref{eq:Z4Split} gives.
\[
\mathscr{Z}_3\ll \frac{N^3 Q R^4 K R_0}{ x^{1-2\epsilon}}+\frac{N^2 Q^{7/2} R^{10} K^4 R_0^4}{x^{2-6\epsilon}}.
\]
This gives the desired bound \eqref{eq:ZhangE4} provided we have
\begin{align}
N&<\frac{x^{1-6\epsilon} }{Q K^2},\\
K^{5} R_0^3 Q^{7/2} R^{6}&<x^{2-10\epsilon}.
\end{align}
This gives the result.
\end{proof}
%
%
%%%%%%%%%%%%%%%%%%%%%%%%%%
%
%
\begin{lmm}\label{lmm:ZhangConclusion}
Let $A,B>0$ and let $B_2=B_2(A,B)$ be sufficiently large in terms of $A,B$. Let $Q,K,R_0,N,M\ll x^{O(1)}$ satisfy
\begin{align*}
Q K R\ll x^{1/2+\delta},\quad MN\asymp x,\quad K=x^{o(1)},\quad x^{2\delta+\epsilon} Q< N <\frac{x^{1-7\epsilon}}{Q},\quad Q^7 R^{12}<x^{4-21\epsilon}.
\end{align*}
Let $b_q$, $a_{q,k,r},a'_{q,k,r}$ be integer sequences with $(b_q,q)=(a_{q,k,r},q k r)=(a'_{q,k,r},q k r)=1$ and $b_q\equiv a_{q,k,r}\equiv a'_{q,k,r}\Mod{q}$. Let $c_{q,k,r}$ be a 1-bounded sequence with $c_{q,k,r}$ supported  on square-free $r$ with $P^-(r)\ge z_0:=x^{1/(\log\log{x})^3}$ and $r,q,k$ pairwise coprime and let $|\alpha_n|\le \tau(n)^B$ satisfy the Siegel-Walfisz condition \eqref{eq:SiegelWalfisz}. Let $\mathscr{Z}$ be given by
\[
\mathscr{Z}:=\sum_{q\sim Q}\sum_{k\sim K}\sum_{r_1,r_2\sim R}c_{q,k,r_1}\overline{c_{q,k,r_2}}\hspace{-0.5cm}\sum_{\substack{n_1,n_2\sim N\\ \tau(n_1),\tau(n_2)\le (\log{x})^{B_2}}}\alpha_{n_1}\overline{\alpha_{n_2}}\sum_{\substack{m n_1 \equiv a_{q,k,r_1}\Mod{q k r_1} \\ m n_2\equiv a'_{q,k,r_2}\Mod{q k r_2} }}\psi\Bigl(\frac{m}{M}\Bigr).
\]
Then we have
\[
\mathscr{Z}=\mathscr{Z}_{MT}+O_A\Bigl(\frac{MN^2}{Q K(\log{x})^A}\Bigr),
\]
where for some constant $C_1=C_1(A,B,B_2)$
\[
\mathscr{Z}_{MT}=\sum_{q\sim Q}\sum_{k\sim K}\sum_{r_0\le N/((\log{x})^{C_1} K Q)}\sum_{\substack{r_1',r_2'\sim R/r_0\\ (r_1',r_2')=1}}\sum_{\substack{n_1,n_2\sim N\\ (n_1,q k r_0r_1')=1\\ (n_2,q k r_0r_2')=1\\ \tau(n_1),\tau(n_2)\le (\log{x})^{B_2}}}\frac{\alpha_{n_1}\overline{\alpha_{n_2}}c_{q,k,r_0r_1'}\overline{c_{q,k,r_0r_2'}}M\hat{\psi}(0)}{q\phi(q k r_0) k r_0 r_1' r_2'}.
\]
\end{lmm}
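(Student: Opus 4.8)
The plan is to follow the template of Lemma~\ref{lmm:MainConclusion} and Lemma~\ref{lmm:SecondConclusion}, with the exponential sum estimate Lemma~\ref{lmm:Zhang} now playing the role that the chain of Lemmas~\ref{lmm:Cauchy}--\ref{lmm:OffDiag} (respectively Lemmas~\ref{lmm:AlternativeCauchy}--\ref{lmm:AltA4}) played there. The one structural ingredient that is genuinely new, and which makes Zhang's method available, is the hypothesis $b_q\equiv a_{q,k,r}\equiv a'_{q,k,r}\Mod{q}$: this agreement of the two residue classes on the common part $q$ of the moduli $qkr_1$ and $qkr_2$ is exactly what lets the congruence produced after Cauchy--Schwarz split off a factor $n_1\equiv n_2\Mod{q}$, which is the step powering Lemma~\ref{lmm:Zhang}.

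Concretely, I would set $r_0:=(r_1,r_2)$, write $r_1=r_0r_1'$, $r_2=r_0r_2'$ with $(r_1',r_2')=1$, and split $\mathscr{Z}$ dyadically according to $r_0\sim R_0$. As in Lemma~\ref{lmm:MainConclusion}, the two congruences on $m$ force $(mn_1n_2,qkr_1r_2)=1$ together with $n_1\overline{a_{q,k,r_1}}\equiv n_2\overline{a'_{q,k,r_2}}\Mod{qkr_0}$, and pin $m$ to a single class modulo $qkr_0r_1'r_2'$. For the dyadic ranges with $R_0$ large I would invoke the natural analogue of Lemma~\ref{lmm:GCD}, carried out with the modulus $qk$ in place of $q$ (harmless since $K=x^{o(1)}$ and $QKR\ll x^{1/2+\delta}$), to bound their contribution by $O_A(MN^2/(QK(\log x)^A))$; this leaves only $R_0\ll N/((\log x)^{C_1}QK)$, which in particular secures $R_0KQ<N$. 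On this remaining range the truncation $\tau(n_1),\tau(n_2)\le(\log x)^{B_2}$ may be re-introduced for free using Lemma~\ref{lmm:Divisor} and the trivial bound, provided $B_2=B_2(A,B)$ is large enough.

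Next I would run the Poisson-summation step of Lemma~\ref{lmm:Fourier} (the extra variable $k$ is inert throughout), completing the $m$-sum with $H\asymp QKR_0(R/R_0)^2(\log x)^{O(1)}/M$. The $h=0$ term reproduces $\mathscr{Z}_{MT}$ up to an error which I would dispose of by Cauchy--Schwarz and the Barban--Davenport--Halberstam estimate (Lemma~\ref{lmm:Barban}): this applies because $\alpha_n$ satisfies the Siegel--Walfisz condition and $qkr_0\ll N/(\log x)^{C_1}$ on the surviving range, and the off-diagonal ``$n_1$ determines $n_2$'' pairs are absorbed exactly as in Lemma~\ref{lmm:Fourier}. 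Splitting the $h\neq0$ phase by the Chinese Remainder Theorem over $qkr_0$, $r_1'$, $r_2'$ and normalizing the $1$-bounded coefficients, the surviving sum is a constant multiple of the sum $\mathscr{Z}$ of Lemma~\ref{lmm:Zhang}, with the same $Q,K,R_0$ and with its ``$R$'' equal to $R':=R/R_0$. Lemma~\ref{lmm:Zhang} then bounds it by $\ll N^2R'^2R_0/x^{\epsilon}$, and restoring the normalization factor $M(\log x)^{O(1)}/(QKR_0R'^2)$ yields $\ll MN^2(\log x)^{O(1)}/(QKx^{\epsilon})\ll_A MN^2/(QK(\log x)^A)$, which is the claim.

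The bulk of the remaining work — and the place I expect the fiddliest bookkeeping — is verifying that the hypotheses of Lemma~\ref{lmm:Zhang} hold uniformly over the relevant dyadic $R_0$ after these substitutions. The condition $N<x^{1-6\epsilon}/(QK^2)$ follows at once from $N<x^{1-7\epsilon}/Q$ and $K=x^{o(1)}$; the condition $K^5R_0^3Q^{7/2}R'^6<x^{2-10\epsilon}$ becomes $K^5Q^{7/2}R^6R_0^{-3}<x^{2-10\epsilon}$, which follows from $Q^7R^{12}<x^{4-21\epsilon}$ on taking square roots and using $R_0\ge1$; and $R_0KQ<N$ was arranged above. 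The delicate point is matching the completion length $H$ against the hypothesis $H\ll QNR'^2K/(x^{1-\epsilon}R_0)$ of Lemma~\ref{lmm:Zhang}: one must check that the $r_0$-ranges handled by the analogue of Lemma~\ref{lmm:GCD} and those handled by Lemma~\ref{lmm:Zhang} together exhaust all dyadic values of $R_0$, treating any intermediate values by an auxiliary argument — for instance, once $R/R_0<z_0$ the smoothness constraint $P^-(r)\ge z_0$ forces $r_1=r_2$ and the sum collapses to a diagonal again controlled by Lemma~\ref{lmm:Barban}. Once this coverage is secured, the estimate follows.
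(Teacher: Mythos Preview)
Your approach is essentially identical to the paper's: split according to $r_0=(r_1,r_2)\sim R_0$, dispose of large $R_0$ via Lemma~\ref{lmm:GCD} (with modulus $qk$), apply the Fourier step of Lemma~\ref{lmm:Fourier} to extract $\mathscr{Z}_{MT}$ from the $h=0$ term, and feed the remaining $h\neq 0$ sum directly into Lemma~\ref{lmm:Zhang} with its ``$R$'' taken to be $R':=R/R_0$.

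Your worry at the end is unfounded, and the auxiliary argument you sketch is unnecessary. The mismatch you noticed between the completion length $H\asymp NQKR_0R'^2(\log x)^{O(1)}/x$ and the stated hypothesis $H\ll QNR'^2K/(x^{1-\epsilon}R_0)$ of Lemma~\ref{lmm:Zhang} is a typo in that lemma's statement: inspecting its proof (specifically the passage to \eqref{eq:ZNeq}, where $H^2$ produces the factor $R_0^2$ needed to give $R_0^4$ in the final bound), the intended hypothesis is $H\ll QNR^2KR_0/x^{1-\epsilon}$, with $R_0$ in the numerator. This is exactly what Poisson summation delivers, so the two ranges (large $R_0$ via Lemma~\ref{lmm:GCD}, small $R_0$ via Lemma~\ref{lmm:Zhang}) cover everything with no gap. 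Your verification of the remaining hypotheses of Lemma~\ref{lmm:Zhang} is correct.
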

%
%
%%%%%%%%%%%%%%%%%%%%%%%%%%
%
%
\begin{proof}
We consider $\mathscr{Z}$. Let $r_0=(r_1,r_2)$ and $r_1=r_0r_1'$, $r_2=r_0r_2'$. The congruence conditions on $m$ have no solutions unless $n_1\overline{a_{q,k,r_0r_1'}}\equiv n_2\overline{a'_{q,k,r_0r_2'}}\Mod{q k r_0}$ and $(n_1,q k r_0r_1')=(n_2,q k r_0r_2')=1$. We split the summations of $\mathscr{Z}_2$ according to the size of $r_0$. Thus we see it suffices to show that for a suitable constant $C_1=C_1(A,B,B_2)$
\[
\mathscr{Z}(R_0)=\begin{cases}
\mathscr{Z}_{MT}(R_0)+O_A\Bigl(\frac{MN^2}{ Q K (\log{x})^{A} }\Bigr),\qquad &R_0\le N/((\log{x})^{C_1} Q K),\\
O_A\Bigl(\frac{MN^2}{ Q K (\log{x})^{A} }\Bigr), &R_0> N/((\log{x})^{C_1} Q K).
\end{cases}
\]
where $\mathscr{Z}_{MT}(R_0)$ is $\mathscr{Z}_{MT}$ with the $r_0$ summation restricted to $r_0\sim R_0$, and $\mathscr{Z}(R_0)$ is given by
\begin{align*}
\mathscr{Z}(R_0)&:=\sum_{q\sim Q}\sum_{k\sim K}\sum_{r_0\sim R_0}\sum_{\substack{r_1',r_2'\sim R/r_0\\ (r_1',r_2')=1}}c_{q,k,r_0r_1'}\overline{c_{q,k,r_0r_2'}}\hspace{-1cm}\sum_{\substack{n_1,n_2\sim N\\ n_1\overline{a_{q,k,r_0r_1'}}\equiv n_2\overline{a'_{q,k,r_0r_2'}} \Mod{q k r_0}\\ (n_1,q k r_0 r_1')=1\\ (n_2,q k r_0 r_2')=1 \\ \tau(n_1),\tau(n_2)\le (\log{x})^{B_2}}}\hspace{-1cm}\alpha_{n_1}\overline{\alpha_{n_2}}\\
&\times \sum_{\substack{m\sim M\\ n_1m\equiv a_{q,k,r_0r_1'}\Mod{q k r_0 r_1'}\\ n_2m\equiv a'_{q,k,r_0r_1'}\Mod{r_1'}}}\psi\Bigl(\frac{m}{M}\Bigr).
\end{align*}
If $R_0> N/((\log{x})^{C_1}Q K)$ for $C_1$ sufficiently large in terms of $A,B,B_2$, then $\mathscr{Z}(R_0)\ll_{A,B,B_2} MN^2/( (\log{x})^A Q K )$ by Lemma \ref{lmm:GCD}, as required. Thus we only need to consider $R_0<N/((\log{x})^{C_1}QK)$ for some $C_1(A,B,B_2)$ sufficiently large. By the same argument as  Lemma \ref{lmm:Fourier}, provided $B_2=B_2(A,B)$ is sufficiently large in terms of $A,B$ and $C_1$ is sufficiently large in terms of $A,B$, it suffices to show that for some sufficiently large constant $A_2=A_2(A,B,B_2)$
\[
\mathscr{Z}'\ll_{A_2} \frac{N^2 R'{}^2 R_0}{(\log{x})^{A_2} }, 
\]
where for some 1-bounded sequence $\alpha'_n$
\begin{align*}
\mathscr{Z}'&:=\sum_{\substack{q\sim Q}}\sum_{k\sim K}\sum_{r_0\sim R_0}\sum_{\substack{r_1',r_2'\sim R'\\ (r_1',r_2')=1}}\sum_{\substack{n_1,n_2\sim N\\ n_1\overline{a_{q,k,r_0r_1'}}\equiv n_2\overline{a'_{q,k,r_0r_2'}}\Mod{q k r_0}}}\hspace{-1cm}\alpha'_{n_1}\overline{\alpha'_{n_2}}c_{q,k,r_0 r_1'}\overline{c_{q,k,r_0r_2'}}\\
&\qquad\times \sum_{1\le |h|\le H}\hat{\psi}\Bigl(\frac{h M}{q k r_0 r_1' r_2'}\Bigr)e\Bigl(\frac{a_{q,k,r_0 r_1'}h\overline{n_1 r_2}}{q k r_0 r_1'}\Bigr)e\Bigl(\frac{a'_{q,k,r_0 r_2'}h\overline{n_2 q k r_0 r_1'}}{r_2'}\Bigr),
\end{align*}
and $R'\asymp R/R_0$, $H:=N Q K R_0 R'{}^2/x^{1-\epsilon}$. Lemma \ref{lmm:Zhang} then gives the desired result.
\end{proof}
%
%
%%%%%%%%%%%%%%%%%%%%%%%%%%
%
%
\begin{proof}[Proof of Proposition \ref{prpstn:Zhang}]
By the Bombieri-Vinogradov Theorem for convolutions, since $Q_1<x^{1/2-\epsilon}$ we have
\[
\sum_{q_1\sim Q_1}\sup_{(b,q_1)=1}\sum_{q_2\sim Q_2}\Bigl|\sum_{m\sim M}\beta_m\hspace{-0.2cm}\sum_{\substack{n\sim N\\ (n m,q_1q_2)=1}}\hspace{-0.2cm}\frac{\phi(q_1)\alpha_n}{\phi(q_1q_2)}\Bigl(\mathbf{1}_{\substack{n m\equiv b\Mod{q_1}}}-\frac{1}{\phi(q_1)}\Bigr)\Bigl|\ll_A \frac{x}{(\log{x})^A}.
\]
Thus it suffices to show that
\[
\sum_{q_1\sim Q_1}\sup_{(b,q_1)=1}\sum_{q_2\sim Q_2}\sup_{\substack{(a,q_1q_2)=1\\ a\equiv b\Mod{q_1}}}|\Delta(a,b;q_1,q_2)|\ll_A \frac{x}{(\log{x})^A},
\]
where
\[
\Delta(a,b;q_1,q_2):= \sum_{m\sim M}\beta_m\sum_{\substack{n\sim N\\ (n m,q_1 q_2)=1}}\alpha_n \Bigl(\mathbf{1}_{n m\equiv a\Mod{q_1 q_2}}-\frac{\phi(q_1)\mathbf{1}_{n m\equiv b\Mod{q_1}}}{\phi(q_1 q_2)}\Bigr).
\]
Given $q_1,q_2$, let $q:=q_1q_2$ and factor $q=q^\square q^{\notsquare}$ with $q^\square$ square-full and $q^{\notsquare}$ squarefree. Let $q_1':=(q^{\notsquare},q_1)$ and $q_2':=(q^{\notsquare},q_2)$ and $q_1=q_1' q_1''$, $q_2=q_2' q_2''$ for suitable $q_1'',q_2''$ which have $q_1''q_2''$ square-full. Finally, let $q_2'=q_2^-q_2^+$ with $P^+(q_2^-)\le z_0:=x^{1/(\log\log{x})^3}$ and $P^-(q_2^+)>z_0$. Then we see that $(q_2^+,q_1'q_1''q_2''q_2^-)=(q_1',q_2^-q_2^+q_2''q_1'')=1$. Putting each of $q_1',q_1'',q_2^+,q_2^-,q_2''$ into dyadic intervals, and relaxing the condition $a\equiv b\Mod{q_1'q_1''}$ to $a\equiv b\Mod{q_1'}$ for an upper bound we see it suffices to show that for every $A>0$
\[
\sum_{\substack{q_1'\sim Q_1'\\ \mu^2(q_1')=1}}\sup_{(b,q_1')=1}\sum_{\substack{q_1'' \sim Q_1'' \\ q_2'' \sim Q_2'' \\ q_1''q_2'' \text{square-full}\\ (q_1''q_2'',q_1')=1}}\sum_{\substack{q_2^-\sim Q_2^-\\ P^+(q_2^-)\le z_0\\ (q_2^-,q_1')=1}}\sum_{\substack{q_2^+\sim Q_2^+\\ (q_2^+,q_1'q_1''q_2''q_2^-)=1\\ \mu^2(q_2^+)=1\\ P^-(q_2^+)>z_0}}\sup_{\substack{(a,q)=1\\ a\equiv b\Mod{q_1'}}}|\Delta|\ll_A \frac{x}{(\log{x})^{A}},
\]
for all choices of $Q_1',Q_1'',Q_2'',Q_2^-,Q_2^+$ with $Q_1'Q_1''\asymp Q_1$ and $Q_2'' Q_2^-Q_2^+\asymp Q_2$. Here we have written $q$ to represent $q_1'q_1''q_2''q_2^- q_2^+$ and $\Delta$ to represent $\Delta(a,b;q_1' q_1'',q_2'' q_2^-q_2^+)$. By Lemma \ref{lmm:Squarefree} and \ref{lmm:Smooth} we see that we only need to consider $Q_1'',Q_2'', Q_2^-\ll x^{o(1)}$. In particular, $Q_1'=Q_1x^{-o(1)}$ and $Q_2^+=Q_2x^{-o(1)}$. Letting $q=q_1'$, $k= q_1'' q_2'' q_2^-$ and $r=q_2^+$, and relaxing the constraint $a\equiv b\Mod{q_1' q_1''}$ to $a\equiv b\Mod{q_1'}$, we see that it suffices to show for all choices of $Q=Q_1x^{-o(1)}, R=Q_2x^{-o(1)}$ and $K=x^{o(1)}$ and $C>0$
\[
\sum_{q\sim Q}\mu^2(q)\sup_{(b,q)=1}\sum_{\substack{k\sim K\\ (k,q)=1}}\sum_{\substack{r\sim R\\ (r,k q)=1 \\  P^-(r)\ge z_0}}\mu^2(r)\sup_{\substack{(a,k r q)=1\\ a\equiv b\Mod{q}}}|\Delta(a,b,q,k r_1)|\ll_C\frac{x}{(\log{x})^C}.
\]
We see that for $(q,kr)=1$
\[
\Delta(a,b;q,k r)=\frac{1}{\phi(k r)}\sum_{\substack{a'\Mod{q k r}\\ (a',q k r)=1\\ a'\equiv b\Mod{q} }}\tilde{\Delta}(a,a';q k r)\ll\sup_{\substack{(a',q k r)=1\\ a'\equiv b\Mod{q}}}|\tilde{\Delta}(a,a';q k r)|,
\]
where
\[
\tilde{\Delta}(a,a';q k r):= \sum_{m\sim M}\beta_m\sum_{\substack{n\sim N\\ (nm,qr)=1}}\alpha_n \Bigl(\mathbf{1}_{n m\equiv a\Mod{q k r}}-\mathbf{1}_{n m \equiv a'\Mod{q k r}}\Bigr).
\]
Thus it suffices to show that
\[
\sum_{q\sim Q}\mu^2(q)\sup_{(b,q)=1}\sum_{\substack{k\sim K \\ (k,q)=1}}\sum_{\substack{r\sim R\\ (r, k q)=1 \\  P^-(r)\ge z_0}}\mu^2(r)\sup_{\substack{a,a'\\ (a a',k r q)=1\\ a'\equiv a\equiv b\Mod{q}}}|\tilde{\Delta}(a,a',q k r)|\ll_C\frac{x}{(\log{x})^C}.
\]
Let the suprema occur at $b=b_q$, $a=a_{q,k,r}$, and $a'=a'_{q,k,r}$, and insert 1-bounded coefficients $c_{q,k,r}$ to remove the absolute values. We may restrict the support of $c_{q,k,r}$ to $q,k,r$ pairwise coprime with $q r$ square-free and $P^-(r)\ge z_0$. Thus it suffices to show that
\[
\mathscr{Z}_0:=\sum_{q\sim Q}\sum_{k\sim K}\sum_{r\sim R}c_{q,k,r}\tilde{\Delta}(a_{q,k,r},a'_{q,k,r},q k r)\ll_C\frac{x}{(\log{x})^C}.
\]
By Lemma \ref{lmm:Divisor} and the trivial bound, the contribution from $\tau(n)\ge (\log{x})^{B_2}$ is negligible for $B_2\ge B_0(C)$ suitably large in terms of $C$, and so we may restrict to $\tau(n)\le (\log{x})^{B_2}$. 
We substitute the definition of $\tilde{\Delta}$ and apply Cauchy-Schwarz in $q,k,m$. Inserting a smooth majorant for the $m$ summation, we see that
\begin{align*}
\mathscr{Z}_0^2&\ll  \mathscr{Z}_1:=Q K M \sum_{q\sim Q}\sum_{k\sim K}\sum_{m}\psi\Bigl(\frac{m}{M}\Bigr)|\mathscr{Z}_0'|^2
\end{align*}
where
\begin{align*}
\mathscr{Z}_0'&:=\sum_{r\sim R}c_{q,k,r}\sum_{\substack{n\sim N\\ \tau(n)\le (\log{x})^{B_2}}}\alpha_n\Bigl(\mathbf{1}_{n m\equiv a_{q,k,r}\Mod{q k r}}-\mathbf{1}_{m n\equiv a'_{q,k,r}\Mod{q k r}}\Bigr).
\end{align*} 
Thus it suffices to show that $\mathscr{Z}_1\ll M^2 N^2/(\log{x})^{2C}$. Expanding the square, and swapping the order of summation we see that suffices to show that for any sequences $b_q,a_{q,k,r},a'_{q,k,r}$ with $a_{q,k,r}\equiv a'_{q,k,r}\equiv b_q\Mod{q}$ and $(b_q,q)=(a_{q,k,r},q k r)=(a'_{q,k,r},q k r)=1$ that we have
\[
\mathscr{Z}_2= \mathscr{Z}_{MT}+O_C\Bigl(\frac{MN^2}{Q K (\log{x})^{2C}}\Bigr),
\]
where for some $C_1=C_1(A,B,B_2)$
\begin{align*}
\mathscr{Z}_2&:=\sum_{q\sim Q}\sum_{k\sim K}\sum_{r_1,r_2\sim R}c_{q,k,r_1}\overline{c_{q,k,r_2}}\sum_{\substack{n_1,n_2\sim N\\ \tau(n_1),\tau(n_2)\le (\log{x})^{B_2}} }\alpha_{n_1}\overline{\alpha_{n_2}}\sum_{\substack{m\sim M\\ n_1m\equiv a_{q,k,r_1}\Mod{q k r_1}\\ n_2m\equiv a'_{q,k,r_2}\Mod{q k r_2}}}\psi\Bigl(\frac{m}{M}\Bigr),\\
\mathscr{Z}_{MT}&:=\sum_{q\sim Q}\sum_{k\sim K}\sum_{r_0\le N/((\log{x})^{C_1} K Q)}\sum_{\substack{r_1',r_2'\sim R/r_0\\ (r_1',r_2')=1}}\sum_{\substack{n_1,n_2\sim N\\ (n_1,q k r_0r_1')=1\\ (n_2,q k r_0r_2')=1\\ \tau(n_1),\tau(n_2)\le (\log{x})^{B_2}}}\frac{\alpha_{n_1}\overline{\alpha_{n_2}}c_{q,k,r_0r_1'}\overline{c_{q,k,r_0r_2'}}M\hat{\psi}(0)}{q\phi(q k r_0) k r_0 r_1' r_2'}.
\end{align*}
The result now follows from Lemma \ref{lmm:ZhangConclusion} on choosing $B_2$ sufficiently large in terms of $C$.
\end{proof}
%
%
%%%%%%%%%%%%%%%%%%%%%%%%%%
%
%
\section{Triple divisor function}\label{sec:Triple}
%
%
%%%%%%%%%%%%%%%%%%%%%%%%%%
%
%
%\allowdisplaybreaks
Finally, we establish Proposition \ref{prpstn:Triple}. As mentioned previously, this is essentially an estimate for the triple divisor function convolved with a short rough sequence.

Friedlander-Iwaniec \cite{FIDivisor} were the first to show that the triple divisor function $\tau_3(n)$ is equidistributed in arithmetic progressions to modulus $q=x^{1/2+\delta}$. This was uniform in the residue class and worked for each individual $q$, but would only allow for an additional factor $M<x^c$ for some very small constant $c$. Instead we take an approach which follows that of \cite{Polymath} to allow for a larger value of $M$. (It is vital for our argument that we can almost get to $x^{1/10}$.) There are additional technical complications in our situation because the original argument of \cite{Polymath} was not completely uniform in the residue class. To resolve this we need to rework several of their arguments slightly, going back to the underlying estimates for sums over $\mathbb{F}_p$. We also require an argument that only has logarithmic losses and isn't limited to square-free moduli, which necessitates more technical care at several stages. 

As with previous work on the triple divisor function, the key technical ingredient concerns correlations of hyper Kloosterman sums, which relies on extensions of Deligne's work \cite{DeligneApp}. It is crucial for our argument that we also can handle twists by a suitable additive character to make a small additional saving to handle issues from the uniformity of the residue classes under consideration.
%
%
%%%%%%%%%%%%%%%%%%%%%%%%%%
%
%
\begin{lmm}[Bound for correlations of Kloosterman sums]\label{lmm:KloostermanCorrelation}
Let
\[
\Kl_3(b;q):=\frac{1}{q}\sum_{\substack{b_1,b_2,b_3\in \mathbb{Z}/q\mathbb{Z}\\ b_1b_2b_3=b\Mod{q}}}e\Bigl(\frac{b_1+b_2+b_3}{q}\Bigr).
\]
We have that for any prime $p$
\[
\sum_{\substack{b\Mod{p}\\ (b,p)=1}}e\Bigl(\frac{c_1 b}{p}\Bigr)\Kl_3(b;p)\overline{\Kl_3(c_2b;p)}\ll p^{1/2}
\]
unless $c_1\equiv 0\Mod{p}$ and $c_2\equiv 1\Mod{p}$.
\end{lmm}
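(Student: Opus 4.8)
The plan is to recognize the left-hand side as a sum over $\mathbb{F}_p$ of a product of trace functions and then invoke the appropriate bound for sums of products of trace functions, making the conductor bounded and controlling the ``bad'' cases. First I would recall that by Lemma \ref{lmm:FSum}(5), for a prime $p$ and $(b,p)=1$ we have $\Kl_3(b;p) = F(1,1,1;b;p)/p$, and more usefully that $\Kl_3(b;p)$ is the trace function at $b$ of the hyper-Kloosterman sheaf $\mathcal{K}\ell_3$ on $\mathbb{G}_m/\mathbb{F}_p$, which is a geometrically irreducible middle-extension sheaf, pure of weight $0$, lisse on $\mathbb{G}_m$, of rank $3$ and bounded conductor (tame at $0$, wild at $\infty$ with Swan conductor $1$). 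I would write $t(b) = \Kl_3(b;p)$, so that $\overline{t(b)} = t(b^{-1})$ (the dual of $\mathcal{K}\ell_3$ is its pullback by inversion, up to a geometrically constant twist), and the sum becomes
\[
\Sigma := \sum_{\substack{b\bmod p\\ (b,p)=1}} e\Bigl(\frac{c_1 b}{p}\Bigr)\, t(b)\, \overline{t(c_2 b)}.
\]

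Next I would set this up as a correlation sum: let $\mathcal{F} = \mathcal{L}_{\psi(c_1 x)}\otimes \mathcal{K}\ell_3$ and $\mathcal{G} = [\times c_2]^*\mathcal{K}\ell_3$, where $\mathcal{L}_{\psi(c_1 x)}$ is the Artin--Schreier sheaf attached to $x\mapsto e(c_1 x/p)$ and $[\times c_2]$ is multiplication by $c_2$. Then $\Sigma = \sum_{b} t_{\mathcal{F}}(b)\overline{t_{\mathcal{G}}(b)}$, and by the Riemann Hypothesis over finite fields (Deligne), the standard estimate for such sums (as in the work of Fouvry--Kowalski--Michel on trace functions, and as used in \cite{Polymath,FKMDivisor}) gives $\Sigma \ll p^{1/2}$ with an implied constant depending only on the conductors of $\mathcal{F},\mathcal{G}$ — which are absolutely bounded here — \emph{provided} $\mathcal{F}$ and $\mathcal{G}$ have no common geometrically irreducible constituent, equivalently (since both are geometrically irreducible of the same rank $3$) provided $\mathcal{F}$ is not geometrically isomorphic to $\mathcal{G}$. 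So the whole content is to show that $\mathcal{L}_{\psi(c_1 x)}\otimes\mathcal{K}\ell_3 \cong [\times c_2]^*\mathcal{K}\ell_3$ geometrically forces $c_1\equiv 0$ and $c_2\equiv 1 \bmod p$.

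To rule out the geometric isomorphism in the remaining cases I would compare local monodromy at $0$ and at $\infty$. At $\infty$: $[\times c_2]^*\mathcal{K}\ell_3$ is, as an $I_\infty$-representation, isomorphic to $\mathcal{K}\ell_3$ twisted by a tame character (pullback by a unit scalar does not change the wild part), so it is totally wild with Swan conductor $1$ and a single break at $1/3$; tensoring $\mathcal{K}\ell_3$ by $\mathcal{L}_{\psi(c_1 x)}$ with $c_1\not\equiv 0$ introduces, at $\infty$, a sheaf whose slopes are controlled by $\mathcal{L}_{\psi(c_1 x)}$ (slope $1$) against $\mathcal{K}\ell_3$ (slope $1/3$), so by the non-archimedean ultrametric on breaks the tensor product has all breaks equal to $1$, hence Swan conductor $3$ at $\infty$ — incompatible with Swan conductor $1$. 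Therefore $c_1\equiv 0\bmod p$. With $c_1\equiv 0$, the isomorphism becomes $\mathcal{K}\ell_3 \cong [\times c_2]^*\mathcal{K}\ell_3$ (up to geometrically constant twist, which is harmless here as both are already normalized). This is a standard rigidity fact for hyper-Kloosterman sheaves: the only multiplicative translations preserving $\mathcal{K}\ell_n$ are by $n$-th roots of unity when we track the wild part at $\infty$ precisely, and for $\mathcal{K}\ell_3$ combined with the tame behaviour at $0$ (unipotent, single Jordan block) one gets $c_2^3\equiv 1$; then examining the precise Frobenius structure (or the fact that the isomorphism must be arithmetic after accounting for the $p$-power normalization, as recorded in \cite[Lemma 4.6]{FKMDivisor} or the corresponding statement in \cite{Polymath}) pins down $c_2\equiv 1\bmod p$. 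Hence unless $c_1\equiv 0$ and $c_2\equiv 1$, the two sheaves share no common constituent and $\Sigma\ll p^{1/2}$.

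\textbf{Main obstacle.} The routine part is the slope computation at $\infty$ ruling out $c_1\not\equiv 0$. The genuinely delicate step is the rigidity argument that $\mathcal{K}\ell_3\cong[\times c_2]^*\mathcal{K}\ell_3$ implies $c_2\equiv 1$ rather than merely $c_2^3\equiv 1$: one must be careful to use the \emph{arithmetic} (not just geometric) structure, since a geometric isomorphism for $c_2$ a nontrivial cube root of unity does exist in general — what saves us is that the correlation-sum bound only needs the absence of a common constituent up to the appropriate twist, and for our application we can either appeal directly to the cited trace-function estimates of \cite{Polymath,FKMDivisor} (which already incorporate exactly this analysis of when the hyper-Kloosterman correlation sum fails to have square-root cancellation), or re-derive it by noting that for $c_2$ a primitive cube root of unity the diagonal term $\sum_b |t(b)|^2 \cdot(\text{phase})$ does not in fact produce a main term because the twisting character is nontrivial. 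I would present it via the first route, citing the $\mathbb{F}_p$-level input from \cite{Polymath} and \cite{DeligneApp}, and only spelling out the conductor bookkeeping.
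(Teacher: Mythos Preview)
The paper's own proof is a single line: ``This follows from \cite[Proposition 6.11]{Polymath}.'' Your proposal is essentially a sketch of the argument behind that proposition, so the approaches are the same in substance; you are unpacking what the paper simply cites.

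One comment on the content of your sketch: the ``main obstacle'' you flag---ruling out $c_2$ a nontrivial cube root of unity when $c_1\equiv 0$---is indeed the delicate point, and your discussion of it is a bit wobbly. The geometric isomorphism $[\times c_2]^*\mathcal{K}\ell_3\cong\mathcal{K}\ell_3$ does \emph{not} hold for nontrivial cube roots of unity; the multiplicative translates $[\times a]^*\mathcal{K}\ell_n$ for $a\ne 1$ are pairwise geometrically non-isomorphic (this is part of Katz's analysis of Kloosterman sheaves, and is what \cite[Proposition 6.11]{Polymath} invokes). So the resolution is cleaner than your write-up suggests: there is no exceptional case at cube roots of unity to argue away via arithmetic versus geometric structure or via ``the twisting character is nontrivial.'' Since you conclude by saying you would cite \cite{Polymath} for this step anyway, your proof lands in the same place as the paper's, but if you were to spell out the sheaf argument you should correct this point rather than hedge around it.
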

%
%
%%%%%%%%%%%%%%%%%%%%%%%%%%
%
%
\begin{proof}
This follows from \cite[Proposition 6.11]{Polymath}.
\end{proof}
%
%
%%%%%%%%%%%%%%%%%%%%%%%%%%
%
%
\begin{lmm}[Completion of sums]\label{lmm:TripleCompletion}
Let $B>0$. Let $\alpha_m$ and $c_{q,r,s}$ be 1-bounded complex sequences with $c_{q,r,s}$ supported on $\tau(qr)\ll (\log{x})^B$. Let $a_{t}$ be a sequence of integers satisfying $(a_{t},t)=1$ for all $t$. Let $\psi_1,\psi_2,\psi_3$ be smooth functions supported on $[1,2]$ with $\|\psi^{(j)}_1\|_\infty,\|\psi^{(j)}_2\|_\infty,\|\psi^{(j)}_3\|_\infty \ll((j+1)\log{x})^{B j}$ for all $j\ge 0$. Let $\mathscr{K}$ and $\mathscr{K}_{MT}$ be given by
\begin{align*}
\mathscr{K}&:=\sum_{s\sim S}\sum_{q\sim Q}\sum_{r\sim R}c_{q,r,s}\sum_{m\sim M}\alpha_m\mathop{\sum_{n_1}\sum_{n_2}\sum_{n_3}}\limits_{n_1n_2n_3m\equiv a_{q r s}\Mod{q r s}}\psi_1\Bigl(\frac{n_1}{N_1}\Bigr)\psi_2\Bigl(\frac{n_2}{N_2}\Bigr)\psi_3\Bigl(\frac{n_3}{N_3}\Bigr),\\
\mathscr{K}_{MT}&:=N_1N_2 N_3\hat{\psi}_1(0)\hat{\psi}_2(0)\hat{\psi}_3(0)\sum_{s\sim S}\sum_{q\sim Q}\sum_{r\sim R}c_{q,r,s}\sum_{\substack{m\sim M\\ (m,q r s)=1}}\alpha_m\frac{\phi(q r s)^2}{q^3 r^3 s^3},
\end{align*}
for some quantities $N_1\le N_2\le N_3\le x$ and $Q R S\le x$ with $MN_1N_2N_3\asymp x$. Then we have
\begin{align*}
\mathscr{K}&=\mathscr{K}_{MT}+\frac{N_1N_2N_3}{Q^3 R^3}\sum_{s\sim S}\mathscr{K}_2+O_B\Bigl(\frac{x(\log{x})^{3B}}{N_1}\Bigr),
\end{align*}
where
\begin{align*}
\mathscr{K}_2:&=\sum_{q\sim Q}\sum_{r\sim R}c'_{q,r,s}\sum_{\substack{m\sim M\\ (m,q r s)=1}}\alpha_m\sum_{\substack{1\le |h_1|\le H_1 \\ 1\le |h_2|\le H_2\\ 1\le |h_3|\le H_3}}\hat{\psi_1}\Bigl(\frac{N_1 h_1}{q r s}\Bigr)\hat{\psi_2}\Bigl(\frac{N_2h_2}{q r s}\Bigr)\hat{\psi_3}\Bigl(\frac{N_3h_3}{q r s}\Bigr)\\
&\qquad\times F(h_1,h_2,h_3;a_{q r s}\overline{m};q r s),\\
H_i&=(\log{x})^{2B+1}\frac{Q R S}{N_i},\qquad (i\in\{1,2,3\})\\
c'_{q,r,s}&=\frac{Q^3 R^3 S^3}{q^3 r^3 s^3}c_{q,r,s}.
\end{align*}
Here $F$ is the function of Lemma \ref{lmm:FSum}.
\end{lmm}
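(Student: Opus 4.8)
The plan is to prove this by Poisson summation (Lemma \ref{lmm:Completion}) applied successively to each of the three inner variables $n_1,n_2,n_3$, after first isolating the residue class structure. Concretely, I would fix $q,r,s,m$ with $(m,qrs)=1$ (terms with $(m,qrs)>1$ contribute nothing since the congruence $n_1n_2n_3m\equiv a_{qrs}\Mod{qrs}$ with $(a_{qrs},qrs)=1$ forces $(m,qrs)=1$), and set $t=qrs$, $a=a_{qrs}$. The congruence $n_1n_2n_3 m\equiv a\Mod t$ is equivalent to $n_1n_2n_3\equiv a\overline m\Mod t$, which in turn, writing $b_i$ for $n_i\bmod t$, is exactly the constraint appearing in the definition of $F(h_1,h_2,h_3;a\overline m;t)$ once one Poisson-completes. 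So the first step is: for fixed $n_2,n_3$ coprime considerations aside, detect the congruence on $n_1$ by Poisson summation in $n_1$ to modulus $t$, with cutoff $H_1 = (\log x)^{2B+1} t/N_1 \asymp (\log x)^{2B+1}QRS/N_1$ as permitted by Lemma \ref{lmm:Completion} (note $t\le x$ and $N_1\le x$ so the hypotheses apply). This produces the $h_1=0$ term $\tfrac{N_1}{t}\hat\psi_1(0)\,\mathbf 1_{(n_1 \text{ contributes})}$ — which after the analogous treatment of $n_2,n_3$ will assemble into $\mathscr K_{MT}$ — plus the sum over $1\le|h_1|\le H_1$, plus an error $O_B(x^{-100})$ per $(q,r,s,m,n_2,n_3)$ tuple, which is totally negligible.

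Next I would repeat Poisson summation in $n_2$ and then $n_3$. After completing all three variables the inner triple sum over $b_1,b_2,b_3\in\mathbb Z/t\mathbb Z$ with $b_1b_2b_3\equiv a\overline m\Mod t$ of $e((h_1b_1+h_2b_2+h_3b_3)/t)$ is precisely $F(h_1,h_2,h_3;a\overline m;t)$ by definition. Collecting the purely "main" contribution — the term where $h_1=h_2=h_3=0$ — the corresponding exponential sum is $F(0,0,0;a\overline m;t) = \#\{(b_1,b_2,b_3)\in((\mathbb Z/t)^\times)^3: b_1b_2b_3\equiv a\overline m\} = \phi(t)^2$ (for each choice of $b_1,b_2$ coprime to $t$ there is a unique $b_3$, which is automatically a unit), giving the factor $\tfrac{N_1N_2N_3}{t^3}\hat\psi_1(0)\hat\psi_2(0)\hat\psi_3(0)\phi(t)^2$ summed against $c_{q,r,s}\alpha_m$, i.e.\ exactly $\mathscr K_{MT}$ after expanding $t^3=q^3r^3s^3$. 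The "mixed" terms where some but not all $h_i$ vanish I would fold into the error term or absorb into $\mathscr K_2$ by noting $\hat\psi_i(0)=\hat\psi_i(N_i\cdot 0/t)$; in practice the cleanest route is to keep all $2\times2\times2=8$ sign/zero patterns together as $\sum_{|h_i|\le H_i}$ and then subtract off the single $(0,0,0)$ term, so that $\mathscr K_2$ as written (with $1\le|h_i|\le H_i$ for \emph{all} $i$) together with the "partially zero" terms reconstitutes everything; one then checks the partially-zero terms are themselves $O_B(x(\log x)^{3B}/N_1)$ by the trivial bound $|F|\le t^2$ and point (3)/(4) of Lemma \ref{lmm:FSum}. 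Actually, to match the statement exactly I would instead argue that the terms with, say, $h_1=0$ but $(h_2,h_3)\ne(0,0)$ contribute an acceptable error: here $F(0,h_2,h_3;a\overline m;t)$ vanishes unless $t\mid$ (something), and in any case is bounded so that summing $\tfrac{N_1N_2N_3}{t^3}\cdot\tfrac{\#h_2\,\#h_3}{1}\cdot t^2$ over $q\sim Q,r\sim R,s\sim S, m\sim M$ gives $\ll \tfrac{N_1N_2N_3}{t}\cdot H_2H_3\cdot QRSM \ll x(\log x)^{O(B)}/N_1$ after using $H_2H_3\asymp (\log x)^{O(B)}(QRS)^2/(N_2N_3)$, $QRSM\asymp x/(N_1N_2N_3)\cdot$ wait — I would track these exponents carefully but the size is $\asymp x(\log x)^{3B}/N_1$ as claimed, which is where the $N_1$ (the smallest of the $N_i$) in the error denominator comes from.

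The bookkeeping obstacle — and the main thing requiring care — is the \emph{coprimality and weight normalization}. After Poisson summation in $n_1$, the completed sum runs over all $b_1\bmod t$, but I want the main term to involve only $b_1$ \emph{coprime} to $t$ so that it matches $\phi(t)^2$ and the definition of $F$ (which sums over $(\mathbb Z/t)^\times$). This is automatic: the congruence $b_1b_2b_3 m\equiv a\Mod t$ with $(a,t)=1$ forces $(b_1,t)=(b_2,t)=(b_3,t)=1$, so no Möbius sieving is needed — the constraint is self-enforcing. The second point of care is the normalization constant $c'_{q,r,s}=Q^3R^3S^3 c_{q,r,s}/(q^3r^3s^3)$: the natural factor coming out of Poisson is $N_1N_2N_3/t^3$, and pulling out $N_1N_2N_3/(Q^3R^3)$ in front (as in the statement — note it is $Q^3R^3$ not $Q^3R^3S^3$, with the $S^3$ and the $s$-sum kept inside $\sum_{s\sim S}\mathscr K_2$) leaves exactly $S^3/s^3 \cdot Q^3R^3/(q^3r^3) \cdot c_{q,r,s}$, i.e.\ $c'_{q,r,s}$, and this is $1$-bounded since $q\sim Q,r\sim R,s\sim S$. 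I would also verify the error from Lemma \ref{lmm:Completion} accumulates acceptably: $x^{-100}$ per tuple times $\ll x^{O(1)}$ tuples is $O_B(x^{-50})$, negligible. Finally I'd record that $H_i=(\log x)^{2B+1}QRS/N_i$ indeed satisfies $H_i\ge t(\log x)^{2\cdot B+1}/N_i$ needed in Lemma \ref{lmm:Completion} with the $2B+1=2C+1$ matching since $\psi_i$ has derivative bounds with exponent $B$ — so the completion is valid. No deep input is needed here; this lemma is purely a completion-of-sums identity, and the real content (bounding $\mathscr K_2$ via Lemma \ref{lmm:KloostermanCorrelation} and the properties of $F$) comes in the subsequent lemmas.
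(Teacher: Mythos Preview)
Your overall strategy---Poisson-complete each $n_i$ modulo $t=qrs$ and recognise the resulting triple character sum as $F(h_1,h_2,h_3;a\overline m;t)$---is correct and is essentially what the paper does. The paper organises the completion asymmetrically (Lemma~\ref{lmm:Completion} on $n_1$, then Lemma~\ref{lmm:TrivialCompletion} on $n_2,n_3$ for the $h_1=0$ branch, and Lemma~\ref{lmm:InverseCompletion} on $n_2,n_3$ for the $h_1\ne0$ branch), whereas you split into residue classes first and apply plain Poisson to each $n_i$; these are equivalent reorderings of the same identity. Your observation that the congruence $b_1b_2b_3m\equiv a\Mod t$ forces $(b_i,t)=1$ automatically is valid and slightly cleaner than the paper's treatment.

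There is, however, a genuine gap in your error analysis for the ``partially zero'' terms. The trivial bound $|F|\le t^2$ (or $\phi(t)^2$) does \emph{not} suffice: for instance the $h_1=0$, $(h_2,h_3)\ne(0,0)$ contribution bounded this way gives
\[
\sum_{q,r,s,m}\frac{N_1N_2N_3}{t^3}\cdot t^2\cdot H_2H_3\asymp M(QRS)^2 N_1(\log x)^{O_B(1)},
\]
which is far too large. What you actually need is the explicit evaluation
\[
F(0,h_2,h_3;a\overline m;t)=\sum_{b_2,b_3\in(\mathbb Z/t\mathbb Z)^\times}e\Bigl(\frac{h_2b_2+h_3b_3}{t}\Bigr)=c_t(h_2)\,c_t(h_3),
\]
i.e.\ a product of Ramanujan sums, so that $|F(0,h_2,h_3;\cdot;t)|\le(h_2,t)(h_3,t)$ (and similarly $|F(h_1,0,0;\cdot;t)|\le\phi(t)(h_1,t)$). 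With this bound the dominant partially-zero case is $h_2=h_3=0$, $h_1\ne0$, which gives precisely $O_B(x(\log x)^{O_B(1)}/N_1)$. Points (3)/(4) of Lemma~\ref{lmm:FSum} that you cite do not give this; it is a direct (and easy) computation, and in the paper's sequential approach it emerges automatically as the $\ell=0$ Ramanujan-sum term in Lemma~\ref{lmm:InverseCompletion}. Once you insert this, your argument goes through.
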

%
%
%%%%%%%%%%%%%%%%%%%%%%%%%%
%
%
\begin{proof}
Let $t=q r s$. We see that $\mathscr{K}$ is given by
\begin{equation}
\mathscr{K}=\sum_{q\sim Q}\sum_{r\sim R}\sum_{s\sim S}c_{q,r,s}\mathscr{K'}(q r s)
\label{eq:KSum}
\end{equation}
where
\begin{align}
\mathscr{K}'(t):=\sum_{\substack{m\sim M\\ (m,t)=1}}\alpha_m\sum_{(n_2,t)=1}\sum_{(n_3,t)=1}\psi_2\Bigl(\frac{n_2}{N_2}\Bigr)\psi_3\Bigl(\frac{n_3}{N_3}\Bigr)\sum_{n_1\equiv a_{t}\overline{n_2n_3 m}\Mod{t}}\psi_1\Bigl(\frac{n_1}{N_1}\Bigr).\label{eq:K'Sum}
\end{align}
We concentrate on the inner sum. By Lemma \ref{lmm:Completion}, for $(m n_2 n_3,t)=1$ we have
\begin{align}
\sum_{n_1\equiv a_{t}\overline{m n_2 n_3}\Mod{t}}\psi_1\Bigl(\frac{n_1}{N_1}\Bigr)&=\frac{N_1}{t}\hat{\psi_1}(0)+\sum_{1\le |h_1|\le H_1}\hat{\psi}_1\Bigl(\frac{h_1 N_1}{t}\Bigr)e\Bigl(\frac{a_{t} h_1 \overline{m n_2 n_3}}{t}\Bigr)\nonumber\\
&\qquad+O(x^{-100}),\label{eq:Completion1}
\end{align}
where $H_1:=(\log{x})^{2B+1} Q R S/N_1$. We substitute this expression into \eqref{eq:K'Sum}. The final term of \eqref{eq:Completion1} clearly contributes negligibly to $\mathscr{K}'$. The first term of \eqref{eq:Completion1} contributes a total
\[
\frac{N_1\hat{\psi_1}(0)}{t}\sum_{\substack{m\sim M\\ (m,t)=1}}\alpha_m\sum_{(n_2,t)=1}\sum_{(n_3,t)=1}\psi_2\Bigl(\frac{n_2}{N_2}\Bigr)\psi_3\Bigl(\frac{n_3}{N_3}\Bigr).
\]
By Lemma \ref{lmm:TrivialCompletion} we have
\[
\sum_{(n_2,t)=1}\psi_2\Bigl(\frac{n_2}{N_2}\Bigr)=\frac{N_2\phi(t)}{t}\hat{\psi}_2(0)+O(\tau(t)(\log{x})^{2B}),
\]
and similarly for the $n_3$ summation. Since we only consider $\tau(t)\le (\log{x})^B$, we see that the first term of \eqref{eq:Completion1} contributes
\[
\frac{N_1 N_2 N_3\hat{\psi}_1(0)\hat{\psi}_2(0)\hat{\psi}_3(0)\phi(t)^2}{t^3}\sum_{\substack{m\sim M\\ (m,t)=1}}\alpha_m+O\Bigl(\frac{x(\log{x})^{3B}}{t N_2}+\frac{x(\log{x})^{3B}}{t N_3}\Bigr).
\]
to $\mathscr{K}'$. Substituting this into \eqref{eq:KSum}, we see that this term contributes a total
\[
\mathscr{K}_{MT}+O\Bigl(\frac{x(\log{x})^{3B}}{N_2}+\frac{x(\log{x})^{3B} }{N_3}\Bigr)
\]
to $\mathscr{K}$. Thus we are left to show that the second term of \eqref{eq:Completion1} contributes roughly $\mathscr{K}_2$ to $\mathscr{K}$. Lemma \ref{lmm:InverseCompletion} shows that for $H_2:=Q R S (\log{x})^{2B+1}/N_2$ we have
\begin{align*}
\sum_{(n_2,t)=1}\psi_2&\Bigl(\frac{n_2}{N_2}\Bigr)e\Bigl(\frac{a_{t}h_1\overline{m n_2 n_3}}{t}\Bigr)=\frac{N_2\hat{\psi_2}(0)}{t}\sum_{(b_2,t)=1}e\Bigl(\frac{a_{t}h_1\overline{m n_3} b_2}{t}\Bigr)\\
&+\frac{N}{t}\sum_{1\le |h_2|\le H_2}\hat{\psi_2}\Bigl(\frac{h_2 N_2}{t}\Bigr)\sum_{\substack{b_2\Mod{t}\\ (b_2,t)=1}} e\Bigl(\frac{a_{t}h_1\overline{m n_3 b_2}+h_2 b_2}{t}\Bigr)+O_B(x^{-10}).
\end{align*}
The first term above is a multiple of a Ramanujan sum, and so of size $O( N_2\gcd(h_1,t)/t)$. Applying Lemma \ref{lmm:InverseCompletion} again to the $n_3$ sum with $H_3:=Q R S (\log{x})^{2B+1}/N_3$, we find
\begin{align*}
&\sum_{\substack{n_2,n_3\\ (n_2n_3,t)=1}}\psi_2\Bigl(\frac{n_2}{N_2}\Bigr)\psi_3\Bigl(\frac{n_3}{N_3}\Bigr)e\Bigl(\frac{a_t h_1\overline{m n_2n_3}}{t}\Bigr)\\
&=\frac{N_2N_3}{t^2}\sum_{\substack{1\le |h_2|\le H_2\\ 1\le |h_3|\le H_3}}\hat{\psi_2}\Bigl(\frac{h_2 N_2}{t}\Bigr)\hat{\psi_3}\Bigl(\frac{h_3 N_3}{t}\Bigr)\sum_{b_2,b_3\in(\mathbb{Z}/t\mathbb{Z})^\times }e\Bigl(\frac{a_t h_1\overline{b_2 b_3 m}+h_2 b_2+h_3 b_3}{t}\Bigr)\\
&\qquad+O_C\Bigl(\frac{N_2N_3}{Q R S}(h_1,t)\Bigr).
\end{align*}
Putting this all together, we obtain
\begin{align*}
\mathscr{K}&=\mathscr{K}_{MT}+\frac{N_1N_2N_3}{Q^3 R^3 S^3}\sum_{s\sim S}\mathscr{K}_2+O_C\Bigl(\frac{x(\log{x})^{3B} }{N_1}\Bigr),
\end{align*}
where
\begin{align*}
\mathscr{K}_2:&=\sum_{q\sim Q}\sum_{r\sim R}c'_{q,r,s}\sum_{\substack{m\sim M\\ (m,q r s)=1}}\alpha_m\sum_{\substack{1\le |h_1|\le H_1 \\ 1\le |h_2|\le H_2\\ 1\le |h_3|\le H_3}}\hat{\psi_1}\Bigl(\frac{N_1 h_1}{q r s}\Bigr)\hat{\psi_2}\Bigl(\frac{N_2h_2}{q r s}\Bigr)\hat{\psi_3}\Bigl(\frac{N_3h_3}{q r s}\Bigr)\\
&\qquad\times\sum_{\substack{b_1,b_2,b_3\in(\mathbb{Z}/q r s\mathbb{Z})^\times\\ b_1b_2b_3\equiv a_{q r s}\overline{m}\Mod{q r s}}}e\Bigl(\frac{h_1b_1+h_2b_2+h_3b_3}{q r s}\Bigr),\\
c'_{q,r,s}&=\frac{Q^3 R^3 S^3}{q^3 r^3 s^3}c_{q,r,s}.
\end{align*}
We see that the final sum over $b_1,b_2,b_3$ is $F(h_1,h_2,h_3;a_{q r s}\overline{m};q r s)$. This gives the result.
\end{proof}
%
%
%%%%%%%%%%%%%%%%%%%%%%%%%%
%
%
\begin{lmm}[Dealing with dependencies and GCDs]\label{lmm:TripleGCDs}
Let $a_{t},c_{q,r,s},\alpha_m,\mathscr{K}_2$ be as in Lemma \ref{lmm:TripleCompletion}. Then we have
\begin{align*}
\mathscr{K}_2&\ll (\log{x})^3 S^{11}Q R\sum_{d_0\le 4QR}\,\sum_{\substack{d_1\le 2Q, d_2\le 2R\\ d_0|d_1d_2\\ \mu(d_1d_2)^2=1}}\sum_{\substack{e_1,e_2,e_3| d_1^\infty d_2^\infty\\ d_1d_2|d_0e_1e_2e_3}}\frac{\prod_{i=1}^3 (d_0e_i,d_1d_2)}{d_1^2 d_2^2}|\mathscr{K}_3|,
\end{align*}
where
\begin{align*}
\mathscr{K}_3&:=\sum_{\substack{q'\sim Q'}}\sum_{\substack{r'\sim R'}}c''_{q',r'}\sum_{\substack{m\sim M\\ (m,q' r')=1}}\alpha'_m\sum_{\substack{1\le |h_1'|\le H_1' \\ 1\le |h_2'|\le H_2'\\ 1\le |h_3'|\le H_3'\\ (h_1'h_2'h_3',q' r')=1}} \gamma_{h_1'}\gamma_{h_2'}\gamma_{h_3'} \Kl_3(a'_{q', r'}h_1'h_2'h_3'\overline{m}f;q' r'),
\end{align*}
where $H_i'\le H_i/(d_0e_i)$, $Q':=Q/d_1$, $R':=R/d_2$ and $a'_{q', r'}:=a_{d_1d_2q' r' s}$, $(f,q' r')=1$ depends only on $s,d_0,d_1,d_2,e_1,e_2,e_3$ and where $|c''_{q',r'}|\le |c'_{q' d_1,r' d_2}|$, $|\alpha'_m|\le |\alpha_m|$ and $|\gamma_{h'}|\le 1$ are some 1-bounded complex sequences (depending on $d_0,d_1,s$) . 
\end{lmm}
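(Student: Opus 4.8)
The plan is to start from the expression for $\mathscr{K}_2$ given by Lemma \ref{lmm:TripleCompletion} and reduce it, via a sequence of elementary manipulations, to a sum of correlations of $\Kl_3$ with coprimality and additive-twist structure. The main reductions are: (i) separating the $h_i$ variables from the moduli in the factors $\hat{\psi}_i(N_ih_i/(qrs))$ via partial summation, which costs a factor $(\log{x})^3$ and replaces $\hat{\psi}_i$ by $1$-bounded coefficients $\gamma_{h_i}$; (ii) extracting the common factor $d_0 := \gcd(h_1h_2h_3, \text{something involving } qr)$ or more precisely isolating divisibility relations between the $h_i$ and the square-free part of $qr$; (iii) writing $q = d_1 q'$, $r = d_2 r'$ where $d_1d_2$ captures the parts of the moduli that interact badly with the $h_i$, and factoring $q'r'$ out so that the remaining moduli are coprime to all the $h_i'$; (iv) using the multiplicativity properties of $F$ from Lemma \ref{lmm:FSum} to pass from $F(h_1,h_2,h_3;a\overline{m};qrs)$ to (a product involving) $\Kl_3$ on the coprime part, plus controlled contributions from the non-coprime part.

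Concretely, I would first invoke partial summation in $h_1,h_2,h_3$ (using the decay of $\hat{\psi}_i$ and its derivatives) to replace the smooth weights by sharp cutoffs $1\le|h_i|\le H_i'$ with $1$-bounded $\gamma_{h_i}$; this introduces the $(\log x)^3$ factor. Then I would dissect the moduli: given $q\sim Q$, $r\sim R$, $s\sim S$, let $d_1 = \gcd(q, (h_1h_2h_3)^\infty)$-type quantities — more carefully, I would first split $qr = g^{\square}g^{\notsquare}$ into square-full and square-free parts as suggested by the statement's $\mu(d_1d_2)^2 = 1$ condition, and then introduce $d_0 = \gcd(h_1h_2h_3, g^{\notsquare})$ and the $e_i\mid d_1^\infty d_2^\infty$ accounting for how powers of primes dividing the bad part $d_1d_2$ appear in each $h_i$. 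Applying parts (1), (2), (3), (4) of Lemma \ref{lmm:FSum}: part (3) kills terms with $(a,q)\ne 1$, part (4) reduces the modulus by the common factor, parts (6)–(7) show that on the square-free part the prime divisors of $q'r'$ not dividing all three $h_i$ force the contribution to vanish or be bounded by $\prod(h_i,q)/q$-type terms, and part (5) converts the coprime-to-$h_i$ part into $q'r'\cdot\Kl_3(\cdot)$. Collecting the cruft into the prefactors $S^{11}QR\sum_{d_0}\sum_{d_1,d_2}\sum_{e_1,e_2,e_3}\frac{\prod_i(d_0e_i,d_1d_2)}{d_1^2d_2^2}$ and absorbing everything surviving into $1$-bounded $c''$, $\alpha'$, $\gamma$, $f$ then gives the claimed bound.

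The main obstacle I anticipate is the careful bookkeeping in step (iii)-(iv): tracking exactly how a prime power $p^a \| q$ (or $r$) with $p$ dividing some but not all of $h_1,h_2,h_3$ contributes, using the non-multiplicative-looking but locally computable behavior of $F$ described in parts (4), (6), (7) of Lemma \ref{lmm:FSum}, and verifying that the resulting weights telescope into exactly $\prod_{i=1}^3(d_0e_i,d_1d_2)/(d_1^2d_2^2)$ with the stated divisibility constraints $d_0\mid d_1d_2$, $d_1d_2\mid d_0e_1e_2e_3$, $e_i\mid d_1^\infty d_2^\infty$. The $S^{11}$ factor (wildly generous) reflects that we are being deliberately wasteful with the $s$-aspect since $S$ is a tiny power of $x$; I would simply bound the $s$-dependent divisor factors trivially by $\tau(s)^{O(1)}\ll S^{O(1)}$ rather than optimize. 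The additive-twist subtlety — ensuring that the $\Kl_3$ correlation that ultimately appears in $\mathscr{K}_3$ genuinely has the shape $\Kl_3(a'h_1'h_2'h_3'\overline{m}f;q'r')$ with a single extra parameter $f$ coprime to $q'r'$ and depending only on the dissection data — requires using part (2) of Lemma \ref{lmm:FSum} to rescale, and this is where I would need to be most careful that $f$ does not secretly depend on $q',r',m$ in a way that would obstruct the later application of Lemma \ref{lmm:KloostermanCorrelation}.
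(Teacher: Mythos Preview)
Your broad architecture---partial summation to strip the $\hat\psi_i$ weights, then GCD dissection and the multiplicativity of $F$ from Lemma~\ref{lmm:FSum}---matches the paper. But two concrete points would derail your argument as written.

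First, your handling of the $s$-part is not viable. After factoring $F(h_1,h_2,h_3;a_{qrs}\overline m;qrs)=F(\cdot;s)\,F(\cdot;q'r')\,F(\cdot;d_1d_2)$, the factor $F(\cdot;s)$ is a complex number of size up to $S^2$ that depends \emph{jointly} on $h_1,h_2,h_3,a_{qrs},m,q,r\Mod s$; it does not factor as a product of functions of the individual variables, so it cannot be absorbed into separated $1$-bounded coefficients $c'',\alpha',\gamma$, and ``bounding trivially by $\tau(s)^{O(1)}$'' is neither a correct bound nor a way to separate variables. The paper instead freezes the residue classes $\Mod s$ of all the relevant variables (there are $O(S^{13})$ choices), which makes $F(\cdot;s)=\kappa_s$ genuinely constant and can then be pulled outside; combined with $|\kappa_s|\ll S^2$ this is the source of the large power of $S$. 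Without this freezing step you cannot produce a $\mathscr{K}_3$ in which the $\Kl_3$ appears cleanly, and the subsequent Cauchy step (Lemma~\ref{lmm:TripleCauchy}) requires $\Kl_3$, not $|\Kl_3|$.

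Second, your $d_0$ is the wrong object. You set $d_0=\gcd(h_1h_2h_3,qr)$, which is the paper's $d_1d_2$. The paper's $d_0$ is $\gcd(h_1,h_2,h_3,d_1d_2)$, i.e.\ the part of $d_1d_2$ dividing \emph{every} $h_i$; this is what triggers part~(4) of Lemma~\ref{lmm:FSum} and explains the constraints $d_0\mid d_1d_2$ and $d_1d_2\mid d_0e_1e_2e_3$ (the latter because $d_1d_2=\gcd(h_1h_2h_3,qr)\mid d_0^3e_1e_2e_3$ and $d_1d_2$ is square-free). With your definition these constraints are vacuous or wrong, and the weight $\prod_i(d_0e_i,d_1d_2)/(d_1^2d_2^2)$ will not emerge from part~(7). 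Also, the square-full/square-free split of $qr$ is unnecessary: in this lemma's setting $c_{q,r,s}$ is already supported on $qr$ square-free with $q,r,s$ pairwise coprime.
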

%
%
%%%%%%%%%%%%%%%%%%%%%%%%%%
%
%
\begin{proof}
First we wish to remove the dependencies between $q,r,h_1,h_2,h_3$ from the $\hat{\psi}_i$ factors. We note that since $\hat{\psi}^{(j)}(x)\ll_{j,k} |x|^{-k}$, we have (uniformly in $s$)
\[
\frac{\partial^{j_1}\partial^{j_2}\partial^{j_3}}{\partial q^{j_1}\partial r^{j_2}\partial h_i^{j_3}}\hat{\psi}_i\Bigl(\frac{N_i h_i}{q r s}\Bigr)\ll_{j_1,j_2,j_3}q^{-j_1} r^{-j_2} h_i^{-j_3}.
\]
Therefore, by partial summation, we have
\[
\mathscr{K}_2\ll (\log{x})^3 \mathscr{K}_2',
\]
where for some $Q'',R'',H_1'',H_2'',H_3''$ (with $H_i''\le H_i$)
\[
\mathscr{K}_2':=\sum_{\substack{q\sim Q\\ q\le Q''}}\sum_{\substack{r\sim R\\ r\le R''}}c'_{q,r,s}\sum_{\substack{m\sim M\\ (m,q r)=1}}\alpha_m\sum_{\substack{1\le |h_1|\le H_1'' \\ 1\le |h_2|\le H_2''\\ 1\le |h_3|\le H_3''}}F(h_1,h_2,h_3;a_{q r s}\overline{m};q r s).
\]
We now wish to simplify the $F$ sum by extracting GCDs. We recall that we only need to consider $q,r,s$ pairwise coprime with $qr$ square-free. Let $d_1=\gcd(h_1h_2h_3,q)$ and $d_2=\gcd(h_1h_2h_3,r)$ and write $q=d_1q'$, $r=d_2r'$. Since $qr$ is square-free we have $r',q',d_1,d_2$ are pairwise coprime. Thus, by Lemma \ref{lmm:FSum} we have
\begin{align*}
F(h_1,h_2,h_3;a_{q r s}\overline{m};q r)&=F(h_1,h_2, h_3;a_{qr s}\overline{m q^3 r^3};s)F(h_1,h_2,h_3;a_{q r s}\overline{m s^3 d_1^3 d_2^3};q' r')\\
&\qquad \times F(h_1,h_2,h_3;a_{q r s}\overline{m s^3 q'{}^3r'{}^3};d_1d_2).
\end{align*}
Since $qr$ is square-free we see that $\gcd(h_1h_2h_3,q' r')=1$, so by Lemma \ref{lmm:FSum}
\[
F(h_1,h_2,h_3;a_{q r s}\overline{m s^3 d_1^3 d_2^3};q' r')=q' r'\Kl_3(a_{d_1d_2q'r's}h_1h_2h_3\overline{m s^3 d_1^3 d_2^3};q' r').
\]
Let $d_0=(h_1,h_2,h_3,d_1d_2)$. Then, by Lemma \ref{lmm:FSum}
\begin{align*}
F(h_1,h_2,h_3;a_{q r s}\overline{m s^3q'{}^3r'{}^3};d_1d_2)&=\phi(d_0)^2 F\Bigl(\frac{h_1}{d_0},\frac{h_2}{d_0},\frac{h_3}{d_0};a_{q r s}\overline{m s^3q'{}^3r'{}^3};\frac{d_1d_2}{d_0}\Bigr)\\
&=\phi(d_0)^2 G\Bigl(\frac{h_1}{d_0},\frac{h_2}{d_0},\frac{h_3}{d_0};\frac{d_1d_2}{d_0}\Bigr)
\end{align*}
for some function $G$ which depends only on $\gcd(h_i/d_0,d_1d_2/d_0)$ and satisfies
\[
\phi(d_0)^2 G\Bigl(\frac{h_1}{d_0},\frac{h_2}{d_0},\frac{h_3}{d_0};\frac{d_1d_2}{d_0}\Bigr)\ll \frac{(h_1,d_1d_2)(h_2,d_1d_2)(h_3,d_1d_2)}{d_1d_2}.
\]
To ease notation let $h_i=d_0e_i h_i'$ with $(h_i',d_1d_2)=1$, $e_i|d_1^\infty d_2^\infty$. Finally, we see that
\[
F(h_1,h_2, h_3;a_{q r s}\overline{m q^3 r^3};s)
\]
only depends on the values of $h_1',h_2',h_3',a_{q r s},m,q',r',d_1,d_2,d_0,e_1,e_2,e_3\Mod{s}$ and is always bounded above by $s^2$. Thus we can essentially fix this factor by fixing the $O(S^{13})$ residue classes of these variables. Specifically, let us fix them to lie in the residue classes $s_{h_1'},s_{h_2'},s_{h_3'},s_{a},s_m,s_{q'},s_{r'},s_{d_1},s_{d_2},s_{d_0},s_{e_1},s_{e_2},s_{e_3}\Mod{s}$. , so we have
\[
F(h_1,h_2,h_3;a_{q r s}\overline{m};q r s)=\kappa_s q' r'\phi(d_0)^2 G\Bigl(e_1,e_2,e_3;\frac{d_1d_2}{d_0}\Bigr)\Kl_3(a_{q r s}h_1'h_2'h_3'\overline{m}f;q' r').
\]
where $f=f(d_0,e_1,e_2,e_3,s,d_1,d_2,q',r')\Mod{q' r'}$ is given by
\[
f= d_0^3 e_1e_2e_3\overline{s^3d_1^3 d_2^3}
\]
and $\kappa_s\ll S^2$ depends only on residue classes $\Mod{s}$ which we have constrained our variables to lie in.
Substituting this into $\mathscr{K}_2'$ and taking the worst choice of residue classes $\Mod{s}$ for an upper bound, we obtain
\begin{align*}
\mathscr{K}_2'&\ll S^{15}\sup_{s_{h_1'},s_{h_2'},s_{h_3'},s_{a},s_m,s_{q'},s_{r'}\Mod{s}}|\mathscr{K}_2''|,\\
\mathscr{K}_2''&:=\sum_{d_0\le 4QR}\,\sum_{\substack{d_1\le 2Q\\ d_2\le 2R\\ d_0|d_1d_2\\ \mu(d_1d_2)^2=1}}\sum_{\substack{e_1,e_2,e_3| d_1^\infty d_2^\infty\\ d_1d_2|d_0e_1e_2e_3}}\frac{(d_0e_1,d_1d_2)(d_0e_2,d_1d_2)(d_0e_3,d_1d_2)}{d_1d_2}Q' R'|\mathscr{K}_3|,\\
\mathscr{K}_3&:=\sum_{\substack{q'\sim Q'}}\sum_{\substack{r'\sim R'}}c''_{q',r'}\sum_{\substack{m\sim M\\ (m,q' r')=1}}\alpha'_m\sum_{\substack{1\le |h_1'|\le H_1' \\ 1\le |h_2'|\le H_2'\\ 1\le |h_3'|\le H_3'\\ (h_1'h_2'h_3',q' r')=1}}\gamma_{h_1'}\gamma_{h_2'}\gamma_{h_3'} \Kl_3(a'_{q',r'}h_1'h_2'h_3'\overline{m}f;q' r'),
\end{align*}
where $H_i':=H_i''/(d_0e_i)\le H_i/(d_0e_i)$, $Q':=Q/d_1$, $R':=R/d_2$, and the coefficients are given by
\begin{align*}
c''_{q',r'}&:=\frac{q' r'}{4 Q' R'}\mathbf{1}_{\substack{ a_{q', r'}\equiv s_{a} \Mod{s} \\ q'\equiv s_{q'}\Mod{s} \\  r'\equiv s_{r'}\Mod{s}  }}\mathbf{1}_{q'\le Q''/d_1 }\mathbf{1}_{ r'\le R''/d_2 } c'_{q' d_1,r' d_2,s},\\
\alpha'_m&:=\alpha_m\mathbf{1}_{m\equiv s_m\Mod{s}}\mathbf{1}_{ (m,d_1d_2)=1},\\
\gamma_{h_i'}&:=\mathbf{1}_{h_i'\equiv s_{h_i'}\Mod{s}}\mathbf{1}_{ (h_i',d_1d_2)=1},\\
a'_{q',r'}&:=a_{d_1 d_2 q' r' s}. 
\end{align*}
This gives the result.
\end{proof}
%
%
%%%%%%%%%%%%%%%%%%%%%%%%%%
%
%
\begin{lmm}[Cauchy]\label{lmm:TripleCauchy}
Let $\mathscr{K}_3$ be as in Lemma \ref{lmm:TripleGCDs}. Then we have
\[
|\mathscr{K}_3|^2\ll (\log{x})^{O(1)} Q' H_1'H_2'H_3' \sup_{H\le H_1'H_2'H_3'}\mathscr{K}_4,
\]
where
\begin{align*}
\mathscr{K}_{4}&:=\sum_{q'\sim Q'}\sum_{\substack{r_1,r_2\sim R'}}|c''_{q',r_1}c''_{q',r_2}|\sum_{\substack{m_1,m_2\sim M\\ (m_1,q' r_1)=1\\ (m_2,q' r_2)=1}}\\
&\qquad \times \Bigl|\sum_{(h,q' r_1r_2)=1}\psi\Bigl(\frac{h}{H}\Bigr)\Kl_3(a'_{q',r_1}h\overline{m_1}f;q' r_1)\overline{\Kl_3(a'_{q',r_2}h\overline{m_2}f;q' r_2)}\Bigr|
\end{align*}
\end{lmm}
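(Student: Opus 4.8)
The plan is to apply Cauchy--Schwarz to the sum $\mathscr{K}_3$ in the variables $q', m, h_1', h_2', h_3'$, grouping the three completion variables $h_1', h_2', h_3'$ into a single variable $h = h_1' h_2' h_3'$, so that after squaring we are left with a sum over two copies $r_1, r_2$ of the modulus $r'$ (and two copies $m_1, m_2$ of $m$), with an inner sum over $h$ of a product of two hyper-Kloosterman sums, one to modulus $q' r_1$ and one to modulus $q' r_2$. The $q'$ variable must stay on the outside of the Cauchy--Schwarz because the modulus $q' r_1 r_2$ would otherwise grow too large; we keep $q'$ fixed, place absolute values around the $r', m, h$-summation, and apply Cauchy--Schwarz in $(q', m, h_1', h_2', h_3')$ treating $c''_{q', r'}$ as the ``outer'' weight and the rest as the ``inner'' object. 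Since the multiplicativity $\Kl_3(a_{q'r'} h_1'h_2'h_3' \overline{m} f; q' r')$ depends on $h_1', h_2', h_3'$ only through the product $h_1' h_2' h_3'$ modulo $q' r'$, it is natural to first substitute $h := h_1' h_2' h_3'$, which ranges over $[1, H_1' H_2' H_3']$ with multiplicity $\tau_3(h) \ll x^{o(1)}$; this explains the factor $Q' H_1' H_2' H_3'$ and the $(\log x)^{O(1)}$ (really $x^{o(1)}$, absorbed) loss in the statement, the supremum over dyadic ranges $H \le H_1' H_2' H_3'$ coming from splitting $h$ dyadically.

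More precisely, I would argue as follows. Write $\mathscr{K}_3 = \sum_{q' \sim Q'} \sum_{r' \sim R'} c''_{q', r'} \sum_{m \sim M, (m, q'r')=1} \alpha'_m \, T(q', r', m)$, where $T(q', r', m) := \sum_{h_1', h_2', h_3'} \gamma_{h_1'} \gamma_{h_2'} \gamma_{h_3'} \Kl_3(a'_{q', r'} h_1' h_2' h_3' \overline{m} f; q' r')$; substituting $h = h_1' h_2' h_3'$ and bounding $\tau_3$ trivially turns $T$ into $\sum_{h} \gamma_h' \Kl_3(\dots; q' r')$ for a new $1$-bounded $\gamma_h'$ supported on $|h| \le H_1' H_2' H_3'$, up to an $x^{o(1)}$ loss. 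Then group $(r', m)$ as the Cauchy--Schwarz variable that survives the square and $(q', h)$ — no: rather, following the statement, we keep $q'$ fixed, pull the $r'$-sum inside absolute values, and Cauchy--Schwarz in the pair $(m, h_1', h_2', h_3')$ against the weight $\sum_{r'} c''_{q',r'} (\cdots)$. The first factor contributes $\sum_{q'} \sum_m \sum_{h_1',h_2',h_3'} |\alpha'_m \gamma\cdots|^2 \ll Q' M H_1' H_2' H_3'$ (up to $x^{o(1)}$), matching the $Q' H_1' H_2' H_3'$ in the lemma once one notes $M$ is absorbed or accounted elsewhere; the second factor, after expanding the square, is exactly $\mathscr{K}_4$ with the $h$-sum smoothed by a bump function $\psi(h/H)$ (inserted via a smooth partition of unity on the dyadic range, Lemma~\ref{lmm:Partition}, which is why $\psi$ and the supremum over $H$ appear) and with the coprimality condition $(h, q' r_1 r_2) = 1$ coming from the supports of the original $\gamma_{h_i'}$. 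Replacing the sharp cutoff $1 \le |h| \le H_1'H_2'H_3'$ by the smooth $\psi(h/H)$ over dyadic blocks is routine and loses only $\log x$.

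The routine parts are: the substitution $h = h_1'h_2'h_3'$ and the $\tau_3$ bound; the Cauchy--Schwarz itself; the computation of the diagonal ``first factor'' bound $Q' H_1'H_2'H_3' x^{o(1)}$ (here one uses $|\alpha_m'|, |\gamma_{h_i'}| \le 1$); and the smoothing of the $h$-cutoff via Lemma~\ref{lmm:Partition}, noting the derivative bounds on $\psi$ are harmless. \textbf{The main obstacle} is purely bookkeeping: one must be careful that after Cauchy--Schwarz the modulus in $\mathscr{K}_4$ is $q' r_1 r_2$ of size $Q' R'^2 \ll x^{O(\delta)} R^2$, which is still small enough that the eventual appeal to Lemma~\ref{lmm:KloostermanCorrelation} (applied prime-by-prime to the correlation sum over $h$ modulo $q' r_1 r_2$, after factoring and using the Chinese Remainder Theorem / multiplicativity of $\Kl_3$ from Lemma~\ref{lmm:FSum}) will give a genuine power saving; but that estimate for $\mathscr{K}_4$ is carried out in a later lemma, so here we need only record the clean reduction. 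One should also track that the twist $f$ and the residue class $a'_{q',r_1}$ versus $a'_{q',r_2}$ do not depend on $m_1, m_2$ in a way that obstructs the later congruence analysis — but at this stage they are simply carried along verbatim, and the only real care needed is to ensure the coprimality conditions and the $1$-boundedness of all coefficients are preserved, which they are by construction.
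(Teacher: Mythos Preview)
Your Cauchy--Schwarz is applied over the wrong set of variables. You propose to Cauchy in $(q',m,h_1',h_2',h_3')$, keeping only $r'$ inside the square. But look at the definition of $\mathscr{K}_4$ in the lemma: it carries a double sum $\sum_{m_1,m_2\sim M}$, which can only arise if the $m$-variable is kept \emph{inside} the absolute values together with $r'$ before squaring. The paper's proof applies Cauchy--Schwarz only in $(q',h)$ (after collapsing $h=h_1'h_2'h_3'$ with weight $\Gamma_h\le\tau_3(h)$), so that the inner object is $\sum_{r'}c''_{q',r'}\sum_{m}\alpha'_m\Kl_3(\cdots)$; expanding the square then produces $\sum_{r_1,r_2}\sum_{m_1,m_2}$ as required. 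Your version would instead yield a first factor of size $Q'MH_1'H_2'H_3'$ (the extra $M$ is not ``absorbed elsewhere'' --- the lemma genuinely claims $Q'H_1'H_2'H_3'$ without $M$) and a second factor with only a single $m$-sum, which is not $\mathscr{K}_4$.

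A secondary point: to get the $(\log x)^{O(1)}$ loss rather than $x^{o(1)}$, you should not bound $\tau_3(h)$ pointwise. The paper uses the second-moment bound $\sum_{|h|\le H'}\tau_3(h)^2\ll H'(\log x)^{O(1)}$ for the first Cauchy factor, which is exactly what gives only logarithmic losses. This matters because Proposition~\ref{prpstn:Triple} is used at the edge of its range in the proof of Theorem~\ref{thrm:WeakEquidistribution}, where only log-power savings are available.
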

%
%
%%%%%%%%%%%%%%%%%%%%%%%%%%
%
%
Here we caution to the reader that by $\overline{\Kl_3(a'_{q',r_2}h\overline{m_2}f;q' r_2)}$ we mean the complex conjugate of $\Kl_3(a'_{q',r_2}h\overline{m_2}f;q' r_2)$, where $\overline{m_2}m_2\equiv 1\Mod{q' r_2}$. 
%
%
%%%%%%%%%%%%%%%%%%%%%%%%%%
%
%
\begin{proof}
We swap the order of summation and write $h=h_1'h_2'h_3'$ in $\mathscr{K}_3$. This gives
\begin{align*}
\mathscr{K}_3&:=\sum_{\substack{q'\sim Q'}}\sum_{\substack{1\le h\le H'\\ (h,q')=1}}\Gamma_h\sum_{\substack{r'\sim R'\\ (r',h)=1}}c''_{q',r'}\sum_{\substack{m\sim M\\ (m,q' r')=1}}\alpha'_m\Kl_3(a'_{q',r'}h\overline{m}f;q' r'),
\end{align*}
where $H':=H_1'H_2'H_3'\le (\log{x})^{6B+3} Q^3 R^3 S^3 /(N_1N_2 N_3 d_0^3 e_1e_2e_3)$ and
\[
\Gamma_h:=\sum_{1\le |h_1'|\le H_1'}\sum_{1\le |h_2'|\le H_2'}\sum_{\substack{1\le |h_3'|\le H_3'\\ h=h_1'h_2'h_3'}} \gamma_{h_1'}\gamma_{h_2'}\gamma_{h_3'}\le \tau_3(h).
\]
We now Cauchy in $q',h_1',h_2',h_3'$, put $h$ into dyadic intervals and insert a smooth majorant $\psi$. This gives
\begin{align*}
\mathscr{K}_3^2&\le \sum_{q'\sim Q'}\sum_{\substack{1\le |h|\le H'\\ (h,q')=1}}\Bigl|\sum_{\substack{r'\sim R'\\ (r',h)=1}}c''_{q',r'}\sum_{\substack{m\sim M\\ (m,q' r')=1}}\alpha'_m\Kl_3(a'_{q',r'}h\overline{m}f;q' r')\Bigr|^2\\
&\qquad\times \Bigl(Q'\sum_{1\le |h|\le H'}\tau_3(h)^2\Bigr)\\
&\ll (\log{x})^{O(1)} Q H' \sup_{H\le H'}\mathscr{K}_3',
\end{align*}
where $\mathscr{K}_3'$ is given by
\begin{align*}
\mathscr{K}_3'&:=\sum_{q'\sim Q'}\sum_{(h,q')=1}\psi\Bigl(\frac{h}{H}\Bigr)\Bigl|\sum_{\substack{r'\sim R'\\ (r',h)=1}}c''_{q',r'}\sum_{\substack{m\sim M\\ (m,q' r')=1}}\alpha'_m\Kl_3(a'_{q',r'}h\overline{m}f;q' r')\Bigr|^2.
\end{align*}
We expand the square, and swap the order of summation, giving 
\[
\mathscr{K}_3'\le \mathscr{K}_4,
\]
where
\begin{align*}
\mathscr{K}_{4}&:=\sum_{q'\sim Q'}\sum_{\substack{r_1,r_2\sim R'}}|c''_{q',r_1}c''_{q',r_2}|\sum_{\substack{m_1,m_2\sim M\\ (m_1,q' r_1)=1\\ (m_2,q' r_2)=1}}\\
&\qquad \times\Bigl|\sum_{(h,q' r_1r_2)=1}\psi\Bigl(\frac{h}{H}\Bigr)\Kl_3(a'_{q',r_1}h\overline{m_1}f;q' r_1)\overline{\Kl_3(a'_{q',r_2}h\overline{m_2}f;q' r_2)}\Bigr|.
\end{align*}
This gives the result.
\end{proof}
%
%
%%%%%%%%%%%%%%%%%%%%%%%%%%
%
%
\begin{lmm}[Bounding $\mathscr{K}_4$]\label{lmm:K4Bound}
Let $\mathscr{K}_4$ be as in Lemma \ref{lmm:TripleCauchy}. Then we have
\[
\mathscr{K}_4\ll (\log{x})^{O_B(1)}\Bigl(Q'{}^{3/2}R'{}^3 M^2+HQ'{}^{1/2}R' M^2+H Q' R' M\Bigr).
\]
\end{lmm}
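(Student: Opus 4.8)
The plan is to estimate $\mathscr{K}_4$ by separating the diagonal contribution (where the two hyper-Kloosterman sums are forced to match termwise) from the genuinely bilinear contribution, and then to detect cancellation in the sum over $h$ using the correlation bound of Lemma \ref{lmm:KloostermanCorrelation}. First I would write $q'r_1 = q' [r_1,r_2]/\gcd(r_1,r_2)\cdot\dots$; more precisely, factor the moduli $q'r_1$ and $q'r_2$ and use the twisted multiplicativity of $\Kl_3$ (the standard CRT factorization $\Kl_3(a;uv)=\Kl_3(a\overline{v}^3;u)\Kl_3(a\overline{u}^3;v)$ for coprime $u,v$) to reduce the inner $h$-sum to a product, over primes $p\mid q'r_1r_2$, of complete sums of the shape $\sum_{h\Mod p} e(c_1 h/p)\Kl_3(b_1 h;p)\overline{\Kl_3(b_2 h;p)}$ after completing the $h$-summation via Lemma \ref{lmm:InverseCompletion} (so $h$ runs over $\ell\ll q'r_1r_2/H$ frequencies, up to the main term $\ell=0$). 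Each such complete sum is $O(p^{1/2})$ by Lemma \ref{lmm:KloostermanCorrelation} unless the exceptional condition $c_1\equiv 0$, $c_2\equiv 1\Mod p$ holds, which pins down a congruence relating $m_1, m_2, r_1, r_2$ (essentially $m_1\overline{r_1}\equiv m_2\overline{r_2}$ type relations modulo each prime), and when it fails for even one prime we gain a full power-saving factor of size roughly $(q'r_1r_2)^{1/2}$ over the trivial bound $q'r_1r_2$ for the completed sum.

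Next I would organize the count. The "diagonal" terms are those $q', r_1, r_2, m_1, m_2$ for which the exceptional condition holds at every prime dividing $q' r_1 r_2$; these force $r_1 = r_2$ and $m_1 = m_2$ (or at least $m_1\overline{m_2}\equiv r_1\overline{r_2}$ globally, which after the coprimality and size constraints $m_i\sim M$, $r_i\sim R'$ leaves $O((\log x)^{O(1)})$ choices once one of each pair is fixed), and for these we estimate the inner sum trivially by $H$ times $(q'r_1r_2)^{o(1)}$, giving a contribution of size $(\log x)^{O_B(1)} Q' R' M H$ — the third term in the claimed bound. For the off-diagonal terms I would split further according to whether the $\ell=0$ frequency contributes: the $\ell=0$ term of the completed $h$-sum is itself a product of complete $\Kl_3$-correlation sums with $c_1\equiv 0$ but $c_2$ arbitrary, so it is $O((q'r_1r_2)^{1/2+o(1)})$ times $H/(q'r_1r_2)$ unless additionally $c_2\equiv 1$ (back to the diagonal), yielding after summing over $m_1,m_2,r_1,r_2,q'$ a term of size $(\log x)^{O_B(1)} H Q'^{1/2} R' M^2$; and the $\ell\neq 0$ frequencies contribute $(q'r_1r_2)^{1/2+o(1)}\cdot (q'r_1r_2/H)$, i.e. after multiplying by $N_i/(q'r_1r_2)$-type normalizations and summing trivially over the $O(Q' R'^2 M^2)$ outer tuples, a term of size $(\log x)^{O_B(1)} Q'^{3/2} R'^3 M^2$. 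Collecting the three regimes gives exactly the stated bound.

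The main obstacle I anticipate is the bookkeeping around the exceptional set and non-squarefree moduli. Lemma \ref{lmm:KloostermanCorrelation} is stated only for prime $p$, so I must lift it to prime powers $p^\nu\mid q'r_1r_2$ — for the square-free part $r_1 r_2$ this is clean, but $q'$ need not be square-free (it arose as $Q/d_1$ with $d_1$ possibly carrying the square-full part), so I would need either an elementary Weil-type bound for $\Kl_3$-correlations to prime power modulus (which follows from standard stationary-phase / Hensel-lifting arguments, losing only a bounded power of $p$ from ramified primes) or to first isolate the square-full part of $q'$, bound its contribution trivially using its smallness, and run the correlation argument on the square-free part. A secondary subtlety is correctly identifying the exceptional congruence: because of the additive twist $e(c_1 h/p)$ coming from the completion frequency $\ell$, the condition is $c_1\equiv 0$ AND $c_2\equiv 1$, and I need both to fail simultaneously at some prime to win — tracking which prime "saves us" while summing over the remaining primes (where one only has the trivial bound $p$) requires the usual argument of choosing, for each off-diagonal tuple, a single prime $p_0\mid q'r_1r_2$ at which the bad condition fails and summing a geometric-type series over the complementary divisor; the coprimality structure $(r_1,r_2)$, squarefreeness of $r_1r_2$, and the divisor bound keep this under control with only $(\log x)^{O_B(1)}$ loss. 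Everything else is routine: completion of sums via Lemma \ref{lmm:InverseCompletion}, the Weil bound Lemma \ref{lmm:Kloosterman} for any leftover one-variable Kloosterman sums, and trivial estimation of the remaining variable sums.
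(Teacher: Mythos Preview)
Your overall strategy matches the paper's: complete the $h$-sum (with period $q'[r_1,r_2]$, not $q'r_1r_2$ --- note $r_1,r_2$ need not be coprime), factor via CRT, and apply Lemma~\ref{lmm:KloostermanCorrelation} prime by prime. The three terms in the final bound do indeed arise from (i) the frequencies $\ell\neq 0$, (ii) the $\ell=0$ term with generic behaviour, and (iii) the $\ell=0$ term with a large exceptional set. Two smaller points: you should use Lemma~\ref{lmm:Completion} (Poisson) rather than Lemma~\ref{lmm:InverseCompletion} to complete the $h$-sum, and your worry about $q'$ being non-squarefree is unfounded --- in this setup $c''_{q',r'}$ is supported on squarefree $q'r'$ (it inherits this from $c_{q,r,s}$, which was restricted to squarefree $qr$ in the reduction preceding Lemma~\ref{lmm:TripleCompletion}).

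The genuine gap is in your handling of the exceptional set. Your dichotomy ``all primes exceptional'' versus ``at least one prime non-exceptional'' is too coarse. At each prime $p\mid q'[r_1,r_2]$ the local sum is $O(p^{1/2})$ if the exceptional condition fails and $O(p)$ otherwise, so the product is $O\bigl((q'[r_1,r_2])^{1/2}g^{1/2}\bigr)$ where $g$ is the product of the exceptional primes --- not $O((q'[r_1,r_2])^{1/2})$ as you claim for the off-diagonal $\ell=0$ case. Your proposed fix of ``choosing one prime $p_0$ where we save'' yields only $(q'[r_1,r_2])p_0^{-1/2}$, and after multiplying by $H/(q'[r_1,r_2])$ and summing trivially over the $Q'R'^2M^2$ outer tuples this gives $HQ'R'^2M^2p_0^{-1/2}$, which is far too large. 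What the paper does instead is identify $g$ (for $\ell=0$) with the greatest common divisors
\[
g_1=\gcd\bigl(a'_{q',r_1}r_2^3m_2-a'_{q',r_2}r_1^3m_1,\,q'\bigr),\qquad g_2\mid (r_1,r_2),
\]
and then sum over the value $d=g_1$: for each $d\mid q'$ the congruence $a'_{q',r_1}r_2^3m_2\equiv a'_{q',r_2}r_1^3m_1\Mod{d}$ leaves $O(M/d+1)$ choices for $m_2$, and $\sum_{d\mid q'}d^{1/2}(M^2/d+M)$ produces exactly the two terms $HQ'^{1/2}R'M^2$ and $HQ'R'M$. In particular the third term does \emph{not} come from ``$r_1=r_2$, $m_1=m_2$'' --- the exceptional condition is only a congruence involving the (variable-dependent) residues $a'_{q',r_i}$, and does not force equality of variables.
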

%
%
%%%%%%%%%%%%%%%%%%%%%%%%%%
%
%
\begin{proof}
We see that the $\Kl_3$ factors are periodic with period $q'[r_1,r_2]$. (Here we use the notation $[r_1,r_2]:=\lcm(r_1,r_2)$.) Therefore we split the inner sum of $\mathscr{K}_4$ into residue classes $\Mod{q'[r_1,r_2]}$. Denoting this inner sum by $\mathscr{K}_5$, we see
\begin{align}
\mathscr{K}_5&:=\sum_{h}\psi\Bigl(\frac{h}{H}\Bigr)\Kl_3(a'_{q',r_1}h\overline{m_1}f;q' r_1)\overline{\Kl_3(a'_{q',r_2}h\overline{m_2}f;q' r_2)}\nonumber \\
&=\sum_{b\Mod{q'[r_1,r_2]}}\Kl_3(a'_{q',r_1}b\overline{m_1}f;q' r_1)\Kl_3(a'_{q',r_2}b\overline{m_2}f;q' r_2)\sum_{h\equiv b\Mod{q'[r_1,r_2]}}\psi\Bigl(\frac{h}{H}\Bigr).\label{eq:K5}
\end{align}
By completion of sums (Lemma \ref{lmm:Completion}), we have for $L_0:=(\log{x})^5 q' [r_1,r_2]/H$
\[
\sum_{h\equiv b\Mod{q'[r_1,r_2]}}\psi\Bigl(\frac{h}{H}\Bigr)=\frac{H}{q'[r_1,r_2]}\sum_{|\ell|\le L_0}\hat{\psi}\Bigl(\frac{\ell H}{q'[r_1,r_2]}\Bigr)e\Bigl(\frac{\ell b}{q'[r_1,r_2]}\Bigr)+O(x^{-100}).
\]
We substitute this expression into \eqref{eq:K5}. This gives
\begin{align}
\mathscr{K}_5&=O(x^{-10})+\frac{H}{q'[r_1,r_2]}\sum_{|\ell|\le L_0}\hat{\psi}\Bigl(\frac{\ell H}{q'[r_1,r_2]}\Bigr)\nonumber\\
&\qquad \times \sum_{b \Mod{q'[r_1,r_2]}}e\Bigl(\frac{\ell b}{q'[r_1,r_2]}\Bigr)\Kl_3(a'_{q',r_1}b\overline{m_1}f;q' r_1)\Kl_3(a'_{q',r_2}b\overline{m_2}f;q' r_2).\label{eq:K5Completed}
\end{align}
The inner sum over $b$ factors into a product of sums modulo each prime factor of $q'[r_1,r_2]$  by the Chinese Remainder Theorem. Explicitly, using Lemma \ref{lmm:FSum}, it is given by
\begin{align*}
&\prod_{p|q'(r_1,r_2)}\Bigl(\sum_{b \Mod{p}}e\Bigl(\frac{\ell b\overline{(q'[r_1,r_2]/p)}}{p}\Bigr)\Kl_3\Bigl(a'_{q',r_1}b\overline{m_1}\overline{\Bigl(\frac{q' r_1}{p}\Bigr)^3}f;p\Bigr)\Kl_3\Bigl(a'_{q',r_2}b\overline{m_2}\overline{\Bigl(\frac{q'r_2}{p}\Bigr)^3}f;p\Bigr)\Bigr)\\
&\times \prod_{\substack{p|r_1\\ p\nmid r_2}}\Bigl(\sum_{b \Mod{p}}e\Bigl(\frac{\ell b\overline{(q'[r_1,r_2]/p)}}{p}\Bigr)\Kl_3\Bigl(a'_{q',r_1}b\overline{m_1}\overline{\Bigl(\frac{q'r_1}{p}\Bigr)^3}f;p\Bigr)\Bigr)\\
&\times \prod_{\substack{p|r_2\\ p\nmid r_1}}\Bigl(\sum_{b \Mod{p}}e\Bigl(\frac{\ell b\overline{(q'[r_1,r_2]/p)}}{p}\Bigr)\Kl_3\Bigl(a'_{q',r_2}b\overline{m_2}\overline{\Bigl(\frac{q'r_2}{p}\Bigr)^3}f;p\Bigr)\Bigr).
\end{align*}
By Lemma \ref{lmm:KloostermanCorrelation}, each such sum $\Mod{p}$ exhibits square-root cancellation unless $\ell$ vanishes and the arguments are of the $\Kl_3$ factors are the same $\Mod{p}$. Thus the sum over $b$ in \eqref{eq:K5Completed} is bounded by
\[
\tau(q'r_1r_2)^{O(1)}q'{}^{1/2}[r_1,r_2]^{1/2}g_1^{1/2} g_2^{1/2},
\]
where the GCDs $g_1$ and $g_2$ are given by
\begin{align*}
g_1&:=(a'_{q',r_1}r_2^3m_2-a'_{q',r_2}r_1^3 m_1,\ell,q'),\\
g_2&:=\Bigl(\frac{m_2 a'_{q',r_1}r_2^3-m_1a'_{q',r_2}r_1^3}{(r_1,r_2)^3},r_1,r_2,\ell\Bigr).
\end{align*}
We substitute this into our expression \eqref{eq:K5Completed} for $\mathscr{K}_5$. Recalling that $c_{q,r}$ is supported on $\tau(q r)\le (\log{x})^B$, we see that for the terms we are considering the $\tau(q'r_1r_2)^{O(1)}\ll (\log{x})^{O_B(1)}$. Separating the term $\ell=0$, this gives a bound
\begin{align}
\mathscr{K}_5&\ll (\log{x})^{O_B(1)}\Bigl(Q'{}^{1/2}R'+\frac{H(a'_{q',r_1}r_2^3m_2-a'_{q',r_2}r_1^3 m_1,q')^{1/2}(r_1,r_2)^{1/2} }{Q'{}^{1/2}[r_1,r_2]^{1/2}}\Bigr)\nonumber\\
&\ll  (\log{x})^{O_B(1)}\Bigl(Q'{}^{1/2}R'+\frac{H}{Q'{}^{1/2}R'}(a'_{q',r_1}r_2^3m_2-a_{q',r_2}r_1^3 m_1,q')^{1/2}(r_1,r_2)\Bigr).\label{eq:K5Bound}
\end{align}
We recall that
\[
\mathscr{K}_4=\sum_{q'\sim Q'}\sum_{\substack{r_1,r_2\sim R'\\ (r_1r_2,q' d_1d_2)=1}}\sum_{\substack{m_1,m_2\sim M\\ (m_1,q' r_1)=1\\ (m_2,q' r_2)=1}}|\mathscr{K}_5|.
\]
The first term of \eqref{eq:K5Bound} contributes to $\mathscr{K}_4$ a total
\[
\ll \sum_{q\sim Q'}\sum_{r_1,r_2\sim R'}\sum_{m_1,m_2\sim M} (\log{x})^{O_B(1)} Q'{}^{1/2}R'\ll  (\log{x})^{O_B(1)} Q'{}^{3/2}R'{}^3 M^2.
\]
We now consider the contribution from the second term of \eqref{eq:K5Bound}. We separately the contribution according to the value of $d:=\gcd(a'_{q',r_1}m_2 r_1^3-a'_{q',r_2}m_1 r_2^3,q')$. Given a choice of $d,q',r_1,r_2$ and $m_1$, we see that $m_2$ is forced to lie in a fixed residue class $\mod{d}$. Thus there are $O(M/d+1)$ possible choices of $m_2$. Therefore we see that the total contribution from the second term of \eqref{eq:K5Bound} to $\mathscr{K}_4$ is
\begin{align*}
&\ll  (\log{x})^{O_B(1)} \frac{H}{Q'{}^{1/2}R'}\sum_{q'\sim Q'} \sum_{d|q'}d^{1/2}\sum_{r_1,r_2\sim R'}(r_1,r_2) \Bigl(\frac{M^2}{d}+M\Bigr)\\
&\ll  (\log{x})^{O_B(1)}\Bigl(HQ'{}^{1/2}R' M^2+H Q' R' M\Bigr).
\end{align*}
Putting this together then gives the result.
\end{proof}
%
%
%%%%%%%%%%%%%%%%%%%%%%%%%%
%
%
\begin{lmm}\label{lmm:TripleConclusion}
Let $A,B>0$, let $Q R S=x^{1/2+\delta}$, $MN_1N_2N_3\asymp x$ with $(\log{x})^{A+3B}\le N_1\le N_2\le N_3$ and $S\le (\log{x})^B$. Let $\mathscr{K}$ and $\mathscr{K}_{MT}$ be as given by Lemma \ref{lmm:TripleCompletion} and let $M$ satisfy
\[
M<\min\Bigl(\frac{R}{x^{4\delta}(\log{x})^C},\frac{Q^{1/2}}{x^{2\delta}(\log{x})^C}\Bigr)
\]
for some constant $C=C(A,B)$ sufficiently large in terms of $A,B$. Then we have
\[
\mathscr{K}=\mathscr{K}_{MT}+O_A\Bigl(\frac{x}{(\log{x})^A}\Bigr).
\]
\end{lmm}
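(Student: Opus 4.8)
The plan is to assemble Lemmas~\ref{lmm:TripleCompletion}--\ref{lmm:K4Bound} into a chain of estimates and then verify that the numerical constraints on $M,Q,R,S$ suffice to make every error term acceptable. First I would apply Lemma~\ref{lmm:TripleCompletion} to write $\mathscr{K}=\mathscr{K}_{MT}+(N_1N_2N_3/(Q^3R^3))\sum_{s\sim S}\mathscr{K}_2+O_B(x(\log x)^{3B}/N_1)$; since $N_1\ge(\log x)^{A+3B}$, the displayed error term is already $O_A(x/(\log x)^A)$, so it remains to show $\sum_{s\sim S}\mathscr{K}_2\ll x Q^3R^3/(N_1N_2N_3(\log x)^A)=MQ^3R^3/(\log x)^A$ after absorbing $S\le(\log x)^B$. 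Because there are only $O((\log x)^B)$ values of $s$, it suffices to bound each $\mathscr{K}_2$ by $MQ^3R^3/(\log x)^{A'}$ for suitable $A'=A'(A,B)$.

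Next I would feed $\mathscr{K}_2$ through Lemma~\ref{lmm:TripleGCDs}, reducing to $\mathscr{K}_3$ with $Q'=Q/d_1$, $R'=R/d_2$, $H_i'\le H_i/(d_0e_i)$. The divisor-sum weights $\prod_i(d_0e_i,d_1d_2)/(d_1^2d_2^2)$ converge (the sums over $d_0,d_1,d_2,e_1,e_2,e_3$ contribute only $x^{o(1)}$ or $(\log x)^{O(1)}$ losses once one checks the standard bound $\sum_{e|d^\infty}1/e\ll_\epsilon d^\epsilon$), so modulo bookkeeping it is enough to bound $\mathscr{K}_3$ by $MQ'R'/(\log x)^{A''}$, up to the harmless factors $S^{11}QR$ and the weights. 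Then I would apply Lemma~\ref{lmm:TripleCauchy} to get $|\mathscr{K}_3|^2\ll(\log x)^{O(1)}Q'H_1'H_2'H_3'\sup_{H\le H_1'H_2'H_3'}\mathscr{K}_4$, and Lemma~\ref{lmm:K4Bound} to bound $\mathscr{K}_4\ll(\log x)^{O_B(1)}(Q'^{3/2}R'^3M^2+HQ'^{1/2}R'M^2+HQ'R'M)$. Substituting $H\le H':=H_1'H_2'H_3'\le(\log x)^{O_B(1)}Q^3R^3S^3/(N_1N_2N_3 d_0^3e_1e_2e_3)\ll(\log x)^{O_B(1)}Q'^3R'^3M$ (using $MN_1N_2N_3\asymp x$, $QRS=x^{1/2+\delta}$, and $d_i,e_i\ge 1$, so $H'/(d_1^3d_2^3)\le$ a constant times $Q'^3R'^3M$ times small powers of $x^{2\delta}$), one gets $|\mathscr{K}_3|^2\ll(\log x)^{O_B(1)}Q'^2R'^3M(Q'^{3/2}R'^3M^2+Q'^3R'^3M^3 Q'^{1/2}R'M^2/\ldots)$; collecting terms, $|\mathscr{K}_3|^2\ll(\log x)^{O_B(1)}(Q'^{7/2}R'^6M^3+Q'^{11/2}R'^7M^5+Q'^{9/2}R'^6M^4)$ after inserting the crude bound $H\ll Q'^3R'^3M x^{O(\delta)}$.

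The goal is $|\mathscr{K}_3|^2\ll Q'^2R'^2M^2/(\log x)^{2A''}$, i.e.\ each of the three terms must be smaller than $Q'^2R'^2M^2$ by a log power. Dividing through: the first term needs $Q'^{3/2}R'^4M\ll x^{-O(\delta)}$ — i.e.\ $M\ll x^{-O(\delta)}/(Q'^{3/2}R'^4)$; since $Q'\le Q$, $R'\le R$ and $Q'^{3/2}R'^4$ is at least of order $(R')^{4}$, and one checks using $QRS=x^{1/2+\delta}$ with $R$ small that $M\ll R/(x^{4\delta}(\log x)^C)$ is exactly what makes $H Q'^{1/2}R'M^2\ll Q'^2R'^2M^2/(\log x)^{O(1)}$ work out. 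The second term ($H Q'^{1/2}R'M^2$ vs $Q'^2R'^2M^2$) demands $H\ll Q'^{3/2}R'/(\log x)^{O(1)}$, which combined with $H\ll Q'^3R'^3M x^{O(\delta)}$ and $Q'R'\asymp x^{1/2+\delta}/(S d_1d_2)$ translates into the requirement $M\ll Q^{1/2}x^{-2\delta}(\log x)^{-C}$ — this is the binding constraint and it is precisely the hypothesis $M<Q^{1/2}/(x^{2\delta}(\log x)^C)$. The third term ($HQ'R'M$ vs $Q'^2R'^2M^2$) needs $H\ll Q'R'M/(\log x)^{O(1)}$, which is weaker. So both hypotheses on $M$ are needed and, up to tracking powers of $x^\delta$ carefully through $H_i'=(\log x)^{2B+1}QRS/(N_i d_0e_i)$ and $MN_1N_2N_3\asymp x$, they are sharp enough.

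I expect the main obstacle to be the precise propagation of the $x^{O(\delta)}$ factors: one must show that after all the Cauchy--Schwarz steps the exponents of $x^\delta$ accumulated from $H_1'H_2'H_3'$ (three factors, each carrying $QRS/N_i$) and from the reciprocals $1/(d_1d_2)$ are absorbed by the slack built into $x^{4\delta}(\log x)^C$ and $x^{2\delta}(\log x)^C$ in the hypotheses on $M$. This is bookkeeping rather than a conceptual difficulty, but it is where an off-by-a-constant in the exponent of $\delta$ would break the argument; one has to use $QRS=x^{1/2+\delta}$ (not merely $\le x$) to pin down $H'\ll Q'^3R'^3 M\cdot(x^\delta)^{O(1)}\cdot(S d_1 d_2)^{-3}\cdot$ small, and then the constraint $S\le(\log x)^B$ keeps the $S$-powers harmless. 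I would also need to double-check that the sums over $d_0,d_1,d_2,e_1,e_2,e_3$ in Lemma~\ref{lmm:TripleGCDs} really do converge after the square-rooting; since $d_1d_2$ is square-free and $e_i\mid d_1^\infty d_2^\infty$ with $d_1d_2\mid d_0e_1e_2e_3$, the weight is at most $x^{o(1)}(d_1d_2)^{-1}$ per tuple and there are $x^{o(1)}$ tuples, giving a total loss of $x^{o(1)}$ which is swallowed by the $(\log x)^{-A'}$ savings if $A'$ is chosen large enough — but this does force the final choice of constants $C=C(A,B)$ and the intermediate $A',A''$ to be made in the right order, which I would state explicitly at the end of the proof.
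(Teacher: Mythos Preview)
Your plan is the same as the paper's—chain Lemmas~\ref{lmm:TripleCompletion} through~\ref{lmm:K4Bound} and read off the constraints on $M$ from the three terms of Lemma~\ref{lmm:K4Bound}. The initial step (the $O_B(x(\log x)^{3B}/N_1)$ error being acceptable since $N_1\ge(\log x)^{A+3B}$) and the identification of which hypothesis on $M$ controls which term are both correct.

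However, your stated target for $\mathscr{K}_3$ is off by a full factor of $QR\asymp x^{1/2+\delta}$, and this is a genuine gap, not just bookkeeping. From $\mathscr{K}_2\ll MQ^3R^3/(\log x)^{A'}$ and the prefactor $S^{11}QR$ in Lemma~\ref{lmm:TripleGCDs}, the target for the weighted $d$-sum of $|\mathscr{K}_3|$ is of order $MQ^2R^2$, not $MQR$; so the target for an individual $\mathscr{K}_3$ must be $MQ^2R^2$ divided by explicit decay in the $d$-parameters. If you follow your target $\mathscr{K}_3\ll MQ'R'$ through Cauchy, the induced target for $\mathscr{K}_4$ becomes roughly $Mx/(Q'^2R')$, and already the first term $Q'^{3/2}R'^3M^2$ of Lemma~\ref{lmm:K4Bound} would force $M\ll x/(Q'^{7/2}R'^4)\ll x^{-3/4}$, which is impossible. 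The paper instead aims for
\[
\mathscr{K}_3\ll \frac{MQ^2R^2}{(\log x)^{C_1}\,d_0^{3/2}d_1^{1/2}d_2^{1/2}(e_1e_2e_3)^{1/2}},
\]
extracting the decay in $d_0,e_i$ from $H_1'H_2'H_3'\ll Q^3R^3S^3/(N_1N_2N_3\,d_0^3e_1e_2e_3)$ and the decay in $d_1$ from $Q'=Q/d_1$ in the Cauchy step. This decay is exactly what makes the $d_0,d_1,d_2,e_i$ sums converge with loss $(\log x)^{O_B(1)}$; your fallback claim that the loss is ``$x^{o(1)}$ which is swallowed by $(\log x)^{-A'}$'' is false as written, since no log power dominates $x^{o(1)}$. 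With the corrected target, the induced target for $\mathscr{K}_4$ is $xRM/(d_2(\log x)^{C_2})$; substituting $Q'\le Q$, $R'\le R/d_2$, and $H\le(\log x)^{O_B(1)}Q^3R^3/(N_1N_2N_3)$ into Lemma~\ref{lmm:K4Bound} then yields the three inequalities $M\ll Q^{1/2}x^{-2\delta}$, $M\ll Q^{1/4}R^{1/2}x^{-2\delta}$, $M\ll Rx^{-4\delta}$ (up to logs), the middle one redundant—matching the hypothesis.
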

%
%
%%%%%%%%%%%%%%%%%%%%%%%%%%
%
%
\begin{proof}
Let $\mathscr{K}_2,\mathscr{K}_3,\mathscr{K}_4$ be the quantities defined in Lemma \ref{lmm:TripleCompletion}, Lemma \ref{lmm:TripleGCDs} and Lemma \ref{lmm:TripleCauchy}. By Lemma \ref{lmm:TripleCompletion}, we see that it suffices to show that
\begin{equation}
\mathscr{K}_2\ll \frac{x}{(\log{x})^A}\frac{Q^3R^3}{N_1N_2 N_3}.
\label{eq:K2Target}
\end{equation}
If we can show that
\begin{equation}
\mathscr{K}_3\ll \frac{x}{(\log{x})^{C_1}}\frac{Q^2 R^2}{N_1 N_2 N_3 d_0^{3/2} d_1^{1/2} d_2^{1/2} (e_1e_2e_3)^{1/2}}
\label{eq:K3Target}
\end{equation}
then, by Lemma \ref{lmm:TripleGCDs}, we see that
\begin{align*}
\mathscr{K}_2&\ll \frac{x S^{11} }{(\log{x})^{C_1}}\frac{Q^3 R^3}{ N_1 N_2 N_3 }\sum_{d_1,d_2\le x}\sum_{d_0|d_1d_2}\frac{1}{d_0^{3/2}d_1^{5/2} d_2^{5/2} }\Bigl(\sum_{e|d_1^\infty d_2^\infty}\frac{(d_0e,d_1d_2)}{e^{1/2}}\Bigr)^3\\
&\ll \frac{x}{(\log{x})^{C_1-11B}}\frac{Q^3 R^3}{ N_1 N_2 N_3 }\sum_{d_1,d_2\le x}\sum_{d_0|d_1d_2}\frac{\tau(d_1d_2)(d_0^{1/2}d_1^{1/2}d_2^{1/2})^3}{d_0^{3/2}d_1^{5/2} d_2^{5/2}}\\
&\ll \frac{x}{(\log{x})^{C_1-O_B(1)}}\frac{Q^3 R^3}{N_1 N_2 N_3 }.
\end{align*}
This would give \eqref{eq:K2Target} provided $C_1=C_1(A,B)$ is sufficiently large in terms of $A$ and $B$. Thus it suffices to show \eqref{eq:K3Target}. By Lemma \ref{lmm:TripleCauchy}, recalling that $H_i'\le (\log{x})^{2B+1} Q R S / (d_0e_i N_i)$, $Q'\le Q/d_1$, $S\le (\log{x})^B$ and $MN_1N_2N_3\asymp x$, we see that we have \eqref{eq:K3Target} provided
\begin{equation}
\mathscr{K}_4\ll \frac{x^{2}}{(\log{x})^{C_2}}\frac{ R}{ d_2 N_1 N_2 N_3}\asymp \frac{x R M}{d_2(\log{x})^{C_2}}\label{eq:K4Target}
\end{equation}
for some constant $C_2$ sufficiently large in terms of $C_1$ and $B$ (so we can take $C_2=C_2(A,B)$). By Lemma \ref{lmm:K4Bound}, we have that
\begin{align}
\mathscr{K}_4&\ll (\log{x})^{O_B(1)}\Bigl(Q'{}^{3/2}R'{}^3 M^2+HQ'{}^{1/2}R' M^2+H Q' R' M\Bigr)\nonumber\\
&\ll \frac{(\log{x})^{O_B(1)}}{d_2}\Bigl(Q^{3/2}R^3 M^2+\frac{Q^{7/2} R^4 M^2}{N_1N_2 N_3}+\frac{Q^4 R^4 M}{N_1 N_2 N_3}\Bigr).\label{eq:K4Bound}
\end{align}
Here we used the fact that $H\le H_1H_2H_3\le (\log{x})^{9B+3} Q^3R^3 /(N_1N_2N_3)$, $R'\ll R/d_2$ and $Q'\le Q$ in the final line.

Recalling that $MN_1N_2N_3\asymp x$, we see that \eqref{eq:K4Bound} gives \eqref{eq:K4Target} provided
\[
Q^{3/2}R^3 M^2+\frac{Q^{7/2}R^4 M^3}{x}+\frac{Q^4 R^4 M^2}{x}\ll \frac{x R M}{(\log{x})^{C_3}}
\]
for some constant $C_3$ chosen sufficiently large in terms of $C_2$ and $B$ (so we can take $C_3=C_3(A,B)$). Recalling that $QR=x^{1/2+\delta}$, this is satisfied if we have
\begin{align}
M&<\frac{Q^{1/2} x }{Q^2 R^2(\log{x})^{C_3}}=Q^{1/2} x^{-2\delta}(\log{x})^{-C_3},\label{eq:DivisorCond1}\\
M&<  \frac{R^{1/2} Q^{1/4} x}{Q^2 R^2(\log{x})^{C_3/2}}=R^{1/2}Q^{1/4} x^{-2\delta}(\log{x})^{-C_3/2},\label{eq:DivisorCond2}\\
M&<\frac{R x^2}{Q^4 R^4 (\log{x})^{C_3}}=Rx^{-4\delta}(\log{x})^{-C_3}\label{eq:DivisorCond3}.
\end{align}
We see that \eqref{eq:DivisorCond2} is implied by \eqref{eq:DivisorCond1} and \eqref{eq:DivisorCond3}, and so can be dropped. This gives the result on taking $C=C_3$.
\end{proof}
%
%
%%%%%%%%%%%%%%%%%%%%%%%%%%
%
%
\begin{proof}[Proof of Proposition \ref{prpstn:Triple}]
If $\min(N_1,N_2,N_3)\le x^\epsilon$ then the result follows from Lemma \ref{lmm:DoubleDivisor} and partial summation, so we may assume that $N_1,N_2,N_3\ge x^\epsilon$.

By Lemma \ref{lmm:Divisor} and the trivial bound, the contribution from $\tau(qr)\ge (\log{x})^{B}$ is negligible if $B=B(A)$ is sufficiently large in terms of $A$, so we may restrict to $\tau(qr)\le (\log{x})^{B}$. Similarly, the contribution from $|\alpha_m|\ge (\log{x})^{C_1}$ is negligible for $C_1=C_1(A)$ sufficiently large in terms of $A$, and so we may restrict to $\alpha_m$ being 1-bounded after replacing $A$ by $A+C_1$.

Let $qr=t^\square t^{\notsquare}$ be factored into square-full and square-free parts. By Lemma \ref{lmm:Squarefree}, the contribution from $q,r$ with $t^\square\ge (\log{x})^{C_2}$ is negligible if $C_2=C_2(A)$ is sufficiently large in terms of $A$, so we may restrict to $t^\square\le (\log{x})^{C_2}$. We let $q'=(q,t^{\notsquare})$, $r'=(r,t^{\notsquare})$ and $s'=t^\square$, so $q',r',s'$ are pairwise coprime with $q',r'$ square-free. Thus we see it suffices to show that
\[
\sum_{s'\sim S'}\sum_{q'\sim Q'}\sum_{\substack{r'\sim R'\\ (q'r',s')=1\\ \tau(q'r')\le (\log{x})^B}}\mu^2(q' r')\sup_{(a,q' r' s')=1}|\Delta_{\mathscr{K}}(a;q' r' s')|\ll_A \frac{x}{(\log{x})^A}
\]
over all choices of $S'\le (\log{x})^{C_2}$, $Q'=Q(\log{x})^{O(C_2)}$, $R'=R(\log{x})^{O(C_2)}$. Let the supremum occur with the residue class $b_{q' r' s'}\Mod{q' r' s'}$, and insert 1-bounded coefficients $c_{q',r',s'}$ to remove the absolute values. We may restrict the support of $c_{q',r',s'}$ to $q',r',s'$ pairwise coprime with $q' r'$ square-free and $\tau(q' r')\le (\log{x})^{B}$. Thus it suffices to show
\begin{equation}
\sum_{s\sim S'}\sum_{q\sim Q'}\sum_{r\sim R'}c_{q,r,s}\sum_{m\sim M}\alpha_m\sum_{n_1\sim N_1}\sum_{n_2\sim N_2}\sum_{n_3\sim N_3}\Delta_{\mathscr{K}}(b_{q r s};q r s)\ll_A \frac{x}{(\log{x})^A}.
\label{eq:TripleTarget}
\end{equation}
We see that
\[
\mathbf{1}_{\ell \equiv b\Mod{q rs}}-\frac{\mathbf{1}_{(\ell,q r s )=1}}{\phi(q r s)}=\frac{1}{\phi(q r s)}\sum_{(a,q r s)=1}\Bigl(\mathbf{1}_{\ell\equiv b\Mod{q r s}}-\mathbf{1}_{\ell\equiv a\Mod{q r s}}\Bigr).
\]
Using this with $\ell=m n_1n_2n_3$ and $b=b_{q r s}$ in $\Delta_{\mathscr{K}}(q r s)$, we see that \eqref{eq:TripleTarget} follows if uniformly over all residue classes $(a_{q r s},q r s)=1$ we have
\[
\mathscr{K}=\mathscr{K}_{MT}+O_A\Bigl(\frac{x}{(\log{x})^{A}}\Bigr)
\]
where $\mathscr{K}$ and $\mathscr{K}_{MT}$ are as given by Lemma \ref{lmm:TripleCompletion}. This now follows from Lemma \ref{lmm:TripleConclusion} thanks to our assumptions on $M$.
\end{proof}
%
%
%%%%%%%%%%%%%%%%%%%%%%%%%%
%
%
\bibliographystyle{plain}
\bibliography{Bibliography}
\end{document}